\documentclass[11pt]{article}
\usepackage{amsfonts}
\usepackage{CJK}
\usepackage{graphics}
\usepackage{indentfirst}
\usepackage{cite}
\usepackage{latexsym}
\usepackage{amsmath}
\usepackage{amsthm}
\usepackage{amssymb}
\usepackage[dvips]{epsfig}
\usepackage{amscd}

\usepackage{color}

\hoffset -1.2cm

\setlength{\parindent}{12pt}                
\setlength{\parskip}{3pt plus1pt minus2pt}  
\setlength{\baselineskip}{20pt plus2pt minus1pt}
 \setlength{\textheight}{22.5 true cm}      
 \setlength{\textwidth}{14.5 true cm}
  \topmargin    -0.5cm
\newtheorem{theorem}{Theorem}[section]
\newtheorem{remark}{Remark}[section]
\newtheorem{definition}{Definition}[section]
\newtheorem{lemma}[theorem]{Lemma}
\newtheorem{pro}{Proposition}[section]
\newtheorem{cor}[theorem]{Corollary}

\renewcommand{\div}{ {\rm div }  }

\newcommand{\na}{\nabla }

\newcommand{\pa}{\partial}
\renewcommand{\r}{\mathbb{R}}

\newcommand{\dis}{\displaystyle}
\newcommand{\ia}{\int_0^T}

\newcommand{\bt}{{\hat\theta}}
\newcommand{\bl}{\begin{lemma}}
\newcommand{\el}{\end{lemma}}
\newcommand{\et}{\end{theorem}}
\newcommand{\ga}{\gamma}

\newcommand{\curl}{{\rm curl} }
\newcommand{\te}{\theta}

\newcommand{\al}{\alpha}
\newcommand{\de}{\delta}
\newcommand{\ve}{\varepsilon}
\newcommand{\la}{\label}

\newcommand{\p}{p(\rho)  }

\newcommand{\ka}{\kappa}

\newcommand{\bn}{\begin{eqnarray}}
\newcommand{\en}{\end{eqnarray}}
\newcommand{\bnn}{\begin{eqnarray*}}
\newcommand{\enn}{\end{eqnarray*}}

\newcommand{\bnnn}{\begin{eqnarray*}}
\newcommand{\ennn}{\end{eqnarray*}}

\newcommand{\ba}{\begin{aligned}}
\newcommand{\ea}{\end{aligned}}
\newcommand{\be}{\begin{equation}}
\newcommand{\ee}{\end{equation}}
\def\O{\Omega}
\def\p{\partial}
\def\norm[#1]#2{\|#2\|_{#1}}

\newcommand{\ep}{\varepsilon}
\newcommand{\n}{\rho}
\newcommand{\si}{\sigma}

\def\la{\label}

\def\na{\nabla}
\def\on{\hat\rho}

\def\tn{1}

\def\xl{\left}
\def\xr{\right}
\def\bp{\overline{P}}

\makeatletter   
\@addtoreset{equation}{section}
\makeatother       

\title{Global   Existence of Classical Solutions  to      Full Compressible Navier-Stokes System with Large Oscillations and Vacuum in 3D Bounded Domains}

\author{Jing L{\small I}$^{a,b,c} $, Boqiang L{\small \"U}$^{a} $, Xue W{\small ANG}$^{b}$  \thanks{email:  ajingli@gmail.com (J. Li), lvbq86@163.com (B.   L\"u), xuewa@amss.ac.cn (X.  Wang) } \\
{\normalsize a. Department of Mathematics }\\ {\normalsize \& Institute of Mathematics and Interdisciplinary Sciences,}\\ {\normalsize  Nanchang University, Nanchang 330031, P. R. China;}
 \\ {\normalsize b.  School of Mathematical Sciences,}\\
{\normalsize  University of Chinese Academy of Sciences, Beijing 100049, P. R. China;}\\
{\normalsize c. Institute of Applied Mathematics, AMSS,} \\ {\normalsize \&   Hua Loo-Keng Key Laboratory of Mathematics,}\\
{\normalsize  Chinese Academy of Sciences,    Beijing 100190, P. R. China}}

\date{}
\begin{document}
\maketitle

\begin{abstract}
The full compressible Navier-Stokes system describing the motion of a viscous, compressible, heat-conductive, and Newtonian polytropic fluid is studied in a three-dimensional   simply connected bounded domain with smooth boundary having a finite number of two-dimensional connected components.
For the initial-boundary-value problem with slip boundary conditions on the velocity and Neumann boundary  one on the temperature,  the global existence of classical and weak solutions  which are of small energy but possibly large oscillations is established. In particular, both the density and temperature are allowed to vanish initially. Finally, the exponential stability of the density, velocity, and temperature is also obtained.   Moreover,  it is   shown that for the classical solutions, the oscillation of the density will grow unboundedly in the long run with an exponential  rate provided  vacuum appears  (even at a point) initially.
This is the first result concerning the global existence of classical   solutions to the full compressible Navier-Stokes equations with   vacuum in general three-dimensional bounded smooth domains.
\end{abstract}

\textbf{Keywords}:  full compressible Navier-Stokes  system;  global existence; slip boundary condition; vacuum; large oscillations.

\section{Introduction}
The motion of a compressible viscous, heat-conductive, and Newtonian polytropic fluid occupying a spatial domain $\Omega \subset \mathbb{R}^{3}$ is governed by the following full compressible Navier-Stokes system:
\be \la{a0}
\begin{cases}
 	\n_t+{\rm div} (\n u)=0,\\
 	(\n u)_t+{\rm div}(\n u\otimes u)+\na P=\div \mathbb{S},\\
 	(\n E)_t+\div (\n Eu+Pu)=\div (\kappa \na \te)+\div (\mathbb{S}u),
\end{cases}\ee
where   $\mathbb{S}$ and $E$ are respectively the viscous stress tensor and the total energy given by
$$\mathbb{S}=2\mu\mathfrak{D}(u) +\lambda \div  u \mathbb{I}_3,~~E=e+\frac{1}{2}|u|^2,$$
with $\mathfrak{D}(u)  = (\nabla u + (\nabla u)^{\rm tr})/2$ and $\mathbb{I}_3$  denoting the deformation tensor and the $3\times3$ identity matrix respectively.
Here, $t\ge 0$ is time, $x\in \Omega$ is the spatial coordinate, and $\n$, $u=\left(u^1,u^2,u^3\right)^{\rm tr},$ $e$, $P,$ and $\te $ represent respectively the fluid density, velocity, specific internal energy, pressure, and  absolute temperature.
The viscosity coefficients $\mu$ and $\lambda$  are constants satisfying the physical restrictions:
\be\la{h3} \mu>0,\quad 2 \mu + 3\lambda\ge 0.\ee
The heat-conductivity coefficient $\ka$ is a positive constant.
We consider the ideal  polytropic fluids so that $P$ and $e$ are given by the state equations:
\be \la{pg}
P(\n,e)=(\ga-1)\n e=R\n \te, \quad e=\frac{R\theta}{\ga-1},
\ee
where $\ga>1$  is the adiabatic constant  and $ R$ is a positive constant.

Let $\Omega \subset \r^3 $ be a simply connected bounded domain.
Note that for the classical solutions, the system \eqref{a0} can be rewritten as
\be \la{a1}
\begin{cases}
	\n_t+{\rm div} (\n u)=0,\\
	\n (u_t+  u\cdot \na u )=\mu\Delta u+(\mu+\lambda)\na({\rm div}u)-\na P,\\
	\frac{R}{\ga-1}\n ( \te_t+u\cdot\na \te)=\ka\Delta\te-P\div u+\lambda (\div u)^2+2\mu |\mathfrak{D}(u)|^2.
\end{cases}
\ee
We consider the system \eqref{a1}  subjected  to the given initial data
\be \la{h2}
(\rho,\n u,\n \te)(x,{t=0})=(\rho_0,\n_0 u_0, \n_0\te_0)(x), \quad x\in \Omega,
\ee
and boundary conditions
\be \la{h1}
u\cdot n=0,~~\mbox{curl} u\times n=0,~~\na \theta\cdot n=0~~~~\text{on}~\p\Omega \times(0,T),
\ee
where $n =(n^1, n^2, n^3)^{\rm{tr}}$ is the unit outward normal vector on $\partial\Omega$.

There is a lot  of literature  on the global existence and large time
behavior of solutions to (\ref{a0}). The one-dimensional problem
with strictly positive initial density and temperature has been
studied extensively by many people (see \cite{kazh01,Kaz,akm} and
the references therein). For the multi-dimensional case,  the local existence and uniqueness of classical solutions are known in \cite{Na,se1}  in the absence of vacuum.  
The global classical solutions were
first obtained by Matsumura-Nishida \cite{M1} for initial data close
to a non-vacuum equilibrium in some Sobolev space $H^s.$
Later, Hoff \cite{Hof1} studied the
global weak solutions with strictly positive initial density and
temperature for discontinuous initial data. On the other hand, in
the presence of vacuum, this issue becomes much more complicated.
Concerning viscous compressible fluids in a barotropic regime, where
the state of these fluids at each instant $t>0 $ is completely
determined by the density $\n=\n(x,t)$ and the velocity $u=u(x,t),$
the pressure $P$ being an explicit function of the density,    the
major breakthrough is due to Lions \cite{L1} (see also  Feireisl
\cite{F1, feireisl1}), where he obtained global existence of weak
solutions, defined as solutions with finite energy,  when the
pressure $P$ satisfies $P(\n)=a\n^\ga (a>0,\ga>1)$  with suitably large  $\ga.$
The main restriction on initial data is that the initial energy  is
finite, so that the density vanishes at far fields, or even has
compact support. Recently,  Huang-Li-Xin \cite{hulx} and Li-Xin \cite{2dlx} established the global well-posedness
of classical solutions to the Cauchy problem for the 3D and 2D barotropic compressible Navier-Stokes equations in whole space with smooth initial data that are of small energy but
possibly large oscillations, in particular, the initial density is allowed to vanish.
More recently, for slip boundary condition  in bounded domains,
Cai-Li \cite{C-L}   obtained the global classical solutions with initial vacuum, provided that the initial energy is suitably small.

Compared with the barotropic flows, it seems much more difficult and complicated to study the global well-posedness of solutions to full compressible Navier-Stokes system (\ref{a0}) with   vacuum, where some additional difficulties arise, such as the degeneracy of both momentum and energy equations, the strong coupling between the velocity and temperature,  et al. For specific pressure laws   excluding the perfect gas
equation of state, the question of existence of so-called
``variational" solutions in dimension $d\ge 2$ has been recently
 addressed
in \cite{feireisl,feireisl1}, where the temperature equation is
satisfied only as an inequality  which justifies the notion of
variational solutions. 
 Moreover, for   a very particular form of the viscosity
coefficients depending on the density, Bresch-Desjardins
\cite{bd} obtained global stability of weak solutions.
For   the global well-posedness of classical solutions to the full compressible Navier-Stokes system \eqref{a0}, it  is shown in  {Xin} \cite{X1}  that  there is no solution in $C^{1}\left([0, \infty), H^{s}\left(\mathbb{R}^{d}\right)\right)$ for large $s$ to the Cauchy problem for the full compressible Navier-Stokes
system without heat conduction  provided  that the initial density has compact support.  See also the recent generalizations to the
case   for non-compact but rapidly decreasing at far field initial densities (\cite{R}).
Recently, Huang-Li \cite{H-L} established the global existence and uniqueness for the classical solutions to the 3D Cauchy problem with interior vacuum provided the initial energy is small enough.  Later,  Wen-Zhu  \cite{W-C} obtained the global existence and uniqueness of the classical solutions  for vanishing far-field density under the assumption that the initial mass is sufficiently small or both viscosity and heat-conductivity coefficients are large enough.  It should be mentioned here that   the results of \cite{H-L,W-C} hold  only for the Cauchy problem. However, 
 the global existence   of classical solutions or even weak ones with vacuum to multi-dimensional   full compressible Navier-Stokes system \eqref{a0} in general bounded domains remains completely  open except for spherically or cylindrically symmetric
initial data (see  \cite{wenzhu1,wenzhu2}). In fact, one of the aims of  this paper is to study the global well-posedness of classical solutions to full compressible Navier-Stokes system \eqref{a0} in general bounded domains.

Before stating the main results, we explain the notations and conventions used throughout this paper.  We denote
\bnn\int fdx\triangleq\int_{\Omega}fdx,\enn
and \bnn \overline{f}\triangleq\frac{1}{|\O|}\int_{\Omega}fdx,\enn
which is the average of a function $f$ over $\Omega$.
For $1\le p\le \infty $ and integer $k\ge 0,$  we adopt the simplified notations for Sobolev spaces as
follows:
\be\ba\notag \begin {cases}
L^p=L^p(\Omega),\quad W^{k,p}=W^{k,p}(\Omega),\quad H^k=W^{k,2}(\O),\\
H_\omega^i= \left.\left\{f\in H^i \right | f\cdot n=0,\,\curl f\times n=0 \rm \,\,{on}\,\, \p\O\right\}\,(i=1,2).
\end{cases}\ea\ee

Without loss of generality, we assume that
\be\la{m}\overline{\rho_0}=\frac{1}{|\O|}\int \rho_0 dx=1.\ee
We then define  the initial energy  $C_0$  as follows:
\be\la{e}\ba  C_0\triangleq &\frac{1}{2}\int\n_0
|u_0|^2dx+R \int  \left( 1+\n_0\log {\n_0}-\n_0 \right)dx\\
&+\frac{R}{\ga-1}\int  \n_0\left(\te_0- \log {\te_0} -1
\right) dx .\ea\ee

The first main result in this paper can be stated  as follows:

\begin{theorem}\la{th1}
Let $\O\subset\r^3$ be a simply connected bounded smooth domain, whose boundary $\p\O$ has a finite number of 2-dimensional connected components. For  given numbers $M>0$ (not necessarily
small), $q\in (3,6),$
  $\on> 2,$ and $\bt>1,$
suppose that the initial data $(\n_0,u_0,\te_0)$ satisfies \be\ba
\la{co3} \n_0\in  W^{2,q}, \quad  u_0 \in H^2_\omega,\quad
\te_0\in\left.\left\{f\in H^1 \right| \na f\cdot n=0 \,\,\rm {on}\,\, \p\O\right\}, \ea\ee
\be \la{co4} 0\le\inf\rho_0\le\sup\rho_0< \hat{\rho},\quad 0  \le\inf\te_0\le\sup\te_0\le \bt, \quad \|\na u_0\|_{L^2} \le M,
   \ee
   and the compatibility condition
\be
\la{co2}-\mu \Delta u_0-(\mu+\lambda)\na\div u_0+R\na (\n_0\te_0)
=\sqrt{\n_0} g,\ee
with  $g\in L^2. $ Then there exists a positive constant $\ve$
depending only
 on $\mu,$ $\lambda,$ $\ka,$ $ R,$ $ \ga,$  $\on,$ $\bt$, $\O$, and $M$ such that if
 \be
 \la{co14} C_0\le\ve,
   \ee  the   problem  (\ref{a1})--(\ref{h1})
admits a unique global classical solution $(\rho,u,\te)$ in
   $\Omega\times(0,\infty)$ satisfying
  \be\la{h8}
  0\le\rho(x,t)\le 2\hat{\rho},\quad \te(x,t)\ge 0,\quad x\in \Omega,\,~~ t\ge 0,
  \ee
 and \be
   \la{h9}\begin{cases}
   \rho\in C([0,T];W^{2,q}),\\ 
   u \in C([0,T];W^{1,\tilde p})\cap L^\infty(0,T;H^2)\cap   L^\infty(\tau,T; W^{3,q}),  \\
   \te\in   L^\infty(\tau,T;H^4)\cap  C([\tau,T];  W^{3,\tilde p}), \\
  u_t \in L^2(0,T;H^1)\cap
L^{\infty}(\tau,T;H^2)\cap H^1(\tau,T;H^1), \\
  \te_t \in
L^{\infty}(\tau,T;H^2)\cap H^1(\tau,T;H^1),\end{cases} \ee for any $0<\tau<T<\infty $ and $\tilde p\in [1,6).$
Moreover, for any $p\in  [1 ,\infty)$ and $r\in [1,6]$, there exist positive constants $C$, $\al_0$, and $\te_\infty$ depending only  on $\mu,$ $\lambda,$ $\ka,$ $ R,$ $ \ga,$  $\on,$ $\bt$, $\O$, $p$, $r$, and $M$  such that for any $t\geq 1,$
\be\la{h11}
\|\n-1\|_{L^p}+\|u\|_{W^{1,r}}^2+\|\te-\te_\infty\|_{H^2}^2\le C e^{-\al_0 t}.
\ee

\end{theorem}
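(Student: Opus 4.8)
The plan is to prove Theorem \ref{th1} via the standard strategy for these small-energy/large-oscillation problems: establish the local existence of a classical solution by a linearization/fixed-point argument, then derive global-in-time a priori estimates that are \emph{uniform} once the initial energy $C_0$ is small, and finally patch the local solutions together to obtain the global one, with the decay estimate \eqref{h11} falling out of the uniform bounds. The decay constant $\te_\infty$ should be identified as $\te_\infty = \overline{\rho_0 e_0}(\ga-1)/R = \overline{\rho_0\te_0}$ up to a normalization — it is the asymptotic constant temperature dictated by conservation of total energy, since the momentum and the internal energy redistribute so that $u\to 0$ and $\n\to 1$.

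The heart of the matter is the closed a priori estimate, so let me sketch its structure. Assume on a time interval $[0,T]$ the \emph{a priori} bounds $0\le\rho\le 2\hat\rho$ and suitable time-weighted norms (these are what must be closed). First I would run the \textbf{basic energy estimate}: integrating the entropy-type functional from \eqref{e} forward, using the state equations \eqref{pg}, gives $\sup_t C(t) + \int_0^T(\mu\|\na u\|_{L^2}^2 + \|\na\sqrt\te\|_{L^2}^2 + \cdots)\,dt \le C C_0$, exploiting the slip boundary condition \eqref{h1} to kill boundary terms (here $u\cdot n=0$ and $\curl u\times n=0$ are crucial, as is $\na\te\cdot n=0$). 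Next come the \textbf{higher-order estimates}: multiply the momentum equation by $u_t$ and $\Delta u$, and the temperature equation by $\te_t$, integrating by parts and using the effective-viscous-flux and vorticity structure adapted to the slip boundary (the identity $\Delta u = \na\div u - \curl\curl u$ together with $\curl u\times n = 0$ keeps boundary integrals benign). One introduces time weights $\si(t) = \min\{1,t\}$ to handle the merely $H^1$-type initial data: obtain bounds on $\sup_t \si\|\na u\|_{L^2}^2$, $\int_0^T \si\|\sqrt\rho u_t\|_{L^2}^2$, then $\sup_t \si^2(\|\sqrt\rho u_t\|_{L^2}^2 + \|\na\te\|_{L^2}^2)$, etc. The \textbf{density upper bound} $\rho\le 2\hat\rho$ is recovered by the Zlotnik-type ODE argument along particle trajectories applied to $\log\rho$, using the effective viscous flux $F = (2\mu+\lambda)\div u - (P - \bar P)$ and its $L^\infty$-in-time, small-in-integral control; this is where smallness of $C_0$ is genuinely used to \emph{improve} the open bound $2\hat\rho$ back to, say, $\tfrac{3}{2}\hat\rho$.

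\textbf{The main obstacle} I anticipate is twofold. First, the strong coupling between $u$ and $\te$ through the term $R\na(\rho\te)$ in the momentum equation and the viscous-heating/pressure-work terms $-P\div u + \lambda(\div u)^2 + 2\mu|\mathfrak D(u)|^2$ in the temperature equation: controlling $\|\rho\te - \overline{\rho\te}\|$ and the nonlinear source $2\mu|\mathfrak D(u)|^2 + \lambda(\div u)^2$ in $L^2_tL^2_x$ or better requires carefully interleaving the velocity and temperature estimates rather than closing them separately, and one must avoid a circular dependence on the (unknown) $L^\infty$ bound for $\te$. Second, the presence of the physical boundary forces every integration by parts to be checked against \eqref{h1}; in particular the higher $W^{3,q}$ and $H^4$ regularity claims in \eqref{h9} rely on elliptic estimates for the Lamé system and for $-\Delta$ with these mixed slip/Neumann conditions, combined with the compatibility condition \eqref{co2}. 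After the uniform estimates are in hand, the exponential decay \eqref{h11} is obtained by deriving a differential inequality of the form $\tfrac{d}{dt}\mathcal G(t) + \alpha_0 \mathcal G(t) \le 0$ for a suitable (non-negative, equivalent-to-the-energy) functional $\mathcal G$ once $t\ge 1$, so that the initial-layer weights $\si(t)$ have saturated to $1$; Grönwall then gives the $e^{-\alpha_0 t}$ rate, and interpolation upgrades it to all $L^p$ and $W^{1,r}$ norms in the stated ranges.
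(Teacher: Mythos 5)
Your overall architecture (local existence for positive density, bootstrap a priori estimates with the time weight $\si(t)$, density bound via an ODE-in-time argument built on the effective viscous flux, approximation of the initial data to admit vacuum, and decay from a differential inequality for $t\ge 1$) is the same as the paper's. But there is a genuine gap at the very first step: you assert that the basic energy estimate closes at order $CC_0$ because the slip boundary conditions ``kill'' the boundary terms. For $u\cdot n=0$, $\curl u\times n=0$ this is exactly what fails, and it is the first main difficulty of this problem: the entropy identity \eqref{11a} produces the extra term $-\mu\int\bigl(|\curl u|^2+2(\div u)^2-2|\mathfrak{D}(u)|^2\bigr)dx$, which, in contrast to the Cauchy problem, does not vanish after integration by parts and has no sign. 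Consequently $E(t)\le CC_0$ is not available; the paper instead closes only the weaker bound $E(t)\le CC_0^{1/4}$ by bootstrapping the hypothesis $A_2(T)\le 2C_0^{1/4}$ (which controls $\int_0^T\|\na u\|_{L^2}^2dt$), and then compensates with two devices absent from your sketch: (i) uniform positive lower and upper bounds for $\bp$ via Jensen's inequality, which replace the missing control of $\overline\te$ and supply both the damping coefficient in the density equation and the structure of the $A_2$-estimate; and (ii) a re-derivation of the full basic energy estimate on the short interval $[0,\si(T)]$ through an a priori $L^1(0,\si(T);L^\infty)$ bound on $\te$, giving $\|R\te-\bp\|_{L^2}\le C(C_0^{1/2}+C_0^{1/3}\|\na\te\|_{L^2})$ and a cancellation between kinetic and thermal energy that closes $A_2(T)\le C_0^{1/4}$. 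Without these steps your proposed interleaving of velocity and temperature estimates has no quantitative starting point and the bootstrap cannot be closed.

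Two further points. The identification $\te_\infty=\overline{\n_0\te_0}$ ``by conservation of total energy'' is unjustified here: with $\curl u\times n=0$ the boundary work $\int_{\p\O}(\mathbb{S}u)\cdot n\,dS$ need not vanish, so total energy is not conserved; in the paper $\te_\infty=P_\infty/R$ is obtained a posteriori from $|\bp_t|\le Ce^{-\al t}$, which shows $\bp(t)$ converges. The boundary integrals in the material-derivative estimates are also not ``benign'': a term such as $\int_{\p\O}G(u\cdot\na)u\cdot\na n\cdot u\,dS$ would, by the trace theorem, demand $\|\na^2u\|_{L^p}$, which is unavailable at that stage; the paper needs the representation $u=u^{\perp}\times n$ and the identity $\div(\na u^i\times u^{\perp})=-\na u^i\cdot\na\times u^{\perp}$ to convert these into interior integrals controlled by $\na u$ and $\na G$. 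Finally, exponential decay of $\|\n-1\|_{L^2}$ cannot come from a bare Gr\"onwall inequality since the density has no dissipation; the paper augments the energy functional with $\int\n u\cdot\mathcal{B}[\n-1]dx$, where $\mathcal{B}$ is the Bogovskii operator, to generate that dissipation before applying your $\mathcal G'+\al_0\mathcal G\le 0$ scheme.
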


The next result of this paper concerns  weak solutions whose   definition is as follows.
\begin{definition}\la{def} We say that $(\n,u,E=\frac{1}{2}|u|^2+\frac{R}{\ga-1}\te)$ is a weak solution to Cauchy problem (\ref{a0}) (\ref{h2}) (\ref{h1}) provided that
$$\n\in L^\infty_{\rm loc}([0,\infty);L^\infty(\O)),\quad u ,\te\in L^2_{\rm loc} ([0,\infty); H^1(\O)),$$
and that for all test functions $\psi\in\mathcal{D}(\O\times(-\infty,\infty)),$
\be \la{def1}
\int_{\O}\n_0\psi(\cdot,0)dx+\int_0^\infty\int_{\O}\left(\n\psi_t+\n u\cdot\na\psi\right) dxdt=0,\ee
\be\la{def2}\ba&
\int_{\O}\n_0u^j_0\psi(\cdot,0)dx+\int_0^\infty\int_{\O}\left(\n u^j\psi_t
   +\n u^ju\cdot\na\psi+P(\n,\te)\psi_{x_j}\right)dxdt\\
&-\int_0^\infty\int_{\O}\left(  \mu\na u^j\cdot\na\psi+(\mu+\lambda)(\div u)\psi_{x_j}\right) dxdt=0,\quad
j=1,2,3, \ea\ee

\be \la{def3}\ba
&\int_{\O}\left(\frac{1}{2}\n_0|u_0|^2 +\frac{R}{\ga-1}\n_0\te_0 \right)\psi(\cdot,0)dx\\
&+\int_0^\infty\int_{\O}\left(\n E\psi_t+ (\n E  +P)u\cdot\na\psi\right) dxdt\\
&-\int_0^\infty\int_{\O}\left(\ka\na\te+\frac{1}{2}\mu \na(|u|^2)
 +\mu  u\cdot \na u+\lambda\div u u\right)\cdot\na \psi dxdt=0.
\ea\ee
\end{definition}

Then we state our second main result as follows:
\begin{theorem}\la{th2}  Under the conditions of Theorem \ref{th1} except (\ref{co2}), where the condition (\ref{co3}) is replaced by
\be\la{dt7}u_0\in H^1_\omega,\ee
assume  further that $C_0$  as in (\ref{e}) satisfies (\ref{co14}) with $\varepsilon$ as in Theorem \ref{th1}. Then there exists a global weak solution $(\rho,u,E=\frac{1}{2}|u|^2+\frac{R}{\ga-1}\te)$ to the problem  (\ref{a0}) (\ref{h2}) (\ref{h1})
    satisfying
\be\la{hq1}
\n\in C([0,\infty);L^p), \quad (\n u,\,\n |u|^2,\,\n\te)\in C([0,\infty);H^{-1}) ,
\ee
\be\la{hq2}u\in L^\infty(0,\infty;H^1)\cap C((0,\infty);L^2 )  ,\quad \te\in C((0,\infty); W^{1,\tilde p}),\ee
\be \la{hq3}
u(\cdot,t),\,\,\curl u(\cdot,t),\,\,((2\mu+\lambda)\div u-(P-\overline P))(\cdot,t),  \,\,\na\te(\cdot,t)\in H^1,\quad t>0,\ee
\be\la{hq4}\rho\in [0,2\hat{\rho}] \quad \mbox{ \rm a.e.}, \quad \te\ge 0 \quad\mbox{ \rm a.e.},\ee
 and the exponential decay property (\ref{h11})  with $p\in  [1 ,\infty)$, $\tilde p\in [1,6)$, and $r\in[1,6]$. In addition, there exists some  positive constant $C$ depending  only on $ \mu,$ $\lambda,$ $ \ka,$ $ R,$ $ \ga,$  $\on$, $\bt,$ $\O$, and $M $ such that, for  $\si(t)\triangleq\min\{1,t\},$  the following estimates hold
  \be\la{hq5}\ba  \sup_{t\in (0,\infty)} \| u \|_{H^1}^2  +\int_0^\infty\int\left| (\n u)_t+{\rm div}(\n u\otimes u)\right|^2dxdt\le C,\ea\ee
  \be\la{hq7}\ba &\sup_{t\in (0,\infty)} \int\left((\n-1)^2+\n |u|^2+\n(R\te-\overline P)^2\right)dx  \\&\quad   +\int_0^\infty\left( \|\na u\|^2_{L^2}+ \|\na \te\|_{L^2}^2 \right)dt\le CC_0^{1/4}, \ea\ee
\be\la{hq8}\ba
&\sup_{t\in (0,\infty)}\left(\si\|\na u \|^2_{L^6}+\si^2\|\te\|^2_{H^2}\right) \\
&+\int_0^\infty\left( \si\|u_t\|^2_{L^2}+\si^2\|\na \dot u\|_{L^2}^2+\si^2\|\te_t\|^2_{H^1}\right)dt\le C. \ea\ee
Moreover, $(\n,u,\te)$ satisfies (\ref{a1})$_3$ in the weak form, that is, for any test function  $\psi\in
\mathcal{D}(\O\times(-\infty,\infty)),$
\be\la{vu019}\ba &\frac{R}{\ga-1} \int\n_0 \te_0\psi(\cdot,0) dx+\frac{R}{\ga-1}\int_0^\infty\int\n \te \left(\psi_t+u \cdot\na\psi
\right)dxdt\\ & =  \ka \int_0^\infty\int \na\te \cdot\na\psi dxdt+R\int_0^\infty\int  \n\te  \div u \psi dxdt\\&\quad -\int_0^\infty\int \left(\lambda(\div u)^2+2\mu   |\mathfrak{D}(u)|^2  \right) \psi dxdt.\ea\ee

\end{theorem}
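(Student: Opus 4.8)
The plan is to obtain the weak solution as a limit of the classical solutions produced by Theorem \ref{th1}. First I would regularize the initial data: given $(\n_0,u_0,\te_0)$ satisfying the hypotheses with $u_0\in H^1_\omega$ (but not necessarily $H^2_\omega$ and without the compatibility condition \eqref{co2}), I construct a sequence $(\n_0^\eta,u_0^\eta,\te_0^\eta)$ that satisfies the full assumptions of Theorem \ref{th1} — in particular \eqref{co3}, \eqref{co4} with the same $\hat\rho,\bt,M$ (up to an arbitrarily small loss), and the compatibility condition \eqref{co2} with some $g^\eta\in L^2$ — and such that $C_0^\eta\to C_0$, $\n_0^\eta\to\n_0$ in $W^{2,q}$ is too strong, so instead $\n_0^\eta\to\n_0$ in $L^p$ for $p<\infty$ together with weak-$*$ convergence in $L^\infty$, $\sqrt{\n_0^\eta}u_0^\eta\to\sqrt{\n_0}u_0$ in $L^2$, $\na u_0^\eta\to\na u_0$ in $L^2$, and $\sqrt{\n_0^\eta}\te_0^\eta\to\sqrt{\n_0}\te_0$ in $L^2$ with $\na\te_0^\eta$ bounded in $L^2$. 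The standard device (as in Huang--Li--Xin or Cai--Li) is to mollify $\n_0$ away from zero, then solve an elliptic problem for a corrected velocity so that the compatibility condition holds exactly; the smallness $C_0^\eta\le\ve$ is preserved for $\eta$ small since $C_0$ is continuous under these convergences.

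Next, for each $\eta$ Theorem \ref{th1} gives a global classical solution $(\n^\eta,u^\eta,\te^\eta)$ on $\O\times(0,\infty)$ with the bounds \eqref{h8}, \eqref{h11}. The crucial point is that all the a priori estimates behind Theorem \ref{th1} are \emph{uniform in $\eta$}: they depend only on $\mu,\lambda,\ka,R,\ga,\hat\rho,\bt,\O,M$ and on $C_0^\eta\le\ve$, not on higher norms of the data. Thus one extracts from the proof of Theorem \ref{th1} the $\eta$-independent estimates $\sup_t\|\na u^\eta\|_{H^1}$ weighted by $\si$, $\sup_t\|\sqrt{\n^\eta}u^\eta_t\|_{L^2}$ weighted by $\si^{1/2}$, the time-integral bounds on $\|\na u^\eta\|_{L^2}^2$, $\|\na\te^\eta\|_{L^2}^2$, and the weighted bounds in \eqref{hq5}--\eqref{hq8}; the decay \eqref{h11} is likewise uniform. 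With $0\le\n^\eta\le2\hat\rho$, $\te^\eta\ge0$, and these bounds in hand, I pass to the limit $\eta\to0$: by Aubin--Lions and the continuity equation one gets $\n^\eta\to\n$ in $C([0,T];L^p)$ and weak-$*$ in $L^\infty$; $u^\eta\rightharpoonup u$ weakly-$*$ with $u^\eta\to u$ strongly in $L^2(\O\times(0,T))$ and a.e., hence $\n^\eta u^\eta\to\n u$, $\n^\eta|u^\eta|^2\to\n|u|^2$, $\n^\eta u^\eta\otimes u^\eta\to\n u\otimes u$ in appropriate spaces; $\te^\eta\to\te$ strongly in $L^2(0,T;L^2)$ after using the $\si$-weighted $H^2$ bound on $\te^\eta$ and the bound on $\te^\eta_t$, so that the nonlinear source terms $\lambda(\div u^\eta)^2+2\mu|\mathfrak D(u^\eta)|^2$ converge weakly and $\n^\eta\te^\eta\div u^\eta$ converges. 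These suffice to pass to the limit in the weak formulations \eqref{def1}, \eqref{def2}, \eqref{def3}, and \eqref{vu019}, and the uniform estimates descend to \eqref{hq1}--\eqref{hq8} by weak lower semicontinuity (with the regularity \eqref{hq3} following from the $H^1$ bounds on $\curl u$, the effective viscous flux, and $\na\te$). Uniqueness of $\n$ in $C([0,\infty);L^p)$ and the regularity $\n\in C([0,\infty);L^p)$ follow from the renormalized continuity equation once $u\in L^1(0,T;W^{1,\infty})$ — which we do \emph{not} have globally, only $u\in L^\infty(0,T;H^1)$, so for the continuity-equation time-continuity I would instead use $\n^\eta\to\n$ in $C([0,T];L^p)$ directly from the uniform convergence, not re-derive it.

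The main obstacle is obtaining the $L^2$-strong convergence of $\te^\eta$ (and hence the convergence of the quadratic dissipation terms and of $\n^\eta\te^\eta\div u^\eta$) near $t=0$, where the only control is through the degenerate-weighted estimates of \eqref{hq8}: $\si^2\|\te^\eta\|_{H^2}^2$ and $\int\si^2\|\te^\eta_t\|_{H^1}^2$. Away from $t=0$ this is routine Aubin--Lions, but on $(0,\tau)$ one must argue more carefully — e.g. split $\int_0^T=\int_0^\tau+\int_\tau^T$, bound the first piece using $\int_0^\tau\|\na\te^\eta\|_{L^2}^2\,dt\le CC_0^{1/4}$ from \eqref{hq7} together with $0\le\te^\eta$ and $\n^\eta\le2\hat\rho$ to get smallness in $\tau$ uniformly in $\eta$, and on the second piece use the $\tau$-conditional Aubin--Lions. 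The same $\int_0^\tau$-smallness argument handles the velocity terms $\mu u^\eta\cdot\na u^\eta$ and $\lambda\div u^\eta\,u^\eta$ appearing in \eqref{def3}. A secondary technical point is verifying that the regularized data can be chosen to satisfy \emph{both} $\|\na u_0^\eta\|_{L^2}\le M$ (not just $\le M+o(1)$, which would force re-running Theorem \ref{th1} with $M$ replaced by $2M$, say — acceptable since $\ve$ then depends on $2M$, still a fixed constant) and the exact compatibility condition \eqref{co2}; this is handled by the usual construction and does not affect the final estimates, which are stated with constants depending on $M$.
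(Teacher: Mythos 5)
Your strategy is the same as the paper's: regularize the data to smooth, strictly positive density and temperature whose initial energy is still below the threshold, apply the global small-energy existence theory for such data, keep only the lower-order estimates (which are uniform, depending only on $\mu,\lambda,\kappa,R,\gamma,\hat\rho,\hat\theta,\Omega,M$), and pass to the limit by Aubin--Lions in the weak formulations \eqref{def1}--\eqref{def3} and \eqref{vu019}. The genuine gap is in the very first step, which you dispose of with ``the smallness $C_0^\eta\le\ve$ is preserved for $\eta$ small since $C_0$ is continuous under these convergences.'' Under the hypotheses of Theorem \ref{th2} the data $\rho_0,\theta_0$ are merely bounded and may vanish, and the delicate term in \eqref{e} is $\int\rho_0(\theta_0-\log\theta_0-1)dx$: the convergences you list ($\rho_0^\eta\to\rho_0$ in $L^p$, $\sqrt{\rho_0^\eta}\,\theta_0^\eta\to\sqrt{\rho_0}\,\theta_0$ in $L^2$) give no control of $-\int\rho_0^\eta\log\theta_0^\eta\,dx$, while Lemma \ref{th0} forces $\inf\theta_0^\eta>0$, so you must exhibit $\theta_0^\eta$ on the region where $\theta_0$ is small but $\rho_0$ is not and show that the entropy part of the energy does not jump above $\ve_0$. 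This is precisely what Step 1 of the paper's proof is devoted to: it takes $\hat\theta_0^{m,\eta}$ to be the ratio $\bigl((\rho_0\theta_0 1_{\Omega_m})\ast j_{m^{-1}}+\eta\bigr)/\bigl((\rho_0 1_{\Omega_m})\ast j_{m^{-1}}+\eta\bigr)$ -- a mollification of $\rho_0\theta_0$ divided by one of $\rho_0$, not a mollification of $\theta_0$ -- and proves by dominated convergence the one-sided statement \eqref{uv9}, i.e. $\limsup \hat C_0^{m,\eta}\le C_0$, which is what yields \eqref{uv8}. Without such a construction your scheme is not known to satisfy $C_0^\eta\le\ve$, and the whole argument stalls. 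Relatedly, your requirement that $\nabla\theta_0^\eta$ stay bounded in $L^2$ is impossible for general bounded $\theta_0$ and also unnecessary, since no uniform estimate uses it.

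Two further points, minor by comparison. First, you do not need to engineer the compatibility condition \eqref{co2} by an elliptic correction: once $\rho_0^\eta\ge\eta>0$ and the data are smooth, \eqref{co2} holds automatically with $g^\eta=(\rho_0^\eta)^{-1/2}\bigl(-\mu\Delta u_0^\eta-(\mu+\lambda)\nabla\div u_0^\eta+R\nabla(\rho_0^\eta\theta_0^\eta)\bigr)\in L^2$; the paper sidesteps Theorem \ref{th1} entirely and applies Proposition \ref{pro2}, whose hypotheses contain no compatibility condition, precisely because $\|g^\eta\|_{L^2}$ may blow up as $\eta\to0$ and only the lower-order bounds survive the limit -- exactly the bounds you use. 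Second, the strong continuity $\rho\in C([0,\infty);L^p)$ in \eqref{hq1} does not follow ``directly from the uniform convergence'': the uniform bounds only give compactness in $C([0,T];L^p\mbox{-weak})\cap C([0,T];H^{-1})$, as recorded in \eqref{vu1}, and upgrading this to strong $L^p$ continuity requires the effective-viscous-flux/renormalization argument, for which the paper refers to \cite{H-L}. Your time-splitting treatment of the terms near $t=0$ in \eqref{def3}, by contrast, is consistent with what is needed.
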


Next, as a direct application of \eqref{h11}, the following Corollary \ref{th3},  whose proof is similar to that of     \cite[Theorem 1.2]{C-L}, shows that the oscillation of the density will grow unboundedly in the long run with an exponential  rate provided  vacuum appears  (even at a point) initially.

\begin{cor} \la{th3}
In addition to the conditions  of Theorem \ref{th1}, assume further that there exists some point $x_0\in\Omega$ such that $\rho_0(x_0)=0.$ Then for any $\hat p>3$, there exists some positive constant $C$ depending on $\mu$, $\lambda$, $\ka,$ $ R,$ $ \ga,$  $\on,$ $\bt$, $\O$, $\hat p$ and $M$ such that the unique global classical solution $(\rho,u,\te)$ to the problem (\ref{a1})--(\ref{h1}) obtained in Theorem \ref{th1} satisfies that for any $t\geq 1$,
\be\notag\|\nabla \rho(\cdot,t)\|_{L^{\hat p} }\geq Ce^{Ct}.\ee
\end{cor}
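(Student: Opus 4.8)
\emph{Proof strategy for Corollary \ref{th3}.} The idea is to play the exponential stabilization \eqref{h11} against the fact that an initial vacuum point is never filled in, which forces $\na\n$ to concentrate. First, since the continuity equation conserves total mass, $\overline{\n}(t)\equiv\overline{\n_0}=1$, so \eqref{h11} with the fixed exponent $p=4$ yields $\|\n(\cdot,t)-1\|_{L^4}\le Ce^{-\al_0 t}$ for $t\ge1$. Next I would record that $\n$ vanishes along the particle trajectory $X(t)$ issuing from $x_0$: exactly as in \cite{C-L}, $X(t)$ is well defined and confined to $\overline{\O}$ (because $u$ is continuous and bounded on $\overline{\O}\times[0,\infty)$ and $u\cdot n=0$ on $\p\O$), and integrating the continuity equation along it gives $\n(X(t),t)=\n_0(x_0)\exp\big(-\int_0^t\div u(X(s),s)\,ds\big)=0$ for all $t\ge0$.

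The core step is a Morrey estimate around $X(t)$. Write $\Phi(t)\triangleq\|\na\n(\cdot,t)\|_{L^{\hat p}}$; by the regularity in \eqref{h9} and the embedding $W^{2,q}\hookrightarrow W^{1,\hat p}$ this is finite and continuous on $[0,\infty)$, and it is strictly positive because $\n(\cdot,t)$ is nonconstant (it vanishes at $X(t)$ while having average $1$). Since $\hat p>3$, for $r$ below a geometric threshold $r_0=r_0(\O)$ Morrey's inequality on $B_r(X(t))\cap\O$ — with a constant uniform in $X(t)$ thanks to the smoothness of $\p\O$ — gives $|\n(y,t)|=|\n(y,t)-\n(X(t),t)|\le C_1 r^{1-3/\hat p}\Phi(t)$ for $y\in B_r(X(t))\cap\O$, and one also has the volume bound $|B_r(X(t))\cap\O|\ge c_0 r^3$. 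Hence, whenever $C_1 r^{1-3/\hat p}\Phi(t)\le1/2$ one has $|\n(y,t)-1|\ge1/2$ on $B_r(X(t))\cap\O$, so that, by Hölder's inequality and the decay estimate above,
\be\la{cor31}
\tfrac12 c_0 r^3\le\int_{B_r(X(t))\cap\O}|\n(\cdot,t)-1|\,dx\le\|\n(\cdot,t)-1\|_{L^4}\,|B_r|^{3/4}\le C e^{-\al_0 t} r^{9/4},
\ee
which forces $r^{3/4}\le C e^{-\al_0 t}$, i.e.\ $r\le C e^{-4\al_0 t/3}$.

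To conclude I would choose $r=r(t)\triangleq(2C_1\Phi(t))^{-\hat p/(\hat p-3)}$, so that $C_1 r(t)^{1-3/\hat p}\Phi(t)=1/2$. The constraint $r(t)\le r_0$ is handled by a short dichotomy: if $\Phi(t)$ ever dropped below the fixed level $M_0\triangleq(2C_1)^{-1}r_0^{-(\hat p-3)/\hat p}$, applying \eqref{cor31} with the fixed radius $r_0$ would give $\tfrac12 c_0 r_0^{3/4}\le Ce^{-\al_0 t}$, which is impossible once $t\ge T_0$; hence $\Phi(t)\ge M_0$ and so $r(t)\le r_0$ for $t\ge T_0$, and then \eqref{cor31} with $r=r(t)$ yields $(2C_1\Phi(t))^{-\hat p/(\hat p-3)}\le Ce^{-4\al_0 t/3}$, that is $\Phi(t)\ge c\,e^{4\al_0(\hat p-3)t/(3\hat p)}$ for $t\ge T_0$. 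On $[1,T_0]$ the function $\Phi$ is continuous and strictly positive, hence bounded below by a positive constant. Absorbing all constants then gives $\|\na\n(\cdot,t)\|_{L^{\hat p}}\ge C e^{C t}$ for $t\ge1$. The only genuinely delicate points, I expect, are (i) the rigorous justification of the persistence of vacuum, which rests on the transport structure of the continuity equation together with enough regularity from \eqref{h9} to legitimize the trajectory computation, and (ii) the uniformity in $t$ of the Morrey constant and of the volume lower bound near $X(t)$ when that point approaches $\p\O$; both of these follow from the smoothness of $\p\O$, and everything else is a routine consequence of \eqref{h11}.
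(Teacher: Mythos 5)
Your proposal is correct and follows essentially the argument the paper has in mind when it refers to \cite{C-L}: vacuum persists along a particle path $X(t)$ (legitimate here since the paper's estimates give $\int_0^t\|\div u\|_{L^\infty}ds<\infty$, e.g.\ via \eqref{3.88}, \eqref{3.90} and \eqref{qq1}, and $u\in C(\overline\Omega\times[0,T])$ by \eqref{h9}), and the exponential decay \eqref{h11} of $\|\rho-1\|_{L^p}$ is then incompatible with a bounded $\|\na\rho\|_{L^{\hat p}}$. The only real difference is in execution: the standard route observes $\|\rho(\cdot,t)-1\|_{C(\overline\Omega)}\ge 1$ and applies the Gagliardo--Nirenberg inequality \eqref{g2} directly, while you run a localized Morrey-plus-measure argument on balls around $X(t)$; both yield the same exponential rate, and your version needs the extra (but harmless) radius dichotomy.

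One point to tighten: on the intermediate interval $[1,T_0]$ you bound $\Phi(t)=\|\na\rho(\cdot,t)\|_{L^{\hat p}}$ below by ``continuity and strict positivity,'' but that lower bound depends on the particular solution, whereas the corollary asserts a constant depending only on $\mu,\lambda,\ka,R,\ga,\on,\bt,\O,\hat p,M$. The fix is one line and also removes the dichotomy: since $\overline{\rho}(t)=1$ and $\rho(\cdot,t)$ is continuous, there is $y_t$ with $\rho(y_t,t)\ge 1$, so the embedding $W^{1,\hat p}(\O)\hookrightarrow C^{0,1-3/\hat p}(\overline\Omega)$ gives $1\le|\rho(y_t,t)-\rho(X(t),t)|\le C(\O,\hat p)\,[\mathrm{diam}(\O)]^{1-3/\hat p}\,\Phi(t)$ for all $t\ge 0$, i.e.\ a time-uniform, parameter-only lower bound $\Phi(t)\ge c(\O,\hat p)>0$; combined with your growth estimate for $t\ge T_0$ (where $T_0$ depends only on the listed parameters), this yields $\|\na\rho(\cdot,t)\|_{L^{\hat p}}\ge Ce^{Ct}$ for all $t\ge 1$ with $C$ having exactly the stated dependence.
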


A few remarks are in order:

\begin{remark}
It is easy to deduce from (\ref{h9}) and the Sobolev imbedding  theorem that for any $0<\tau<T<\infty,$
\be \la{hk1}
(\n,\, \na\n,\,u) \in C(\overline{\Omega} \times[0,T] ),\quad   (\te,\,\na\te,\,\na^2\te)  \in C(\overline{\Omega} \times(0,T] ),\ee
and
\be \ba\la{ono}
(\na u,\,\na^2 u) \in  C( [\tau,T];L^2 )\cap L^\infty(\tau,T;  W^{1,q})\hookrightarrow  C(\overline{\Omega} \times[\tau,T] ),
\ea \ee
which together with  (\ref{a1})$_1$  and (\ref{hk1}) shows
\be  \la{hk3} \n_t\in   C(\overline{\Omega}\times[\tau,T] ).\ee
Analogously, we have
\be\notag(u_t,\te_t)\in C(\overline{\Omega} \times[\tau,T] ),\ee
which along with (\ref{hk1})--(\ref{hk3}) arrives at
 the solution $(\n,u,\te)$ obtained in Theorem \ref{th1} is a classical one to  the   problem (\ref{a1})--(\ref{h1}) in $\Omega\times (0,\infty).$
\end{remark}

\begin{remark} It seems that
Theorem \ref{th1}, which extends the global existence result of the  barotropic flows studied in \cite{C-L} to the full compressible Navier-Stokes system,  is the first result concerning the global existence of classical solutions with initial vacuum to  (\ref{a0}) in general bounded domains. Although its energy is small, the oscillations could be arbitrarily large. 
 \end{remark}

\begin{remark}
	To obtain the global existence and uniqueness of classical solutions with vacuum, we only need the compatibility condition on the velocity (\ref{co2}) as in \cite{lxz}, which is much weaker than those in \cite{choe1, H-L,W-C} where not only (\ref{co2}) but also the following compatibility condition on the temperature
	\be\la{co1} \ka\Delta \te_0+\frac{\mu}{2}|\na u_0+(\na u_0)^{\rm tr}|^2+\lambda (\div u_0)^2=\sqrt{\n_0}g_1, \quad g_1 \in L^2\ee
	is needed. This reveals that the compatibility condition on the temperature (\ref{co1}) is not necessary for establishing the classical solutions with vacuum to the full Navier-Stokes equations, which is just the same as the barotropic case \cite{hulx, 2dlx}.
	\end{remark}
\begin{remark}
It should be  mentioned here that the boundary condition for velocity $u$:
	\be\la{bb}
	u\cdot n=0,\,\,\curl u \times n=0 \,\,\text {on}\,\, \p\O,
	\ee
	is a special case of the following general Navier-type slip condition (see Navier \cite{Nclm1}) 
	\be\notag
	u \cdot n = 0, \,\,(2\mathfrak{D}(u)\,n+ \vartheta u)_{tan}=0 \,\,\,\text{on}\,\,\, \partial\Omega,
	\ee which as indicated by \cite[Remark 1.1]{C-L},   is in fact  a particular case of   the following slip boundary one:
	\bn\label{Ns}
	u\cdot n=0,\,\,\curl u \times n=-Au \,\,\text {on}\,\, \p\O,
	\en
	where $\vartheta$ is a scalar friction function, the symbol $v_{tan}$ represents the projection of tangent plane of the vector $v$ on $\partial\Omega$, and $A = A(x)$ is a given $3\times3$ symmetric matrix defined on $\p\O$.	
	Indeed, our result still holds for more general slip boundary condition (\ref{Ns}) with $A$ being semi-positive and regular enough. The proof is similar to \cite{C-L} and omitted here.
\end{remark}

We now comment on the analysis of this paper.
We mainly take the strategy that we first extend the
standard local classical solutions with strictly positive initial density (see Lemma \ref{th0}) globally in time just under the condition that the initial energy is suitably small
(see Proposition \ref{pro2}), then let the lower bound of the initial density go to zero. To
do so, one needs to establish global a priori estimates, which are independent of
the lower bound of the density, on smooth solutions to  (\ref{a1})--(\ref{h1}) in suitable
higher norms.
As indicated in \cite{H-L,h101}, the key issue  is to obtain the time-independent upper bound of the density.

The first main difficulty arises in deriving the basic energy estimate, which indeed  is obtained directly for the Cauchy problem \cite{H-L}. 
However, in our case,  the basic energy equality reads:
\be\ba\la{11a}
&E'(t)+\int \left( \frac{\lambda(\div u)^2+2\mu |\mathfrak{D}(u)|^2}{{\te}}+\ka \frac{|\na \te|^2}{\te^2} \right)dx \\
&=- \mu \int \left(|\curl u|^2+2(\div u)^2 - 2 |\mathfrak{D}(u)|^2\right)dx,
\ea\ee where the basic energy $E(t)$ is defined by
\be \label{enet}E(t) \triangleq \int  \left( \frac{1}{2}\n |u|^2+R(1+\n\log
\n-\n)+\frac{R}{\ga-1}\n(\te-\log \te-1)\right)dx. \ee
Note that the right-hand term in \eqref{11a}
    is   sign-undetermined   due to the slip boundary condition (\ref{bb}), thus it seems difficult to obtain directly the usual standard energy estimate  \be \la{edwq1} E(t) \le C C_0,\ee   
where the smallness (with the same order of initial energy) of basic energy  plays a key role in the whole analysis of the global existence of classical solutions with vacuum not only for Cauchy problem/IBVP of barotropic flows \cite{2dlx, hulx, C-L} but also for Cauchy problem of  full compressible Navier-Stokes equations \cite{H-L}.
To overcome this difficulty,   we  first    assume that    $A_2(T)$ (see \eqref{AS2})  a priori satisfies $A_2(T)\le 2C_0^{1/4}$ (see \eqref{3.q2}) and  obtain the following ``weaker" basic energy estimate (see also \eqref{a2.8}):
\be\la{weaken} E(t) \le CC_0^{1/4},\ee which compared with \eqref{edwq1}, however,  is not enough and indeed will  bring us some essential difficulties to obtain all the a priori estimates (see Proposition \ref{pr1}).
Then, the first   observation is that the average of the pressure $\bp$ is uniformly bounded with positive lower and upper bounds (see \eqref{key}) by both ``weaker" basic energy estimate \eqref{weaken} and   Jensen's inequality (see Lemma \ref{je}). Combining this with the fact that the quantity $\bp$   plays a similar role as $\overline{\te}$ (see \eqref{pq}) implies that we can replace $\bar \te$ by $\bar P $ whose  positive  lower and upper  bounds play an important role  in further analysis.

Next, the second difficulty lies in the  estimation on the energy-like term $A_2(T)$ (see \eqref{AS1}) which includes the key bounds on the $L^2(\O\times(0,T))$-norm of the spatial derivatives of both the velocity and the temperature. To proceed, first, we adopt the ideas due to \cite{Hof1,C-L} to  estimate $\dot u$ and $\dot\te$ (see Lemma \ref{a113.4}), where $\dot f\triangleq f_t+u\cdot\nabla f $ denotes the material derivative of $f.$   Indeed, in this process, one needs to deal with the boundary  integrals in \eqref{bz8}, for example
$$\int_{\p\O}G(u\cdot\na)u\cdot\na n\cdot u dS,$$ which by
the classical trace theorem seems to be bounded by some good terms and the $L^p$-norm of $\na^2 u$, which is unavailable in this step. To overcome this difficulty, we adopt some idea  due to \cite{C-L}, that is, $u=u^{\perp}\times n$ with  $u^{\perp}\triangleq-u\times n$, which combined with the following fact:
\be\la{cd}\div(\na u^i\times u^{\perp})=-\na u^i\cdot\na\times u^{\perp},\ee yields that the above boundary integral can be indeed bounded by some suitable norms on both $\na u$ and  $\na G$ (see \eqref{bz3} for details). 
Next, after observing that  the evolution of $\bp$ can  be derived from the temperature equation (see \eqref{pt}), combining a careful analysis on the system \eqref{a1}  with the $L^1(0,\min\{1,T\};L^\infty)$-norm of the temperature  gives the desired basic energy estimate for small time (see Lemma \ref{a13}).
Moreover, we observe that  $R \te -\bp$ can be bounded by the combination of the initial energy with the spatial $L^2$-norm of the spatial derivatives of the temperature (see (\ref{a2.17})), which together  with a suitable combination of kinetic energy and thermal energy (see \eqref{a2.23}) yields $ A_2(T)\le CC_0^{7/24}$ (see \eqref{kyu1}) which implies $A_2(T)\le C_0^{1/ 4}$ (see \eqref{a2.34}), provided the initial energy is suitably small.

Next, the third difficulty is to obtain  the  key time-independent upper bound of the density. It should be noted that the methods used in Cauchy problem \cite{H-L}, which heavily relies on the nontrivial far field states, can not be applied to the IBVP directly. Here, by inserting the key quantity $\overline{P}$, we rewrite the continuity equation in the following way
\bnn\ba
(2\mu+\lambda) D_t \n&=-\bp \n(\n-1)- \n^2(R\te-\bp)-\n G,
\ea\enn
where $D_t$ is the material derivative and $G$ is the effective viscous flux, defined by
\be \la{hj1} D_t f \triangleq \dot f\triangleq f_t+u\cdot\nabla f,\quad G\triangleq(2\mu + \lambda)\div u -  (P-\bp), \ee
respectively.
With the aid of the uniform bound of $\overline{P}$ (see \eqref{key}),
the upper bound of the density follows directly by applying the Gr\"{o}nwall-type inequality (see Lemma \ref{le1}) and using the estimates on $R\te-\bp$ and $G$.

Finally, with the lower-order estimates including the time-independent upper bound of the density  at hand,  we can obtain the higher-order estimates 
just under the compatibility condition on the velocity \eqref{co2}. Note that all the a priori estimates are independent of the lower bound of the  density, thus after a standard approximate procedure, we can obtain the global existence of classical solutions with vacuum.  Moreover, we can as well establish the global weak solutions   almost  the same way as we established the classical one with a new modified approximate initial data.

The rest of the paper is organized as follows: In Section 2, we collect some basic facts and inequalities which will be used later. Section 3 is devoted to deriving the lower-order a priori estimates on classical solutions which are needed to extend the local solutions to all time. The higher-order estimates are established in Section 4. 
Finally, with all a priori estimates at hand, the main results, Theorems \ref{th1} and \ref{th2}, are proved in Section 5.


\section{Preliminaries}\la{se2}


First, the following local existence theory with strictly positive initial density can be shown by the standard contraction mapping arguments as in \cite{choe1,Tani,M1}.

\begin{lemma}   \la{th0} Let $\O$ be as in Theorem \ref{th1}. Assume  that
 $(\n_0,u_0,\te_0)$ satisfies \be \la{2.1} \begin{cases}
(\n_0,u_0,\te_0)\in H^3, \quad \inf\limits_{x\in\Omega}\n_0(x) >0, \quad \inf\limits_{x\in\Omega}\te_0(x)> 0,\\
u_0\cdot n=0,~~\curl u_0\times n=0,~~\na\te_0\cdot n=0~~~~\text{on}~\p\Omega.\end{cases}\ee Then there exist  a small time
$0<T_0<1$ and a unique classical solution $(\rho , u,\te )$ to the problem  (\ref{a1})--(\ref{h1}) on $\Omega\times(0,T_0]$ satisfying
\be\la{mn6}
  \inf\limits_{(x,t)\in\Omega\times (0,T_0]}\n(x,t)\ge \frac{1}{2}
 \inf\limits_{x\in\Omega}\n_0(x), \ee and
 \be\la{mn5}
 \begin{cases}
 ( \rho,u,\te) \in C([0,T_0];H^3),\quad
 \n_t\in C([0,T_0];H^2),\\   (u_t,\te_t)\in C([0,T_0];H^1),
 \quad (u,\te)\in L^2(0,T_0;H^4).\end{cases}\ee
 \end{lemma}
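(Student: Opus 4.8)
The plan is to construct the solution by a standard linearization--iteration scheme, the only genuinely delicate point being the linear theory for the momentum equation under the slip conditions $u\cdot n=0$, $\curl u\times n=0$. First I would fix a pair $(v,w)$ with $v$ in the class $X_T$ of maps satisfying $v\in C([0,T];H^3)$, $v_t\in C([0,T];H^1)\cap L^2(0,T;H^2)$, $v(\cdot,0)=u_0$, $v\cdot n=0$ and $\curl v\times n=0$ on $\p\O$, and $w$ in an analogous class with $w(\cdot,0)=\te_0$, $\na w\cdot n=0$, and solve three linear problems in turn. (i) The transport equation $\n_t+\div(\n v)=0$, $\n(\cdot,0)=\n_0$: by the method of characteristics $\n(x,t)=\n_0(Y(0;t,x))\exp\big(-\int_0^t(\div v)(Y(s;t,x),s)\,ds\big)$ along the flow $Y$ of $v$, so $\n$ stays strictly positive and, for $T_0$ small, $\inf\n\ge\frac12\inf\n_0$, which is \eqref{mn6}; moreover $\n\in C([0,T];H^3)$, $\n_t\in C([0,T];H^2)$ with bounds controlled by $\|v\|_{X_T}$. (ii) With this $\n$ and $P=R\n w$, the linear parabolic system
\[\n(u_t+v\cdot\na u)=\mu\Delta u+(\mu+\lambda)\na\div u-\na P,\qquad u(\cdot,0)=u_0,\]
subject to $u\cdot n=0$, $\curl u\times n=0$ on $\p\O$. (iii) The linear parabolic equation
\[\tfrac{R}{\ga-1}\n(\te_t+v\cdot\na\te)=\ka\Delta\te-P\div v+\lambda(\div v)^2+2\mu|\mathfrak{D}(v)|^2,\qquad\te(\cdot,0)=\te_0,\]
with $\na\te\cdot n=0$ on $\p\O$; a truncation (Stampacchia) argument using $\n>0$ then gives $\te>0$ on a short time interval.

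Next I would derive uniform-in-iteration a priori bounds, the decisive input being elliptic regularity for the Lam\'e operator $Lz:=-\mu\Delta z-(\mu+\lambda)\na\div z$ under the boundary conditions $z\cdot n=0$, $\curl z\times n=0$, namely $\|z\|_{H^{k+2}}\le C(\|Lz\|_{H^k}+\|z\|_{H^1})$ for $k=0,1$; this rests on the identity $-\Delta z=\curl\curl z-\na\div z$ together with boundary computations involving the second fundamental form of $\p\O$, and it is here that the hypothesis on the connected components of $\p\O$ enters. Feeding this into energy estimates for $u$ and $u_t$ (multiplying the $u$-equation by $u_t$, and its time derivative by $u_{tt}$-type multipliers), and into the analogous Neumann parabolic estimates for $\te$ and $\te_t$, one shows that for a radius depending only on $\|\n_0\|_{H^3}$, $\|u_0\|_{H^3}$, $\|\te_0\|_{H^3}$, $\inf\n_0$, $\inf\te_0$ and a sufficiently small $T_0\in(0,1)$, the map $(v,w)\mapsto(u,\te)$ carries the corresponding closed ball into itself. (The compatibility condition \eqref{co2} holds automatically here since $\n_0$ is bounded below, and it is precisely what makes $u_t(\cdot,0)\in H^1$.)

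Then I would show the map is a contraction in a weaker norm: the difference of two successive iterates is controlled in $C([0,T];L^2)\cap L^2(0,T;H^1)$ for $u$ and $\te$ and in $C([0,T];L^2)$ for $\n$, with contraction factor $\le CT_0^{\al}$ for some $\al>0$, hence a strict contraction for $T_0$ small. The Banach fixed point theorem yields a solution $(\n,u,\te)$, and the uniform higher-norm bounds of the previous step, together with weak-$\ast$ compactness and an Aubin-Lions argument, place it in the class \eqref{mn5} while preserving \eqref{mn6}; uniqueness follows from the same weaker-norm estimate applied to the difference of two solutions and Gr\"{o}nwall's inequality.

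The hard part will be the linear momentum analysis under the slip conditions: both the geometry-dependent $H^{k+2}$ estimate for $L$ with $z\cdot n=0$, $\curl z\times n=0$, and the verification that these exact boundary conditions (and $\na\te\cdot n=0$) are propagated at each iteration step so that the scheme closes in $X_{T_0}$. By contrast, the transport estimates for $\n$ and the Neumann parabolic estimates for $\te$ are routine.
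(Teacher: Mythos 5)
Your proposal is essentially the same argument the paper relies on: the paper does not prove Lemma \ref{th0} in detail but attributes it to the standard linearization/contraction-mapping scheme of \cite{choe1,Tani,M1}, which is precisely the iteration you outline (transport equation solved along characteristics, linearized Lam\'e system under the slip conditions with its $H^{k+2}$ elliptic estimate, Neumann heat equation, uniform higher-order bounds plus contraction in a weaker norm, then Banach fixed point). The approach is correct and matches the cited standard route, so there is nothing of substance to add.
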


 \begin{remark} Applying the same arguments as in \cite[Lemma 2.1]{H-L}, one can deduce that the classical solution $(\rho , u,\te )$  obtained in Lemma \ref{th0} satisfies
\be \la{mn1}\begin{cases} (t u_{t},  t \te_{t}) \in L^2( 0,T_0;H^3)  ,\quad (t u_{tt},  t\te_{tt}) \in L^2(
0,T_0;H^1), \\  (t^2u_{tt},  t^2\te_{tt}) \in L^2( 0,T_0;H^2), \quad
 (t^2u_{ttt},  t^2\te_{ttt}) \in L^2(0,T_0;L^2).
\end{cases}\ee
Moreover, for any    $(x,t)\in \Omega\times [0,T_0],$ the following estimate holds:
   \be\la{mn2}
\te(x,t)\ge
\inf\limits_{x\in\Omega}\te_0(x)\exp\left\{-(\ga-1)\int_0^{T_0}
 \|\div u\|_{L^\infty}dt\right\}.\ee
\end{remark}

Next, we state the classical Jensen's inequality (see \cite[Theorem 3.3]{jes}), which guarantees the key uniform upper and lower bounds of $\overline P$.
\begin{lemma}\la{je}
Let $\mu$ be a positive measure on a $\si$-algebra $\mathfrak{M}$ in a set $\Omega$, so that $\mu(\Omega)=1$. If $f$ is a real function in $L^1(\mu)$,  $a<f(x)<b$ for all $x\in \Omega$, and  $\Phi$ is convex on $(a,b)$, then
\be\la{jen}\Phi\left(\int_\Omega fd\mu\right)\le \int_\Omega (\Phi\circ f)d\mu.\ee
\end{lemma}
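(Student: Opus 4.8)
The plan is to follow the classical argument, deriving the inequality from a supporting line of $\Phi$ at the barycenter. First I would set $t\triangleq\int_\Omega f\,d\mu$. Since $a<f(x)<b$ for every $x\in\Omega$ and $\mu(\Omega)=1$, integrating the pointwise inequalities $a<f<b$ against $\mu$ gives $a<t<b$, so $t$ lies in the open interval on which $\Phi$ is assumed convex. This strict two-sided confinement is precisely what the hypothesis is designed to guarantee.

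The crucial step is to produce an affine minorant of $\Phi$ that touches the graph at $t$. Convexity of $\Phi$ on $(a,b)$ implies that the difference quotients are monotone: for $a<s<t<u<b$ one has $\frac{\Phi(t)-\Phi(s)}{t-s}\le\frac{\Phi(u)-\Phi(t)}{u-t}$. Hence $\beta\triangleq\sup_{a<s<t}\frac{\Phi(t)-\Phi(s)}{t-s}$ is finite, and from the two displayed inequalities (taking $s\uparrow$ and $s\downarrow$ cases separately) one gets $\Phi(s)\ge\Phi(t)+\beta(s-t)$ for all $s\in(a,b)$. No differentiability of $\Phi$ is needed here; only a one-sided subgradient at the interior point $t$.

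Next I would substitute $s=f(x)$, which is legitimate since $f(x)\in(a,b)$ for all $x$, obtaining the pointwise bound $\Phi(f(x))\ge\Phi(t)+\beta\bigl(f(x)-t\bigr)$ on $\Omega$. Because a convex function on an open interval is continuous, $\Phi\circ f$ is measurable; moreover its minorant $\Phi(t)+\beta(f-t)$ is $\mu$-integrable (as $f\in L^1(\mu)$ and $\mu(\Omega)=1$), so the negative part of $\Phi\circ f$ is integrable and $\int_\Omega(\Phi\circ f)\,d\mu$ is well defined in $(-\infty,+\infty]$. Integrating the pointwise bound over $\Omega$ and using $\mu(\Omega)=1$ together with $\int_\Omega f\,d\mu=t$, the affine term contributes $\Phi(t)\cdot 1+\beta(t-t)=\Phi(t)$, which yields $\int_\Omega(\Phi\circ f)\,d\mu\ge\Phi(t)=\Phi\!\left(\int_\Omega f\,d\mu\right)$, as claimed. (If $\int_\Omega(\Phi\circ f)\,d\mu=+\infty$ the inequality is trivial, so only the finite case requires the monotonicity of the integral.)

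The only genuine subtlety is the existence of the supporting line at the interior point $t$ — everything after that is linearity and monotonicity of the Lebesgue integral — and this is exactly the point that forces the \emph{strict} inequalities $a<f<b$, ensuring $t$ stays inside $(a,b)$ where the two-sided difference-quotient estimate, hence a finite slope $\beta$, is available. In the use made of this lemma in the paper, $\Phi$ is a strictly convex function (such as $s\mapsto s\log s$ or an exponential), $f$ is the density or pressure, and $\mu=|\Omega|^{-1}dx$, so the estimate furnishes the uniform positive lower and upper bounds for $\overline P$ used throughout the analysis.
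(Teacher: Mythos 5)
Your proof is correct and is exactly the classical supporting-line argument of Rudin (\cite[Theorem 3.3]{jes}), which is precisely the proof the paper invokes by citation rather than reproducing. The points you flag — strictness of $a<\int_\Omega f\,d\mu<b$ (using $f-a>0$ everywhere and $\mu(\Omega)=1$), the finite subgradient $\beta$ at an interior point without any differentiability assumption, and the interpretation of $\int_\Omega(\Phi\circ f)\,d\mu$ as possibly $+\infty$ — are handled correctly, so nothing is missing.
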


Next, the following well-known Gagliardo-Nirenberg-Sobolev-type inequality (see \cite{nir}) will be used later frequently.

\begin{lemma}
 \la{11} Assume that $\O\subset\r^3$ is a bounded Lipschitz domain. For $r\in [2,6]$, $p\in(1,\infty)$, and $ q\in
(3,\infty),$ there exist  positive
 constants $C, \,C',$ and $C''$  which may depend  on $r,\,p,\, q$, and $\O$ such that for $f\in H^1$,
 $g\in L^p\cap W^{1,q}$,  and $\varphi,\,\psi\in H^2$,
 \be
\la{g1}\|f\|_{L^r} \le C \| f\|_{L^2}^{{(6-r)}/{2r}}  \|\na f\|_{L^2}^{{(3r-6)}/{2r}} + C' \| f\|_{L^2} ,\ee
\be \la{g2}\|g\|_{C\left(\overline{\Omega}\right)}
       \le C \|g\|_{L^p}^{p(q-3)/(3q+p(q-3))}\|\na g\|_{L^q}^{3q/(3q+p(q-3))} + C'' \|g\|_{L^2},\ee
\be\la{hs} \|\varphi\psi\|_{H^2}\le C \|\varphi\|_{H^2}\|\psi\|_{H^2}.\ee
Moreover, if $f\cdot n|_{\p \O}=0$ or $\overline{f}=0$, one has $C'=0$.
Similarly, if $g\cdot n|_{\p \O}=0$ or $\overline{g}=0$, it holds  $C''=0$.
\end{lemma}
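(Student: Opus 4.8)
All three estimates are classical consequences of Sobolev embeddings and interpolation on bounded Lipschitz domains, and I would organize the argument accordingly. The tools I rely on are: for a bounded Lipschitz $\O\subset\r^3$, the continuous embeddings $H^1(\O)\hookrightarrow L^6(\O)$ and, for $q>3$, $W^{1,q}(\O)\hookrightarrow C(\overline\O)$; a fixed extension operator $E$ that is bounded simultaneously $L^p(\O)\to L^p(\r^3)$ and $W^{1,q}(\O)\to W^{1,q}(\r^3)$ for all relevant exponents (Stein's extension); the Poincar\'e--Wirtinger inequality $\|h-\overline h\|_{L^2}\le C\|\na h\|_{L^2}$; and the Poincar\'e inequality $\|f\|_{L^2}\le C\|\na f\|_{L^2}$ valid on $H^1_\omega$-type spaces (i.e. when $f\cdot n=0$ on $\p\O$).

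For \eqref{g1}: interpolate the $L^r$ norm, $2\le r\le 6$, between $L^2$ and $L^6$, namely $\|f\|_{L^r}\le\|f\|_{L^2}^{\theta}\|f\|_{L^6}^{1-\theta}$ with $\theta=(6-r)/(2r)$ and $1-\theta=(3r-6)/(2r)$; then insert $\|f\|_{L^6}\le C(\|\na f\|_{L^2}+\|f\|_{L^2})$ and use the subadditivity of $t\mapsto t^{1-\theta}$ on $[0,\infty)$ to split the right-hand side as $C\|f\|_{L^2}^{\theta}\|\na f\|_{L^2}^{1-\theta}+C'\|f\|_{L^2}$. When $\overline f=0$, Poincar\'e--Wirtinger gives $\|f\|_{L^2}\le C\|\na f\|_{L^2}$; when $f\cdot n=0$ on $\p\O$ the same bound follows from the Poincar\'e inequality for boundary-tangent fields. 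In either case I rewrite $C'\|f\|_{L^2}=C'\|f\|_{L^2}^{\theta}\|f\|_{L^2}^{1-\theta}\le C''\|f\|_{L^2}^{\theta}\|\na f\|_{L^2}^{1-\theta}$ and absorb it, so $C'=0$.

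For \eqref{g2}: I would first establish the scale-invariant version on the whole space, $\|w\|_{L^\infty(\r^3)}\le C\|w\|_{L^p(\r^3)}^{\al}\|\na w\|_{L^q(\r^3)}^{1-\al}$ with $\al=p(q-3)/(3q+p(q-3))$, by applying Morrey's inequality on a ball, $|w(x)-\langle w\rangle_{B_R(x)}|\le CR^{1-3/q}\|\na w\|_{L^q(\r^3)}$, together with $|\langle w\rangle_{B_R(x)}|\le CR^{-3/p}\|w\|_{L^p(\r^3)}$, and optimizing the sum over $R\in(0,\infty)$; the two powers of $R$ balance exactly at the exponent $\al$. Applying this to $w=Eg$ and using $\|Eg\|_{L^p(\r^3)}\le C\|g\|_{L^p(\O)}$, $\|\na Eg\|_{L^q(\r^3)}\le C\|g\|_{W^{1,q}(\O)}$ gives $\|g\|_{C(\overline\O)}\le C\|g\|_{L^p}^{\al}\|g\|_{W^{1,q}}^{1-\al}$; expanding $\|g\|_{W^{1,q}}\le C(\|g\|_{L^q}+\|\na g\|_{L^q})$, interpolating $\|g\|_{L^q}$ between $L^p$ and $L^\infty$, and using Young's inequality to absorb the $\|g\|_{C(\overline\O)}$ contribution produces \eqref{g2} with the remainder $C''\|g\|_{L^2}$. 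If $\overline g=0$ or $g\cdot n|_{\p\O}=0$, Poincar\'e upgrades the extension estimate to $\|\na Eg\|_{L^q(\r^3)}\le C\|g\|_{W^{1,q}(\O)}\le C\|\na g\|_{L^q(\O)}$, so no remainder survives and $C''=0$. There is no deep obstacle here; the only delicate points are getting the $R$-optimization exponents exactly right, and justifying the boundary-tangent Poincar\'e inequality used for the $C'=C''=0$ refinement — this rests on the geometric fact that no nonzero constant vector field is tangent to the closed, bounded boundary $\p\O$, so the relevant compactness (Lions) argument applies.

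For \eqref{hs}: differentiate the product, $\na^2(\varphi\psi)=\varphi\na^2\psi+2\na\varphi\otimes\na\psi+\psi\na^2\varphi$, and estimate term by term with the $3$D embeddings $H^2(\O)\hookrightarrow L^\infty(\O)$ and $H^1(\O)\hookrightarrow L^4(\O)$: $\|\varphi\na^2\psi\|_{L^2}\le\|\varphi\|_{L^\infty}\|\na^2\psi\|_{L^2}\le C\|\varphi\|_{H^2}\|\psi\|_{H^2}$ and $\|\na\varphi\otimes\na\psi\|_{L^2}\le\|\na\varphi\|_{L^4}\|\na\psi\|_{L^4}\le C\|\varphi\|_{H^2}\|\psi\|_{H^2}$, the lower-order contributions $\|\varphi\psi\|_{L^2}$ and $\|\na(\varphi\psi)\|_{L^2}$ being handled the same way; summing gives \eqref{hs}. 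This part is entirely routine.
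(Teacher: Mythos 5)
Your proposal is correct: the interpolation-plus-Sobolev argument for \eqref{g1}, the scale-invariant Morrey/averaging estimate combined with a Stein extension and a Young absorption for \eqref{g2}, the algebra property of $H^2$ in three dimensions for \eqref{hs}, and the Poincar\'e-type inequalities (mean-zero, or tangential fields via the fact that no nonzero constant vector is tangent to all of $\p\O$) for the refinements $C'=C''=0$ are exactly the standard route. The paper itself gives no proof of this lemma — it simply cites Nirenberg's classical work — and your argument is the standard derivation underlying that citation, so there is nothing to correct.
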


Then, the following div-curl type inequality (see \cite[Theorem 3.2]{vww}) is used to get the estimates on the spatial derivatives of velocity. 
\begin{lemma}   \la{crle1}
	Assume that $\O\subset\r^3$ is a simply connected bounded domain with $C^{1,1}$ boundary $\partial\Omega$. Then, for $v\in W^{1,q}$ with $q\in(1,\infty)$ and $v\cdot n|_{\partial\Omega}=0$, there exists a positive constant $C=C(q,\Omega)$ such that
	\be\la{nn2}\|v\|_{W^{1,q}}\leq C\left(\|\div v\|_{L^{q}}+\|\curl v\|_{L^{q}}\right).\ee
\end{lemma}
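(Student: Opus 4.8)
The plan is to prove \eqref{nn2} in two stages: first an a priori estimate carrying a superfluous lower-order term, then the removal of that term by a compactness argument that uses the hypothesis that $\O$ is simply connected.

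\emph{Stage 1: the estimate with a lower-order term.} I would establish that there is a constant $C=C(q,\O)$ with
\be\la{dca1}
\|v\|_{W^{1,q}}\le C\left(\|\div v\|_{L^q}+\|\curl v\|_{L^q}+\|v\|_{L^q}\right)
\ee
for every $v\in W^{1,q}$ satisfying $v\cdot n|_{\p\O}=0$. Writing $f=\div v$ and $g=\curl v$ (so that $\div g=0$ distributionally), I would first let $\phi$ solve the Neumann problem $\Delta\phi=f$ in $\O$, $\p_n\phi=0$ on $\p\O$, $\overline\phi=0$, which is solvable since $\int_\O f\,dx=\int_{\p\O}v\cdot n\,dS=0$; on the $C^{1,1}$ domain $\O$ the Calder\'on--Zygmund theory for the Neumann Laplacian gives $\|\phi\|_{W^{2,q}}\le C\|f\|_{L^q}$. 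Then $w:=v-\na\phi$ is divergence free, obeys $w\cdot n|_{\p\O}=0$ and $\curl w=g$, and — the flux of $w$ through each component of $\p\O$ being zero — admits a regularized vector potential $B$ with $w=\curl B$, $\div B=0$, a homogeneous boundary condition, and $\|B\|_{W^{1,q}}\le C\|w\|_{L^q}$; since $-\Delta B=\curl\curl B-\na\div B=g$, the $L^q$-elliptic estimate for this Hodge-type boundary value problem yields $\|B\|_{W^{2,q}}\le C(\|g\|_{L^q}+\|B\|_{W^{1,q}})$, whence $\|w\|_{W^{1,q}}=\|\curl B\|_{W^{1,q}}\le C(\|g\|_{L^q}+\|w\|_{L^q})$. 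Adding the bounds for $\na\phi$ and $w$ and using $\|w\|_{L^q}\le\|v\|_{L^q}+\|\na\phi\|_{L^q}\le\|v\|_{L^q}+C\|f\|_{L^q}$ gives \eqref{dca1}. (Alternatively, \eqref{dca1} is an instance of the Agmon--Douglis--Nirenberg estimates, since $(\div,\curl)$ with boundary operator $v\mapsto v\cdot n$ is elliptic and satisfies the complementing condition.)

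\emph{Stage 2: removing the lower-order term.} Suppose \eqref{nn2} were false. Then there exist $v_k\in W^{1,q}$ with $v_k\cdot n|_{\p\O}=0$, $\|v_k\|_{W^{1,q}}=1$, and $\|\div v_k\|_{L^q}+\|\curl v_k\|_{L^q}\to 0$. Since $\O$ is a bounded Lipschitz domain and $q\in(1,\infty)$, Rellich--Kondrachov gives a subsequence converging strongly in $L^q$; applying \eqref{dca1} to the differences $v_k-v_m$ shows $\{v_k\}$ is Cauchy in $W^{1,q}$, so $v_k\to v$ in $W^{1,q}$ with $\|v\|_{W^{1,q}}=1$, $\div v=0$, $\curl v=0$, and $v\cdot n|_{\p\O}=0$ by continuity of the trace. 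As $\O$ is simply connected, $\curl v=0$ forces $v=\na p$ for some $p$; then $\div v=0$ gives $\Delta p=0$ and $v\cdot n=\p_n p=0$ on $\p\O$, so $p$ is constant and $v=0$, contradicting $\|v\|_{W^{1,q}}=1$. Hence \eqref{nn2} holds.

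\emph{Main obstacle.} The substantive work is Stage 1: making the $L^q$ div--curl estimate \eqref{dca1} rigorous on a merely $C^{1,1}$ domain — in particular constructing the divergence-free vector potential $B$ with the stated regularity and establishing the $W^{2,q}$ regularity of the associated Hodge boundary value problem (equivalently, reducing by a partition of unity and boundary flattening to explicit singular-integral representations on the half-space, for which $q\in(1,\infty)$ is used). Stage 2 is routine once \eqref{dca1} is available, and it is the only place where the simple connectedness of $\O$ is genuinely needed.
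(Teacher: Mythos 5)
Your proposal is correct in substance, but it takes a genuinely different route from the paper: the paper gives no proof of Lemma \ref{crle1} at all and simply quotes it from von Wahl \cite[Theorem 3.2]{vww}, whereas you build a self-contained argument in two stages — first the estimate with the superfluous term $\|v\|_{L^q}$, obtained from a Neumann potential for $\div v$ plus a vector potential for the remaining divergence-free tangential part (or, as you note, directly from Agmon--Douglis--Nirenberg theory for the elliptic system $(\div,\curl)$ with boundary operator $v\mapsto v\cdot n$), and then a Rellich compactness/uniqueness argument in which simple connectedness is used exactly once, to show that a field with $\div v=0$, $\curl v=0$, $v\cdot n|_{\partial\Omega}=0$ must vanish. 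What the citation buys the authors is brevity together with the sharp hypotheses ($C^{1,1}$ boundary, all $q\in(1,\infty)$, the precise topological assumption); what your route buys is transparency about where the topology enters and why the lower-order term can be dropped.

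Two places in your sketch deserve more care, though neither is a fatal gap. In Stage 1, the regularity statement for the vector potential ($\curl B=w$, $\div B=0$, a boundary condition on $B$, with $\|B\|_{W^{2,q}}\le C(\|\curl w\|_{L^q}+\|B\|_{W^{1,q}})$) is itself a div--curl estimate of the same nature as the lemma, so to avoid circularity you should genuinely invoke the $L^q$ theory for the Hodge Laplacian with its complementing boundary conditions (or carry out the ADN alternative for the first-order system), and you should note that the existence of such a $B$ with the stated bound may also involve the topology of $\Omega$ — harmless here only because $\Omega$ is assumed simply connected, so your remark that simple connectedness is needed ``only in Stage 2'' is tied to the ADN version of Stage 1 rather than to the vector-potential version. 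In Stage 2, the final step ``$\Delta p=0$, $\partial_n p=0$ on $\partial\Omega$ implies $p$ constant'' is immediate from the energy identity only if $\nabla p\in L^2$; for $q<2$ you need a short bootstrap (the limit field is harmonic, the Neumann data are zero, hence $p\in W^{2,r}$ for every finite $r$ on the $C^{1,1}$ domain, and then the energy identity or Hopf's lemma applies). With those two points spelled out, your argument is a valid alternative proof of \eqref{nn2}.
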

%
%

Now, we deduce  from $(\ref{a1})_2$ that $G$ defined in \eqref{hj1}  satisfies the following elliptic equation:
 \be\la{h13}\begin{cases}
 \Delta G = \div (\rho\dot{u}),\,\,&x\in \O,\\
 \na G\cdot n=\n \dot u\cdot n,\,\,&x\in\p\O.
 \end{cases}\ee
The  standard $L^p$-estimate  for  (\ref{h13})  together with the div-curl type inequalities (see \cite{CANEHS,vww}) 
yields  the following essential estimates (see also \cite[Lemma 2.9]{C-L}).


\begin{lemma} \la{le4}
 Let  $\O\subset\r^3$ be the same as in Theorem \ref{th1} and $(\rho,u,\te)$ a smooth solution of
   (\ref{a1})--(\ref{h1}).
    Then there exists a generic positive
   constant $C$ depending only on $p$, $\mu,$   $\lambda,$  and $\O$ such that, for any $p\in [2,6],$
   \be\la{h18}
   \|\na u \|_{L^p} \le C \left( \|\div u\|_{L^p}  +\|\curl u\|_{L^p}\right),\ee
   \be\la{h19}\|\na G\|_{L^p} \le C\|\rho\dot{u}\|_{L^p},\ee
   \be\la{h191}  \|{\nabla \curl u}\|_{L^p}
   \le C (\|\rho\dot{u}\|_{L^p} + \|\na {u}\|_{L^2}),\ee
   \be  \la{h20}\|G\|_{L^p}
   \le C \|\rho\dot{u}\|_{L^2}^{(3p-6)/(2p) }
   \left(\|{\nabla u}\|_{L^2}
   +  \|P-\bp\|_{L^2}\right)^{(6-p)/(2p)},\ee
   \be  \la{h21} \|\curl u\|_{L^p}
   \le C \|\rho\dot{u}\|_{L^2}^{(3p-6)/(2p) }
   \|{\nabla u}\|_{L^2}
   ^{(6-p)/(2p)} + C\|{\nabla u}\|_{L^2}.\ee
  Moreover, it holds that
  \be \la{1h19}\ba
  \|G\|_{L^p} +\|\curl u\|_{L^p}\le C (\|\rho \dot{u}\|_{L^2}+\|\na u\|_{L^2}),
  \ea \ee
  \be\ba\la{h17} \|\na u\|_{L^p}\le& C  \|\rho\dot{u}\|_{L^2}^{(3p-6)/(2p) }  \left(\|{\nabla
  	u}\|_{L^2}+\|P-\bp\|_{L^2}\right)^{(6-p)/(2p)}\\
  &+ C (\|{\nabla	u}\|_{L^2} +  \|P-\bp\|_{L^p}).
  \ea\ee
\end{lemma}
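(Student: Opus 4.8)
The plan is to obtain the seven bounds as a chain resting on just two analytic inputs — the interior $W^{1,p}$ estimate for the Neumann problem \eqref{h13} satisfied by $G$, and the div-curl inequalities of \cite{vww,CANEHS} (of which Lemma \ref{crle1} is the simplest instance) — with everything else following from pointwise differential identities, the Gagliardo--Nirenberg interpolation of Lemma \ref{11}, and Young's inequality.

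First, \eqref{h18} is immediate: since $u\cdot n=0$ on $\p\O$, Lemma \ref{crle1} with $v=u$ gives $\|u\|_{W^{1,p}}\le C(\|\div u\|_{L^p}+\|\curl u\|_{L^p})$, hence the stated bound on $\|\na u\|_{L^p}$. For \eqref{h19}, I would note that the Neumann data in \eqref{h13} satisfies the solvability condition $\int_\O\div(\n\dot u)\,dx=\int_{\p\O}\n\dot u\cdot n\,dS$ automatically (divergence theorem), so the standard $W^{1,p}$-estimate for $\Delta G=\div F$, $\na G\cdot n=F\cdot n$ with $F=\n\dot u$, yields $\|\na G\|_{L^p}\le C\|\n\dot u\|_{L^p}$. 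For \eqref{h191}, I would rewrite $(\ref{a1})_2$ using $\Delta u=\na\div u-\curl\curl u$ together with the definition \eqref{hj1} of $G$ (and $\na\overline P=0$) to get the pointwise identity $\mu\,\curl\curl u=\na G-\n\dot u$; then $\|\curl\curl u\|_{L^p}\le C(\|\na G\|_{L^p}+\|\n\dot u\|_{L^p})\le C\|\n\dot u\|_{L^p}$ by \eqref{h19}, and since $\div\curl u=0$ and $\curl u\times n=0$ on $\p\O$, the div-curl inequality of \cite{vww,CANEHS} for this boundary condition — which carries a lower-order term controlled by $\|\curl u\|_{L^2}\le\|\na u\|_{L^2}$ — produces \eqref{h191}.

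The remaining four bounds come by interpolation, using the key fact $\overline G=0$ (because $\int_\O\div u\,dx=\int_{\p\O}u\cdot n\,dS=0$ and $\int_\O(P-\overline P)\,dx=0$). Lemma \ref{11} with $C'=0$ then gives $\|G\|_{L^p}\le C\|G\|_{L^2}^{(6-p)/(2p)}\|\na G\|_{L^2}^{(3p-6)/(2p)}$; inserting $\|\na G\|_{L^2}\le C\|\n\dot u\|_{L^2}$ from \eqref{h19} and $\|G\|_{L^2}\le C(\|\na u\|_{L^2}+\|P-\overline P\|_{L^2})$ from \eqref{hj1} yields \eqref{h20}. The same interpolation applied to $\curl u$ (now retaining the lower-order term), combined with \eqref{h191} at $p=2$, with $\|\curl u\|_{L^2}\le\|\na u\|_{L^2}$, with the subadditivity $(a+b)^\vartheta\le a^\vartheta+b^\vartheta$ for $\vartheta\in[0,1]$, and with the identity $(6-p)/(2p)+(3p-6)/(2p)=1$, gives \eqref{h21}. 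For \eqref{1h19}, the fact $\overline G=0$ also gives the Poincar\'e estimate $\|G\|_{L^2}\le C\|\na G\|_{L^2}\le C\|\n\dot u\|_{L^2}$, so the interpolation above collapses to $\|G\|_{L^p}\le C\|\n\dot u\|_{L^2}$, while $\|\curl u\|_{L^p}\le C(\|\n\dot u\|_{L^2}+\|\na u\|_{L^2})$ follows from \eqref{h21} by the weighted arithmetic-geometric mean inequality. Finally, \eqref{h17} results from \eqref{h18}, the decomposition $\div u=(2\mu+\lambda)^{-1}(G+P-\overline P)$, and the substitution of \eqref{h20} and \eqref{h21}, absorbing the cross term $\|\n\dot u\|_{L^2}^{(3p-6)/(2p)}\|\na u\|_{L^2}^{(6-p)/(2p)}$ into the first term on the right-hand side of \eqref{h17}.

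The one step that requires genuine care is the family of div-curl inequalities applied to $\curl u$: the only boundary information available for $\curl u$ is the tangential condition $\curl u\times n=0$, not the normal one $\curl u\cdot n=0$, so the clean form of Lemma \ref{crle1} does not apply verbatim and one must invoke the variant from \cite{vww,CANEHS} that tolerates a lower-order term, then verify that this term is always dominated by $\|\na u\|_{L^2}$ (a mixed-norm point, since the leading terms live in $L^p$). This is exactly what generates the extra summand $\|\na u\|_{L^2}$ in \eqref{h191} and \eqref{h21}, and hence in \eqref{h17} and \eqref{1h19}. The remaining ingredients — the elliptic estimate for $G$, the Gagliardo--Nirenberg interpolation with $\overline G=0$, and the Young/AM-GM bookkeeping — are routine.
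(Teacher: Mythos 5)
Your proposal is correct and follows essentially the same route the paper indicates for this lemma: the standard $L^p$-estimate for the Neumann problem \eqref{h13} satisfied by $G$, the identity $\mu\na\times\curl u=\na G-\n\dot u$ from $(\ref{a1})_2$, the div-curl inequalities of \cite{vww,CANEHS} (with the lower-order term unavoidable for $\curl u$, since only $\curl u\times n=0$ is available), and Gagliardo--Nirenberg interpolation exploiting $\overline G=0$, exactly as in \cite[Lemma 2.9]{C-L}. The only detail left implicit — reducing the lower-order term of the div-curl estimate from $L^p$ to $L^2$ so it is dominated by $\|\na u\|_{L^2}$ — is handled by the routine interpolation $\|v\|_{L^p}\le \ve\|\na v\|_{L^p}+C(\ve)\|v\|_{L^2}$ for $p\in[2,6]$, so the argument is complete.
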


Next, we state the following estimates  on $\dot u$ with $u \cdot n|_{\p \O}=0$, whose proof can be found in \cite[Lemma 2.10]{C-L}.

\begin{lemma}\la{uup1}
	Let $\Omega \subset \mathbb{R}^3$ with $C^{1,1}$ boundary.
	Assume that $u$ is smooth enough and $u \cdot n|_{\p \O}=0$,
	then there exists a generic positive constant $C$ depending only on   $\Omega$ such that
	\be\la{tb90}
	\ba\|\dot{u}\|_{L^6}\le C(\|\nabla\dot{u}\|_{L^2}+\|\nabla u\|_{L^2}^2),
	\ea\ee
	\be\la{tb11}\ba
	\|\nabla\dot{u}\|_{L^2}\le C(\|\div \dot{u}\|_{L^2}+\|\curl \dot{u}\|_{L^2}+\|\nabla u\|_{L^4}^2).
	\ea\ee
\end{lemma}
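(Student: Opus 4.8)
The statement to prove is Lemma~\ref{uup1}, the two estimates \eqref{tb90} and \eqref{tb11} on the material derivative $\dot u = u_t + u\cdot\nabla u$ for vector fields satisfying $u\cdot n|_{\partial\Omega}=0$. Since the excerpt explicitly says the proof can be found in \cite[Lemma 2.10]{C-L}, I should outline the natural self-contained argument rather than invent something exotic.

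\medskip

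\noindent\textbf{Proof strategy.}

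For \eqref{tb90}: the key point is that $\dot u$ does \emph{not} satisfy a zero normal boundary condition — only $u$ does — so one cannot apply Poincar\'e/Sobolev directly in the form that kills the $L^2$ constant. The plan is to use the Gagliardo--Nirenberg inequality \eqref{g1} with $r=6$, which gives $\|\dot u\|_{L^6}\le C\|\dot u\|_{L^2}^{0}\|\nabla\dot u\|_{L^2} + C'\|\dot u\|_{L^2}$ (here $(6-r)/(2r)=0$), i.e.
\be\notag
\|\dot u\|_{L^6}\le C\|\nabla\dot u\|_{L^2}+C'\|\dot u\|_{L^2}.
\ee
It then remains to absorb the lower-order term $\|\dot u\|_{L^2}$. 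Write $\dot u = u_t + u\cdot\nabla u$. One estimates $\|u\cdot\nabla u\|_{L^2}\le \|u\|_{L^\infty}\|\nabla u\|_{L^2}$ or, better, $\|u\cdot\nabla u\|_{L^2} \le \|u\|_{L^6}\|\nabla u\|_{L^3}$ and interpolate; the cleanest route giving exactly $\|\nabla u\|_{L^2}^2$ on the right is to use that $u\cdot n=0$ together with the Sobolev/Poincar\'e-type bound $\|u\|_{L^\infty}\lesssim \|\nabla u\|_{L^2}^{1/2}\|\nabla u\|_{H^1}^{1/2}$ — but that reintroduces $\nabla^2 u$. The trick in \cite{C-L} is instead to handle $u_t$ via $u_t = \dot u - u\cdot\nabla u$ and observe that the $L^2$ norm of $\dot u$ can be split so that the genuinely problematic piece is controlled by $\|\nabla\dot u\|_{L^2}$ after using $\overline{\dot u}$ or a Poincar\'e inequality on the part of $\dot u$ with vanishing mean; then the remaining terms are bilinear in $\nabla u$ and bounded by $\|\nabla u\|_{L^2}^2$ via $\|u\cdot\nabla u\|_{L^2}\le C\|\nabla u\|_{L^2}^2$ (using $\|u\|_{L^6}\le C\|\nabla u\|_{L^2}$ from Sobolev with $u\cdot n=0$, and $\|\nabla u\|_{L^3}\le C\|\nabla u\|_{L^2}^{1/2}\|\nabla u\|_{L^6}^{1/2}$ — this last step again needs care). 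The honest statement: I would follow \cite{C-L} and deduce \eqref{tb90} by applying \eqref{g1} to $\dot u$ and controlling $\|\dot u\|_{L^2}$ through the decomposition $\dot u = u_t + u\cdot\nabla u$, bounding $\|u_t\|_{L^2}$ as part of the ``good'' term and $\|u\cdot\nabla u\|_{L^2}$ by $C\|\nabla u\|_{L^2}^2$.

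\medskip

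For \eqref{tb11}: here the obstruction is that the div--curl estimate \eqref{h18} and Lemma~\ref{crle1} require the vector field to have vanishing \emph{normal} component on $\partial\Omega$, and $\dot u$ does not. The standard fix is to compute $\dot u\cdot n$ on the boundary explicitly: since $u\cdot n=0$ on $\partial\Omega$, differentiating along the flow gives $\dot u\cdot n = u\cdot\dot n$ on $\partial\Omega$, and because $u$ is tangential, $u\cdot\dot n = u\cdot(u\cdot\nabla)n = (u\cdot\nabla n)\cdot u$, a quadratic expression in $u$ with coefficients given by (derivatives of a smooth extension of) $n$. Thus one decomposes $\dot u = w + \nabla\phi$ (Helmholtz-type) or, more simply, one applies a version of the div--curl inequality that tolerates a nonzero normal trace: $\|\nabla\dot u\|_{L^2}\le C(\|\div\dot u\|_{L^2}+\|\curl\dot u\|_{L^2}+\|\dot u\cdot n\|_{H^{1/2}(\partial\Omega)})$, and then estimate the boundary term by $\|\dot u\cdot n\|_{H^{1/2}(\partial\Omega)} = \|(u\cdot\nabla n)\cdot u\|_{H^{1/2}(\partial\Omega)}\le C\|u\otimes u\|_{H^1(\Omega)}\le C\|u\|_{L^\infty}\|\nabla u\|_{L^2}\le C\|\nabla u\|_{L^4}^2$ (using $\|u\|_{L^\infty}\le C\|\nabla u\|_{L^4}$ via $u\cdot n=0$, or $\|u\|_{L^\infty}\le C\|u\|_{W^{1,4}}\le C\|\nabla u\|_{L^4}$). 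This produces exactly the $\|\nabla u\|_{L^4}^2$ term on the right of \eqref{tb11}.

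\medskip

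\noindent\textbf{Main obstacle.} The genuinely delicate point is the boundary contribution in \eqref{tb11}: getting the normal-trace correction into the div--curl estimate and bounding it cleanly by $\|\nabla u\|_{L^4}^2$ without picking up $\|\nabla^2 u\|$. This is exactly the place where the condition $u\cdot n|_{\partial\Omega}=0$ (and the smoothness of $\partial\Omega$, i.e. $C^{1,1}$, so that $n$ has an $H^1$/Lipschitz extension) is essential; one must be careful that the trace inequality $\|fg\|_{H^{1/2}(\partial\Omega)}\le C\|f\|_{H^1(\Omega)}\|g\|_{\ldots}$ is applied to quantities that are actually controlled. For \eqref{tb90} the only subtlety is the absence of a zero normal/mean condition on $\dot u$, handled by \eqref{g1} with the nonzero $C'$ and by the bilinear estimate $\|u\cdot\nabla u\|_{L^2}\le C\|\nabla u\|_{L^2}^2$; this is routine once the GNS inequality from Lemma~\ref{11} is in hand.
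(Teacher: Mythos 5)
You should first note that the paper does not actually prove this lemma: it is quoted from \cite[Lemma 2.10]{C-L}, and the mechanism there is the boundary identity $\dot u\cdot n=u\cdot\na u\cdot n=-u\cdot\na n\cdot u$ on $\p\O$ (using that $u\cdot n=0$ for all $t$ forces $u_t\cdot n=0$, together with \eqref{pzw1}). Your treatment of \eqref{tb11} is in exactly this spirit and is essentially sound: either use a div--curl estimate tolerating a nonzero normal trace, or, closer to what Lemma \ref{crle1} actually provides on the simply connected domain, correct $\dot u$ by the quadratic field $w=\dot u+(u\cdot\na \tilde n\cdot u)\,\tilde n$ with $\tilde n$ a Lipschitz extension of $n$, so that $w\cdot n|_{\p\O}=0$ and $\|\na\dot u\|_{L^2}\le \|\na w\|_{L^2}+C\left(\|u\|_{L^\infty}\|\na u\|_{L^2}+\|u\|_{L^4}^2\right)\le C\left(\|\div\dot u\|_{L^2}+\|\curl\dot u\|_{L^2}+\|\na u\|_{L^4}^2\right)$. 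The only loose ends there are the trace-corrected div--curl inequality itself, which does need justification since Lemma \ref{crle1} is stated only for vanishing normal trace (it follows from a Helmholtz splitting plus the Neumann $H^2$ estimate, or is bypassed by the correction-field device), and an irrelevant sign.

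The genuine gap is in your argument for \eqref{tb90}. The route you propose, Gagliardo--Nirenberg with the extra $C'\|\dot u\|_{L^2}$ followed by the splitting $\dot u=u_t+u\cdot\na u$, cannot close. First, $\|u_t\|_{L^2}$ is not on, and cannot be bounded by, the right-hand side of \eqref{tb90}: $\na u_t$ differs from $\na\dot u$ by $\na(u\cdot\na u)$, which involves $\na^2 u$, so there is no mechanism to control $u_t$ separately, and the phrase ``bounding $\|u_t\|_{L^2}$ as part of the good term'' has no content. Second, the bilinear estimate you lean on, $\|u\cdot\na u\|_{L^2}\le C\|\na u\|_{L^2}^2$, is false in three dimensions (it would require $\|u\|_{L^\infty}$ or $\|\na u\|_{L^3}$ to be controlled by $\|\na u\|_{L^2}$; a bump of height $h$ and width $\ep$ gives $\|u\cdot\na u\|_{L^2}\sim h^2\ep^{1/2}$ against $\|\na u\|_{L^2}^2\sim h^2\ep$). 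The correct proof of \eqref{tb90} uses the same boundary information as \eqref{tb11}: since $\dot u\cdot n=-u\cdot\na n\cdot u$ on $\p\O$, apply the Poincar\'e--Sobolev inequality with a normal-trace term, $\|v\|_{L^6}\le C\left(\|\na v\|_{L^2}+\|v\cdot n\|_{L^2(\p\O)}\right)$, valid on a bounded $C^{1,1}$ domain by a compactness argument because a constant vector tangent to all of $\p\O$ must vanish, to $v=\dot u$, and then bound $\|u\cdot\na n\cdot u\|_{L^2(\p\O)}\le C\|u\|_{L^4(\p\O)}^2\le C\|u\|_{H^1}^2\le C\|\na u\|_{L^2}^2$ by the trace theorem and Poincar\'e (where $u\cdot n|_{\p\O}=0$ enters again). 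Note that the correction-field trick that saves \eqref{tb11} would only yield $\|\na u\|_{L^4}^2$ here, so for \eqref{tb90} the trace-term form of the inequality is really what is needed.
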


Furthermore, by the classical elliptic theory owing to  Agmon-Douglis-Nirenberg \cite{adn}, one has the following estimates for smooth solution to  the Lam\'{e}'s system:
\be\la{lames}\begin{cases}
	-\mu\Delta u-(\lambda+\mu)\nabla\div u=-\rho\dot{u}-\nabla P , \,\, &x\in\Omega, \\
	u\cdot n=0,\,\curl u\times n=0,\,\,&x\in\partial\Omega,
\end{cases} \ee
where 
$\Omega\subset\r^3$ is a bounded smooth domain, and $\mu,\lambda$ satisfy the condition \eqref{h3}.
\begin{lemma}  \la{zhle}
	Let $u$ be a smooth solution of the Lam\'{e}'s system (\ref{lames}). Then for $p\in[2,6]$ and $k\ge 2$,   there exists a positive constant $C$ depending only on $\lambda,\,\mu,\,p,\,k,$ and $\Omega$ such that
	\be\ba\la{rmk1}
	\| u\|_{W^{k,p}} 
	&\leq C(\|\rho\dot{u}\|_{W^{k-2,p}}+\|\nabla P\|_{W^{k-2,p}}+\|\nabla u\|_{L^2}).
	\ea\ee
\end{lemma}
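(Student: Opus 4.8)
The statement to prove is Lemma \ref{zhle}, the $W^{k,p}$ elliptic regularity estimate for the Lamé system
\be\notag\begin{cases}
-\mu\Delta u-(\lambda+\mu)\nabla\div u=-\rho\dot u-\nabla P,&x\in\Omega,\\
u\cdot n=0,\ \curl u\times n=0,&x\in\partial\Omega,
\end{cases}\ee
with the bound $\|u\|_{W^{k,p}}\le C(\|\rho\dot u\|_{W^{k-2,p}}+\|\nabla P\|_{W^{k-2,p}}+\|\nabla u\|_{L^2})$ for $p\in[2,6]$ and $k\ge2$.

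\medskip

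\noindent\textbf{Plan of proof.} The plan is to reduce the Lamé system to a decoupled system of scalar elliptic equations for $\div u$ and $\curl u$ (the effective-flux trick), apply the standard Agmon--Douglis--Nirenberg \cite{adn} $L^p$-estimates to each, and then recover $u$ itself via the div-curl elliptic estimate of Lemma \ref{crle1}. First I would set $f\triangleq -\rho\dot u-\nabla P$, so the right-hand side lies in $W^{k-2,p}$ with $\|f\|_{W^{k-2,p}}\le C(\|\rho\dot u\|_{W^{k-2,p}}+\|\nabla P\|_{W^{k-2,p}})$. Taking the divergence of the equation gives $-(2\mu+\lambda)\Delta(\div u)=\div f$ in $\Omega$; taking the curl gives $-\mu\Delta(\curl u)=\curl f$ in $\Omega$. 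The boundary condition $u\cdot n=0$ together with the structure of the equation yields a Neumann-type condition for $\div u$ on $\partial\Omega$ (one differentiates the tangential components of the equation along $\partial\Omega$, as is standard for the Navier-slip Lamé problem), while $\curl u\times n=0$ gives the appropriate boundary data for the components of $\curl u$. Applying \cite{adn} to these scalar (or diagonal) problems bounds $\|\div u\|_{W^{k-1,p}}$ and $\|\curl u\|_{W^{k-1,p}}$ by $\|f\|_{W^{k-2,p}}$ plus a lower-order term controlled by $\|\nabla u\|_{L^2}$ (via interpolation and the compact embedding handling the kernel, exactly as $C'$, $C''$ appear in Lemma \ref{11}).

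\medskip

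\noindent Next, with $\div u$ and $\curl u$ estimated in $W^{k-1,p}$, I would invoke Lemma \ref{crle1} (and its higher-order iterates): since $u\cdot n|_{\partial\Omega}=0$ and $\Omega$ is simply connected with $C^{1,1}$ boundary, the map $u\mapsto(\div u,\curl u)$ is elliptic, so $\|u\|_{W^{k,p}}\le C(\|\div u\|_{W^{k-1,p}}+\|\curl u\|_{W^{k-1,p}}+\|u\|_{L^p})$. The term $\|u\|_{L^p}$ is absorbed by $\|\nabla u\|_{L^2}$ using the Gagliardo--Nirenberg inequality \eqref{g1} together with the Poincaré-type control (or simply $L^p\hookrightarrow$ interpolation between $L^2$ and $H^1$ for $p\le 6$, noting $u\cdot n|_{\partial\Omega}=0$). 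For $k>2$ one proceeds inductively: the already-obtained $W^{2,p}$ bound feeds into the $W^{3,p}$ estimate for the scalar equations (the right-hand sides $\div f$, $\curl f$ now being taken one derivative higher), and so on up to level $k$. Collecting everything gives the asserted inequality with $C=C(\lambda,\mu,p,k,\Omega)$.

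\medskip

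\noindent\textbf{Main obstacle.} The delicate point is the treatment of the boundary conditions when passing to the scalar equations for $\div u$ and $\curl u$: the Navier-slip conditions $u\cdot n=0$, $\curl u\times n=0$ do not transform into homogeneous Dirichlet or Neumann data in an obvious way, and one must carefully commute tangential derivatives with the boundary operators, picking up curvature terms (involving $\nabla n$, i.e.\ the second fundamental form of $\partial\Omega$) that are lower-order and hence absorbable, but must be tracked to keep the constant depending only on $\Omega$. This is precisely where the $C^{1,1}$ regularity of $\partial\Omega$ is used. The rest — the ADN estimates and the div-curl recovery — is routine given Lemmas \ref{crle1} and \ref{11}, and the induction on $k$ is mechanical once the base case $k=2$ is in hand.
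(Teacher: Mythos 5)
Your route is workable but genuinely different from the paper's: the paper gives no decomposition argument at all for Lemma \ref{zhle} — it simply invokes the classical Agmon--Douglis--Nirenberg theory \cite{adn} for the full Lam\'e operator with the boundary conditions $u\cdot n=0$, $\curl u\times n=0$, i.e.\ it takes the complementing condition for the coupled system (\ref{lames}) for granted. Your decoupling into problems for $\div u$ and $\curl u$ followed by a div--curl recovery is exactly the machinery the paper already uses at first order (the effective-flux problem \eqref{h13} and Lemma \ref{le4}), pushed to $W^{k,p}$; what it buys is that only scalar/standard elliptic theory is ever applied, at the price of more bookkeeping and of needing a higher-order version of the div--curl inequality — Lemma \ref{crle1} as stated is only first order, so the $W^{k,p}$ analogue must be cited or proved (it is standard, and available in the references \cite{vww,CANEHS}).

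Two details in precisely the step you flag as the main obstacle need repair. First, the Neumann condition for $\div u$ does not come from differentiating the tangential components of the equation along $\partial\Omega$, nor from $u\cdot n=0$: rewrite the equation as $(2\mu+\lambda)\na\div u-\mu\na\times\curl u=\rho\dot u+\na P$, take its \emph{normal} component on $\partial\Omega$, and use that $(\na\times\curl u)\cdot n$ is a tangential differential operator acting on $\curl u\times n$, which vanishes by the second boundary condition; this is exactly how the paper obtains $\na G\cdot n=\rho\dot u\cdot n$ in \eqref{h13}. Working with $G=(2\mu+\lambda)\div u-(P-\bp)$ rather than lumping $\na P$ into $f$ also avoids the awkward case $k=2$, where your $\div f$ contains $\Delta P$ and lies only in $W^{-1,p}$, so that the Neumann problem would have to be handled in divergence form. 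Second, $\curl u\times n=0$ supplies only two scalar conditions for the three-component Poisson problem $-\mu\Delta(\curl u)=\curl f$; you must supplement it (e.g.\ with $\div\curl u=0$ on $\partial\Omega$), or better, bypass the second-order problem and estimate $\omega=\curl u$ through the first-order system $\div\omega=0$, $\mu\na\times\omega=\na G-\rho\dot u$, $\omega\times n|_{\partial\Omega}=0$, using the tangential-trace version of the div--curl inequality. With these corrections, and with the lower-order terms absorbed as you indicate (using $u\cdot n|_{\partial\Omega}=0$, Lemma \ref{crle1} with $q=2$ and \eqref{g1}), your induction on $k$ does deliver \eqref{rmk1} with a constant depending only on $\lambda,\mu,p,k,\Omega$.
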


Next, the following Gr\"{o}nwall-type inequality will be used to get the uniform (in time) upper bound of the density $\n$, whose proof can be found in \cite[Lemma 2.5]{H-L}.
\begin{lemma} \la{le1}
	Let the function $y\in W^{1,1}(0,T)$ satisfy
	\be \notag
	y'(t)+\al(t) y(t)\le  g(t)\mbox{  on  } [0,T] ,\quad y(0)=y_0,
	\ee
	where $0<\al_0\le \al(t)$ for any $t\in[0,T]$ and  $ g \in L^p(0,T_1)\cap L^q(T_1,T)$  for some $p,\,q\ge 1, $  $T_1\in [0,T].$ Then it has
	\be \la{2.34}
	\sup_{0\le t\le T} y(t) \le |y_0| + (1+\al_0^{-1}) \left(\|g\|_{L^p(0,T_1)} + \|g\|_{L^q(T_1,T)}\right).
	\ee
\end{lemma}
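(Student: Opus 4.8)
The plan is to reduce everything to the standard integrating-factor trick, but to split the time interval at $T_1$ so that the two different integrability assumptions on $g$ can each be exploited. First I would set $z(t) \triangleq y(t)\exp\left(\int_0^t \al(s)\,ds\right)$, which is in $W^{1,1}(0,T)$ as a product of $W^{1,1}$ functions (using that $\al\ge\al_0>0$ is at least integrable so the exponential factor is Lipschitz and bounded). Differentiating and using the differential inequality gives $z'(t) \le g(t)\exp\left(\int_0^t\al(s)\,ds\right)$ a.e., hence for every $t$,
\be\notag
y(t) \le y_0\exp\left(-\int_0^t\al(s)\,ds\right) + \int_0^t g(s)\exp\left(-\int_s^t\al(s')\,ds'\right)ds.
\ee
Since $\al\ge\al_0$, the first term is bounded by $|y_0|$, and in the integral the exponential weight is bounded by $\exp(-\al_0(t-s))\le 1$.

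Next I would estimate the convolution integral by splitting at $T_1$:
\be\notag
\int_0^t g(s)e^{-\al_0(t-s)}\,ds \le \int_0^{\min\{t,T_1\}} |g(s)|e^{-\al_0(t-s)}\,ds + \int_{\min\{t,T_1\}}^{t} |g(s)|e^{-\al_0(t-s)}\,ds,
\ee
where the second term is understood to vanish when $t\le T_1$. For the first piece, apply H\"older's inequality with exponents $p$ and $p'$: it is bounded by $\|g\|_{L^p(0,T_1)}\left(\int_0^\infty e^{-\al_0 p' s}\,ds\right)^{1/p'} = \|g\|_{L^p(0,T_1)}(\al_0 p')^{-1/p'}$. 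For the second piece, apply H\"older with exponents $q$ and $q'$ on $(T_1,t)\subset(T_1,T)$, getting the bound $\|g\|_{L^q(T_1,T)}(\al_0 q')^{-1/q'}$. Since $(\al_0 p')^{-1/p'}\le 1+\al_0^{-1}$ and likewise for $q'$ (both when $\al_0\ge 1$ and $\al_0<1$, crudely bounding the dual-exponent constants by $1$ or by $\al_0^{-1}$), one obtains
\be\notag
\sup_{0\le t\le T} y(t) \le |y_0| + (1+\al_0^{-1})\left(\|g\|_{L^p(0,T_1)} + \|g\|_{L^q(T_1,T)}\right),
\ee
which is \eqref{2.34}.

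There is essentially no serious obstacle here; the only mild subtlety is the bookkeeping to make the constant come out as exactly $1+\al_0^{-1}$ uniformly in $p,q$ — one must be slightly careful that $(\al_0 p')^{-1/p'}$ is bounded by $1+\al_0^{-1}$ for all $p'\in(1,\infty]$, which follows by treating $\al_0\ge 1$ and $0<\al_0<1$ separately and using $x^{-1/p'}\le\max\{1,x^{-1}\}$ for $x>0$. One should also note the sign issue: $y$ need not be nonnegative, so strictly speaking the bound is on $\sup y(t)$ (not $\sup|y(t)|$), which is exactly what \eqref{2.34} asserts; the lower bound on $y$ is neither claimed nor needed. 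A reference to \cite[Lemma 2.5]{H-L} can be given for the detailed computation, but the argument above is self-contained.
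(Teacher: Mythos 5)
Your argument is correct, and since the paper itself does not prove this lemma but simply cites \cite[Lemma 2.5]{H-L}, there is nothing to contrast with: the integrating-factor computation, Duhamel's formula, the splitting of the resulting integral at $T_1$, and H\"older's inequality together with the crude bound $(\al_0 p')^{-1/p'}\le\max\{1,\al_0^{-1}\}\le 1+\al_0^{-1}$ (and its analogue for $q'$) is exactly the expected route and yields \eqref{2.34} as stated. One small inaccuracy: the pointwise bound $\al\ge\al_0>0$ does not by itself make $\al$ integrable, so the factor $\exp\big(\int_0^t\al\,ds\big)$ could in principle be infinite; either record explicitly the (harmless, and satisfied in the paper's application, where $\al=\bp/(2\mu+\lambda)$ is bounded above by $\pi_2/(2\mu+\lambda)$) assumption that $\al\in L^1(0,T)$, or avoid it altogether by observing that on any maximal subinterval where $y>0$ one has $y'+\al_0 y\le g$, so the same computation can be run with the constant $\al_0$ in the integrating factor, starting from a left endpoint where $y$ equals either $y_0$ or $0$.
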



Next, to derive the exponential decay property of the solutions, we consider the following auxiliary problem
\be\la{e480}\begin{cases}
{\rm div}v=f,&x\in\Omega, \\
v=0,&x\in{\partial\Omega}.
\end{cases} \ee
\begin{lemma} \cite[Theorem III.3.1]{GPG} \la{th00}  There exists a linear operator $\mathcal{B} = [\mathcal{B}_1 , \mathcal{B}_2 , \mathcal{B}_3 ]$ enjoying
the properties:

1)The operator $$\mathcal{B}:\{f\in L^p\left|\right.\overline f =0\}\mapsto W^{1,p}_0$$ is a bounded linear one, that is,
\bnn \|\mathcal{B}[f]\|_{W^{1,p}_0}\le C(p)\|f\|_{L^p}, \mbox{ for any }p\in (1,\infty).\enn

2) The function $v = \mathcal{B}[f]$ solves the problem (\ref{e480}).

3) If, moreover, for $f = \div  g$ with a certain $g\in L^r,$ $g\cdot n|_{\pa\O}=0,$  then  for   any $r  \in (1,\infty),$
\bnn \|\mathcal{B}[f]\|_{L^{r}}\le C(r)\|g\|_{L^r} .\enn
\end{lemma}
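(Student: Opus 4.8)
\noindent\emph{Proof plan.} This is the classical Bogovskii construction, and the plan is to proceed in three stages: localization to star-shaped pieces, the explicit integral operator on a star-shaped domain, and the two Calder\'on--Zygmund estimates. \textbf{Localization.} Since $\pa\O$ is smooth (Lipschitz would suffice), I would cover $\overline\O$ by finitely many open sets $\O_1,\dots,\O_N$, relabelled so that $\O_i\cap\O_{i+1}\cap\O\neq\emptyset$, such that each $\O_i\cap\O$ is star-shaped with respect to a ball $B_i$ with $\overline{B_i}\subset\O_i\cap\O$ and $\O=\bigcup_{i=1}^N(\O_i\cap\O)$; then fix a subordinate partition of unity $\{\varphi_i\}$ and fixed bump functions $\chi_i\in C_c^\infty(\O_i\cap\O_{i+1}\cap\O)$ with $\int\chi_i=1$. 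Given $f\in L^p$ with $\overline f=0$, the pieces $\varphi_i f$ are supported in the right sets but need not have zero mean, so I would transfer the excess masses $m_i:=\int\varphi_i f$ along the chain of overlaps by a telescoping correction with the $\chi_i$'s, obtaining $f=\sum_{i=1}^N f_i$ with $\operatorname{supp}f_i\subset\O_i\cap\O$ and $\int f_i=0$ (using $\sum_i m_i=\int f=0$). Setting $\mathcal B[f]:=\sum_i\mathcal B_i[f_i]$, where $\mathcal B_i$ is the operator below on the star-shaped piece $\O_i\cap\O$, reduces the whole statement to the case of a domain star-shaped with respect to a ball, the fixed $\chi_i$-corrections contributing only bounded terms in every $L^s$.

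\textbf{The operator.} Let $\O$ be star-shaped with respect to $B=B_\rho(0)\subset\O$ and fix $\omega\in C_c^\infty(B)$ with $\int\omega=1$. Define
\[
\mathcal B[f](x)=\int_\O f(y)\,\frac{x-y}{|x-y|^3}\int_{|x-y|}^{\infty}\omega\!\left(y+r\,\frac{x-y}{|x-y|}\right)r^2\,dr\,dy.
\]
Differentiating under the integral and using the star-shapedness of $\O$ with respect to $B$, one verifies the pointwise identity $\div\mathcal B[f]=f-\omega\int_\O f$, hence $\div\mathcal B[f]=f$ when $\overline f=0$; together with the support/trace structure of the kernel (once $\na\mathcal B[f]\in L^p$ is known) this gives $\mathcal B[f]\in W^{1,p}_0$ and that $v=\mathcal B[f]$ solves \eqref{e480}, i.e.\ properties 1) and 2).

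\textbf{The two estimates.} For the norm bound in 1), I would split $\partial_{x_k}\mathcal B[f]$ into a weakly singular operator, with kernel of order $|x-y|^{-2}$ and hence bounded on every $L^p$ ($1<p<\infty$) because its restriction to the bounded set $\O$ is integrable, plus a principal-value singular integral whose kernel is, to leading order, homogeneous of degree $-3$ in $x-y$ with vanishing mean over spheres; the Calder\'on--Zygmund theorem then gives $\|\na\mathcal B[f]\|_{L^p}\le C(p,\O)\|f\|_{L^p}$. For 3), when $f=\div g$ with $g\cdot n|_{\pa\O}=0$ (which forces $\overline f=0$), I would integrate by parts in $y$ in the formula for $\mathcal B[\div g](x)$: the boundary integral vanishes exactly because $g\cdot n=0$, and the remaining volume term has the form $\int_\O N(x,y)\,g(y)\,dy$ with $N(x,y)=-\partial_y[\text{kernel}]$ again of Calder\'on--Zygmund type (leading part homogeneous of degree $-3$ with spherical cancellation, plus a weakly singular remainder), so $\|\mathcal B[\div g]\|_{L^r}\le C(r,\O)\|g\|_{L^r}$. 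Reassembling the finitely many patches (the $\chi_i$-corrections being harmless in $L^r$) gives all three properties on the general domain $\O$.

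\textbf{Main obstacle.} The substantive difficulty is entirely in the last stage: verifying that after the differentiation, and after the integration by parts against $\div g$, the resulting kernels genuinely satisfy the Calder\'on--Zygmund hypotheses --- correct homogeneity, cancellation over spheres, and a quantitative control of the non-singular remainder depending only on the star-shapedness ratio of $\O$ --- and, in the localization step, checking that the telescoping preserves the zero-mean and support constraints, so that the patched operator inherits 1)--3) with constants depending only on $p$ (resp.\ $r$) and $\O$. All of this is carried out in detail in \cite[Chapter III]{GPG}.
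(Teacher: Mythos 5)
The paper does not prove this lemma at all: it is quoted verbatim from Galdi \cite[Theorem III.3.1]{GPG}, and your sketch is precisely the Bogovskii construction carried out there (localization to star-shaped pieces with the telescoping mean-zero correction, the explicit kernel, Calder\'on--Zygmund bounds for $\na\mathcal B[f]$, and the integration by parts against $\div g$ using $g\cdot n|_{\p\O}=0$), so your outline is correct and coincides with the cited source. The only point to flag is that for part 3) $g$ lies merely in $L^r$, so the integration by parts and the vanishing of the boundary term must be justified by a density/duality argument (as Galdi does), since the normal trace of $g$ is only defined weakly.
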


Finally, in order to estimate $\|\nabla u\|_{L^\infty}$ for the further higher order estimates, we need the following Beale-Kato-Majda-type inequality, which was first proved in \cite{bkm,kato} when $\div u\equiv 0,$ 
whose detailed proof in the case of slip boundary condition  can be found in \cite[Lemma 2.7]{C-L} (see also \cite{h101,h1x}).
\begin{lemma}\la{le9}
Let $\O\subset\r^3$ be a  bounded smooth domain. For $3<q<\infty$, assume that $ u \in \{ f \in W^{2,q}|  f\cdot n=0, \curl f\times n=0 \text{ on } \partial \Omega \}$,  then there is a positive constant  $C=C(q,\O)$ such that
\bnn\ba
\|\na u\|_{L^\infty}\le C\left(\|{\rm div}u\|_{L^\infty}+\|\curl u\|_{L^\infty} \right)\ln(e+\|\na^2u\|_{L^q})+C\|\na u\|_{L^2} +C.
\ea\enn
\end{lemma}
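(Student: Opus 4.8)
The plan is to adapt the Beale--Kato--Majda logarithmic interpolation \cite{bkm,kato} to the slip boundary condition by working with the div--curl structure of $u$, exactly as in the elliptic estimates of Lemmas \ref{crle1} and \ref{le4}. Write $d\triangleq\div u$ and $\w\triangleq\curl u$; by hypothesis $d,\w\in L^\infty$ (since $W^{2,q}\hookrightarrow W^{1,\infty}$ for $q>3$), with $u\cdot n=0$ and $\curl u\times n=0$ on $\partial\O$. Since $\O$ is simply connected, bounded and smooth, I would first decompose $u=\na\varphi+w$, where $\varphi$ solves the Neumann problem $\Delta\varphi=d$ in $\O$, $\na\varphi\cdot n=0$ on $\partial\O$, and $w$ is divergence free with $\curl w=\w$, $w\cdot n=0$ on $\partial\O$; thus $\na u=\na^2\varphi+\na w$. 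Using the classical potential representations for the Neumann Laplacian and for the div--curl system (cf.\ \cite{vww,CANEHS}), one expresses $\na^2\varphi$ (resp.\ $\na w$) as a Calder\'on--Zygmund singular integral acting on $d$ (resp.\ on $\w$), plus a boundary remainder operator whose kernel is only weakly singular and hence bounded on every $L^p$. The condition $\curl u\times n=0$ is precisely what makes the boundary integrations by parts in this reduction close without generating second-order traces, mirroring its role in Lemmas \ref{crle1}--\ref{le4}.

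Next, for a Calder\'on--Zygmund operator $T$ on $\O\subset\r^3$ and $g\in L^\infty\cap W^{1,q}$ with $q>3$, I would run the standard BKM splitting: using that $T$ has mean zero on annuli, write $Tg(x)=\int_{|x-y|<r}K(x-y)(g(y)-g(x))\,dy+\int_{|x-y|\ge r}K(x-y)g(y)\,dy$ for a radius $r\le\mathrm{diam}\,\O$, bound the near part by $Cr^{1-3/q}\|\na g\|_{L^q}$ via Morrey's inequality (this is where $q>3$ is used) and the far part by $C\|g\|_{L^\infty}\ln(e/r)+C\|g\|_{L^2}$, and optimize in $r$ (taking $r=\mathrm{diam}\,\O$ if the optimizer exceeds it). This gives
\[
\|Tg\|_{L^\infty}\le C\|g\|_{L^\infty}\ln\!\big(e+\|g\|_{W^{1,q}}\big)+C\|g\|_{L^2}+C .
\]
Applying this with $g=d$ and $g=\w$, and bounding the boundary-remainder contributions from Step 1 as well as the $L^2$ terms by the right-hand side of the asserted estimate (using $\|d\|_{L^2}+\|\w\|_{L^2}\le C\|\na u\|_{L^2}$ and Lemma \ref{crle1}), one arrives at
\[
\|\na u\|_{L^\infty}\le C\big(\|d\|_{L^\infty}+\|\w\|_{L^\infty}\big)\ln\!\big(e+\|d\|_{W^{1,q}}+\|\w\|_{W^{1,q}}\big)+C\|\na u\|_{L^2}+C .
\]

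To conclude, I would reduce the argument of the logarithm: $\|d\|_{W^{1,q}}+\|\w\|_{W^{1,q}}\le C\|\na^2u\|_{L^q}+C\|u\|_{W^{1,q}}\le C\|\na^2u\|_{L^q}+C(\|d\|_{L^\infty}+\|\w\|_{L^\infty})$, where the last step uses Lemma \ref{crle1} together with $L^\infty(\O)\hookrightarrow L^q(\O)$. Then distinguish two cases. If $\|\na^2u\|_{L^q}\ge\|d\|_{L^\infty}+\|\w\|_{L^\infty}+\|\na u\|_{L^2}$, the logarithm above is $\le C\ln(e+\|\na^2u\|_{L^q})$ and the claim follows immediately. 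Otherwise $\|\na^2u\|_{L^q}<\|d\|_{L^\infty}+\|\w\|_{L^\infty}+\|\na u\|_{L^2}$, in which case the Sobolev embedding $W^{2,q}\hookrightarrow W^{1,\infty}$ and Lemma \ref{crle1} give
\[
\|\na u\|_{L^\infty}\le C\|u\|_{W^{2,q}}\le C\|\na^2u\|_{L^q}+C\|u\|_{W^{1,q}}\le C\big(\|d\|_{L^\infty}+\|\w\|_{L^\infty}+\|\na u\|_{L^2}\big),
\]
which, since $\ln(e+\cdot)\ge1$, is again dominated by the right-hand side of the asserted estimate. This proves the lemma.

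The main obstacle is Step 1: on a bounded domain the map $(\div u,\curl u)\mapsto\na u$ is not an exact Riesz transform, so one must extract a genuine Calder\'on--Zygmund part and verify that the boundary correction is harmless, i.e.\ governed only by the low-order norm $\|\na u\|_{L^2}$ (and $\|d\|_{L^\infty}+\|\w\|_{L^\infty}$, already on the right-hand side). This is exactly where the slip conditions $u\cdot n=0$ and $\curl u\times n=0$ are essential; beyond it the proof is the classical BKM logarithmic trick plus elementary bookkeeping, and the detailed verification for the slip boundary condition is carried out in \cite[Lemma 2.7]{C-L} (see also \cite{h101,h1x}).
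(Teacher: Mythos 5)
The paper offers no proof of this lemma at all: it simply refers to \cite[Lemma 2.7]{C-L} (see also \cite{h101,h1x}), and your argument, after laying out the standard Helmholtz/div--curl decomposition and the Calder\'on--Zygmund logarithmic splitting, defers the one step that carries the real mathematical weight --- the boundary-compatible singular-integral representation of $\na u$ in terms of $\div u$ and $\curl u$ with a remainder controlled by $\|\na u\|_{L^2}$ under the slip conditions --- to that very same reference, so you are in effect taking the paper's route with extra exposition. The interior BKM splitting, the reduction of the logarithm's argument via Lemma \ref{crle1}, and the closing case distinction are all correct; note only that your appeal to simple connectedness (in the div--curl reduction and in Lemma \ref{crle1}) is slightly stronger than the lemma's stated hypothesis of a bounded smooth domain, though it matches the setting in which the paper actually applies the estimate.
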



\section{\la{se3} A priori estimates (I): lower-order estimates}

In this section,  we  will establish a priori bounds for the local-in-time smooth solution to problem (\ref{a1})--(\ref{h1}) obtained in Lemma \ref{th0}.

Let $(\n,u,\te)$ be a smooth solution to the  problem (\ref{a1})--(\ref{h1})  on $\Omega\times (0,T]$ for some fixed time $T>0,$ with  initial data $(\n_0,u_0,\te_0)$ satisfying \eqref{2.1}.
For
$\si(t)\triangleq\min\{1,t\}, $  we  define
$A_i(T)(i= 1,2,3)$ as follows:
  \be\la{As1}
  A_1(T) \triangleq \sup_{t\in[0,T] }\|\nabla u \|_{L^2}^2
  + \int_0^{T}\int  \rho|\dot{u}|^2dxdt,
  \ee
  \be\label{AS1}
  A_2(T) \triangleq \frac{1}{2(\ga-1)}\sup_{t\in[0,T] }\int\n (R\te-\bp)^2dx
  +\int_0^T\left( \|\na u\|_{L^2}^2+\|\na \te\|_{L^2}^2\right)dt,
  \ee
  \be\ba \label{AS2}
  A_3(T) \triangleq &\sup_{t\in(0,T]}\left(\si \|\na u\|_{L^2}^2+\si^2\int\n |\dot u|^2dx + \si^2\|\na\te\|_{L^2}^2 \right)\\
  & + \int_0^T\int\left(\si\n |\dot u|^2 +\sigma^2|\nabla\dot{u}|^2 +\sigma^2\n|\dot \te|^2 \right)dxdt.
  \ea\ee

We have the following key a priori estimates on $(\n,u,\te)$.
\begin{pro}\la{pr1}
For  given numbers $M>0$, $\on> 2,$  and $\bt> 1,$ assume further that $(\rho_0,u_0,\te_0) $  satisfies
\be \la{3.1}
0<\inf \rho_0 \le\sup \rho_0 <\on,\quad 0<\inf \te_0 \le\sup \te_0 \le \bt, \quad \|\na u_0\|_{L^2} \le M.
\ee
Then there exist  positive constants $K$,  $C^\ast$, $\al$,  $\te_\infty$, and $\ep_0$ all depending on $\mu,\,\lambda,\, \ka,\, R,\, \ga,$      $\on,$ $\bt$, $\O$, and $M$ such that if $(\rho,u,\te)$ is a smooth solution to the problem (\ref{a1})--(\ref{h1}) on $\O\times (0,T]$ satisfying
\be \la{z1}
0<  \rho\le 2\on, \,\,\, A_1(T)\le 3 K, \,\,\, A_2(T) \le 2C_0^{1/4}, \,\,\, A_3(T)  \le 2C_0^{1/6},
\ee
the following estimates hold:
\be \la{zs2}
0< \rho\le 3\on/2,\,\,\, A_1(T)\le 2 K, \,\,\, A_2(T) \le C_0^{1/4},  \,\,\, A_3(T)  \le  C_0^{1/6},
\ee
and for any $t\geq1$,
\be\la{h22}
\|\n-1\|_{L^2}+\|u\|_{W^{1,6}}^2+\|\te-\te_\infty\|_{H^2}^2\le C^\ast e^{-\al t},
\ee
provided \be\la{z01}C_0\le \ve_0.\ee      \end{pro}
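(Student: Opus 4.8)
The plan is to run a continuity (bootstrap) argument: starting from the ``weaker'' a priori assumptions \eqref{z1}, we close the estimates by improving every bound in \eqref{zs2} strictly, and then deduce the exponential decay \eqref{h22}. The first and most important step is to exploit the hypothesis $A_2(T)\le 2C_0^{1/4}$ together with the basic energy identity \eqref{11a}. Integrating \eqref{11a} in time, the sign-undetermined right-hand side is controlled by $\int_0^T\|\nabla u\|_{L^2}^2\,dt\le A_2(T)\le 2C_0^{1/4}$, so one gets the ``weaker'' energy bound $E(t)\le CC_0^{1/4}$ (i.e.\ \eqref{weaken}). From this, applying Jensen's inequality (Lemma \ref{je}) to the convex function $\Phi$ associated with $\n\log\n$ and to the thermal part, one extracts the crucial fact that $\overline P=\overline{R\n\te}$ stays in a fixed interval $[\underline{P},\bar{P}_0]$ with positive endpoints (the estimate \eqref{key}). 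Once $\overline P$ is pinned between positive constants, it can play the role that $\overline\te$ plays in the Cauchy-problem analysis.

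Next I would establish the energy-type estimate on $A_2(T)$. This splits into: (i) estimates for the material derivatives $\dot u$ and $\dot\te$ in the spirit of Hoff and of Cai--Li \cite{Hof1,C-L}, using Lemma \ref{le4}, Lemma \ref{uup1} and the elliptic bound \eqref{h13}--\eqref{h19}; here the delicate point is the boundary integrals (e.g.\ $\int_{\p\O}G(u\cdot\na)u\cdot\na n\cdot u\,dS$), which are handled via the identity $u=u^\perp\times n$ and \eqref{cd} to trade the unavailable $\|\na^2u\|_{L^p}$ for norms of $\na u$ and $\na G$; (ii) a separate short-time ($t\le\si(T)$) bound on $E(t)$ obtained by combining the evolution equation \eqref{pt} for $\overline P$ with an $L^1(0,\si(T);L^\infty)$ bound on $\te$; and (iii) the key observation that $R\te-\overline P$ is controlled by $C_0$ plus $\|\na\te\|_{L^2}$ (the bound \eqref{a2.17}). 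Carefully combining (i)--(iii) with a suitable linear combination of kinetic and thermal energies \eqref{a2.23} should yield $A_2(T)\le CC_0^{7/24}$ (\eqref{kyu1}), hence $A_2(T)\le C_0^{1/4}$ once $C_0\le\ve_0$, which is the strict improvement we need.

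Then comes the time-independent upper bound on $\n$, the third difficulty. Rewriting the continuity equation with $\overline P$ inserted,
\[
(2\mu+\lambda)D_t\n=-\overline P\,\n(\n-1)-\n^2(R\te-\overline P)-\n G,
\]
and using the uniform positive bounds on $\overline P$ from \eqref{key}, the Gr\"onwall-type Lemma \ref{le1} applied along particle trajectories gives $\n\le 3\on/2$, provided the forcing terms $\|R\te-\overline P\|$ and $\|G\|$ (controlled through $A_2(T)$, $A_3(T)$ and Lemma \ref{le4}) are small in the appropriate time-integral norms; smallness is again bought from $C_0\le\ve_0$. With this in hand, the estimates on $A_1(T)$ and $A_3(T)$ follow by testing the momentum equation with $\dot u$ (resp.\ $u_t$) and the temperature equation with $\dot\te$ (resp.\ $\te_t$), exactly as in the barotropic case but now using $\overline P$ in place of $\overline\te$; one recovers $A_1(T)\le 2K$ and $A_3(T)\le C_0^{1/6}$. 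Finally, the exponential decay \eqref{h22}: once the a priori bounds are closed, the basic energy $E(t)$ (now genuinely small, of order $C_0$, since the weak bound has been upgraded) satisfies a differential inequality $E'(t)+\al_0 E(t)\le 0$ up to lower-order terms, which one converts into $\|\n-1\|_{L^2}^2+\|\na u\|_{L^2}^2+\|\na\te\|_{L^2}^2\le Ce^{-\al t}$; bootstrapping through Lemmas \ref{le4}, \ref{zhle} and Lemma \ref{th00} (the Bogovskii operator, used to handle $\overline P-\text{const}$) upgrades this to the $W^{1,6}$ and $H^2$ norms in \eqref{h22}. The main obstacle throughout is step (ii)--(iii) of the $A_2(T)$ estimate: without the usual $E(t)\le CC_0$ bound, controlling the spatial derivatives of $u$ and $\te$ in $L^2(\O\times(0,T))$ requires the new short-time argument for $\overline P$ and the $R\te-\overline P$ identity, and getting the powers of $C_0$ to line up ($C_0^{1/4}$, $C_0^{1/6}$, $C_0^{7/24}$) so that every assumption in \eqref{z1} is strictly improved is the technical heart of the proof.
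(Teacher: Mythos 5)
Your plan coincides with the paper's own proof: the chain you describe (the ``weaker'' energy bound $E(t)\le CC_0^{1/4}$ extracted from the a priori $A_2$ bound, Jensen's inequality to pin $\overline P$ between positive constants as in \eqref{key}, the Hoff-type estimates on $\dot u,\dot\te$ with the $u=u^{\perp}\times n$ boundary trick, the short-time energy bound together with \eqref{a2.17} to close $A_2(T)\le CC_0^{7/24}$, the Gr\"onwall-type lemma applied to the $\overline P$-rewritten continuity equation for the density bound, and the Bogovskii-based Lyapunov functional for the exponential decay) is exactly how Lemmas \ref{a13.1}--\ref{pr2} proceed. The only slight misstatement is your parenthetical claim that the decay step rests on upgrading the global energy bound to order $C_0$: the paper never improves $E(t)\le CC_0^{1/4}$ for all time (only $E\le CC_0$ on $[0,\sigma(T)]$ via \eqref{a2.121}), and the decay instead closes because the right-hand side of the Lyapunov inequality carries the small factor $C_0^{1/24}$, which is consistent with the mechanism you otherwise describe.
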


\begin{proof}
Proposition \ref{pr1} is a straight consequence of
the following Lemmas \ref{le2}, \ref{le6}, \ref{le3}, \ref{le7}, and \ref{pr2}  with $\ve_0$ as in (\ref{t7}).
\end{proof}

In this section, we always assume that $C_0\le 1$ and let $C$ denote some generic positive constant depending only on $\mu$,  $\lambda$, $\ka$,  $R$, $\ga$, $\on$, $\bt$, $\O,$ and $M,$ and we write $C(\al)$ to emphasize that $C$ may depend  on $\al.$

To begin with, we have the following uniform estimate on $\bp$, which plays an important role in the whole analysis.

\begin{lemma}\la{a13.1} Under the conditions of Proposition \ref{pr1}, there exists a positive constant $C$ depending only on $\mu$, $R$, and $\on$ such that if $(\rho,u,\te)$ is a smooth solution to the problem (\ref{a1})--(\ref{h1})  on $\Omega\times (0,T] $ satisfying
\be\la{3.q2}
0<\n\le 2\on ,\quad A_2(T)\le 2C_0^{1/4},
\ee
the following estimates hold:
\be \la{a2.112}
\sup_{0\le t\le T}\int\left( \n |u|^2+(\n-\tn)^2\right)dx \le C  C_0^{1/4},
\ee
and
\be \la{key}
0<\pi_1  \le \overline P (t)\le \pi_2,~~~\mbox{for~any}~t\in [0,T],
\ee
where  $\pi_1 $ and  $\pi_2$ are  positive constants depending only on $\mu$, $\gamma$, $R$, and $\O$.
\end{lemma}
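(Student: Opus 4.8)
The plan is to derive the $L^2$-in-space bounds on $\sqrt{\n}\,u$ and $\n - 1$ directly from the ``weaker'' basic energy estimate \eqref{weaken} (i.e.\ $E(t) \le C C_0^{1/4}$), which in turn must first be established from \eqref{11a} under the a priori assumption $A_2(T)\le 2C_0^{1/4}$. Concretely, I would start by integrating \eqref{11a} in time: the dissipation terms on the left are nonnegative, and the sign-undetermined right-hand side $-\mu\int(|\curl u|^2 + 2(\div u)^2 - 2|\mathfrak{D}(u)|^2)\,dx$ can be controlled in $L^1(0,T)$ by $C\int_0^T\|\na u\|_{L^2}^2\,dt \le C A_2(T) \le C C_0^{1/4}$. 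Since $E(0) = C_0$, this yields $\sup_{[0,T]} E(t) \le C_0 + C C_0^{1/4} \le C C_0^{1/4}$ (using $C_0 \le 1$), which is \eqref{weaken}.

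Next I would extract \eqref{a2.112} from \eqref{weaken} via elementary convexity. The kinetic part $\frac12\int\n|u|^2\,dx \le E(t) \le C C_0^{1/4}$ is immediate. For the density part, the function $s \mapsto 1 + s\log s - s$ is nonnegative, vanishes only at $s=1$, and is bounded below by $c\min\{(s-1)^2, s\}$ on $[0,\infty)$ — in fact, since $0 \le \n \le 2\on$, we have $1 + \n\log\n - \n \ge c(\on)(\n-1)^2$ on that bounded range (the function is convex and behaves quadratically near $s=1$, and on a bounded interval away from $0$ it is comparable to $(s-1)^2$, while near $s=0$ it is still bounded below by a multiple of $(s-1)^2$ since $(s-1)^2 \le 1$ there). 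Hence $\int(\n-1)^2\,dx \le C(\on)\int(1+\n\log\n - \n)\,dx \le C E(t) \le C C_0^{1/4}$, giving \eqref{a2.112}.

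For \eqref{key}, the idea is to apply Jensen's inequality (Lemma \ref{je}) to relate $\overline{P} = \overline{R\n\te}$ to the thermal energy. Using the normalization $\overline{\n} = 1$ and the probability measure $d\mu = \n\,dx/|\Omega|$ (valid since $\int \n\,dx = |\Omega|$), write $\overline{P} = \frac{R}{|\Omega|}\int \n\te\,dx = \frac{R|\Omega|}{|\Omega|}\int \te\,d\mu$ — more carefully, $\frac{1}{|\Omega|}\int\n\te\,dx = \int \te\,d\mu$. Applying \eqref{jen} with the convex function $\Phi(s) = -\log s$ on $(0,\infty)$ gives $-\log\left(\int\te\,d\mu\right) \le -\int \log\te\,d\mu = -\frac{1}{|\Omega|}\int\n\log\te\,dx$. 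Now the thermal part of $E(t)$ controls $\frac{1}{|\Omega|}\int\n(\te - \log\te - 1)\,dx \le \frac{\ga-1}{R}E(t) \le C C_0^{1/4} \le C$, and since $\int\n\,dx = |\Omega|$ and $\int\n\te\,dx = \frac{|\Omega|}{R}\overline{P}$, one obtains $\frac{1}{|\Omega|}\int\n\log\te\,dx \ge \frac{1}{R}\overline{P} - 1 - C C_0^{1/4}$. Combined with the Jensen bound, $-\log(\frac1R\overline P \cdot \frac{1}{|\Omega|}\cdot|\Omega|)$ — i.e.\ $\log\left(\frac{1}{R}\overline{P}\right) \ge \frac1R\overline P - 1 - C$, which forces $\overline{P}$ to stay in a bounded interval $[\pi_1, \pi_2]$ with $\pi_1 > 0$: indeed the inequality $\log x \ge x - 1 - C$ (with $x = \overline P/R$) is violated both for $x$ too large and for $x$ too small, since $x - 1 - \log x \to +\infty$ at both ends. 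The upper bound $\overline P \le \pi_2$ can alternatively be read off directly: $\overline P = \frac{R}{|\Omega|}\int\n\te\,dx$ and $\int\n\te\,dx$ is controlled because $\int\n(\te - \log\te - 1)\,dx$ is bounded and $\te - \log\te - 1 \ge \frac12\te - C$ for $\te$ large.

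The main obstacle I anticipate is making the lower bound $\overline{P} \ge \pi_1 > 0$ fully rigorous: one must be careful that Jensen's inequality is applied with the correct probability measure (this is exactly why $\overline{\n_0} = 1$ and the conservation of total mass $\int\n\,dx \equiv |\Omega|$ matter), and that the resulting transcendental inequality $\log x \ge x - 1 - C$ genuinely confines $x$ to a compact subset of $(0,\infty)$ depending only on $C$ — hence only on $\mu, \ga, R, \Omega$ once the $C_0^{1/4}$ terms are absorbed using $C_0 \le 1$. The estimates \eqref{a2.112} are comparatively routine. I would also need to double-check that the constant in \eqref{a2.112} depends only on $\mu, R, \on$ as claimed, which it does since the convexity comparison for $1 + \n\log\n - \n$ uses only the upper bound $2\on$ and the energy bound uses only the structure of \eqref{11a}–\eqref{enet}.
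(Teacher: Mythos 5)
Your proposal is correct and follows essentially the same route as the paper: integrate the basic energy identity, absorb the sign-undetermined boundary-induced term via $\int_0^T\|\na u\|_{L^2}^2dt\le A_2(T)\le 2C_0^{1/4}$ to get $E(t)\le CC_0^{1/4}$, use the quadratic lower bound for $1+\n\log\n-\n$ on $[0,2\on]$ for \eqref{a2.112}, and apply Jensen's inequality with the measure $\n\,dx/|\O|$ (using conservation of mass and $\overline{\n_0}=1$) to confine $\overline{\rho\te}$. The only cosmetic difference is that you apply Jensen to $-\log$ and handle the linear terms by hand, whereas the paper applies it directly to the convex function $y-\log y-1$, which yields the same transcendental bound $\overline{\rho\te}-\log\overline{\rho\te}-1\le C$ and hence \eqref{key}.
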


\begin{proof}
First, it follows from (\ref{3.1}) and  (\ref{mn2}) that, for all $ (x,t)\in \Omega\times(0,T),$
\be \la{3.2}
\te(x,t)>0 .
\ee

Note that
\be\notag
\Delta u = \na \div u - \na \times \curl u,
\ee
one can rewrite $(\ref{a1})_2$  as
\be\la{a11}\ba
\n (u_t+  u\cdot \na u )&=(2\mu+\lambda)\na{\rm div}u- \mu \na \times \curl u-\na P.\ea\ee
Adding $(\ref{a11})$ multiplied by $u$ to $(\ref{a1})_3$ multiplied by $1-\te^{-1}$ and  integrating the resulting equality over $\Omega$ by parts,  we obtain
after   using $(\ref{a1})_1$, \eqref{h3}, \eqref{3.2}, and  the boundary conditions \eqref{h1}    that
\be\la{la2.7}\ba
E'(t)
&=-\int \left( \frac{\lambda(\div u)^2+2\mu |\mathfrak{D}(u)|^2}{{\te}}+\ka \frac{|\na \te|^2}{\te^2} \right)dx \\
&\quad - \mu \int \left(|\curl u|^2+2(\div u)^2 - 2 |\mathfrak{D}(u)|^2\right)dx\\
&\le 2\mu \int  |\na u|^2 dx,
\ea\ee
where  $E(t)$ is  the basic energy defined by \eqref{enet}.

Then, integrating \eqref{la2.7} with respect to $t$ over $(0,T)$ and using \eqref{3.q2}, one has
 \be\la{a2.8}\ba
&\sup_{0\le t\le T} E(t)
\le C_0+ 2\mu \int_{0}^{T} \int  |\na u|^2 dxdt\le C C_0^{1/4},\ea\ee
which together with
\be\la{a2.9}\ba
 (\n-1)^2\ge 1+\n\log\n-\n&=(\n-1)^2\int_0^1\frac{1-\al}{\al (\n-1)+1}d\al  \ge \frac{(\n-1)^2}{ 2(2\on+1)  }
 \ea\ee
gives (\ref{a2.112}).

Next,  it is easy to deduce from $\eqref{a1}_1$  and \eqref{m} that for any $t\in[0,T]$,
\be \la{mmm}
\overline\rho(t)=\overline{\n_0}=1.
\ee
Denote $d\mu\triangleq|\Omega|^{-1}\n dx$. Then $d\mu$ is a positive measure satisfying $\mu(\Omega)=1$ due to \eqref{3.q2} and \eqref{mmm}. Moreover, observe that $y-\log y-1$ is a convex function in $(0,\infty)$,  it thus follows directly from Jensen's inequality \eqref{jen} that for any $t\in [0,T]$,
\be \notag
\overline {\rho \te }(t) - \log \overline {\rho \te }(t) -1 \le \int(\te-\log\te-1)\frac{\n dx}{|\Omega|}\le C
\ee due to   \eqref{a2.8}.
This in particular gives  \eqref{key} and finishes the proof of Lemma \ref{a13.1}.

\end{proof}

\begin{remark}
    It should be pointed out that the following term in (\ref{la2.7})
    \begin{equation}\label{xm}
        - \mu \int \left( |\curl u|^2+2(\div u)^2 - 2 |\mathfrak{D}(u)|^2 \right)dx
    \end{equation}
    is a sign-undetermined term due to the slip boundary condition (\ref{bb}), which is in sharp contrast to the Cauchy problem \cite{H-L} where the term (\ref{xm}) vanishes after integration by parts.
    Thus, in this case, we can not bound the basic energy only by the initial energy. However, this term obviously can be bounded by $C \int  |\nabla u|^2dx$, which implies a ``weaker" basic energy estimate (\ref{a2.8}).
\end{remark}


The next lemma provides an estimate on the term $A_1(T)$.
\begin{lemma}\la{le2}
	Under the conditions of Proposition \ref{pr1}, there exist positive constants  $K $  and $\ep_1 $ both depending only  on $\mu,\,\lambda,\, \ka,\, R,\, \ga,\, \on,\,\bt,\, \O,$ and $M$ such that if  $(\rho,u,\te)$ is a smooth solution to the problem  (\ref{a1})--(\ref{h1}) on $\Omega\times (0,T] $ satisfying
	\be\la{3.q1}  0<\n\le 2\on ,\quad A_2(T)\le 2C_0^{1/4},\quad A_1(T)\le 3K,\ee
	the following estimate holds:
	\be\la{h23} A_1(T)\le 2K ,  \ee
	provided   $C_0\le \ep_1.$
\end{lemma}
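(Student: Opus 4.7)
The strategy is the Hoff-type estimate adapted to the slip boundary setting, as in \cite{C-L}, but now accounting for the presence of the temperature. I test the momentum equation $\n\dot u=\mu\Delta u+(\mu+\lambda)\na\div u-\na P$ against $\dot u=u_t+u\cdot\na u$ and integrate over $\O$. Writing $-\Delta u=-\na\div u+\na\times\curl u$ and using the slip boundary conditions $u\cdot n=0,\,\curl u\times n=0$ (which in particular give $u_t\cdot n=0$ on $\p\O$, so that the boundary contributions in the $u_t$-part of the viscous integration by parts vanish), I expect to obtain an identity of the schematic form
\begin{equation*}
\frac{1}{2}\frac{d}{dt}\int\bigl[(2\mu+\lambda)(\div u)^2+\mu|\curl u|^2-2(P-\bp)\div u\bigr]dx+\int\n|\dot u|^2dx=\sum_j J_j,
\end{equation*}
where the $J_j$ collect (a) the convective terms $\int Lu\cdot(u\cdot\na u)\,dx$, (b) the boundary contribution $\int_{\p\O}(2\mu+\lambda)\div u\,(u\cdot\na u\cdot n)\,dS$ produced because $\dot u\cdot n=u\cdot\na u\cdot n=-u\cdot\na n\cdot u\neq0$ on $\p\O$, and (c) the new pressure term $-\int P_t\div u\,dx$ obtained after moving $-\int\na P\cdot u_t\,dx$ under the time derivative.

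Next, I plan to control each $J_j$. For (b) I would employ the Cai--Li trick $u=u^\perp\times n$ with $u^\perp\triangleq -u\times n$, together with the identity \eqref{cd}, to convert the boundary integral into interior quantities; this replaces the unavailable $\|\na^2 u\|_{L^p}$ estimate by combinations of $\|\na u\|_{L^2}$, $\|G\|_{L^p}$, $\|\curl u\|_{L^p}$, which Lemma \ref{le4} bounds by $\|\sqrt\n\dot u\|_{L^2}$ and $\|\na u\|_{L^2}$ and $\|P-\bp\|_{L^p}$. For (a), a standard computation produces cubic terms like $\int|\na u|^3dx$, which I would handle via $\|\na u\|_{L^3}^3\le C\|\na u\|_{L^2}^{3/2}\|\na u\|_{L^6}^{3/2}$ combined with \eqref{h17}, and then absorb the resulting $\|\sqrt\n\dot u\|_{L^2}^{3/2}$ factor into the LHS by Young's inequality. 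For (c), I substitute $P_t=-u\cdot\na P-\ga P\div u+(\ga-1)(\ka\Delta\te+\lambda(\div u)^2+2\mu|\mathfrak{D}(u)|^2)$ obtained from $(\ref{a1})_3$; after integration by parts, the leading new contribution is of the form $\ka(\ga-1)\int\na\te\cdot\na\div u\,dx$, which by Cauchy--Schwarz is bounded by $\veps\|\na\div u\|_{L^2}^2+C_\veps\|\na\te\|_{L^2}^2$ and will be controlled by the bootstrap hypothesis $A_2(T)\le 2C_0^{1/4}$ providing $\int_0^T\|\na\te\|_{L^2}^2dt\le CC_0^{1/4}$.

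Then I would integrate in time. Using \eqref{a2.112} and Lemma \ref{a13.1} (particularly $\bp$ uniformly bounded) to show $\|P-\bp\|_{L^2}^2\le CC_0^{1/4}$, the ``pressure$\times\div u$'' term in the modified energy is absorbed by $(\mu/4)\|\div u\|_{L^2}^2$ plus $CC_0^{1/4}$. A Gr\"onwall-type argument, invoking again the smallness of $\int_0^T(\|\na u\|_{L^2}^2+\|\na\te\|_{L^2}^2)dt\le 2C_0^{1/4}$ and the bootstrap bound $A_1(T)\le 3K$ only to absorb a product of small quantities, should yield
\begin{equation*}
\sup_{t\in[0,T]}\|\na u(t)\|_{L^2}^2+\int_0^T\!\!\int\n|\dot u|^2\,dxdt\le C(M)+C(\on,M,K)\,C_0^{\kappa}
\end{equation*}
for some $\kappa>0$. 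Setting $K\triangleq 2C(M)$ and choosing $\veps_1$ so that $C(\on,M,K)\veps_1^{\kappa}\le K/2$ proves \eqref{h23}.

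The main obstacle is controlling the slip-boundary integral in (b): the naive trace bound would force $\|\na^2 u\|_{L^p}$ on the right-hand side, which is not yet available among the lower-order estimates and would prevent closing the bootstrap. The rewriting $u=u^\perp\times n$ together with identity \eqref{cd}, already used in \cite{C-L}, is the key device that replaces this by interior norms of $G$ and $\curl u$, all of which are handled by Lemma \ref{le4} and the smallness of the bootstrap quantities. A secondary subtlety is that the ``weaker'' basic energy \eqref{weaken} instead of \eqref{edwq1} forces me to track the quarter power $C_0^{1/4}$ through the computation and choose $\kappa>0$ accordingly.
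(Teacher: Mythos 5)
Your overall architecture (modified energy containing $(P-\bp)\div u$, use of the effective viscous flux, and the bookkeeping $C_0^{1/4}$-smallness plus the bootstrap $A_1(T)\le 3K$ to close with $K\sim C(M)$) is the right one, but two concrete steps do not work as written. The decisive one is your treatment of the pressure term (c): after substituting $P_t$ from $(\ref{a1})_3$ you pair $\ka(\ga-1)\Delta\te$ with $\div u$ and integrate by parts, producing $\ka(\ga-1)\int\na\te\cdot\na\div u\,dx$, which you propose to bound by $\varepsilon\|\na\div u\|_{L^2}^2+C_\varepsilon\|\na\te\|_{L^2}^2$. There is nothing on your left-hand side that can absorb $\varepsilon\|\na\div u\|_{L^2}^2$: the dissipation you have is $\int\rho|\dot u|^2dx$, not an $H^2$-norm of $u$, and $\|\na\div u\|_{L^2}$ can only be traded for $\|\rho\dot u\|_{L^2}+\|\na P\|_{L^2}$, where $\|\na P\|_{L^2}$ (hence $\na\rho$) is a higher-order quantity not controlled by $(\ref{z1})$. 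The same defect appears in the convective part $u\cdot\na P$ of $P_t$ when tested against $\div u$. The paper's fix, which is the key device of this lemma, is to substitute $\div u=\frac{G+(P-\bp)}{2\mu+\lambda}$ from \eqref{hj1} so that $(P-\bp)_t$ is tested against $G$ (see \eqref{hh17}--\eqref{a16}): then $\int\Delta\te\,G\,dx=-\int\na\te\cdot\na G\,dx$ (using $\na\te\cdot n=0$) is bounded by $\|\na\te\|_{L^2}\|\na G\|_{L^2}\le C\|\rho\dot u\|_{L^2}\|\na\te\|_{L^2}$ via \eqref{h19} and can be absorbed, and $\int\div(Pu)G\,dx=-\int Pu\cdot\na G\,dx$ is handled the same way. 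Without routing the $P_t$ term through $G$, the estimate does not close.

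A secondary problem is the boundary term in (b). If you test against $\dot u$ with the viscous term in the form $(2\mu+\lambda)\na\div u-\mu\na\times\curl u$, the boundary contribution is $\int_{\p\O}(2\mu+\lambda)\div u\,(u\cdot\na n\cdot u)\,dS$, and its $(P-\bp)$-part (after writing $(2\mu+\lambda)\div u=G+(P-\bp)$) cannot be controlled: both the trace bound and the Cai--Li rewriting $u=u^{\perp}\times n$ with \eqref{cd} transfer a derivative onto $P-\bp$, i.e. again onto $\na P$. Only the $G$-part is harmless (as in \eqref{b2}). Note that the paper's actual proof of this lemma sidesteps boundary terms entirely by taking $2u_t$ (not $\dot u$) as the multiplier, since $u_t\cdot n=0$ on $\p\O$; the $u^{\perp}\times n$ device is needed only later, for the material-derivative ($A_3$) estimates in Lemma \ref{a113.4}. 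If you insist on the $\dot u$ multiplier, you should start from $\rho\dot u=\na G-\mu\na\times\curl u$ so that only $G$ appears on the boundary, and then extract the time-derivative structure from $\int G\,\div\dot u\,dx$ carefully; as written, your argument has a genuine gap at both points above.
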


\begin{proof}
 First, integrating $(\ref{a11})$  multiplied by $2u_t $ over $\Omega $ by parts gives
 \be\ba \la{hh17}
 &\frac{d}{dt}\int \left(  {\mu} |\curl u|^2+ (2\mu+\lambda)(\div u)^2\right)dx+ \int\rho |\dot u|^2dx \\
&\le 2\int  P\div u_t dx+ \int \n|u\cdot \na u|^2dx\\
 &=  2\frac{d}{dt}\int  (P-\overline P) \div u  dx-2\int (P-\overline{P})_t \div u dx+\int \n|u\cdot \na u|^2dx\\
 &= 2\frac{d}{dt}\int  (P-\overline P) \div u  dx-\frac{1}{2\mu+\lambda}\frac{d}{dt}\int (P-\overline P)^2 dx\\
 &\quad-\frac{2}{2\mu+\lambda}\int (P-\overline{P})_t G dx+ \int \n|u\cdot \na u|^2dx ,
 \ea\ee
where in the last equality  we have used
(\ref{hj1}).

Next,   straight calculations show
that for any $p\in [2,6],$
\be\la{pq}\ba  \|R\te- \overline P \|_{L^p}
  \le R\|\te- \overline\te \|_{L^p}+C|R\overline\te-\overline P|\le C(\hat{\n}) \|\na \te\|_{L^2},\ea\ee
where one has used (\ref{g1}) and the following fact:
\be\notag\ba
|R\overline\te-\overline P|=&\frac{R}{|\O|}\xl|\int(1-\n)\te dx\xr|
=  \frac{R}{|\O|}\xl|\int(1-\n)(\te-\overline\te)dx\xr|\\
\le&C\|\n-1\|_{L^2}\| \te-\overline\te\|_{L^2}\\
\le&C(\on)C_0^{1/8}\|\na \te\|_{L^2}\ea\ee
due to   (\ref{mmm}) and \eqref{a2.112}. Thus, it follows from \eqref{key}, \eqref{3.q1}, \eqref{a2.112}, and \eqref{pq} that for any $p\in[2,6]$,
\be\la{p}\ba  \|P-\overline P \|_{L^p}&= \| \n(R\te-
\overline P )+ (
 \n -1)\overline P\|_{L^p}\\&\le \|\n(R\te-\overline P )\|_{L^2}^{(6-p)/(2p)}
 \|\n(R\te-\overline P )\|_{L^6}^{ 3(p-2)/(2p)}+ \pi_2 \|\n-1\|_{L^p}
 \\&\le C(\on)C_0^{(6-p)/(16p)}
 \|\na\te \|_{L^2}^{ 3(p-2)/(2p)}+ C(\hat\n)C_0^{1/(4p)},
  \ea\ee
which together with (\ref{h17}) and \eqref{3.q1} yields
\be \la{3.30}  \|\na
u\|_{L^6} \le C(\hat\n) \left(  \|\n^{1/2}\dot
u\|_{L^2}+\|\na u\|_{L^2}+ \|\na \te\|_{L^2}+C_0^{1/24}\right).
\ee

Note that (\ref{a1})$_3$ implies
\be \la{op3} \ba
P_t=&-\div (Pu) -(\gamma-1) P\div u+(\ga-1)\ka \Delta\te\\&+(\ga-1)\left(\lambda (\div u)^2+2\mu |\mathfrak{D}(u)|^2\right),
\ea\ee
which along with \eqref{h1} gives
\be \ba \la{pt}
\bp_t =&
-(\gamma-1) \overline{P\div u} +(\ga-1)\left(\lambda
\overline{(\div u)^2}+2\mu \overline{|\mathfrak{D}(u)|^2}\right).
\ea \ee
We thus obtain after  using  integration by parts, (\ref{key}),  (\ref{g1}), (\ref{h19}), and  (\ref{p})--(\ref{op3}) that
\be\la{a16}\ba
&\left|\int   P_t Gdx\right| \\
&\le C\int P(|G||\na u|+ |u||\na G|)dx+ C\int\left( |\na\te||\na G|+|\na u|^2|G|\right)dx \\
&\le  C\int |P-  \bp| (|G||\na u|+|u||\na G|)dx +C\bp\int (|G||\na u|+|u||\na G|)dx \\
&\quad + C \|\na G\|_{L^2} \|\na \te\|_{L^2} + C \| G\|_{L^6} \|\na u\|_{L^2}^{3/2}  \|\na u\|_{L^6}^{1/2} \\
&\le C(\hat \n)(\|\na \te\|_{L^2}^{1/2}+1)\|\na G\|_{L^2}\|\na u\|_{L^2}+ C \|\na G\|_{L^2} \|\na \te\|_{L^2} \\
& \quad+ C(\hat\n) \|\na G\|_{L^2} \|\na u\|_{L^2}^{3/2} \left(\|\n^{1/2}\dot u\|_{L^2}+\|\na u\|_{L^2}+\|\na\te\|_{L^2}+1 \right)^{1/2} \\
&\le \de \|\na G\|_{L^2}^2 +\de \|\rho^{1/2}\dot u\|^2_{L^2}+C(\de,\on) \left( \|\na u\|_{L^2}^2+ \|\na \te\|_{L^2}^2+ \|\na u\|^6_{L^2}\right) \\
&\le C(\on)\de\|\rho^{1/2} \dot u\|^2_{L^2}     +C(\de,\on) \left( \|\na u\|_{L^2}^2+ \|\na \te\|_{L^2}^2 +\|\na  u\|^6_{L^2}\right),
\ea\ee
and
\be\la{a161}\ba   \left|\int   \bp_t  Gdx\right|
\le& C(\on)(\| \na u\|_{L^2}+\| \na u\|_{L^2}^2)\| G\|_{L^2}
\\  \le& \de \|\na G\|_{L^2}^2  +C(\de,\on)(\|\na u\|_{L^2}^2+\|\na u\|_{L^2}^4)
\\\le&  C(\on)\de\|\rho^{1/2}
\dot u\|^2_{L^2}     +C(\de,\on) (\|\na u\|_{L^2}^2+\|\na u\|_{L^2}^4),
\ea\ee
where one has used
\be\la{511} \ba
|\overline P_t|
&\le C\| P-\bp\|_{L^2} \| \na u\|_{L^2} + C \|\na u\|_{L^2}^2 \le C(\on)(C^{1/8}_0\| \na u\|_{L^2}+\| \na u\|_{L^2}^2)
\ea \ee
owing to \eqref{pt} and \eqref{p}.

Then, it follows from (\ref{g1}), \eqref{3.q1}, and (\ref{3.30}) that
\be\la{op1}\ba
\int \n|u\cdot \na u|^2dx&\le C(\on)\|u\|_{L^6}^2 \|\na u\|_{L^2} \|\na u\|_{L^6}  \\
&\le \de\|\rho^{1/2} \dot u\|_{L^2}^2+ C(\de,\on)\left(\|\na u\|_{L^2}^2+\|\na\te\|_{L^2}^2+\|\na
u\|_{L^2}^6\right).
\ea\ee

Finally, substituting (\ref{a16}), \eqref{a161}, and (\ref{op1}) into (\ref{hh17}), one obtains after choosing $\de$ suitably small that
\be\ba \la{1hh17}
&\frac{d}{dt}\int \left(  {\mu} |\curl u|^2+ (2\mu+\lambda)(\div u)^2\right)dx+ \frac{1}{2\mu+\lambda}\frac{d}{dt}  \|P-\overline P\|_{L^2}^2  + \frac{1}{2}\int\rho |\dot u|^2dx \\
& \le 2\frac{d}{dt}\int  (P-\overline P) \div u  dx + C(\on)\left(\|\na u\|_{L^2}^2+\|\na\te\|_{L^2}^2+\|\na
u\|_{L^2}^6\right).
\ea\ee

Note that it holds
\be\la{cz}\ba \|P-\overline P\|_{L^2}^2(0)&=R^2\int(\n_0\te_0-\overline{\n_0\te_0})^2dx\\
&\le C(\on)\int\n_0(\te_0-\overline{\n_0\te_0})^2dx+C\int(\n_0-1)^2dx\\
&\le C(\on, \bt)C_0,\ea\ee
where we have used \eqref{key}, \eqref{a2.9}, and the following fact:
\be \ba \label{jia10} \int \n_0(\te_0- \overline{\n_0\te_0})^2dx &\le C\int \n_0(\te_0-1)^2dx+C|1-\overline{\n_0\te_0}|^2\\
&\le  C\int \n_0(\te_0-1)^2dx+C\left|\int\n_0(1-\te_0)dx\right|^2\\
&\le  C(\hat\n,\bt)\int \n_0(\te_0-\log\te_0-1) dx\\
&\le  C(\hat\n, \bt)C_0
\ea\ee
due to \eqref{m} and
 \be\la{cz1}\te-\log\te-1 =(\te-1)^2\int_0^1\frac{\al}{\al (\te-1)+1}d\al\geq\frac{1}{2(\|\te(\cdot,t)\|_{L^{\infty}}+1)}(\te-1)^2.\ee
Then, integrating (\ref{1hh17}) over $(0,T)$, one deduces from (\ref{h18}), \eqref{cz}, (\ref{3.q1}), and \eqref{p} that
\bnn\la{h81} \ba
&\sup_{0\le t\le T}\|\na u\|_{L^2}^2+ \int_0^{T}\int\rho|\dot{u}|^2dxdt\\
&\le CM^2+C(\on,\hat\te)C_0^{1/4} + C(\on ) C_0^{1/4}\sup_{0\le t\le T}\|\na u\|_{L^2}^4 + C(\on ) C_0^{1/8}\sup_{0\le t\le T}\|\na u\|_{L^2}\\
&\le CM^2+C(\on,\hat\te)C_0^{1/12}+ C(\on ) C_0^{1/4}\sup_{0\le t\le T}\|\na u\|_{L^2}^4 \\
&\le K+9C(\on)C_0^{1/4}K^2 \\
&\le 2K,
\ea \enn
with $K\triangleq CM^2+C(\on,\hat\te) +1$, provided
\be\notag C_0\le \ep_1 \triangleq \min\left\{1,\xl(9C(\on)K\xr)^{-4}\right\}.\ee
The proof of Lemma \ref{le2} is completed.
\end{proof}

Next,  to estimate $A_3(T)$, we adopt the approach due to Hoff \cite{Hof1} (see also Huang-Li \cite{H-L})  to establish the following elementary estimates on $\dot u$ and $\dot \te$, where the boundary terms are handled by the ideas due to \cite{C-L}.
The estimate of $A_3(T)$ will be postponed to Lemma \ref{le6}.

\begin{lemma}\la{a113.4}
	Under the conditions of Proposition \ref{pr1}, let $(\rho,u,\te)$ be a smooth solution to the problem (\ref{a1})--(\ref{h1}) on $\Omega\times (0,T] $ satisfying (\ref{z1}) with $K$ as in Lemma \ref{le2}.  Then  there exist positive constants $C$, $ C_1$, and $C_2$ depending only on $\mu,\,\lambda, \,k,\, R,\, \ga,\, \on,\,\bt,\,\O,$ and $ M$  such that, for any $\eta\in (0,1]$ and $m\geq0,$
the following estimates hold:
\be\ba  \la{an1}
(\sigma B_1)'(t) + \frac{1}{2}\sigma \int \rho |\dot u|^2dx
\le   C C_0^{1/4} \sigma' + C\left(\|\na u\|_{L^2}^2+\|\na\te\|_{L^2}^2\right),
\ea\ee
\be\la{ae0}\ba
&\left(\sigma^{m}\|\rho^{1/2}\dot{u}\|_{L^2}^2\right)_t+C_1 \sigma^{m}\|\na\dot{u}\|_{L^2}^2\\
&\le - 2\left(\int_{\p \O}  \sigma^m (u \cdot \na n \cdot u) G dS\right)_t + C(\si^{m-1}\si'+\si^m)  \|\rho^{1/2} \dot u\|_{L^2}^2 \\&\quad+
C_2  \si^m \|\rho^{1/2} \dot \te\|_{L^2}^2+ C\|\na u\|^2_{L^2}+C \si^m \|\na u\|^4_{L^4} + C \si^m  \|\te \na u\|_{L^2}^2,\\\ea\ee
  and
 \be\la{nle7}\ba  &(\si^mB_2 )'(t)+\si^m \int\n|\dot \te|^2dx\\
&\le C \eta \si^m\|\na\dot u\|_{L^2}^2+C \|\na
\te \|_{L^2}^2+C\si^m \|\na u\|_{L^4}^4+C(\eta)\si^m \|\te\na u\|_{L^2}^2,\ea\ee
where
\be \la{an2} \ba B_1(t)\triangleq& \mu\|\curl u\|_{L^2}^2+ (2\mu+\lambda) \|\div u\|_{L^2}^2 \\&+ \frac{1}{2\mu+\lambda}  \|P-\overline P\|_{L^2}^2 -2 \int \div u(P-\bp) dx, \ea\ee
and
 \be\la{e6}
B_2(t)\triangleq\frac{\ga-1}{R}\left(\ka \|\na
\te\|_{L^2}^2-2 \int (\lambda (\div u)^2+2\mu|\mathfrak{D}(u)|^2)\te dx\right).\ee
\end{lemma}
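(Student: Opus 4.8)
The plan is to prove the three differential inequalities separately, all starting from the material‑derivative form of \eqref{a1} and using freely the a priori bounds \eqref{z1}, the div–curl and effective‑flux estimates of Lemmas \ref{le4} and \ref{uup1}, and the pressure bounds \eqref{pq}--\eqref{p}. Estimate \eqref{an1} is almost immediate: the quantity $B_1$ in \eqref{an2} is precisely the combination differentiated on the left of \eqref{1hh17} (derived in the course of proving Lemma \ref{le2}), so that inequality reads $B_1'(t)+\tfrac12\int\rho|\dot u|^2dx\le C(\|\na u\|_{L^2}^2+\|\na\te\|_{L^2}^2+\|\na u\|_{L^2}^6)$. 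Multiplying by $\si$ and expanding $(\si B_1)'=\si B_1'+\si'B_1$, I would use $\si\le1$ together with $A_1(T)\le 3K$ to replace $\si\|\na u\|_{L^2}^6$ by $C\|\na u\|_{L^2}^2$, and control $\si'B_1$ via $\si'\le1$, the bound $B_1\le C\|\na u\|_{L^2}^2+C\|P-\overline P\|_{L^2}^2$, and the smallness $\|P-\overline P\|_{L^2}^2\le CC_0^{1/4}$ coming from \eqref{p}; absorbing the $C\|\na u\|_{L^2}^2$ piece on the right leaves exactly the claimed $CC_0^{1/4}\si'$.

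For \eqref{ae0} I would operate $D_t=\pa_t+u\cdot\na$ on $(\ref{a1})_2$. Writing $\rho\dot u^j=F^j$ with $F=\na G-\mu\na\times\curl u$ (using \eqref{hj1} and $\na\overline P=0$), the continuity equation yields $\rho\,\pa_t\dot u^j+\rho u\cdot\na\dot u^j=\dot F^j+F^j\div u$, so multiplying by $\si^m\dot u^j$ and integrating gives that $(\si^m\|\rho^{1/2}\dot u\|_{L^2}^2)_t$ is controlled by $C\si^{m-1}\si'\|\rho^{1/2}\dot u\|_{L^2}^2+2\si^m\int\dot u^j\dot F^j\,dx+2\si^m\int\rho|\dot u|^2\div u\,dx$. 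Using the commutator identity $D_t(\pa_ig)=\pa_i(D_tg)-\pa_iu^k\pa_kg$ on $F$, the principal part of $\si^m\int\dot u^j\dot F^j$ produces the dissipation $-\si^m(\mu\|\na\dot u\|_{L^2}^2+(\mu+\lambda)\|\div\dot u\|_{L^2}^2)\le-C_1\si^m\|\na\dot u\|_{L^2}^2$ (note $\mu+\lambda\ge\mu/3>0$ by \eqref{h3}), plus cubic terms in $\na u$ and a $G$‑contribution carrying $\dot G$. Since $P=R\rho\te$ and $\dot\rho=-\rho\div u$ give the clean identity $\dot P=R\rho\dot\te-P\div u$, the factor $R\rho\dot\te$ appearing here (after $\rho\le 2\on$ and Cauchy–Schwarz) is exactly the source of the term $C_2\si^m\|\rho^{1/2}\dot\te\|_{L^2}^2$ in \eqref{ae0}; the cubic, pressure, and $\si^m\int\rho|\dot u|^2\div u$ terms are then bounded by \eqref{g1}, \eqref{h17}, \eqref{h19}, \eqref{tb90}, and \eqref{z1}.

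The one genuinely delicate point — and the main obstacle — is the boundary integrals in \eqref{ae0}. Integrating $\si^m\int\dot u^j\Delta\dot u^j$ and $\si^m\int\dot u^j\pa_j\div\dot u$ by parts creates surface terms because $\dot u\cdot n$ no longer vanishes on $\p\O$; differentiating $u\cdot n=0$ along $u$ gives the boundary identity $\dot u\cdot n=-u\cdot\na n\cdot u$, which converts one of these into $-2\bigl(\int_{\p\O}\si^m(u\cdot\na n\cdot u)G\,dS\bigr)_t$ together with leftovers such as $\int_{\p\O}G(u\cdot\na)u\cdot\na n\cdot u\,dS$. Estimating these leftovers by the trace theorem alone would require an $L^p$‑bound on $\na^2u$, which is unavailable at this stage; following \cite{C-L} I would instead write $u=u^{\perp}\times n$ with $u^{\perp}=-u\times n$ and use \eqref{cd}, $\div(\na u^i\times u^{\perp})=-\na u^i\cdot\na\times u^{\perp}$, to re‑express such integrals as interior integrals involving only $\na u$ and $\na G$, which are controlled via \eqref{h19}, Lemma \ref{uup1}, and \eqref{z1}. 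This completes \eqref{ae0}.

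For \eqref{nle7} I would multiply $(\ref{a1})_3$ by $\si^m\dot\te$ and integrate over $\Omega$. In $\ka\int\Delta\te\,\dot\te\,dx$ I split $\dot\te=\te_t+u\cdot\na\te$ and use $\na\te\cdot n=0$ to get $-\tfrac{\ka}{2}\si^m\tfrac{d}{dt}\|\na\te\|_{L^2}^2$ plus convective terms $-\ka\si^m\int\na\te\cdot\na(u\cdot\na\te)\,dx$ bounded by $C\si^m\|\na u\|_{L^4}^4+C\|\na\te\|_{L^2}^2+C(\eta)\si^m\|\te\na u\|_{L^2}^2$‑type quantities via \eqref{g1}. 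In $\int(\lambda(\div u)^2+2\mu|\mathfrak{D}(u)|^2)\te_t\,dx$ I integrate by parts in time, producing $\tfrac{d}{dt}\int(\lambda(\div u)^2+2\mu|\mathfrak{D}(u)|^2)\te\,dx$ — which together with the heat term assembles $B_2$ of \eqref{e6} up to the factor $\tfrac{\ga-1}{R}$ — and a remainder $-\int(\lambda(\div u)^2+2\mu|\mathfrak{D}(u)|^2)_t\te\,dx$; writing $\na u_t=\na\dot u-\na(u\cdot\na u)$ bounds the remainder by $C\eta\si^m\|\na\dot u\|_{L^2}^2+C(\eta)\si^m\|\te\na u\|_{L^2}^2+\cdots$. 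Finally $-\int P\div u\,\dot\te\,dx$ and the $u\cdot\na\te$ part of the viscous term contribute the $C\si^m\|\na u\|_{L^4}^4$ and $C(\eta)\si^m\|\te\na u\|_{L^2}^2$ pieces, using $\|R\te-\overline P\|_{L^2}\le C\|\na\te\|_{L^2}$ from \eqref{pq} and $\overline P\le\pi_2$; collecting all terms gives \eqref{nle7}.
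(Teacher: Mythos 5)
Your proposal is correct and follows essentially the same route as the paper: \eqref{an1} by multiplying \eqref{1hh17} by $\si$ and using \eqref{p}, \eqref{z1}; \eqref{ae0} by the Hoff-type material-derivative estimate applied to $\rho\dot u=\na G-\mu\na\times\curl u$, with the boundary identity $\dot u\cdot n=-u\cdot\na n\cdot u$, the identity $\dot P=R\rho\dot\te-P\div u$ producing the $C_2\si^m\|\rho^{1/2}\dot\te\|_{L^2}^2$ term, and the $u=u^{\perp}\times n$ trick \eqref{cd} to turn the leftover surface integrals into interior integrals in $\na u$ and $\na G$; and \eqref{nle7} by testing $(\ref{a1})_3$ with $\si^m\dot\te$ and integrating the viscous terms by parts in time to assemble $B_2$. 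The only imprecision is that, with the slip boundary conditions, the principal dissipation comes out in the div--curl form $(2\mu+\lambda)\si^m\|\div\dot u\|_{L^2}^2+\mu\si^m\|\curl\dot u\|_{L^2}^2$ rather than $\mu\|\na\dot u\|_{L^2}^2+(\mu+\lambda)\|\div\dot u\|_{L^2}^2$, and one then recovers $C_1\si^m\|\na\dot u\|_{L^2}^2$ through \eqref{tb11}, which is precisely what generates the admissible $\si^m\|\na u\|_{L^4}^4$ term in \eqref{ae0}.
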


\begin{proof}   First, multiplying \eqref{1hh17} by $\sigma$ and using \eqref{p} and \eqref{z1} give \eqref{an1} directly.

Now, we will prove  (\ref{ae0}).

First, one can rewrite $(\ref{a1})_2$  as
\be\la{a111}\ba\n \dot u&=\na G- \mu \na \times \curl u,
\ea\ee
with $G$ defined in \eqref{hj1}.
For $m\ge 0,$ operating $ \si^m\dot u^j[\pa/\pa t+\div (u\cdot)]$ to $ (\ref{a111})^j$ and integrating the resulting equality over $\Omega$ by parts lead to
\be\la{m4} \ba
& \left(\frac{\sigma^m}{2}\int\rho|\dot{u}|^2dx \right)_t -\frac{m}{2}\sigma^{m-1}\si'\int\rho|\dot{u}|^2dx\\
 &=  \int_{\p \O}  \sigma^m \dot{u} \cdot n G_t dS - \int  \sigma^m [\div \dot{u} G_t + u \cdot \na \dot u \cdot \na G]dx \\
&\quad- \mu \int\sigma^m\dot{u}^j\xl[(\na \times \curl u)_t^j + \div (u (\na \times \curl u)^j)\xr] dx  \triangleq\sum_{i=1}^{3}N_i.
\ea \ee

Noticing that
\be\la{pzw1} u\cdot\nabla u\cdot n=-u\cdot\nabla n\cdot u ~~\quad \mbox{on}~\p \O\ee
due to $u \cdot n|_{\p \O}=0$,
one can deduce from \eqref{h1} and \eqref{pzw1} that
\be \la{bz8}\ba
N_1&=- \int_{\p \O}  \sigma^m (u \cdot \na n \cdot u) G_t dS\\
&=- \left(\int_{\p \O} \sigma^m (u \cdot \na n \cdot u) G dS\right)_t + m \si^{m-1} \si' \int_{\p \O}( u \cdot \na n \cdot u) G dS \\
&\quad+  \int_{\p \O}  \sigma^m (\dot u \cdot \na n \cdot u) G dS+ \int_{\p \O}  \sigma^m ( u \cdot \na      n \cdot \dot u) G dS\\
&\quad-  \int_{\p \O}  \sigma^m G( u \cdot \na) u \cdot \na n \cdot u  dS -\int_{\p \O}  \sigma^m G u \cdot \na n \cdot (u\cdot \na )u  dS\\
&\le - \left(\int_{\p \O}  \sigma^m (u \cdot \na n \cdot u) G dS\right)_t + C\si^{m-1} \si' \| \na u\|_{L^2}^2\|\na G\|_{L^2} \\
&\quad+\de \si^m \|\dot u\|_{H^1}^2+ C(\de) \si^m \|\na u\|_{L^2}^2 \|\na G\|_{L^2}^2\\
& \quad- \int_{\p \O}  \sigma^m G( u \cdot \na) u \cdot \na n \cdot u  dS -\int_{\p \O}  \sigma^m G u \cdot \na n \cdot (u\cdot \na )u  dS,
\ea \ee
where one has used
\be \ba\notag
\left|\int_{\partial \O}  (\dot u\cdot \na n\cdot u+ u\cdot \na n\cdot\dot u) G dS\right| &\le C \|\dot u\|_{H^1} \| u\|_{H^1} \|G\|_{H^1} \\&\le C \|\dot u\|_{H^1} \|\na u\|_{L^2} \|\na G\|_{L^2},
\ea \ee
and
\be\la{b2}
\left|\int_{\partial \O}  ( u\cdot \na n\cdot u) G dS  \right| \le C \|\na u\|_{L^2} ^2\|\na G\|_{L^2}.
\ee
Now, we will adopt the idea in \cite{C-L} to deal with the last two boundary terms in \eqref{bz8}. In fact, denote $u^{\perp}\triangleq-u\times n$, it follows from
$u \cdot n|_{\p \O}=0$ that
\begin{align}\notag
u=u^{\perp} \times n~~~~~\text{on}\, \p \O,
\end{align}
which along with \eqref{g1}, \eqref{cd}, and integration by parts yields
\be \la{bz3}\ba &- \int_{\partial\Omega} G (u\cdot \na) u\cdot\na n\cdot u dS \\&= -\int_{\partial\Omega}  G u^\bot\times n \cdot\na u^i \nabla_i n\cdot u  dS \\&= - \int_{\partial\Omega} G n\cdot ( \na u^i \times  u^\bot)    \nabla_i n\cdot u dS\\
&= - \int\div( G( \na u^i \times  u^\bot)   \nabla_i n\cdot u) dx \\
&= - \int \na (\nabla_i n\cdot u G) \cdot ( \na u^i \times  u^\bot)   dx  - \int \div( \na u^i \times  u^\bot)    \nabla_i n\cdot u   G  dx \\
&= - \int \na (\nabla_i n\cdot u G) \cdot ( \na u^i \times  u^\bot)   dx  + \int  G \na  u^i \cdot \na\times  u^\bot     \nabla_i n\cdot u     dx \\
& \le C \int |\na G||\na u||u|^2dx+C \int |G| (|\na u|^2|u|+|\na u||u|^2)dx
\\& \le C  \|\na G\|_{L^6}\|\na u\|_{L^2}\|u\|^2_{L^6}
+C  \| G\|_{L^3}\|\na u\|^2_{L^4}\|u\|_{L^6}+C\| G\|_{L^{6}} \|\na      u\|_{L^2}\|u\|_{L^6}^2
\\& \le \de \|\na G\|_{L^6}^2+C(\de) \|\na u\|^6_{L^2}+C\|\na u\|^4_{L^4}+ C  \|  \na G\|_{L^2}^2 (\|\na u\|^2_{L^2} +1 ).\ea\ee
Similarly, it holds that
\be \la{bz4}\ba &-  \int_{\partial\Omega}G  u\cdot\na n\cdot ({u}\cdot\na) u dS\\& \le \de \|\na G\|_{L^6}^2+C(\de) \|\na u\|^6_{L^2}+C\|\na u\|^4_{L^4}+ C \|  \na G\|_{L^2}^2(\|\na u\|^2_{L^2} +1 ).\ea\ee

Next, it follows from \eqref{hj1} that
\be \ba \la{pt11}
G_t =& (2\mu+\lambda)\div u_t - (P_t-\bp_t )\\
=& (2\mu+\lambda) \div \dot u - (2\mu+\lambda) \div (u\cdot \na u) - R\rho \dot\te + \div(Pu) + R\overline{\rho \dot\te}\\
=& (2\mu+\lambda) \div \dot u - (2\mu+\lambda) \na u : (\na u)^{\rm tr} -  u \cdot  \na G + P \div u - R\rho \dot\te + R\overline{\rho \dot\te},
\ea \ee
where one has used
\be \ba \label{jia1}
P_t=(R\n \te)_t=R\n \dot{\te}-\div (Pu),~~~\bp_t=R\overline{\n\dot{\te}}.
\ea \ee
Then, integration by parts combined with \eqref{pt11} gives
\be\la{m5} \ba
N_2 =&  - \int  \sigma^m [\div \dot{u} G_t + u \cdot \na \dot u \cdot \na G]dx \\
=& - (2\mu+\lambda) \int  \sigma^m (\div \dot{u} )^2 dx +  (2\mu+\lambda)  \int \sigma^m \div \dot{u} \na u : (\na u)^{\rm tr} dx \\
&+\int  \sigma^m \div \dot{u}  u \cdot  \na G  dx - \int \sigma^m \div \dot{u} P \div u dx  \\
&+ R \int \sigma^m \div \dot{u} \rho \dot\te dx -  R \overline{\rho \dot\te}  \int \sigma^m \div \dot{u}  dx - \int  \sigma^m u \cdot \na \dot u \cdot \na Gdx \\
\le &   - (2\mu+\lambda) \int  \sigma^m (\div \dot{u} )^2 dx\\
& + C \si^m \|\na \dot{u}\|_{L^2} \|\na u\|_{L^4}^2 + C \si^m \|\na \dot{u}\|_{L^2} \|\na G\|_{L^2}^{1/2} \|\na G\|_{L^6}^{1/2} \|u\|_{L^6} \\
&+ C(\on) \si^m \|\na \dot{u}\|_{L^2} \|\te \na u\|_{L^2}+ C(\on) \si^m \|\na \dot{u}\|_{L^2} \|\rho^{1/2} \dot \te\|_{L^2}.\ea \ee

Note that
$$\curl u_t=\curl \dot u-u\cdot \na \curl u-\na u^i\times \nabla_iu,$$
which together with some straight calculations yields
\be\la{ax3999}\ba
N_3 &=- \mu \int\sigma^{m}|\curl\dot{u}|^{2}dx+\mu \int\sigma^{m}\curl\dot{u}\cdot(\nabla u^i\times\nabla_i u) dx \\&\quad+\mu \int\sigma^{m} u\cdot\na  \curl u \cdot\curl\dot{u} dx  +\mu \int\sigma^{m}    u \cdot \na \dot{u}\cdot (\nabla\times\curl u) dx  \\
&\le - \mu \int\sigma^{m}|\curl\dot{u}|^{2}dx+\de  \sigma^{m}(\|\na \dot u\|_{L^2}^2 +
\|\na \curl u\|_{L^6}^2)\\&\quad +C(\de) \sigma^{m} \|\na u\|_{L^4}^4 +C(\de)  \sigma^{m}\|\na  u\|_{L^2}^4\|\na \curl u\|_{L^2}^2.\\
\ea\ee

Finally, it is easy to deduce from  Lemmas  \ref{le4} and \ref{uup1} that
\be \la{bz5} \|G\|_{H^1}+\|\curl u\|_{H^1}\le C(\|\n \dot u\|_{L^2}+\|\na  u\|_{L^2}), \ee
and that
\be \la{bz6}\ba  \|\na G\|_{L^6}+\|\na\curl u\|_{L^6} +\| \dot u\|_{H^1}
 &\le C(\|\n \dot u\|_{L^6}+\|\na  u\|_{L^2})+\| \dot u\|_{H^1}
\\&\le C(\on)  (\|\na \dot u\|_{L^2}+  \|\na  u\|_{L^2}+  \|\na  u\|_{L^2}^2).\ea \ee

Hence, submitting  \eqref{bz8},  \eqref{m5}, and \eqref{ax3999} into \eqref{m4}, one obtains after  using \eqref{bz3}, \eqref{bz4}, \eqref{z1},  \eqref{bz5}, and \eqref{bz6} that
\be\la{ax40}\ba
&\left(\frac{\sigma^{m}}{2}\|\rho^{1/2}\dot{u}\|_{L^2}^2\right)_t+(2\mu+\lambda)\sigma^{m}\|\div\dot{u}\|_{L^2}^2+\mu\sigma^{m}\|\curl\dot{u}\|_{L^2}^2\\
&\le -\left(\int_{\p \O}  \sigma^m (u \cdot \na n \cdot u) G dS\right)_t+ C(\on) \de \si^m \|\na \dot{u}\|_{L^2}^2 \\
&\quad+ C(\de,\on) \si^m \|\rho^{1/2} \dot \te\|_{L^2}^2 + C(\de, \on, M)(\si^{m-1}\si'+\si^m)  \|\rho^{1/2} \dot u\|_{L^2}^2\\
&\quad+C(\de, \on, M) \|\na u\|^2_{L^2} +C(\de)\si^m \|\na u\|^4_{L^4}+ C(\de,\on) \si^m  \|\te \na u\|_{L^2}^2.
\ea\ee
Applying  \eqref{tb11} to \eqref{ax40} and choosing $\de$ small enough infer \eqref{ae0} directly.

Finally, we will prove   (\ref{nle7}).

For $m\ge 0,$
multiplying $(\ref{a1})_3 $ by $\sigma^m \dot\te$ and integrating
the resulting equality over $\Omega $ yield  that
 \be\la{e1} \ba &\frac{\ka
{\sigma^m}}{2}\left( \|\na\te\|_{L^2}^2\right)_t+\frac{R\sigma^m}{\ga-1} \int\rho|\dot{\te}|^2dx
\\&=-\ka\sigma^m\int\na\te\cdot\na(u\cdot\na\te)dx
+\lambda\sigma^m\int  (\div u)^2\dot\te dx\\&\quad
+2\mu\sigma^m\int |\mathfrak{D}(u)|^2\dot\te dx-R\si^m\int\n\te \div
u\dot\te dx \triangleq \sum_{i=1}^4I_i . \ea\ee

 First,   combining   (\ref{g1}) and (\ref{z1}) gives
\be\la{e2} \ba
I_1
&\le C\sigma^{m}   \|\na u\|_{L^2}\|\na\te\|^{1/2}_{L^2}
\|\na^2\te\|^{3/2}_{L^2}  \\
&\le  \de\sigma^{m} \|\n^{1/2}\dot\te\|^2_{L^2}+\si^m \left(\|\na u\|_{L^4}^4+\|\te\na u\|_{L^2}^2\right) +C(\de,\on,M)\sigma^{m}
\|\na\te\|^2_{L^2}   ,\ea\ee
where in the last inequality we have used the following estimate:
\be  \la{lop4}\ba
	\|\na^2\te\|_{L^2} &\le C (\on)\left(\|\n^{1/2}\dot \te\|_{L^2}+ \|\na u\|_{L^4}^2+\|\te\na u\|_{L^2}\right),
	\ea\ee
which is derived from the standard $L^2$-estimate to the following elliptic problem:
	\be\la{3.29}\begin{cases}
		\ka\Delta \te=\frac{R}{\ga-1}\n\dot\te +R\n\te\div
		u-\lambda (\div u)^2-2\mu |\mathfrak{D}(u)|^2 ,\\
		\na \theta\cdot n|_{\p\O \times (0,T)}=0 .
	\end{cases}\ee

Next, it holds  that  for any $\eta\in (0,1],$
\be\la{e3}\ba I_2 =&\lambda\si^m\int (\div u)^2 \te_t
dx+\lambda\si^m\int (\div u)^2u\cdot\na\te
dx\\=&\lambda\si^m\left(\int (\div u)^2 \te
dx\right)_t-2\lambda\si^m \int \te \div u \div (\dot u-u\cdot\na u)
dx\\&+\lambda\si^m\int (\div u)^2u\cdot\na\te
dx \\=&\lambda\si^m
\left(\int (\div u)^2 \te dx\right)_t-2\lambda\si^m\int \te \div u
\div \dot udx\\&+2\lambda\si^m\int \te \div u \pa_i u^j\pa_j  u^i dx
+ \lambda\si^m\int u \cdot\na\left(\te   (\div u)^2 \right)dx
 \\
\le &\lambda\left(\si^m\int (\div u)^2 \te dx\right)_t-\lambda
m\si^{m-1}\si'\int (\div u)^2 \te dx\\& +\eta\si^m\|\na \dot
u\|_{L^2}^2+C(\eta)\si^m\|\te\na u\|_{L^2}^2+C\si^m\|\na u\|_{L^4}^4,\ea\ee
and
 \be \la{e5}\ba I_3&\le 2\mu\left(\si^m\int
|\mathfrak{D}(u)|^2 \te dx\right)_t-2\mu m\si^{m-1}\si'\int
|\mathfrak{D}(u)|^2 \te dx
 \\&\quad+ \eta\si^m\|\na \dot
u\|_{L^2}^2+C(\eta)\si^m\|\te\na u\|_{L^2}^2+C\si^m\|\na u\|_{L^4}^4 .    \ea\ee

Finally, Cauchy's inequality gives
 \be\la{e39}\ba
 |I_4|    \le  \delta \si^m \int \n |\dot\te|^2dx+C( \delta,\on)\si^m \|\te\na u\|_{L^2}^2.  \ea\ee

Substituting  (\ref{e2}) and \eqref{e3}--(\ref{e39}) into (\ref{e1}), we obtain \eqref{nle7}
after using  \eqref{h3} and choosing $\de$ suitably
small.
The proof of Lemma \ref{a113.4} is completed.
\end{proof}

With the estimates \eqref{an1}--\eqref{nle7} (see Lemma  \ref{a113.4}) at hand, we are now in a position to prove the following estimate on $A_3(T).$

\begin{lemma}\la{le6} Under the conditions of Proposition \ref{pr1},   there exists a positive constant $\ve_2$
depending only on   $\mu,\,\lambda,\, \ka,\, R,\, \ga,\, \on,\,\bt,\,\O,$ and $M$
such that if $(\rho,u,\te)$ is a smooth solution to the problem (\ref{a1})--(\ref{h1})  on $\Omega\times (0,T] $ satisfying      (\ref{z1}) with $K$ as
in Lemma \ref{le2}, the following estimate holds: \be\la{b2.34}  A_3(T) \le C_0^{1/6},\ee provided $C_0\le
\ve_2.$
\end{lemma}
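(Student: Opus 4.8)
The plan is to combine the three differential inequalities \eqref{an1}, \eqref{ae0}, and \eqref{nle7} from Lemma \ref{a113.4}, after suitable choices of the weight $\sigma^m$ and the auxiliary parameter $\eta$, to produce a single Gronwall-type inequality for a carefully weighted combination of $\|\n^{1/2}\dot u\|_{L^2}^2$, $\|\na\te\|_{L^2}^2$ (equivalently the quantities $B_1$, $B_2$), and the dissipation terms $\int\n|\dot u|^2$, $\int|\na\dot u|^2$, $\int\n|\dot\te|^2$. Concretely, I would first apply \eqref{ae0} with $m=1$ and \eqref{nle7} with $m=2$, choosing the free parameter $\eta$ in \eqref{nle7} small enough (depending on the constants $C_1$, $C_2$ appearing in \eqref{ae0}) so that the ``bad'' cross terms $C_2\si\|\n^{1/2}\dot\te\|_{L^2}^2$ in \eqref{ae0} and $C\eta\si^2\|\na\dot u\|_{L^2}^2$ in \eqref{nle7} can be absorbed into the good dissipation on the left-hand sides after adding the two inequalities with appropriate fixed coefficients. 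This yields, modulo the boundary time-derivative term, an inequality of the form
\be\notag
\frac{d}{dt}\Phi(t)+c\left(\si\|\na\dot u\|_{L^2}^2+\si^2\|\n^{1/2}\dot\te\|_{L^2}^2\right)\le C\left(\|\na u\|_{L^2}^2+\|\na\te\|_{L^2}^2\right)+C\si\|\na u\|_{L^4}^4+C\si^2\|\te\na u\|_{L^2}^2+(\text{lower order}),
\ee
where $\Phi(t)$ is a nonnegative weighted combination comparable to $\si\|\n^{1/2}\dot u\|_{L^2}^2+\si^2\|\na\te\|_{L^2}^2$ once the boundary term is controlled.

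The next step is to control the boundary term $-2\bigl(\int_{\p\O}\si^m(u\cdot\na n\cdot u)G\,dS\bigr)_t$ appearing in \eqref{ae0}: after integration in time this becomes $-2\int_{\p\O}\si^m(u\cdot\na n\cdot u)G\,dS$, which by the trace theorem, \eqref{b2}, and \eqref{bz5} is bounded by $C\|\na u\|_{L^2}^2\|\n\dot u\|_{L^2}$, hence by $\de\si^m\|\n^{1/2}\dot u\|_{L^2}^2+C(\de)\|\na u\|_{L^2}^2$; thus it can be absorbed into $\Phi(t)$ at the cost of already-controlled terms (recall $A_1(T)\le 3K$ from \eqref{z1} gives $\sup\|\na u\|_{L^2}^2\le 3K$ and $\int_0^T\int\n|\dot u|^2\le 3K$). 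One then integrates in time over $(0,T)$. On the right-hand side, $\int_0^T(\|\na u\|_{L^2}^2+\|\na\te\|_{L^2}^2)\,dt\le A_2(T)\le 2C_0^{1/4}$; the term $\int_0^T\si\|\na u\|_{L^4}^4\,dt$ is estimated via \eqref{h17} (with $p=4$), \eqref{p}, and the bound \eqref{3.30} for $\|\na u\|_{L^6}$, producing contributions like $\int_0^T\si\|\n^{1/2}\dot u\|_{L^2}^2\|\na u\|_{L^2}\cdots\,dt$ which, using $A_3(T)\le 2C_0^{1/6}$ and $A_1(T)\le 3K$ from the bootstrap hypothesis \eqref{z1}, are of order $C_0^{1/6}\cdot C_0^{1/4}$ or better; similarly $\int_0^T\si^2\|\te\na u\|_{L^2}^2\,dt$ is handled via \eqref{2.48}-type estimates and is again $o(C_0^{1/6})$.

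Putting these together gives $A_3(T)\le CC_0^{1/4}+C C_0^{1/4}\cdot C_0^{1/6}+\cdots\le \frac12 C_0^{1/6}+\text{(smaller)}\le C_0^{1/6}$ once $C_0\le\ve_2$ for a suitably small $\ve_2$ depending only on the fixed data $\mu,\lambda,\ka,R,\ga,\on,\bt,\O,M$ (through $K$, $C_1$, $C_2$, and the generic constants). Here one must be careful to check at the endpoint $t=T$ that $\sup_{t}(\si\|\na u\|_{L^2}^2+\si^2\|\n^{1/2}\dot u\|_{L^2}^2+\si^2\|\na\te\|_{L^2}^2)$ is controlled by $\Phi(t)$ plus the boundary correction; since $\Phi(0)=0$ (the weights vanish at $t=0$), no initial-data term enters, which is precisely why only $C_0$ and not $M$ controls $A_3(T)$. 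The main obstacle, and the place where the argument is delicate, is the bookkeeping of the weights $\si^m$ and the insistence that every term on the right-hand side that is not already $O(C_0^{1/4})$ carries at least one factor drawn from $A_3(T)\le 2C_0^{1/6}$ or $A_2(T)\le 2C_0^{1/4}$ (never a bare factor of $K$ or $M$ without an accompanying small power of $C_0$); in particular the quartic term $\si\|\na u\|_{L^4}^4$ and the boundary term must be split so that the ``large'' pieces $\|\na u\|_{L^2}^2\le 3K$ are always paired with a genuinely small quantity, so that closing the estimate with strict improvement $2C_0^{1/6}\to C_0^{1/6}$ is possible.
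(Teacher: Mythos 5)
Your overall architecture (add the $\dot u$- and $\dot\te$-inequalities of Lemma \ref{a113.4}, absorb the cross terms, integrate, and feed in the a priori smallness of $A_2$, $A_3$) has the right shape, but two concrete steps fail. First, the mismatched weights do not allow the absorption you claim: with $m=1$ in \eqref{ae0} and $m=2$ in \eqref{nle7}, the bad term $C_2\,\si\|\n^{1/2}\dot\te\|_{L^2}^2$ coming from \eqref{ae0} must be absorbed by the dissipation $\si^2\int\n|\dot\te|^2dx$ of \eqref{nle7}; since $\si\ge\si^2$ on $(0,1)$ this is impossible for small times, so the two inequalities have to be combined at the \emph{same} weight (the paper takes $m=2$ for both, cf. \eqref{e8} and \eqref{e25}). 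Second, and more seriously, the weight $m=1$ on the $\dot u$-inequality cannot produce smallness at all: \eqref{ae0} carries the term $C(\si^{m-1}\si'+\si^m)\|\n^{1/2}\dot u\|_{L^2}^2$, which for $m=1$ is $C(\si'+\si)\|\n^{1/2}\dot u\|_{L^2}^2$, and its time integral is only controlled by $A_1(T)\le 3K=O(M^2)$ --- a bare large factor with no accompanying power of $C_0$, precisely the situation you say must be avoided but which your choice of weights forces (this is why the paper's own $m=1$ estimate, \eqref{ae26}, is bounded merely by $C(\on,M)$ and not by a small power of $C_0$). You hide this term in ``(lower order)'', but it is the dominant obstruction.

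The missing idea is the intermediate estimate \eqref{dd}: integrating \eqref{an1} and using the coercivity of $B_1$ (via \eqref{h18} and \eqref{p}) gives $\sup_t(\si\|\na u\|_{L^2}^2)+\int_0^T\si\int\n|\dot u|^2dxdt\le CC_0^{1/4}$. This single estimate is what makes the $m=2$ computation close: it renders $\int_0^T\si\si'\|\n^{1/2}\dot u\|_{L^2}^2dt\le CC_0^{1/4}$; it gives the endpoint bound \eqref{jia4} for the boundary term, $\bigl|\int_{\p\O}\si^2(u\cdot\na n\cdot u)G\,dS\bigr|\le C\sup(\si\|\na u\|_{L^2}^2)\,\sup(\si\|\n^{1/2}\dot u\|_{L^2})\le CC_0^{1/4}$, whereas your absorption $\de\si^m\|\n^{1/2}\dot u\|_{L^2}^2+C(\de)\|\na u\|_{L^2}^2$ leaves a non-small remainder of size $K$ in the supremum part of $A_3$, destroying the improvement to $C_0^{1/6}$; and it controls $\int_0^T\si\|\n-1\|_{L^2}^2dt\le CC_0^{1/4}$ through the effective-viscous-flux identity (see \eqref{67}), a term you never address although it is produced by \eqref{ae9}--\eqref{m22} when estimating $\si\|\na u\|_{L^4}^4$. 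With \eqref{dd} in hand and both inequalities taken at $m=2$, every right-hand contribution integrates to $CC_0^{1/4}$ and $A_3(T)\le CC_0^{1/4}\le C_0^{1/6}$ follows for small $C_0$; without it, your scheme cannot reach the required smallness.
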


\begin{proof}
	First, it follows from \eqref{an2}, (\ref{h18}), and (\ref{p}) that
	\be\ba \notag B_1(t)
	&\ge  C   \|\na u\|_{L^2}^2- C \|P-\bp\|^2_{L^2}  \ge C  \|\nabla u\|_{L^2}^2-C(\on) C_0^{1/4},\ea\ee
	which together with \eqref{an1} and \eqref{z1} implies that
	\begin{align}\label{dd}
		\sup_{t\in(0,T]}\left(\si \|\na u\|_{L^2}^2\right) + \int_0^T\int \si\n |\dot u|^2 dx dt \le C(\hat{\rho}, M)C_0^{1/4}.
	\end{align}

	For $C_2$ as in  (\ref{ae0}), adding  (\ref{nle7}) multiplied by $C_2+1$ to (\ref{ae0}) and choosing $ \eta$ suitably small give
	\be\la{e8}\ba
	&\left(\sigma^m\varphi\right)'(t) + \sigma^m \int\left(\frac{C_1}{2} |\nabla\dot{u}|^2 +\n|\dot \te|^2\right)dx\\
	&\le - 2\left(\int_{\p \O}  \sigma^m( u \cdot \na n \cdot u )G dS\right)_t + C( \on, M) (\si^{m-1}\si'+\si^m ) \|\rho^{1/2} \dot u\|_{L^2}^2\\
	&\quad+ C(\on,  M) (\|\na u\|_{L^2}^2 + \|\na \te\|_{L^2}^2)+C\si^m \|\na u\|^4_{L^4} + C (\on)\si^m \|\te\na u\|_{L^2}^2,\\
	\ea\ee
	where  $\varphi(t)$ is defined by
	\be\la{wq3} \varphi(t) \triangleq   \|\n^{1/2}\dot u\|_{L^2}^2+(C_2+1)  B_2(t).\ee
	Then it follows from (\ref{e6}) that
	\be\la{wq2}  \varphi(t) \ge  \frac{1}{2}\|\n^{1/2}\dot u\|_{L^2}^2 +\frac{\ka(\ga-1)}{2R}\|\na\te\|_{L^2}^2-C(\on,  M)\|\na u\|_{L^2}^2, \ee
	where one has used that for any $\de\in(0,1]$,
	\be \la{2.48}\ba   \int
	\te |\na u|^2dx   & \le
	C\int|R\te-\bp||\na u|^2dx+C\bp \int |\na u|^2dx\\ &\le C
	\|R\te-\bp \|_{L^6}\|\na u\|_{L^2}^{3/2}
	\|\na u\|_{L^6}^{1/2}+   C \|\na u\|_{L^2}^2
	\\ &\le C(\on)\|\na\te\|_{L^2}\|\na u\|_{L^2}^{3/2}
	\left(\|\n^{1/2} \dot u\|_{L^2}+\|\na u\|_{L^2}+ \|\na\te\|_{L^2}+1\right)^{1/2}\\
	&\quad+   C \|\na  u\|_{L^2}^2\\ &\le \de
	\left(  \|\na\te\|^2_{L^2}  + \|\n^{1/2}  \dot
	u\|_{L^2}^2 \right) + C(\de,\on,M)  \|\na
	u\|_{L^2}^2 \ea\ee
	due to  \eqref{key}, (\ref{pq}), (\ref{3.30}),   and (\ref{z1}).

	Next, it follows from \eqref{key},  (\ref{pq}), \eqref{z1}, and  (\ref{3.30}) that
	\be \la{m20}\ba
	\|\te\na u\|_{L^2}^2
	&\le C\|R\te-\bp \|_{L^6}^2 \|\na u\|_{L^2} \|\na u\|_{L^6} + C\bp^2 \|\na u\|_{L^2}^2 \\
	&\le C(\on,M) \left( \|\na u\|_{L^2}^2+\|\na\te\|_{L^2}^2\right)\left( \|\n^{1/2}\dot u\|^2_{L^2} +
	\|\na\te\|^2_{L^2} +1\right).\ea\ee
	And, by virtue of (\ref{h17}), (\ref{pq}), and (\ref{z1}), one gets
	\be\la{ae9}\ba
	&\|\na u\|_{L^4}^4\\
	&\le C\|\n\dot u\|_{L^2}^3(\|\na u\|_{L^2}+\|P-\bp\|_{L^2})+C(\|\na u\|_{L^2}^4+\|P-\bp\|_{L^4}^4)\\
	&\le C(\on)\|\n^{1/2}\dot u\|_{L^2}^3\left(\|\na u\|_{L^2}+1\right) +C(\on)\|\na\te\|_{L^2}^3+C\|\n-1\|_{L^4}^4 +C\|\na u\|_{L^2}^4\\
	&\le C(\on,M)\left( \|\n^{1/2}\dot u\|_{L^2}^3 + \|\na \te\|_{L^2}^3\right)  +C(\on)\|\n-1\|_{L^2}^2+C(\on,M)\|\na u\|_{L^2}^2,
	\ea\ee
	which together with (\ref{z1})  yields
	\be \la{m22}\ba
	\si \|\na u\|_{L^4}^4   &\le C(\on,M) \left(\|\n^{1/2}\dot u\|_{L^2}^2  + \|\na \te\|_{L^2}^2+\|\na u\|_{L^2}^2\right)
	+C(\on)\si\|\n-1\|_{L^2}^2.
	\ea\ee
Thus, taking $m=2$ in \eqref{e8}, one obtains after  using  \eqref{z1}, \eqref{m20}, and \eqref{m22}  that
\be\la{e25}\ba
& \left(\sigma^2\varphi\right)'(t) + \sigma^2 \int\left(\frac{C_1}{2} |\nabla\dot{u}|^2 +\n|\dot \te|^2\right)dx\\
&\le  - 2\left(\int_{\p \O}  \sigma^2 (u \cdot \na n \cdot u) G dS\right)_t+ C(\on,M)\si\|\n^{1/2}\dot u\|_{L^2}^2\\
&\quad +  C(\on,  M)\left(\|\na u\|_{L^2}^2+\|\na\te\|_{L^2}^2\right)+C(\on)\si\|\n-1\|_{L^2}^2.
	\ea\ee

	Now, we  deduce from \eqref{h19},  \eqref{z1}, and \eqref{b2} that
	\be \ba\label{jia4}
	\sup_{ 0\le t\le T} \left|\int_{\p \O}  \sigma^2 u \cdot \na n \cdot u G dS\right|
	\le & C(\on) \sup_{ 0\le t\le T} (\si \|\na u\|_{L^2}^2) \sup_{ 0\le t\le T} (\si \|\rho^{1/2} \dot u\|_{L^2})\\
	\le &  C(\on) C_0^{1/4}.
	\ea \ee
	Furthermore, note that (\ref{hj1}) is equivalent to
 \be\notag
\bp(\n-1)=-G+(2\mu+\lambda)\div u-\n(R\te-\bp),
\ee
 this together with \eqref{key}, \eqref{z1}, \eqref{1h19}, \eqref{pq}, and \eqref{dd} implies
\be\ba\la{67}
&\int_0^T\si\|\n-1\|_{L^2}^2 dt\\
&\le C\int_0^T\si(\|G\|_{L^2}^2+\|\na u\|_{L^2}^2) dt+C(\on)\int_0^T\|R\te-\bp\|_{L^2}^2 dt\\
&\le C(\on)\int_0^T\left(\si\|\n^{1/2}\dot u\|_{L^2}^2+\|\na u\|_{L^2}^2+\|\na \te\|_{L^2}^2\right) dt\\
&\le C(\on,M)C_0^{1/4}.
\ea\ee
	
	Thus, integrating \eqref{e25} over $(0,T)$, one obtains after using \eqref{dd}, \eqref{wq2}, (\ref{jia4}), (\ref{67}), and (\ref{z1}) that
	\be\ba\notag
	A_3(T)\le C(\on,M)C_0^{1/4}\le C_0^{1/6},
	\ea\ee
 provided
	\be \ba\notag C_0\le\ve_2\triangleq \min\{1,(C(\on,M))^{-12}\}.\ea\ee

  The proof of Lemma \ref{le6} is completed.
 \end{proof}



Next, in order to control $A_2(T)$, we first re-establish the basic energy estimate for short time $[0, \si(T)]$, and then show that the spatial $L^2$-norm of $R\te-\overline P$ could be  bounded by the combination of the initial energy and the spatial $L^2$-norm of $\na \te$, which is indeed the key ingredient to   estimate   $A_2(T)$.

\begin{lemma}\la{a13} Under the conditions of Proposition \ref{pr1},
	there exist  positive constants $C$ and $\varepsilon_{3,1}$
	depending only on
	$\mu,\,\lambda,\, \ka,\, R,\, \ga,\, \on,\,\bt,\,\O, $ and $M$ such
	that  if $(\rho,u,\te)$ is a smooth solution to the problem (\ref{a1})--(\ref{h1})  on $\Omega\times (0,T] $ satisfying      (\ref{z1}) with $K$ as
in Lemma \ref{le2},
	the following estimates  hold:
	\be \la{a2.121} \ba
	&\sup_{0\le t\le \si(T)}\int\left( \n |u|^2+(\n-1)^2 + \n(\te-\log \te-1) \right)dx\le C C_0,\ea\ee
and 
\be  \la{a2.17}  \ba
\|(R\te-\bp)(\cdot,t)\|_{L^2} \le C \left(C_0^{1/2} +C_0^{1/3}\|\na\te(\cdot,t)\|_{L^2}\right),
	\ea\ee
for all $t\in(0,\si(T)]$, provided $C_0\le\varepsilon_{3,1}.$
	\end{lemma}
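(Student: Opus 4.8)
The plan is to run the basic energy argument of Lemma \ref{a13.1} again, but this time keeping careful track of all the error terms so as to upgrade the ``weaker'' estimate $E(t)\le CC_0^{1/4}$ to the sharp bound $E(t)\le CC_0$ on the short interval $[0,\si(T)]$. Starting from the basic energy identity \eqref{la2.7}, the only obstruction to the sharp estimate is the sign-undetermined boundary term $-\mu\int(|\curl u|^2+2(\div u)^2-2|\mathfrak{D}(u)|^2)dx$, which we bounded crudely by $C\|\na u\|_{L^2}^2$. The key new ingredient is to estimate $\int_0^{\si(T)}\|\na u\|_{L^2}^2\,dt$ not by $A_2(T)\le 2C_0^{1/4}$ but by $CC_0$: indeed, from \eqref{h17}, \eqref{p}, and the bounds \eqref{z1} on $A_1, A_3$ together with \eqref{dd} and the estimate \eqref{a2.112}, one gets control of $\int_0^{\si(T)}\|\na u\|_{L^2}^2\,dt$ in terms of $\int_0^{\si(T)}\si\|\n^{1/2}\dot u\|_{L^2}^2\,dt + \int_0^{\si(T)}\|P-\bp\|_{L^2}^2\,dt$ plus $C_0$. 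The term $\int_0^{\si(T)}\|P-\bp\|_{L^2}^2\,dt\le C\int_0^{\si(T)}\|\na\te\|_{L^2}^2\,dt$ by \eqref{pq} is already $O(C_0^{1/4})$, so we will need to iterate: first feed in $A_2(T)\le 2C_0^{1/4}$ to get a preliminary $\int_0^{\si(T)}\|\na u\|_{L^2}^2\,dt \le CC_0^{1/4}$, use it in \eqref{la2.7} to recover $E(t)\le CC_0^{1/4}$ for $t\le\si(T)$ (consistent with \eqref{a2.8}), and then the real improvement comes from exploiting $\si$-weighted estimates on $[0,\si(T)]$ where $\si(t)=t$, so that the factor $\si$ or $\si^2$ can be paid for by the integrability in time and the smallness of $C_0^{1/6}$ coming from $A_3(T)$.

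More precisely, for \eqref{a2.121} I would integrate \eqref{la2.7} over $[0,t]$ with $t\le\si(T)$ and estimate $2\mu\int_0^t\|\na u\|_{L^2}^2\,ds$ via the effective-flux representation: using $G=(2\mu+\lambda)\div u-(P-\bp)$, \eqref{h18}, \eqref{1h19}, and \eqref{h21}, one bounds $\|\na u\|_{L^2}^2\le C(\|G\|_{L^2}^2+\|\curl u\|_{L^2}^2+\|P-\bp\|_{L^2}^2)\le C(\|\n^{1/2}\dot u\|_{L^2}\|\na u\|_{L^2}+\|\na u\|_{L^2}^2+\|\na\te\|_{L^2}^2)$, and after absorbing, integrating in time and using $\int_0^{\si(T)}\si\|\n^{1/2}\dot u\|_{L^2}^2\,dt\le CC_0^{1/6}$ (from $A_3(T)$), $\int_0^{\si(T)}\|\na\te\|_{L^2}^2\,dt\le CC_0^{1/4}$, together with $\int_0^{\si(T)}\|\na u\|_{L^2}^2dt$ appearing on both sides with a small coefficient after a Cauchy–Schwarz in $\si$, one arrives at $\int_0^{\si(T)}\|\na u\|_{L^2}^2\,dt\le CC_0$ (the gain of a full power of $C_0$ comes from the short time length combined with the $\si$-weighted bounds). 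Plugging this back into \eqref{la2.7} and invoking \eqref{a2.9} and \eqref{cz1} yields \eqref{a2.121}. I expect the bookkeeping of which power of $C_0$ one can actually extract — making sure the iteration closes at exponent $1$ rather than stalling at $1/4$ — to be the main technical obstacle.

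For \eqref{a2.17}, the plan is to use the operator $\mathcal B$ of Lemma \ref{th00}. Set $v=\mathcal B[\n-1]$ (legitimate since $\overline{\n-1}=0$ by \eqref{mmm}), so $\div v=\n-1$ and $\|v\|_{H^1_0}\le C\|\n-1\|_{L^2}$. Then test the momentum equation \eqref{a11} against $v$: the leading term $\int(P-\bp)\div v\,dx=\int(P-\bp)(\n-1)\,dx$ essentially reproduces $\|R\te-\bp\|$-type quantities after writing $P-\bp=\n(R\te-\bp)+(\n-1)\bp$ and using the uniform bound \eqref{key} on $\bp$; the remaining terms are $\int\n\dot u\cdot v\,dx$, handled by $\|\n\dot u\|_{L^2}\|v\|_{L^2}\le CC_0^{?}$-type bounds, the viscous terms $\int(2\mu+\lambda)\div u\,\div v\,dx$ and $\mu\int\curl u\cdot\curl v\,dx$ bounded by $\|\na u\|_{L^2}\|\na v\|_{L^2}\le C\|\na u\|_{L^2}\|\n-1\|_{L^2}$, and a convective term. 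One also needs to control $\int\n|u|^2$-type contributions; here \eqref{a2.112} and \eqref{a2.121} supply the needed smallness. Solving the resulting algebraic inequality for $\|\n-1\|_{L^2}$ and then using $\|R\te-\bp\|_{L^2}\le C\|\na\te\|_{L^2}+C\,|R\bar\te-\bp|$ together with $|R\bar\te-\bp|\le C\|\n-1\|_{L^2}\|\te-\bar\te\|_{L^2}\le C\|\n-1\|_{L^2}\|\na\te\|_{L^2}$ from \eqref{pq} gives, after combining with the bound just obtained for $\|\n-1\|_{L^2}$ and with \eqref{a2.121}, the claimed estimate \eqref{a2.17}; the constants $C_0^{1/2}$ and $C_0^{1/3}$ there record respectively the contribution of the pure-density defect and the cross term with $\|\na\te\|_{L^2}$. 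Choosing $\varepsilon_{3,1}$ small enough to absorb all the nonlinear feedback completes the proof.
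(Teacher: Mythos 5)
There are genuine gaps in both halves of your plan. For \eqref{a2.121}, your key step is the claim that $\int_0^{\si(T)}\|\na u\|_{L^2}^2\,dt\le CC_0$. This is not obtainable from the a priori bounds: the inequality you write, $\|\na u\|_{L^2}^2\le C(\|\n^{1/2}\dot u\|_{L^2}\|\na u\|_{L^2}+\|\na u\|_{L^2}^2+\|\na\te\|_{L^2}^2)$, is circular, since the $\|\na u\|_{L^2}^2$ on the right carries an $O(1)$ constant and cannot be absorbed; and the only smallness available from \eqref{z1} is $A_2\le 2C_0^{1/4}$, $A_3\le 2C_0^{1/6}$, so no Cauchy--Schwarz with $\si$-weights can promote these to a full power $C_0$ (note the $\si^{-1}$ weight such an argument would generate is not integrable against anything you control). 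Indeed the lemma being proved only ever yields $A_2(T)\le C_0^{1/4}$, so an $O(C_0)$ bound on this time integral is far stronger than the theory provides. The paper's actual mechanism is different: it does not try to make $\int\|\na u\|_{L^2}^2dt$ small at all. It adds a suitable multiple of the pure kinetic-energy inequality \eqref{a2.222}, whose dissipation $C_3\|\na u\|_{L^2}^2$ absorbs the sign-undetermined term $2\mu\|\na u\|_{L^2}^2$ on the right of \eqref{la2.7}, at the price of a source term $C(\|\te\|_{L^\infty}+1)\int\n(\te-\log\te-1)dx$; the estimate then closes by Gr\"onwall once one proves the crucial bound $\int_0^{\si(T)}\|\te\|_{L^\infty}dt\le C$ (see \eqref{k}), which in turn requires the $\si$-weighted bounds \eqref{ae26}, the second-derivative bound \eqref{k1}, and the interpolation \eqref{6yue}. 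Your proposal never addresses the $L^1_t$-control of $\|\te\|_{L^\infty}$, which is the real crux of this step.

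For \eqref{a2.17}, the entire content of the estimate is the \emph{small} coefficient $C_0^{1/3}$ in front of $\|\na\te\|_{L^2}$; the bound $\|R\te-\bp\|_{L^2}\le C\|\na\te\|_{L^2}$ with an $O(1)$ constant is already \eqref{pq} and is useless for the later cancellations in Lemma \ref{le3}. Your route---testing the momentum equation against $\mathcal{B}[\n-1]$ to bound $\|\n-1\|_{L^2}$ and then invoking Poincar\'e---produces no mechanism by which a factor $C_0^{1/3}$ could appear in front of $\|\na\te\|_{L^2}$ (and the Bogovskii step is in any case redundant, since $\|\n-1\|_{L^2}^2\le CC_0$ already follows from \eqref{a2.121} via \eqref{a2.9}). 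The paper instead exploits the sharp entropy bound \eqref{a2.121} pointwise through $\te-\log\te-1\ge \frac18(\te-1)1_{(\te>2)}+\frac1{12}(\te-1)^21_{(\te<3)}$, splits $\|\te-1\|_{L^2}^2$ over $\{\te<3\}$ and $\{\te>2\}$, and interpolates $\|\te-1\|_{L^6}\le C(\|\te-1\|_{L^2}+\|\na\te\|_{L^2})$; the small prefactors arise precisely from the cross terms $\int(\n-1)(\te-1)^2dx$ and $\|\te-1\|_{L^1(\te>2)}$, which carry $\|\n-1\|_{L^2}\le CC_0^{1/2}$ and the $O(C_0)$ entropy bound, and a Young inequality then yields $\|\te-1\|_{L^2}^2\le C(C_0+C_0^{2/3}\|\na\te\|_{L^2}^2)$ for $C_0$ small, from which \eqref{a2.17} follows. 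Without this (or some equivalent) use of the smallness just gained in \eqref{a2.121}, your argument cannot reach the stated form of \eqref{a2.17}.
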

\begin{proof}
	The proof is divided into the following two steps.
	
{\it Step 1: The proof of (\ref{a2.121}).}
	
	First, multiplying (\ref{a11}) by $u$, one deduces from integration by parts, \eqref{a1}$_1$, and \eqref{cz1} that
	\be \la{a2.12} \ba
	&\frac{d}{dt}\int\left(\frac{1}{2}\n |u|^2+R(1+\n\log \n-\n)
	\right)dx+ \int(\mu|\curl u|^2+(2\mu+\lambda)(\div u)^2)dx \\
	&=  R \int \rho (\te -1) \div u dx\\
	&\le \de \|\na u\|_{L^2}^2 + C(\de, \on) \int \n(\te-1)^2dx\\
&\le\de \|\na u\|_{L^2}^2 + C(\de, \on)(\|\te(\cdot,t)\|_{L^{\infty}} +1)  \int \rho (\te -\log \te -1)dx.
	\ea\ee
Using \eqref{h18} and choosing $\de$ small enough in \eqref{a2.12}, it holds that
	\be \la{a2.222} \ba
	&\frac{d}{dt}\int\left(\frac{1}{2}\n |u|^2+R(1+\n\log \n-\n)
	\right)dx + C_3 \int|\na u|^2dx \\
&\le C(\on) (\|\te(\cdot,t)\|_{L^{\infty}} +1)  \int \rho  (\te -\log \te -1)dx.
	\ea\ee
Then, adding \eqref{a2.222} multiplied by $(2\mu+1) {C_3}^{-1}$  to \eqref{la2.7}, one has
	\be \la{a2.22} \ba
	&
	\xl((2\mu+1) {C_3}^{-1}+1\xr)\frac{d}{dt}\int\left(\frac{1}{2}\n |u|^2+R(1+\n\log \n-\n)\right)dx
	\\&+ \frac{R}{\ga-1} \frac{d}{dt} \int \n(\te-\log \te-1)dx+\int|\na u|^2dx\\
	 &\le C(\on) (\|\te(\cdot,t)\|_{L^{\infty}} +1)  \int \rho (\te -\log \te -1)dx.
	\ea\ee
	
Next, we claim that
	\be \ba\la{k}
	\int_0^{\si(T)}\|\te\|_{L^\infty}dt \le C(\on, M).
	\ea \ee
	Combining this with \eqref{a2.22}, \eqref{a2.9}, and Gr\"onwall inequality  implies \eqref{a2.121} directly.
	
	Finally, it remains to prove \eqref{k}. 
	Taking $m=1$ in \eqref{e8} and integrating the resulting inequality, one deduces from \eqref{wq2}, \eqref{m20}, \eqref{m22}, (\ref{z1}), \eqref{67}, \eqref{h19}, and \eqref{b2} that
\bnn  \ba
& \sigma \varphi +  \int_0^t \sigma\int\left(\frac{C_1}{2} |\nabla\dot{u}|^2 +\n|\dot \te|^2\right)dxd\tau\\
&\le  2\sigma\left|\int_{\p \O}   (u \cdot \na n \cdot u )G dS\right|(t)
      + C(\on,M)\int_0^t (\|\n^{1/2}  \dot u\|_{L^2}^2 +\|\na u\|_{L^2}^2+\|\na \te\|_{L^2}^2)d\tau \\
&\quad+  C(\on)\int_0^t \si \|\n-1\|_{L^2}^2d\tau+C(\on)\int_0^t \left(\|\na u\|_{L^2}^2+\|\na \te\|_{L^2}^2\right) \si\varphi d\tau\\
& \le  C(\on) (\sigma \|\na u\|_{L^2}^2 \|\rho^{1/2} \dot u\|_{L^2})(t)+ C(\on,M)\\
&\quad+C(\on,M)\int_0^t \left(\|\na u\|_{L^2}^2+\|\na \te\|_{L^2}^2\right) \si\varphi d\tau\\
& \le C(\on,M)+C(\on,M)\int_0^t \left(\|\na u\|_{L^2}^2+\|\na \te\|_{L^2}^2\right) \si\varphi d\tau.
\ea\enn
Then Gr\"onwall inequality together with (\ref{z1}) and \eqref{wq2} yields
\be\ba\label{ae26}
	\sup_{0\le t\le T}\si \left(\int\rho|\dot{u}|^2dx+\|\na\te\|_{L^2}^2\right)+\int_0^T\si \int\left(|\na\dot u|^2+\n|\dot\te|^2\right)dxdt
\le  C(\on,M).\ea\ee


	 Next, it follows from (\ref{lop4}), \eqref{m22}, \eqref{m20},  \eqref{ae26}, (\ref{z1}), and (\ref{67})  that
	\be\ba  \la{k1}
	\int_0^{T }\si \|\na^2\te\|_{L^2}^2dt
	&\le   C(\on,M)\int_0^{T } \left(\si \|\n^{1/2}\dot\te\|_{L^2}^2+
	\|\n^{1/2}\dot u \|_{L^2}^2  \right) dt\\
	&\quad+ C(\on,M)\int_0^{T } \left( \| \na u\|_{L^2}^2+\| \na\te\|_{L^2}^2+\si\|\n-1\|_{L^2}^2\right) dt \\
	&\le C(\on,M).
	\ea\ee
	Furthermore, one deduces from \eqref{g2}, \eqref{g1}, and \eqref{pq} that
	\begin{equation}
	    \label{6yue}\ba
	    \|R\te-\bp\|_{L^\infty} \le& C \|R\te-\bp\|_{L^6}^{1/2} \|\na\te\|_{L^6}^{1/2} + \|R\te-\bp\|_{L^2}\\
	    \le& C(\hat{\rho}) \|\na \te\|_{L^2}^{1/2} \|\na^2 \te\|_{L^2}^{1/2} + C(\hat{\n}) \|\na \te\|_{L^2},
	    \ea
	\end{equation}
	which together with  (\ref{z1}) and \eqref{k1}  gives that
	\be\la{3.88}\ba
	& \int_0^{\si(T)}\|R\te-\bp\|_{L^\infty}dt \\
	&\le C(\hat{\rho}) \int_0^{\si(T)}\|\na\te\|_{L^2}^{1/2} \left(\si\|\na^2\te\|^2_{L^2}\right)^{1/4}\si^{-1/4}dt + C(\hat{\rho}) \left(\int_0^{\si(T)} \|\na\te\|_{L^2}^2 dt\right)^{1/2}\\
	&\le C(\hat{\rho}) \left(\int_0^{\si(T)} \|\na \te\|_{L^2}^2dt \int_0^{\si(T)}\si\|\na^2\te\|_{L^2}^2dt\right)^{1/4} 
+ C(\on)C_0^{1/8}\\
	&\le C(\on,M)C_0^{1/16}.
	\ea\ee
 Combining this  with \eqref{key} yields \eqref{k} directly.

{\it Step 2: The proof of (\ref{a2.17}).}

Direct calculations together with \eqref{cz1} lead to
  \be\notag\ba
 \te-\log\te-1
\ge \frac{1}{8} (\te-1)1_{(\te(\cdot,t)>2)
}+\frac{1}{12}(\te-1)^21_{(\te(\cdot,t)<3)},  \ea\ee with
$(\te(\cdot,t)> 2)\triangleq \left.\left\{x\in
\Omega\right|\te(x,t)> 2\right\}$ and  $(\te(\cdot,t)< 3)\triangleq
\left.\left\{x\in \Omega\right|\te(x,t)<3\right\}.$
   Combining this with \eqref{a2.121} gives
	\be \la{a2.11}\ba
	\sup_{0\le t\le \si(T)}\int \left(\n(\te-1)1_{(\te(\cdot,t)>2)}+\n(\te-1)^21_{(\te(\cdot,t)<3)}\right)dx \le C(\hat\n,M) C_0.
	\ea\ee
	
	Next, it follows from \eqref{a2.11}, \eqref{a2.121}, and the Sobolev inequality that for $t \in (0,\sigma(T)]$,
	\begin{equation}\la{la2.19}
	\begin{aligned}
	 &\|\te -1\|_{L^2(\te(\cdot,t)<3)}^2 \\
	 &\le\int  \n (\te-1)^2 1_{(\te(\cdot,t)<3)}dx  +  \left|\int  (\n-1) (\te -1)^2 dx \right|\\
	&\le C(\on,M)C_0  +  C\|\n-1\|_{L^2} \| \te-1 \|_{L^2}^{1/2} \|\te -1\|_{L^6}^{3/2}\\
	&\le C(\on,M)C_0   +  C(\on,M)C_0^{1/2}  \| \te-1 \|_{L^2}^{1/2} \left(\| \te-1 \|_{L^2}+ \|\na \te\|_{L^2}\right)^{3/2}\\
	&\le C(\on,M)\left(C_0+ C(\delta) C_0^{2/3} \|\na \te\|_{L^2}^2 + (\delta+C_0^{1/2}) \| \te-1 \|_{L^2}^2 \right),
	\end{aligned}
	\end{equation}
	and
	\be\la{a2.18}\ba
	 &\|\te-1\|_{L^2(\te(\cdot,t)> 2)}^2\\
	&\le \|\te-1\|^{4/5}_{L^1(\te(\cdot,t)> 2)} \| \te-1\|_{L^6}^{6/5}\\
	&\le C(\on,M) \left(C_0 +C_0^{1/2} \|\te-1\|_{L^2}\right)^{4/5} (\| \te-1\|_{L^2}+ \|\na \te\|_{L^2})^{6/5}\\
	&\le  C(\on,M)\left(C_0+C(\delta)C_0^{2/3}\|\na \te\|_{L^2}^2 + (\delta+C_0^{2/5} ) \|\te -1\|_{L^2}^2\right),
	\ea\ee
	where in the second inequality one has used
	\be \notag\ba
	\|\te-1\|_{L^1(\te(\cdot,t)>2)} \le &\int  \n (\te-1) 1_{(\te(\cdot,t)>2)}dx  +   \int  \left|(\n-1) (\te -1) \right|dx \\
	\le & C(\on,M)(C_0 +C_0^{1/2} \|\te-1\|_{L^2}).
	\ea\ee
	Hence, adding \eqref{la2.19} with \eqref{a2.18} together and choosing $\delta$ small enough in the resulting inequality, one has   for any $t\in (0,\sigma(T)],$
	\be\ba\notag
	\|\te-1\|_{L^2}^2 \le  C(\on,M)\left(C_0+C_0^{2/3}\|\na \te\|_{L^2}^2 + C_0^{2/5}  \|\te -1\|_{L^2}^2\right),
	\ea\ee
	which implies that
	\be\ba\la{nnn1}
	\|\te-1\|_{L^2}^2 \le  C(\on,M)\left(C_0+C_0^{2/3}\|\na \te\|_{L^2}^2 \right),
	\ea\ee
	provided
	\begin{equation}\label{31}
	C_0\le \ve_{3,1} \triangleq\min\left\{1,(2C(\on,M))^{-5/2}\right\}.
	\end{equation}
	
	Finally, note that
	\be \ba\notag
	\| R\te-\bp \|_{L^2} &\le R \|\te  -1\|_{L^2}	+C|1-\overline{\n\te}|\\
	&\le R \|\te  -1\|_{L^2}+C\left|\int\n(1-\te)dx\right|\\
	&\le C(\on) \|\te  -1\|_{L^2},
	\ea\ee
	this together with \eqref{nnn1} yields (\ref{a2.17}).
	
	The proof of Lemma \ref{a13}  is completed.
\end{proof}


Next, with the help of \eqref{a2.17}, the estimate on $A_2(T)$ will be handled smoothly.

\begin{lemma}\la{le3}
Under the conditions of Proposition \ref{pr1},  there exists a positive constant $\ve_3$ depending only on $\mu,\,\lambda,\, \ka,\, R,\, \ga,\, \on,\,\bt,\,\O$, and $M$ such that if $(\rho,u,\te)$ is a smooth solution to the problem (\ref{a1})--(\ref{h1})  on $\Omega\times (0,T] $ satisfying  (\ref{z1})   with $K$ as in Lemma \ref{le2}, the following estimate holds:
\be\la{a2.34} A_2(T) \le C_0^{1/4},\ee
provided $C_0\le \ve_3.$
\end{lemma}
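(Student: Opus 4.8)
The plan is to bound separately the two constituents of $A_2(T)$ in \eqref{AS1}: the supremum $\sup_{[0,T]}\int\n(R\te-\bp)^2dx$ of the ``relative thermal energy'' and the dissipation integral $\int_0^T(\|\na u\|_{L^2}^2+\|\na\te\|_{L^2}^2)dt$, and to show that each is at most $CC_0^{7/24}$; since $7/24>1/4$, this gives $A_2(T)\le C_0^{1/4}$ once $C_0$ is small. The ingredients are: the ``weaker'' basic energy bound $E(t)\le CC_0^{1/4}$ together with $\int_0^T\|\na u\|_{L^2}^2dt\le 2C_0^{1/4}$ and $\int_0^T\|\na\te\|_{L^2}^2dt\le 2C_0^{1/4}$ coming from \eqref{z1} and Lemma \ref{a13.1}; the strong short-time estimates \eqref{a2.121}, \eqref{a2.17} of Lemma \ref{a13}; the bound $A_3(T)\le C_0^{1/6}$ of Lemma \ref{le6}; the uniform two-sided bound \eqref{key} on $\bp$, which lets $\bp$ substitute for $\overline\te$; and the Gagliardo--Nirenberg bound \eqref{pq}, $\|R\te-\bp\|_{L^p}\le C\|\na\te\|_{L^2}$, used repeatedly to absorb cross terms.

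\textbf{Thermal dissipation.} I would differentiate $\frac1{2(\ga-1)}\int\n(R\te-\bp)^2dx$, using the continuity equation, that the material derivative of $R\te-\bp$ equals $R\dot\te-\bp_t$, the temperature equation \eqref{a1}$_3$, the identity $\bp_t=R\,\overline{\n\dot\te}$ of \eqref{jia1}, the Neumann condition, and the key fact $\int\n(R\te-\bp)\,dx=0$ (a consequence of $\bp=R\,\overline{\n\te}$). Integration by parts turns the $\ka\Delta\te$ contribution into the good dissipation $-\ka R\|\na\te\|_{L^2}^2$, and the remaining terms $\int(R\te-\bp)(\Lambda-R\n\te\div u)\,dx$, with $\Lambda=\lambda(\div u)^2+2\mu|\mathfrak{D}(u)|^2$, are estimated via \eqref{pq}, \eqref{key}, \eqref{3.30}, \eqref{lop4} and \eqref{z1}, exactly along the lines of \eqref{ae0}--\eqref{nle7}, so that they are absorbed by a fraction of $\ka R\|\na\te\|_{L^2}^2$ plus terms of the type $C\|\na u\|_{L^2}^2+C\|\te\na u\|_{L^2}^2$ and small multiples of $\|\n^{1/2}\dot u\|_{L^2}^2+\|\n^{1/2}\dot\te\|_{L^2}^2$.

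\textbf{Velocity dissipation.} Testing \eqref{a11} by $u$ and writing the pressure work as $\int(P-\bp)\div u\,dx$ (mean-zero since $u\cdot n|_{\p\O}=0$) gives $\frac{d}{dt}(\text{kinetic}+\text{potential})+c\|\na u\|_{L^2}^2\le C\|R\te-\bp\|_{L^2}^2+C\|\n-1\|_{L^2}^2$. Combining this, with a sufficiently large multiplier, with the basic energy equality \eqref{la2.7} exactly as in the derivation of \eqref{a2.22} so that the sign-indefinite term $-\mu\int(|\curl u|^2+2(\div u)^2-2|\mathfrak{D}(u)|^2)dx$ is dominated, and invoking \eqref{pq} once more, produces $\frac{d}{dt}(\text{a nonnegative functional})+c\|\na u\|_{L^2}^2\le C\|\na\te\|_{L^2}^2+C\|\n-1\|_{L^2}^2$.

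\textbf{Combining and integrating.} Adding a small multiple of the thermal estimate to the velocity one absorbs $C\|\na\te\|_{L^2}^2$ into $\ka R\|\na\te\|_{L^2}^2$ and yields a single differential inequality $\frac{d}{dt}\mathcal{E}+c(\|\na u\|_{L^2}^2+\|\na\te\|_{L^2}^2)\le C\|\n-1\|_{L^2}^2$ plus small multiples of the right-hand sides of \eqref{ae0} and \eqref{nle7}, with $\mathcal{E}\ge0$ and $\mathcal{E}(0)\le CC_0$ (cf.\ \eqref{cz}). The crucial step is the split $[0,T]=[0,\si(T)]\cup[\si(T),T]$: on $[0,\si(T)]$ the strong estimates \eqref{a2.121}, \eqref{a2.17} give $\int_0^{\si(T)}\|\n-1\|_{L^2}^2dt\le C\si(T)\sup_{[0,\si(T)]}\|\n-1\|_{L^2}^2\le CC_0$ and control the short-time dissipation by $CC_0$; on $[\si(T),T]$ one has $\si\equiv1$, so $\int_{\si(T)}^T\|\n-1\|_{L^2}^2dt$ is controlled by the $\si$-weighted estimate \eqref{67} ($\le CC_0^{1/4}$) and every remaining term by the $A_3$ bound of Lemma \ref{le6}. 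Integrating over $[0,T]$ and tracking the powers of $C_0$ through these interpolations gives $A_2(T)\le CC_0^{7/24}$, hence $A_2(T)\le C_0^{1/4}$ for $C_0\le\ve_3$ with $\ve_3$ small depending only on $\mu,\lambda,\ka,R,\ga,\on,\bt,\O$, and $M$.

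\textbf{Main obstacle.} Because the slip condition makes the right side of \eqref{la2.7} sign-indefinite, only the weak bound $E(t)\le CC_0^{1/4}$ is available, and since exponential decay is not yet established at this stage one cannot run a single Gr\"onwall argument on $[0,\infty)$; everything hinges on the two-interval splitting, with the genuinely strong estimates of Lemma \ref{a13} (notably \eqref{a2.17}) valid only on $[0,\si(T)]$ while on $[\si(T),T]$ one has only the $\si$-weighted information carried by $A_1$ and $A_3$. Reconciling these two regimes, and bookkeeping the exponents of $C_0$ so that the final bound $CC_0^{7/24}$ strictly improves the a priori hypothesis $A_2(T)\le 2C_0^{1/4}$, is the delicate point.
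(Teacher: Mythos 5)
There is a genuine gap, and it sits exactly at the point the paper flags in Remark \ref{r2}. Your plan treats the pressure-work/coupling term by Cauchy--Schwarz and mutual absorption: in the velocity estimate you bound $\int(P-\bp)\div u\,dx$ by $\de\|\na u\|_{L^2}^2+C\|R\te-\bp\|_{L^2}^2+C\|\n-1\|_{L^2}^2$, and in the thermal estimate the corresponding linear term $-\bp\int\n(R\te-\bp)\div u\,dx$ ends up, in your own words, contributing ``terms of the type $C\|\na u\|_{L^2}^2$''. These two coupling constants are $O(1)$ (they involve $R$, $\on$, $\pi_2$ and the Gagliardo--Nirenberg constant in \eqref{pq}), not small in $C_0$, so the two absorptions cannot be performed simultaneously: if you scale the thermal estimate down (``a small multiple'') its dissipation $\ka R\bp^{-1}\|\na\te\|_{L^2}^2$ can no longer absorb the $C\|\na\te\|_{L^2}^2$ coming from the velocity side, and if you scale the velocity estimate down its dissipation cannot absorb the $C\|\na u\|_{L^2}^2$ coming from the thermal side; closing this circle would require a largeness condition on $\mu,\ka$ relative to $R\on$, which is not assumed. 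Nor can you rescue it with the a priori data: estimating the cross term by $C\|\na\te\|_{L^2}\|\na u\|_{L^2}$ and integrating only reproduces a bound of size $C\,C_0^{1/4}$ with an uncontrolled constant $C>1$ (and on $[\si(T),T]$ no decay is yet available to do better), whereas the bootstrap needs every remaining term to carry a strictly positive extra power of $C_0$ so as to reach $A_2(T)\le C\,C_0^{7/24}\le C_0^{1/4}$.

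The paper closes this by an exact cancellation rather than absorption: it tests the momentum equation by $u$ with the \emph{$\bp$-weighted} potential energy (see \eqref{a2.225}) and tests $\eqref{a1}_3$ by $\bp^{-1}(R\te-\bp)$ (see \eqref{a2.231}); adding them, the dangerous term $\int\rho(R\te-\bp)\div u\,dx$ cancels identically, at the harmless price of the $\bp_t$-terms $J_1,J_2,J_3$, which are small thanks to \eqref{511} and \eqref{key}. After this cancellation the only genuinely nonlinear leftovers are the cubic terms $J_4,J_5$, and it is only for $J_5$ that your short-time/long-time splitting (with \eqref{a2.17} on $[0,\si(T))$ and $\si\|\na u\|_{L^6}\le CC_0^{1/24}$ on $[\si(T),T]$) is needed; all terms then carry positive powers of $C_0$, which is what makes $A_2(T)\le CC_0^{7/24}$ attainable. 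So the peripheral ingredients of your proposal (Lemmas \ref{a13.1}, \ref{a13}, \ref{le6}, the bound \eqref{xx}/\eqref{67} on $\int\|\n-1\|_{L^2}^2dt$, the two-interval split) match the paper, but the central device---the $\bp$/$\bp^{-1}$-weighted combination that cancels the linear coupling term---is missing, and without it the differential inequality you describe does not yield the required strict improvement over the hypothesis $A_2(T)\le 2C_0^{1/4}$.
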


\begin{proof}
To begin with, multiplying (\ref{a11}) by $u$ and integrating by parts give that
\be \la{a2.225}\ba
&\frac{d}{dt}\int\left(\frac{1}{2}\n |u|^2+\bp(1+\n\log \n-\n)\right)dx\\
&\quad+ \int\left(\mu|\curl u|^2+(2\mu+\lambda)(\div u)^2\right)dx \\
&= \bp_t \int  (1+\n\log \n-\n) dx +  \int \rho (R \te -\bp) \div u dx.
\ea\ee
Next, multiplying $(\ref{a1})_3$ by $\bp^{-1}(R\te-\bp)$, one obtains after  integrating the resulting
equality over $\Omega $ by parts that
\be\la{a2.231} \ba
& \frac{1}{2(\ga-1)} \frac{d}{dt}\int
{\bp}^{-1}\n {(R\te-\bp)^2}dx+ {\ka}R \bp^{-1}  \|\na\te\|_{L^2}^2\\
&= -  \frac{1}{\ga-1} {\bp}^{-1} \bp_t \int
\n {(R\te-\bp)}  dx - \frac{1}{2(\ga-1)} \bp^{-2} \bp_t \int \n {(R\te-\bp)^2}  dx\\
& \quad- {\bp}^{-1} \int \n {(R\te-\bp)^2} \div u dx -  \int \rho (R\te -\bp) \div u dx \\
&\quad+ {\bp}^{-1} \int  {(R\te-\bp)} (\lambda (\div u)^2+2\mu |\mathfrak{D}(u)|^2) dx.
\ea\ee
Adding \eqref{a2.225} and \eqref{a2.231} together yields that
\be\la{a2.23} \ba
&  \frac{d}{dt}\int
\left(\frac{1}{2}\n |u|^2+\bp (1+\n\log \n-\n)+\frac{1}{2(\ga-1)} \n \bp^{-1}{(R\te-\bp)^2} \right)dx\\
&\quad+ \mu \|\curl u\|_{L^2}^2 +(2\mu+\lambda)\|\div u\|_{L^2}^2+  \ka R \bp^{-1}  \|\na\te\|_{L^2}^2\\
&= -  \frac{1}{\ga-1} {\bp}^{-1} \bp_t \int
\n {(R\te-\bp)}  dx - \frac{1}{2(\ga-1)} {\bp}^{-2} \bp_t \int \n {(R\te-\bp)^2} dx\\
& \quad+\bp_t \int  (1+\n\log \n-\n) dx - \bp^{-1}\int  \n {(R\te-\bp)^2} \div u dx \\
&\quad+ \bp^{-1} \int  {(R\te-\bp)} (\lambda (\div u)^2+2\mu |\mathfrak{D}(u)|^2) dx\triangleq \sum_{i=1}^{5} J_i.
\ea\ee

The terms $J_i \,(i=1,\cdots,5)$ can be estimated as follows.

It follows from \eqref{key}, \eqref{511}, \eqref{z1}, and \eqref{pq} that
\be \ba\la{aa1}
J_1+J_2 \le & C|\overline P_t|\left(\| \n (R\te-\bp)\|_{L^2}+\| \n^{1/2} (R\te-\bp)\|_{L^2}^2\right)\\
\le &C(\on)\left(C_0^{1/8} \|\na u \|_{L^2}+\| \na u\|_{L^2}^2 \right)\|\n^{1/2}(R\te-\bp)\|_{L^2}\\
\le & C(\on) C_0^{1/8} \left( \|\na u\|_{L^2}^2+\| \na \te \|_{L^2}^2\right),
\ea \ee
and
\be \ba\la{aa4}
J_4 \le & C   \|\n^{1/2} {(R\te-\bp)}\|_{L^2}^{1/2} \|\n^{1/2} {(R\te-\bp)}\|_{L^6}^{3/2} \|\na u\|_{L^2} \\
\le & C(\on) A_2^{1/4}(T) \|\na \te\|_{L^2}^{3/2} \|\na u\|_{L^2}\\
\le & C(\on,M) C_0^{1/16} (\|\na u\|_{L^2}^2+\|\na \te\|_{L^2}^{2}).
\ea \ee
Furthermore, by virtue of \eqref{a2.9}, \eqref{511}, and \eqref{a2.112}, we have
\be \ba\la{aa3}
J_3 \le & |\overline P_t|\left|\int (1+\n \log\n-\n )dx\right|\\
\le & C(\on)\left(  C_0^{1/8}\|\na u \|_{L^2}+\| \na u\|_{L^2}^2 \right)\| \n -1\|_{L^2}^2\\
\le & C(\on) C_0^{1/4}  \|\na u\|_{L^2}^2 + C(\on) C_0^{1/4} \| \n -1\|_{L^2}^2.
\ea \ee


Now, we will estimate the term $J_5$ for the short time $t\in [0, \si(T))$ and the large time $t\in [\si(T),T]$, respectively.

For $t\in[0, \si(T))$, it follows from \eqref{key}, \eqref{3.30}, \eqref{pq}, \eqref{a2.17}, and \eqref{z1} that
\be \ba\la{aa5}
J_5 \le& C \int  |R\te-\bp| |\na u|^2dx\\
\le & C \| {R\te-\bp}\|_{L^2}^{1/2} \|R\te-\bp\|_{L^6}^{1/2}  \|\na u\|_{L^2} \|\na u\|_{L^6}\\
\le & C(\on)  \|{R\te-\bp}\|_{L^2}^{1/2} \|\na \te\|_{L^2}^{1/2}  \|\na u\|_{L^2}\\
 & \left(\|\n^{1/2}\dot u\|_{L^2}+\|\na u\|_{L^2}+\|\na\te\|_{L^2}+C_0^{1/24}\right)\\
\le & C (\on) \| {R\te-\bp}\|_{L^2}^{1/2} \|\na \te\|_{L^2}^{1/2}  \|\na u\|_{L^2}\|\n^{1/2}\dot u\|_{L^2}\\
&+C(\on,M) C_0^{1/24} (\|\na \te\|_{L^2}^{2}+  \|\na u\|_{L^2}^2)\\
\le & C(\on,M) C_0^{7/24} \|\rho^{1/2}\dot u\|_{L^2}^2 + C(\on,M)C_0^{1/24} (\|\na u\|_{L^2}^2 +  \|\na \te\|_{L^2}^2),
\ea \ee
where we have used following calculations:
\be \ba\notag
&\| {R\te-\bp}\|_{L^2}^{1/2} \|\na \te\|_{L^2}^{1/2}  \|\na u\|_{L^2} \|\n^{1/2}\dot u\|_{L^2} \\
&\le C(\on,M)(C_0^{1/4}\|\na\te\|_{L^2}^{1/2}+C_0^{1/6}\|\na\te\|_{L^2})\|\na u\|_{L^2}\|\n^{1/2}\dot u\|_{L^2}\\
&\le C(\on,M) C_0^{7/24} \|\rho^{1/2}\dot u\|_{L^2}^2 + C(\on,M)C_0^{1/12}  \|\na u\|_{L^2}^2 + C(\on,M)C_0^{1/24} \|\na \te\|_{L^2}^2
\ea \ee
owing to \eqref{a2.17}.

For $t\in[\si(T), T]$, it holds that
\be \ba\la{aa6}
J_5 
\le &C\|R\te-\bp\|_{L^3}\|\na u\|_{L^2}\|\na u\|_{L^6}
\le  C(\on)C_0^{1/24}(\|\na u\|_{L^2}^2+\|\na \te\|_{L^2}^2),
\ea \ee
where one has used \eqref{pq} and the following fact:
\be\notag\ba \sup_{0\le t\le T}\xl(\si\|\na
u\|_{L^6}\xr)
&\le C(\on)C_0^{1/24} \ea\ee
due to \eqref{z1} and \eqref{3.30}.

Finally, substituting \eqref{aa1}--\eqref{aa6}  into \eqref{a2.23}, one obtains after using \eqref{z1}, \eqref{key}, and \eqref{jia10} that
\be \ba  \label{jia9}
&  \sup_{ 0\le t\le T} \int \left(\frac{1}{2}\n |u|^2+\bp (1+\n\log \n-\n)+\frac{1}{2(\ga-1)} \n \bp^{-1}{(R\te-\bp)^2}
\right)dx\\
&+ \int_{0}^{T} (\mu \|\curl u\|_{L^2}^2 +(2\mu+\lambda)\|\div u\|_{L^2}^2+ {R\ka}{\bp}^{-1}  \|\na\te\|_{L^2}^2)dt\\
&\le  C(\on,M) C_0^{1/24} \int_{ 0}^{T}  ( \|\na u\|_{L^2}^2+\| \na \te \|_{L^2}^2 ) dt+ C(\on) C_0^{1/4}\int_{ 0}^{T} \| \n -1\|_{L^2}^2 dt
\\&\quad + C(\on,M) C_0^{7/24} \int_{0}^{\si(T)} \|\rho^{1/2}\dot u\|_{L^2}^2 dt+C(\on,\bt)C_0\\
&\le  C(\on,\bt, M) C_0^{7/24},
\ea \ee
where one has used
\be\la{xx}
\int_{ 0}^{T} \| \n -1\|_{L^2}^2 dt \le \sup_{0\le t\le\si(T)} \|\n -1\|_{L^2}^2 + \int_{\si(T)}^{T} \| \n -1\|_{L^2}^2 dt \le C(\on,M) C_0^{1/4}
\ee
due to \eqref{a2.121} and \eqref{67}. Thus, one deduces from \eqref{jia9}, \eqref{h18}, and \eqref{key} that
\be \la{kyu1} A_2(T)\le C(\on,\hat{\theta}, M) C_0^{7/24}  \ee which implies \eqref{a2.34}
provided \be \ba \notag C_0\le \ve_3\triangleq\min
\left\{\ve_{3,1},  (C(\on,\hat{\theta},M))^{-24}\right\},\ea\ee
with $\ve_{3,1}$ as in \eqref{31}. The proof
of Lemma \ref{le3} is completed.
\end{proof}

\begin{remark}\la{r2}
	It's worth noticing that the energy-like estimate $A_2(T)$ is a little subtle, since $A_2(T)$ is not a conserved quantity for the full Navier-Stokes system owing to the nonlinear coupling of $\te$ and $u$. Thus, further consideration is needed to handle this issue. 
	More precisely,
	\begin{itemize}
		\item on the one hand, while deriving the kinetic energy (see (\ref{a2.225})), we need to deal with the following term
		$$\int \rho (R\te -\bp) \div u dx.$$
		Unfortunately, this term is troublesome for large time $t\in [\sigma(T), T]$. In fact, this term could be bounded by  $\|\na \te \|_{L^2}\|\na u \|_{L^2}$, which will only be  of the same order as $C_0^{1/4}$ with the help of all a priori estimates (\ref{z1}). Therefore, we can not handle this term directly. Here, based on careful analysis on system (\ref{a1}), we find that this term can be cancelled by a suitable combination of kinetic energy and thermal energy, see (\ref{a2.225})--(\ref{a2.23});
		\item on the other hand, while deriving the thermal energy (see (\ref{a2.231})), we need to handle the following term
		$$\int (R\te - \bp) (\div u)^2 dx.$$
		Note that for short time $t\in [0, \sigma(T))$, the ``weaker" basic energy estimate (\ref{a2.112}) is not enough, hence it's necessary to re-establish  the basic energy estimate (\ref{a2.121}), which is obtained by the a priori $L^1(0,\sigma(T);L^\infty)$-norm of $\te$ (see (\ref{k})). Consequently, we can obtain (\ref{a2.17}) as a consequence of (\ref{a2.121}) and then handle this term for short time $t\in [0, \sigma(T))$ (see (\ref{aa5})).	
	\end{itemize}
	
	Moreover, it should be mentioned that the uniform positive lower and upper bounds  of $\bp$  also play a critical role in estimating $A_2(T)$.
\end{remark}

We now proceed to derive a uniform (in time) upper bound for the density, which turns out to be the key to obtaining all the higher order estimates and thus extending the classical solution globally.

\begin{lemma}\la{le7}
Under the conditions of Proposition \ref{pr1},  there exists a positive constant $\ve_4$ depending only on $\mu,\,\lambda,\, \ka,\, R,\, \ga,\, \on,\,\bt,\,\O$, and $M$ such that if $(\rho,u,\te)$ is a smooth solution to the problem (\ref{a1})--(\ref{h1})  on $\Omega\times (0,T] $ satisfying  (\ref{z1})   with $K$ as in Lemma \ref{le2}, the following estimate holds:
\be \la{a3.7}
\sup_{0\le t\le T}\|\n(\cdot,t)\|_{L^\infty}  \le
\frac{3\on }{2},
\ee
provided $C_0\le \ve_4$.
\end{lemma}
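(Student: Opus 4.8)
The plan is to control $\rho$ along particle paths. Let $X(t;x)$ be the flow of $u$, i.e. $\partial_t X(t;x)=u(X(t;x),t)$, $X(0;x)=x$; since $u$ is smooth with $u\cdot n|_{\p\O}=0$, $X(t;\cdot)\colon\overline\O\to\overline\O$ is a bijection for each $t$. From $D_t\rho=-\rho\div u$, the relation $(2\mu+\lambda)\div u=G+P-\bp$, and the decomposition $R\rho\te-\bp=\rho(R\te-\bp)+\bp(\rho-1)$, one obtains along each trajectory the identity flagged in the introduction,
$$(2\mu+\lambda)\frac{d}{dt}\rho(X(t;x),t)=-\bp\,\rho(\rho-1)-\rho^2(R\te-\bp)-\rho G .$$
Writing $y(t)=\rho(X(t;x),t)-1$, using $-\bp\rho(\rho-1)=-\bp y^2-\bp y$ with $-\bp y^2\le0$, and using the a priori bound $\rho\le2\on$ from \eqref{z1}, I get
$$y'(t)+\alpha(t)y(t)\le h(t),\qquad \alpha(t)=\frac{\bp}{2\mu+\lambda},\qquad h(t)=\frac{(2\on)^2\|R\te-\bp\|_{L^\infty}+2\on\|G\|_{L^\infty}}{2\mu+\lambda}\ge0 .$$
By Lemma \ref{a13.1}, $\alpha(t)\ge\alpha_0:=\pi_1/(2\mu+\lambda)>0$, and $|y(0)|=|\rho_0(x)-1|<\on-1$ because $0<\rho_0<\on$ and $\on>2$.

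Applying the Gr\"onwall-type Lemma \ref{le1} with $T_1=\si(T)$, $p=1$, $q=2$ then gives, pointwise in $x$,
$$\rho(X(t;x),t)=y(t)+1\le \on+(1+\alpha_0^{-1})\bigl(\|h\|_{L^1(0,\si(T))}+\|h\|_{L^2(\si(T),T)}\bigr),$$
so the lemma reduces to showing $\|h\|_{L^1(0,\si(T))}+\|h\|_{L^2(\si(T),T)}\le CC_0^{\kappa}$ for some $\kappa>0$: then the right-hand side is $\le 3\on/2$ once $C_0\le\ve_4$ is small, and taking the supremum over $x$ (using that $X(t;\cdot)$ is onto $\overline\O$) yields \eqref{a3.7}. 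It remains to estimate $\int_0^{\si(T)}(\|R\te-\bp\|_{L^\infty}+\|G\|_{L^\infty})dt$ and $\int_{\si(T)}^{T}(\|R\te-\bp\|_{L^\infty}^2+\|G\|_{L^\infty}^2)dt$. For $\|R\te-\bp\|_{L^\infty}$ the short-time $L^1$ bound is precisely \eqref{3.88}, and on $[\si(T),T]$ (where $\si\equiv1$) I combine \eqref{6yue}, \eqref{lop4}, \eqref{ae9}, \eqref{m20} and \eqref{67} with the global-in-$T$ smallness $\int_{\si(T)}^{T}(\|\na u\|_{L^2}^2+\|\na\te\|_{L^2}^2)dt\le2C_0^{1/4}$ and $\int_{\si(T)}^{T}(\|\na\dot u\|_{L^2}^2+\|\rho^{1/2}\dot\te\|_{L^2}^2)dt\le2C_0^{1/6}$ from \eqref{z1}. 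For $\|G\|_{L^\infty}$, since $\overline G=0$ (because $\overline{\div u}=0$), one has $\|G\|_{L^\infty}\le C\|G\|_{L^6}^{1/2}\|\na G\|_{L^6}^{1/2}\le C\|\rho^{1/2}\dot u\|_{L^2}^{1/2}\bigl(\|\na\dot u\|_{L^2}+\|\na u\|_{L^2}^2\bigr)^{1/2}$ by \eqref{h19} and \eqref{tb90}; the large-time $L^2$ bound then follows from $\int_{\si(T)}^{T}\|\na\dot u\|_{L^2}^2dt\le2C_0^{1/6}$ and $\int_{\si(T)}^{T}\|\na u\|_{L^2}^4dt\le CC_0^{5/12}$ (both from \eqref{z1} and \eqref{ae9}) together with $\int_0^{T}\int\rho|\dot u|^2\le3K$, while for the short-time $L^1$ bound I trade the $t$-singularities of $\|\rho^{1/2}\dot u\|_{L^2}$ and $\|\na u\|_{L^2}$ against the $\si$-weighted estimate \eqref{ae26} and the $\si$- and $\si^2$-weighted parts of $A_3(T)$ in \eqref{z1} (interpolating these to produce a small $\si^{3/2}$-weighted bound for $\|\na\dot u\|_{L^2}^2$), the point being that all the time-weights $\si^{-a}$ that appear have $a<1$ and are thus integrable on $[0,\si(T)]$, and every surviving factor carries a positive power of $C_0$.

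The main obstacle is exactly this last book-keeping. Unlike the Cauchy problem of \cite{H-L}, in the bounded-domain setting there is no clean $L^1_t(L^\infty_x)$ bound for $G$ near $t=0$ coming from the basic energy, so integrability up to $t=0$ has to be squeezed out of the degenerate ($\si$- and $\si^2$-weighted) estimates, and one must simultaneously arrange that every interpolation leaves a genuine positive power of $C_0$ in front of the result; keeping track of which weighted estimate feeds which factor, and checking that all the resulting time-weights are integrable on $[0,\si(T)]$, is the delicate part. Once the two-sided bound \eqref{key} on $\bp$ is in hand, everything else — the characteristic identity, discarding the favorable term $-\bp y^2$, and invoking Lemma \ref{le1} with $\alpha_0=\pi_1/(2\mu+\lambda)$ and $T_1=\si(T)$ — is routine.
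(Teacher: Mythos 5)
Your proposal is correct and follows essentially the same route as the paper: the same rewriting of the continuity equation via $G$ and $\bp$ as in \eqref{3.92}, the Gr\"onwall-type Lemma \ref{le1} with $\alpha_0=\pi_1/(2\mu+\lambda)$ and $T_1=\si(T)$, and the same $L^1(0,\si(T))$ / $L^2(\si(T),T)$ estimates of $\|R\te-\bp\|_{L^\infty}$ and $\|G\|_{L^\infty}$ built from \eqref{3.88}, \eqref{6yue}, \eqref{h19}, \eqref{tb90}, \eqref{ae26}, and the smallness of $A_2,A_3$ in \eqref{z1}, exactly as in \eqref{3.89}--\eqref{3.91}. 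The only differences are cosmetic (explicit characteristics, discarding $-\bp y^2\le 0$ instead of $\rho(\rho-1)\ge\rho-1$, and the mean-zero Gagliardo--Nirenberg bound $\|G\|_{L^\infty}\le C\|G\|_{L^6}^{1/2}\|\na G\|_{L^6}^{1/2}$ in place of $\|\na G\|_{L^2}^{1/2}\|\na G\|_{L^6}^{1/2}$), and your sketched $\si$-weight bookkeeping for the short-time $L^1$ bound on $G$ does close, matching \eqref{3.90}.
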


\begin{proof}
First, it follows from \eqref{k1}, \eqref{6yue}, and \eqref{z1} that
\be\la{3.89}\ba
\int_{\si(T)}^T\|R\te-\bp\|^2_{L^\infty}dt
\le& C(\on) \left(\int_{\si(T)}^T\|\na\te\|^2_{L^2}dt\right)^{1/2}\left(\int_{\si(T)}^T \|\na^2\te\|^2_{L^2}dt\right)^{1/2} \\
&+ C(\on) \int_{\si(T)}^T\|\na\te\|^2_{L^2}dt \\
\le& C(\on,M) C_0^{1/8}.
\ea\ee

Next, it follows from  (\ref{h19}), \eqref{tb90}, (\ref{ae26}), and (\ref{z1}) that
\be\la{3.90}\ba &\int_0^{\si(T)}\|G\|_{L^\infty}dt\\
&\le C\int_0^{\si(T)}\|\na G\|_{L^2}^{1/2} \|\na G\|_{L^6}^{1/2}dt\\
&\le C(\on)\int_0^{\si(T)}\|\n \dot u\|_{L^2}^{1/2}(\|\na\dot u\|_{L^2}+ \|\na u\|_{L^2}^2)^{1/2}dt\\
&\le C(\on)\int_0^{\si(T)}\left(\si\|\n \dot u\|_{L^2}\right)^{1/4} \left(\si\|\n \dot u\|^{2}_{L^2}\right)^{1/8} \left(\si\|\na \dot u\|^2_{L^2}\right)^{1/4}\si^{-5/8}dt\\
& \quad+ C(\on) \int_0^{\si(T)}\left(\si \|\n \dot u\|_{L^2}\right)^{1/2} \|\na u\|_{L^2} \si^{-1/2} dt\\
&\le C(\on,M)C_0^{1/48}\left(\int_0^{\si(T)} \si\|\na \dot u\|^2_{L^2} dt\right)^{1/4}\left(\int_0^{\si(T)} \si^{-5/6}dt\right)^{3/4} \\
&\quad+ C(\on,M) C_0^{1/24} \int_0^{\si(T)}
\si^{-1/2} dt\\
&\le C(\on,M)C_0^{1/48},
\ea\ee
and
 \be\la{3.91}\ba  \int_{\si(T)}^T\|G\|^2_{L^\infty}dt
  &\le C\int_{\si(T)}^T\|\na G\|_{L^2} \|\na G\|_{L^6} dt
   \\ &\le C(\on,M)\int_{\si(T)}^T\left(\|\n^{1/2} \dot u\|^2_{L^2}+
 \|\na\dot u\|_{L^2}^2 +  \|\na u\|_{L^2}^2\right)dt\\ &\le C(\on,M)C_0^{1/6}.
    \ea\ee

Denoting $ D_t\n=\n_t+u \cdot\nabla \n $ and using (\ref{hj1}), one can rewrite   $(\ref{a1})_1$  as follows
\bnn\ba
(2\mu+\lambda) D_t \n&=-\bp \n(\n-1)- \n^2(R\te-\bp)-\n G\\
&\le -\bp (\n-1)+C(\on)\|R\te-\bp\|_{L^\infty}+C(\on)\| G\|_{L^\infty},
\ea\enn
which gives
\be\la{3.92}\ba
D_t (\n-1)+\frac{\bp }{2\mu+\lambda} (\n-1)\le C(\on)\|R\te-\bp\|_{L^\infty}+C(\on)\| G\|_{L^\infty}.
\ea\ee

Finally, applying Lemma \ref{le1} with
$$y=\n-1, \quad\al=\frac{\bp}{2\mu+\lambda} ,\quad g=C(\on)\|R\te-\bp\|_{L^\infty}+C(\on)\| G\|_{L^\infty},\quad T_1=\si(T),$$
 we thus deduce from (\ref{3.92}), (\ref{3.88}), \eqref{3.89}--(\ref{3.91}), (\ref{2.34}), and \eqref{key}  that
 \bnn\ba\n
 & \le \on+1 +C\left(\|g\|_{L^1(0,\si(T))}+\|g\|_{L^2(\si(T),T)}\right) \le \on+1 +C(\on,M)C_0^{1/48} ,
 \ea\enn
 which gives \eqref{a3.7}
 provided 
  \be \notag C_0\le \ve_4\triangleq\min\left\{1,\left(\frac{\hat \n-2 }{2C(\on,M) }\right)^{48}\right\}.\ee
 The proof of Lemma \ref{le7} is completed.
\end{proof}

Next, we summarize some uniform estimates on $(\n,u,\te)$ which will be useful for higher-order ones in the next section.
\begin{lemma}\la{le8}
Under the conditions of Proposition \ref{pr1},  there exists a  positive constant    $C $     depending only   on  $\mu,\,\lambda,\, \ka,\, R$, $\ga,\, \on,\,\bt,\, \O$, and $M$  such that if $(\rho,u,\te)$  is a smooth solution to the problem (\ref{a1})--(\ref{h1}) on $\Omega\times (0,T] $  satisfying (\ref{z1}) with $K$ as in Lemma \ref{le2}, the following estimate holds:
	\be \la{ae3.7}\sup_{0< t\le T}\si^2\int \n|\dot\te|^2dx + \int_0^T\si^2 \|\na\dot\te\|_{L^2}^2dt\le C.\ee
Moreover, it holds that
\be\la{vu15}\ba
&\sup_{0< t\le T}\left(  \si\|\na u \|^2_{L^6}+\si^2\|\te\|^2_{H^2}\right)\\
&+\int_0^T(\si \|\na u \|_{L^4}^4+\si\|\na\te \|_{H^1}^2+\si\|u_t\|_{L^2}^2+\si^2\|\te_t\|^2_{H^1}+\|\n -1\|_{L^2}^2)dt\le C.
\ea\ee
\end{lemma}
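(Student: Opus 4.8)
The plan is to bootstrap from the lower-order estimates already established in Lemmas \ref{le2}--\ref{le7} (which give (\ref{z1}) with the improved constants, hence $\sup_T\|\n\|_{L^\infty}\le 3\on/2$, $A_1(T)\le 2K$, $A_2(T)\le C_0^{1/4}$, $A_3(T)\le C_0^{1/6}$) to the weighted second-order bounds for the temperature. First I would take the equation for $\dot\te$: differentiating $(\ref{a1})_3$ along the flow, i.e. applying the operator $[\pa_t+\div(u\,\cdot)]$ to the internal energy equation exactly as is done for $\dot u$ in Lemma \ref{a113.4}, one gets a parabolic-type identity for $\frac{R}{\ga-1}\int\n|\dot\te|^2dx$ whose right-hand side involves $\|\na\dot u\|_{L^2}$, $\|\na\te\|_{L^2}$, $\|\na u\|_{L^4}$, $\|\te\na u\|_{L^2}$, and $\|\na\dot\te\|_{L^2}$ (absorbed on the left via an elliptic estimate for $\dot\te$ analogous to (\ref{lop4})). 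Multiplying by $\si^2$, integrating over $(0,T)$, and using the already-available bounds $\int_0^T\si^2\|\na\dot u\|_{L^2}^2dt\le C$ from $A_3(T)$, together with (\ref{ae26}) and (\ref{k1}) (which control $\int_0^T\si\|\na^2\te\|_{L^2}^2dt$ and $\sup\si\|\na\te\|_{L^2}^2$), yields (\ref{ae3.7}). The key point is that all the source terms come with enough powers of $\si$ and enough integrated control from $A_3$ and Lemma \ref{a13} to close without smallness beyond what (\ref{z1}) already provides.

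Next, with (\ref{ae3.7}) in hand, I would upgrade to (\ref{vu15}). For $\si\|\na u\|_{L^6}^2$: use (\ref{h17}) or (\ref{3.30}) to write $\|\na u\|_{L^6}\le C(\|\n^{1/2}\dot u\|_{L^2}+\|\na u\|_{L^2}+\|\na\te\|_{L^2}+C_0^{1/24})$, multiply by $\si$, and invoke $\sup\si\|\n^{1/2}\dot u\|_{L^2}^2\le C$ (this is part of $A_3(T)$ once one notes $\si^2\int\n|\dot u|^2\le C_0^{1/6}$ gives only $\si$, but combined with (\ref{ae26}) one has $\sup_T\si\|\n^{1/2}\dot u\|_{L^2}^2\le C$) and $\sup_T\si\|\na\te\|_{L^2}^2\le C$ from (\ref{ae26}). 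For $\si^2\|\te\|_{H^2}^2$: combine the elliptic estimate (\ref{lop4}), namely $\|\na^2\te\|_{L^2}\le C(\|\n^{1/2}\dot\te\|_{L^2}+\|\na u\|_{L^4}^2+\|\te\na u\|_{L^2})$, with (\ref{ae3.7}), (\ref{m20})--(\ref{m22}), and the $H^1$ bound on $\te$ from (\ref{a2.121})/(\ref{a2.17}). The time-integral terms $\int_0^T\si\|\na u\|_{L^4}^4dt$ and $\int_0^T\si\|\na\te\|_{H^1}^2dt$ follow from (\ref{m22}), (\ref{ae26}), (\ref{k1}), and $A_1,A_3$; $\int_0^T\si\|u_t\|_{L^2}^2dt$ follows from $u_t=\dot u-u\cdot\na u$ with $\|u\cdot\na u\|_{L^2}\le C\|u\|_{L^6}\|\na u\|_{L^3}$ controlled by $A_1$ and (\ref{3.30}); $\int_0^T\si^2\|\te_t\|_{H^1}^2dt$ follows from $\te_t=\dot\te-u\cdot\na\te$ together with (\ref{ae3.7}), (\ref{tb90})-type bounds on $\dot\te$, and the $\na^2\te$ control; and $\int_0^T\|\n-1\|_{L^2}^2dt\le C$ is just (\ref{xx}) already proved.

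The main obstacle I anticipate is the $\dot\te$ estimate (\ref{ae3.7}) itself: the equation for $\dot\te$ is genuinely more delicate than that for $\dot u$ because differentiating the dissipation terms $\lambda(\div u)^2+2\mu|\mathfrak{D}(u)|^2$ along the flow produces terms like $\na u:\na\dot u$ and $\na u:\na u:\na u$, which must be handled by the elliptic estimate for $\dot\te$ and by (\ref{tb11})/(\ref{tb90}) applied to $\dot u$, and one must be careful that the $\si$-weights match: the worst terms carry $\si^2\|\na\dot u\|_{L^2}^2$, which is exactly borderline-integrable by $A_3(T)$, so the constants must be tracked to ensure absorption rather than merely boundedness. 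A secondary subtlety is the boundary: since $\na\te\cdot n=0$ on $\pa\O$ is preserved but $\dot\te$ does not satisfy a homogeneous Neumann condition, integration by parts in the $\dot\te$-equation generates boundary integrals of the form $\int_{\pa\O}(u\cdot\na n\cdot\na\te)\,(\cdots)\,dS$ which, as in (\ref{bz3})-(\ref{bz4}), must be rewritten using $u=u^\perp\times n$ and the identity (\ref{cd}) to trade a factor of $\na^2\te$ for lower-order quantities; this is routine given the machinery of Lemma \ref{a113.4} but is where most of the bookkeeping lives. Everything else in (\ref{vu15}) is then a matter of assembling previously-proved pieces.
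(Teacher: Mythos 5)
Your proposal follows essentially the same route as the paper's proof: apply $\pa_t+\div(u\,\cdot)$ to the temperature equation, perform the $\si^2$-weighted energy estimate for $\dot\te$ and close it with the previously established bounds (\ref{ae26}), (\ref{k1}), (\ref{ong}), (\ref{m20}), (\ref{m22}) under (\ref{z1}), and then assemble (\ref{vu15}) from (\ref{lop4}), (\ref{3.30}), (\ref{kk}), (\ref{xx}) and the lower-order estimates, exactly as the paper does. The one discrepancy is your anticipated "main obstacle" at the boundary: it does not actually arise, because keeping the diffusion in the form $\ka\Delta\te_t+\ka\div(\Delta\te\,u)$ and testing with $\dot\te$ produces no boundary integrals at all (they vanish since $\na\te_t\cdot n=0$ and $u\cdot n=0$ on $\p\O$, cf.\ (\ref{bea})), so the $u=u^{\perp}\times n$ machinery of (\ref{bz3})--(\ref{bz4}) is unnecessary here, and likewise the $\|\na\dot\te\|_{L^2}^2$ term enters the left-hand side directly from this integration by parts rather than through an elliptic estimate for $\dot\te$.
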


\begin{proof}
First, applying the operator $\pa_t+\div(u\cdot) $ to (\ref{a1})$_3 $ and using  (\ref{a1})$_1$, one   gets
\be\la{3.96}\ba
&\frac{R}{\ga-1} \n \left(\pa_t\dot \te+u\cdot\na\dot \te\right)\\
&=\ka \Delta  \te_t +\ka \div (\Delta \te u)+\left( \lambda (\div u)^2+2\mu |\mathfrak{D}(u)|^2\right)\div u +R\n \te  \pa_ku^l\pa_lu^k\\
&\quad -R\n \dot\te \div u-R\n \te\div \dot u +2\lambda \left( \div\dot u-\pa_ku^l\pa_lu^k\right)\div u\\
&\quad + \mu (\pa_iu^j+\pa_ju^i)\left( \pa_i\dot u^j+\pa_j\dot u^i-\pa_iu^k\pa_ku^j-\pa_ju^k\pa_ku^i\right).
\ea\ee
Direct calculations show that
\be \ba\la{bea}
 \int  (\Delta  \te_t + \div (\Delta \te u)) \dot \te dx &=  - \int  (\na  \te_t \cdot \na \dot\te + \Delta \te u \cdot \na \dot \te) dx\\
&= - \int  |\na \dot\te|^2 dx  + \int ( \na(u\cdot \na \te) \cdot \na \dot \te - \Delta \te u \cdot \na \dot \te) dx.
\ea \ee
Multiplying (\ref{3.96}) by $\dot \te$ and integrating the resulting equality over $\O$, it holds that
\be\la{3.99}\ba
& \frac{R}{2(\ga-1)}\left(\int \n |\dot\te|^2dx\right)_t + \ka   \|\na\dot\te\|_{L^2}^2 \\
&\le  C  \int|\na \dot \te|\left( |\na^2\te||u|+ |\na \te| |\na u|\right)dx+C\int  \n|R\te-\bp| |\na\dot u| |\dot \te|dx\\
&\quad +C(\on)  \int|\na u|^2|\dot\te|\left(|\na u|+|R\te-\bp| \right)dx+C   \int |\na\dot u|\n|\dot \te| dx \\
&\quad +C (\on)  \int\left( |\na u|^2|\dot \te|+\n  |\dot
\te|^2|\na u|+|\na u| |\na\dot u| |\dot \te|\right)dx \\
&\le C\|\na u\|^{1/2}_{L^2}\|\na u\|^{1/2}_{L^6}\|\na^2\te\|_{L^2}\|\na \dot \te\|_{L^2}+C(\on)\|\na\te\|_{L^2} \|\na\dot u\|_{L^2} \|\dot\te\|_{L^6}\\
&\quad+C(\on)  \|\na u\|_{L^2}\|\na u\|_{L^6}\left(\|\na u\|_{L^6}+\|\na \te\|_{L^2}\right)
\|\dot\te\|_{L^6} +C  \|\na\dot u\|_{L^2} \|\n\dot\te\|_{L^2} \\
&\quad+C(\on)  \|\na u\|^{1/2}_{L^6}\|\na u\|^{1/2}_{L^2} \|\dot\te\|_{L^6}\left(\|\na u\|_{L^2}
+\|\n\dot\te\|_{L^2}+\|\na\dot u\|_{L^2}\right)  \\
&\le\frac{\ka}{2}\|\na\dot\te\|_{L^2}^2+C(\on)\|\na u\|_{L^2}^2\left(\|\na u\|_{L^6}^4+\|\na\te \|_{L^2}^4\right)+C(\on,M) \|\na u\|_{L^6}\|\na u\|_{L^2}^2\\
&\quad+C(\on,M) \left(1+\|\na u\|_{L^6}+\|\na\te\|_{L^2}^2\right) \left(\|\na^2\te\|_{L^2}^2+\|\na\dot u\|_{L^2}^2+\|\rho^{1/2} \dot \te\|_{L^2}^2\right),
\ea\ee
where we have used \eqref{bea}, \eqref{g1}, \eqref{g2}, \eqref{z1}, \eqref{pq}, and the following Poincar\'e-type inequality (\cite[Lemma 3.2]{feireisl1}):
\be \la{kk}
\|f\|_{L^p}\le C(\on)(\|\n^{1/2}f\|_{L^2}+\|\na f \|_{L^2}),~~~p\in[2,6],
\ee
for any $f\in\{h\in H^1 \left|\n^{1/2}h\in L^2\}\right.$.


Multiplying (\ref{3.99}) by $\si^2$ and integrating the resulting inequality over $(0,T),$
we obtain after integrating by parts that
\bnn\ba
& \sup_{0\le t\le T}\si^2\int \n|\dot\te|^2dx + \int_0^T\si^2 \|\na\dot\te\|_{L^2}^2dt  \\
&\le C(\on) \sup_{0\le t\le T} \left(\si^2(\|\na u\|_{L^6}^4+\|\na\te\|_{L^2}^4)\right)\int_0^T\|\na u\|_{L^2}^2dt \\
&\quad + C(\on,M) \sup_{0\le t\le T} \left(\si\left(1+\|\na u\|_{L^6}+\|\na\te\|_{L^2}^2\right)\right)\\
&\quad \quad \quad \quad \quad \quad \quad \cdot\int_0^T\si\left(\|\na^2\te\|_{L^2}^2+\|\na\dot u\|_{L^2}^2+\|\rho^{1/2} \dot \te\|_{L^2}^2\right)dt\\
&\quad +C(\on,M) \sup_{0\le t\le T} \left(\si\|\na u\|_{L^6} \right) \int_0^T\|\na u\|_{L^2}^2dt+C\int_0^T\si\|\rho^{1/2} \dot \te\|_{L^2}^2dt\\
&\le  C(\on,M), \ea\enn
where we have used   (\ref{z1}), (\ref{ae26}),  (\ref{k1}), and the following fact:
\be\ba\la{ong}\sup_{0\le t\le T}(\si\|\na u\|_{L^6}^2)\le C(\on,M)\ea\ee
due to \eqref{3.30}, \eqref{ae26}, and \eqref{z1}.

Next, it follows from (\ref{z1}),  \eqref{ae26}, (\ref{lop4}), (\ref{m20}), (\ref{m22}), (\ref{ae3.7}), (\ref{xx}), \eqref{nnn1}, and  (\ref{k1}) that
\be \la{vu02}\ba
\sup_{0\le t\le T}\left(\si^2\|\te  \|^2_{H^2}\right)+\int_0^T \left(\si\|\na u  \|_{L^4}^4+\si\|\na\te \|_{H^1}^2+\|\n -1\|_{L^2}^2\right)dt
 \le C(\on,M), \ea\ee
which along with (\ref{z1}), \eqref{ae26}, (\ref{k1}), \eqref{kk}, \eqref{ong}, and \eqref{ae3.7} gives
 \be\la{vu12}\ba    \int_0^T  \si \|u _t\|_{L^2}^2dt
 &\le C\int_0^T  \si(\| \dot u \|_{L^2}^2+\|u\cdot\na  u \|_{L^2}^2)dt\\
 &\le C(\hat\n)\int_0^T  \si(\|\n^{1/2} \dot u \|_{L^2}^2+\|\na\dot u\|_{L^2}^2+\|u \|_{L^\infty}^2\|\na  u \|_{L^2}^2)dt\\
 &\le C(\hat \n,M) ,\ea\ee
\be\la{vu11}\ba
\int_0^T  \si^2 \|  \te _t\|_{L^2}^2dt
&\le C\int_0^T  \si^2(\| \dot \te \|_{L^2}^2+\|u \cdot\na  \te \|_{L^2}^2)dt\\
&\le C(\hat\n)\int_0^T  \si^2(\|\n^{1/2} \dot \te\|_{L^2}^2+\|\na\dot\te\|_{L^2}^2+\|u\|_{L^6}^2\|\na\te\|_{L^3}^2)dt\\
&\le C(\hat \n,M) ,\ea\ee
 and
\be\la{vu01}\ba
\int_0^T  \si^2 \|  \na\te _t\|_{L^2}^2dt
&\le C\int_0^T  \si^2\|\na \dot \te \|_{L^2}^2  dt+ C\int_0^T  \si^2\|\na(u \cdot\na  \te )\|_{L^2}^2dt\\
&\le C(\on,M) +C\int_0^T\si^2\left(\|\na u \|_{L^3}^2+\|u \|_{L^\infty}^2\right)\|\na^2 \te \|_{L^2}^2dt  \\ &\le C(\on,M).\ea\ee
Hence, (\ref{vu15}) is derived from (\ref{ong})--\eqref{vu01} immediately.
The proof of Lemma \ref{le8} is finished.
\end{proof}

Finally, we end this section by establishing the exponential decay-in-time for the classical solutions.
\begin{lemma}\la{pr2}
Under the conditions of Proposition \ref{pr1}, there exist  positive constants $\ve_0$, $C^\ast$, $\al$, and $\te_\infty$ depending only on   $\mu,\,\lambda,\, \ka,\, R,\, \ga,\, \on,\, \bt,\,\O,$ and $M$ such that if $(\rho,u,\te)$ is a smooth solution to the problem (\ref{a1})--(\ref{h1}) on $\Omega\times (0,T] $ satisfying (\ref{z1}) with $K$ as in Lemma \ref{le2}, (\ref{h22}) holds for any $t\geq 1$, provided $C_0\le\ve_0.$
\end{lemma}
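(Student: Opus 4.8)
The strategy is the standard one for such decay statements: construct at each "level'' a Lyapunov-type functional whose derivative, after all nonlinear terms are absorbed using the smallness of $C_0$ and the uniform two-sided bound $\pi_1\le\bp\le\pi_2$ of Lemma \ref{a13.1}, is controlled by the dissipation of that level, and then bootstrap upward, multiplying each differential inequality by $e^{\al t}$ and integrating so that the exponentially small \emph{time-integrated} dissipation produced at one level forces the \emph{pointwise} decay of the next. Since $t\ge1$ means $\si(t)\equiv1$, every $\si$-weighted estimate of Lemma \ref{a113.4} and of the proof of Lemma \ref{le8} collapses to an ordinary differential inequality. The equilibrium temperature will be defined a posteriori as $\te_\infty\triangleq R^{-1}\lim_{t\to\infty}\bp(t)$.

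\emph{Step 1 (base energy).} I would take the functional $\mathcal E(t)\triangleq\int\bigl(\tfrac12\n|u|^2+\bp(1+\n\log\n-\n)+\tfrac1{2(\ga-1)}\n\bp^{-1}(R\te-\bp)^2\bigr)dx$ from \eqref{a2.23}, corrected by the Bogovskii term $-\ep\int\n u\cdot\mathcal B[\n-1]\,dx$ with $\ep>0$ small, where $\mathcal B$ is the operator of Lemma \ref{th00} (applicable since $\overline{\n-1}=0$). Testing $\eqref{a1}_2$ against $\mathcal B[\n-1]$ and using $\int\na P\cdot\mathcal B[\n-1]\,dx=-\int(P-\bp)(\n-1)\,dx=-\bp\|\n-1\|_{L^2}^2-\int\n(R\te-\bp)(\n-1)\,dx$ produces the missing coercive term $\bp\|\n-1\|_{L^2}^2$, while the remaining contributions are bounded by $\|\na u\|_{L^2}^2+\|\na\te\|_{L^2}^2$ via $\|\mathcal B[\n-1]\|_{W^{1,2}}\le C\|\n-1\|_{L^2}$, $\|(\mathcal B[\n-1])_t\|_{L^2}=\|\mathcal B[\div(\n u)]\|_{L^2}\le C\|\n u\|_{L^2}$, \eqref{pq}, and the Poincar\'e-type inequality \eqref{kk}. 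On the other hand, for $t\ge1=\si(T)$ the terms $J_1,\dots,J_5$ of \eqref{a2.23} estimated in Lemma \ref{le3} are all dominated by $CC_0^{\beta}(\|\na u\|_{L^2}^2+\|\na\te\|_{L^2}^2+\|\n-1\|_{L^2}^2)$ for some $\beta>0$. Choosing $\ep$ and then $C_0$ small, and using \eqref{h18}, \eqref{pq}, \eqref{a2.9}, and Lemma \ref{a13.1}, I obtain for $t\ge1$
\[
\widetilde{\mathcal E}\,'(t)+c_0\bigl(\|\na u\|_{L^2}^2+\|\na\te\|_{L^2}^2+\|\n-1\|_{L^2}^2\bigr)\le0,\qquad
\tfrac{c}{2}\|\n-1\|_{L^2}^2\le\widetilde{\mathcal E}(t)\le C_1\bigl(\|\na u\|_{L^2}^2+\|\na\te\|_{L^2}^2+\|\n-1\|_{L^2}^2\bigr),
\]
with $\widetilde{\mathcal E}\triangleq\mathcal E-\ep\int\n u\cdot\mathcal B[\n-1]\,dx\ge0$. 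Gr\"onwall then gives $\widetilde{\mathcal E}(t)\le Ce^{-\al_1 t}$ and $\int_t^\infty(\|\na u\|_{L^2}^2+\|\na\te\|_{L^2}^2+\|\n-1\|_{L^2}^2)\,ds\le Ce^{-\al_1 t}$, hence also $\int_1^\infty e^{\al t}(\|\na u\|_{L^2}^2+\|\na\te\|_{L^2}^2+\|\n-1\|_{L^2}^2)\,dt\le C$ for any $\al<\al_1$; in particular $\|\n-1(t)\|_{L^2}^2\le Ce^{-\al_1 t}$.

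\emph{Step 2 (first- and second-order quantities).} Feeding the weighted dissipation of Step 1 into the $e^{\al t}$-multiplied version of \eqref{an1} (after absorbing $\al B_1\le C(\|\na u\|_{L^2}^2+\|P-\bp\|_{L^2}^2)$) and of the combination \eqref{e8} with $m=0$, where the boundary integral $\int_{\p\O}(u\cdot\na n\cdot u)G\,dS$ is treated by \eqref{h19} and \eqref{b2}, $\|\na u\|_{L^4}^4$ by \eqref{ae9}, and $\|\te\na u\|_{L^2}^2$ by \eqref{m20}, and integrating over $[1,T]$ while using $B_1\ge C\|\na u\|_{L^2}^2-C\|P-\bp\|_{L^2}^2$ and $\varphi\ge\tfrac12\|\n^{1/2}\dot u\|_{L^2}^2+c\|\na\te\|_{L^2}^2-C\|\na u\|_{L^2}^2$, I obtain $\|\na u(t)\|_{L^2}^2+\|\na\te(t)\|_{L^2}^2+\|\n^{1/2}\dot u(t)\|_{L^2}^2\le Ce^{-\al t}$ together with $\int_1^\infty e^{\al' t}(\|\na\dot u\|_{L^2}^2+\|\n^{1/2}\dot\te\|_{L^2}^2)\,dt\le C$ for a smaller $\al'>0$. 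The same scheme applied to the $e^{\al t}$-weighted version of the inequality \eqref{3.99} (all of whose right-hand coefficients are bounded for $t\ge1$ by Lemma \ref{le8}) yields $\|\n^{1/2}\dot\te(t)\|_{L^2}^2\le Ce^{-\al t}$; then \eqref{lop4}, \eqref{ae9}, \eqref{m20} give $\|\na^2\te(t)\|_{L^2}^2\le Ce^{-\al t}$, and \eqref{h17}–\eqref{h19} together with the decay of $\|\n\dot u\|_{L^2}$, the estimates \eqref{pq}, \eqref{p}, and the interpolation $\|\n-1\|_{L^6}\le\|\n-1\|_{L^2}^{1/3}\|\n-1\|_{L^\infty}^{2/3}$ (legitimate since $0<\n\le2\on$) give $\|\na u(t)\|_{L^6}^2\le Ce^{-\al t}$, hence $\|u(t)\|_{W^{1,6}}^2\le Ce^{-\al t}$.

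\emph{Step 3 (identification of $\te_\infty$ and conclusion).} From \eqref{pt} and the decay of $\|\na u\|_{L^2}$ one has $|\bp_t(t)|\le C\|\na u\|_{L^2}+C\|\na u\|_{L^2}^2\le Ce^{-\al t}$, so $\bp(t)$ converges to a limit $\bp_\infty=:R\te_\infty$ with $\pi_1\le R\te_\infty\le\pi_2$ (in particular $\te_\infty>0$) and $|\bp(t)-R\te_\infty|\le Ce^{-\al t}$; combining with $\|R\te-\bp(t)\|_{L^2}\le C\|\na\te(t)\|_{L^2}\le Ce^{-\al t}$ from \eqref{pq} and Step 2 gives $\|\te(t)-\te_\infty\|_{L^2}\le Ce^{-\al t}$, whence $\|\te-\te_\infty\|_{H^2}^2\le Ce^{-\al t}$. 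Collecting the three decay estimates proves \eqref{h22}, with $C^\ast$ and $\al$ the largest constant and smallest rate among the finitely many produced above, and $\ep_0$ the minimum of the thresholds of Lemmas \ref{le2}, \ref{le6}, \ref{le3}, \ref{le7} and those imposed in Steps 1–2. The main obstacle is precisely that the functionals controlling the higher-order norms ($B_1$, $\varphi$, $\int\n|\dot\te|^2\,dx$) carry no damping of their own — they only see the dissipation of the level above — so a single Gr\"onwall argument is impossible; the staged $e^{\al t}$-weighted bootstrap circumvents this, with the smallness of $C_0$ absorbing every nonlinear interaction and the uniform two-sided bound on $\bp$ supplying the Poincar\'e-type coercivity of the base energy, while the near-vacuum degeneracy is handled throughout by \eqref{kk}.
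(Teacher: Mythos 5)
Your proposal is correct and follows essentially the same route as the paper: the Bogovskii-operator correction of the combined kinetic–thermal functional from \eqref{a2.23} to gain coercivity in $\|\n-1\|_{L^2}$ via the uniform bounds on $\bp$, followed by the staged $e^{\al t}$-weighted bootstrap through \eqref{1hh17}, \eqref{e8} with $m=0$, and \eqref{3.99}, and finally the identification of $\te_\infty$ from the exponential decay of $\bp_t$ in \eqref{pt}. The only differences are cosmetic (e.g.\ you bound the $J_i$ as in Lemma \ref{le3} rather than via \eqref{6yue} and \eqref{vu15}, and you note $\te_\infty>0$ explicitly), so no gap to report.
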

\begin{proof}
First, it follows from \eqref{a2.23}, \eqref{z1}, \eqref{key}, \eqref{a2.9}, \eqref{511}, \eqref{pq}, \eqref{6yue}, and \eqref{vu15} that for any $t\geq 1$,
\be\la{ao}\ba
&\frac{1}{2}W'(t)+ \mu \|\curl u\|_{L^2}^2 +(2\mu+\lambda)\|\div u\|_{L^2}^2+  \ka R \bp^{-1}  \|\na\te\|_{L^2}^2\\
&\le C|\bp_t|(\|R\te-\bp\|_{L^2}+\|R\te-\bp\|_{L^2}^2)+|\bp_t| \|\n-1\|_{L^2}^2 \\
&\quad+C\|R\te-\bp\|_{L^\infty}(\|R\te-\bp\|_{L^2}^2+\|\na u\|_{L^2}^2)\\
&\le C(C_0^{1/8}\|\na u\|_{L^2}+\|\na u\|_{L^2}^2)(\|\na\te\|_{L^2}+\|\na\te\|_{L^2}^2+ \|\n-1\|_{L^2}^2)\\
&\quad+C(\|\na\te\|_{L^2}^{1/2}\|\na^2\te\|_{L^2}^{1/2}+\|\na\te\|_{L^2})(\|\na\te\|_{L^2}^2+\|\na u\|_{L^2}^2)\\
&\le CC_0^{1/24}(\|\na u\|_{L^2}^2+\|\na\te\|_{L^2}^2+ \|\n-1\|_{L^2}^2),
\ea\ee
where
\be\la{t3}\ba
W(t)&\triangleq\int\left(\n |u|^2+2\bp (1+\n\log \n-\n)+\frac{1}{\ga-1} \n \bp^{-1}{(R\te-\bp)^2} \right)dx\\
&\le \hat C_3(\|\na u\|_{L^2}^2+\|\na\te\|_{L^2}^2+ \|\n-1\|_{L^2}^2)
\ea\ee
owing to \eqref{z1}, \eqref{key}, \eqref{a2.9}, and \eqref{pq}.
Combining \eqref{ao} with \eqref{h18} and \eqref{key} yields that
\be\la{t5}\ba &W'(t)+\hat C_1(\|\na u\|_{L^2}^2+\|\na \te\|_{L^2}^2)
\\&\le \hat C_2C_0^{1/24}(\|\na u\|_{L^2}^2+\|\na\te\|_{L^2}^2+ \|\n-1\|_{L^2}^2).\ea\ee

Next, rewriting $\eqref{a1}_2$   as
\bnn\la{t2}\ba &(\n u)_t+\div(\n u\otimes u)\\&=\mu\Delta u+(\mu+\lambda)\na(\div u)-\na(\n(R\te-\bp))-\bp\na(\n-1),\ea\enn
  multiplying this by $\mathcal{B}[\n-1]$ and using Lemma \ref{th00}, \eqref{z1}, and \eqref{pq},  one gets that for any $t\geq 1$,
\be\notag \ba
&\bp\int(\n-1)^2 dx \\
&= \left(\int\rho u\cdot\mathcal{B}[\n-1] dx\right)_t -\int\rho u\cdot\mathcal{B}[\n_t]dx
  -\int\rho u\cdot\nabla\mathcal{B}[\n-1]\cdot u dx \\
& \quad  +\mu\int\p_j u\cdot\p_j\mathcal{B}[\n-1] dx +(\mu+\lambda)\int(\rho-1)\div udx -\int\n(R\te-\bp)(\n-1)dx\\
& \le\left(\int\n u\cdot\mathcal{B}[\n-1]dx\right)_t+C\|\n u\|_{L^2}^2+C\|u\|_{L^{4}}^{2}\|\n-1\|_{L^2}\\
& \quad  +C\|\rho-1\|_{L^2}\|\na u\|_{L^2}+C\|\rho-1\|_{L^2}\|R\te-\bp\|_{L^2} \\
& \leq \left(\int\rho u\cdot\mathcal{B}[\n-1] dx\right)_t+\frac{\pi_1}{2}\|\n-1\|_{L^2}^2+C(\|\na u\|_{L^2}^2+\|\na \te\|_{L^2}^2),
\ea\ee
which as well as \eqref{key} leads to
\be\la{t6}\|\rho-1\|_{L^2}^2\le\frac{2}{\pi_1}\left(\int\rho u\cdot\mathcal{B}[\n-1] dx\right)_t+\hat C_4(\|\na u\|_{L^2}^2+\|\na \te\|_{L^2}^2).\ee
By virtue of \eqref{key}, \eqref{a2.9}, and Lemma \ref{th00}, it holds
\be \la{t4} \ba
\left|\int\rho u\cdot\mathcal{B}[\n-1] dx\right|
&\leq  C \left(\|\n u\|^2_{L^2}+\|\n-1\|_{L^2}^2\right)\\
&\le \hat C_5\left(\|\n^{1/2} u\|^2_{L^2}+2\bp (1+\n\log \n-\n)\right).
\ea\ee
Adding \eqref{t5} to \eqref{t6} multiplied by $\hat C_6$ with $\hat C_6=\min\{\frac{\pi_1}{4\hat C_5},\frac{\hat C_1}{4\hat C_4}\}$ yields
\be\ba\la{t1}
&W_1'(t)+\frac{3\hat C_1}{4}(\|\na u\|_{L^2}^2+\|\na \te\|_{L^2}^2)+\hat C_6\|\n-1\|_{L^2}^2\\
&\le \hat C_2C_0^{1/24}(\|\na u\|_{L^2}^2+\|\na\te\|_{L^2}^2+ \|\n-1\|_{L^2}^2),\ea\ee
where
$$W_1(t)\triangleq W(t)-\frac{2\hat C_6}{\pi_1}\int\rho u\cdot\mathcal{B}[\n-1] dx,$$
satisfies
\be\la{t8}\frac{1}{2}W(t)\le W_1(t)\le 2W(t)\ee
due to \eqref{t4}.
Thus we infer from \eqref{t1} that
\be\la{t9} W_1'(t)+\frac{\hat C_1}{2}(\|\na u\|_{L^2}^2+\|\na \te\|_{L^2}^2)+\frac{\hat C_6}{2}\|\n-1\|_{L^2}^2\le 0,\ee
provided
\be\la{t7}C_0\le\ve_0\triangleq \min\left\{\ve_1,\cdots,\ve_4,\left(\frac{\hat C_6}{2\hat C_2}\right)^{24},\left(\frac{\hat C_1}{4\hat C_2}\right)^{24}\right\}.\ee
Then by  \eqref{t3}, one derives that for $\alpha=\frac{1}{3}\min\{\frac{\hat C_1}{2\hat C_3},\frac{\hat C_6}{2\hat C_3}\}$,
\be\notag W_1'(t)+3\alpha W_1(t)\le 0,\ee
which along with \eqref{t8},  \eqref{a2.9}, \eqref{key}, \eqref{t3}, and \eqref{z1} shows that for any $t\geq 1$,
\be\la{t10}\|\n^{1/2} u\|_{L^2}^2+\|\n-1\|_{L^2}^2+\|\n^{1/2}(R\te-\bp)\|_{L^2}^2\le C W_1(t)\le Ce^{-3\al t}.\ee
Moreover, we deduce from \eqref{t9} and \eqref{t10} that for any $1\le t\le T<\infty$,
\be\la{t11}\int_1^Te^{\al t}(\|\na u\|_{L^2}^2+\|\na \te\|_{L^2}^2)dt\le C.\ee

Next, multiplying \eqref{1hh17} by $e^{\al t}$ and using \eqref{z1} imply that for $B_1$ defined in \eqref{an2},
\be\ba \la{t12}
&(e^{\al t}B_1(t))'+ \frac{1}{2}e^{\al t}\int\rho |\dot u|^2dx\le Ce^{\al t}\left(\|\na u\|_{L^2}^2+\|\na\te\|_{L^2}^2+\|P-\overline P\|_{L^2}^2\right).
\ea\ee
Note that by \eqref{z1}, \eqref{key}, and \eqref{t10},
\be\la{t16}\|P-\overline P\|_{L^2}\le \|\n(R\te-\bp)\|_{L^2}+\bp\|\n-1\|_{L^2}\le Ce^{-\al t},\ee which together with  \eqref{t11}, \eqref{t12}, and \eqref{h18} gives for any $1\le t\le T<\infty$,
\be\la{t13}\ba
\sup_{1\leq t\leq T}\left(e^{\al t}\|\na u\|_{L^2}^2\right)+\int_1^T e^{\al t}\|\n^{1/2}\dot u\|^2_{L^2}dt\le C.
\ea \ee

Furthermore, choosing $m=0$ in \eqref{e8},  it follows from \eqref{m20}, \eqref{ae9}, and \eqref{z1} that for any $t\geq1$,
\be\notag\ba
&\varphi'(t)+ 2\left(\int_{\p \O}( u \cdot \na n \cdot u )G dS\right)_t +\int\left(\frac{C_1}{2} |\nabla\dot{u}|^2 +\n|\dot \te|^2\right)dx\\
&\le C(\|\n^{1/2}\dot u\|_{L^2}^2+\|\na u\|_{L^2}^2 + \|\na \te\|_{L^2}^2+\|\n-1\|_{L^2}^2),
\ea\ee
where  $\varphi(t)$ is defined in \eqref{wq3}. Multiplying this by $e^{\al t}$ along with \eqref{e6}, \eqref{wq3}, \eqref{wq2}, \eqref{b2}, \eqref{t10}, \eqref{t11}, \eqref{t13}, \eqref{h19}, and \eqref{z1} yields that for any $1\le t\le T<\infty$,
\be\la{t15}\ba
\sup_{1\leq t\leq T}\left(e^{\al t}(\|\n^{1/2}\dot u\|_{L^2}^2+\|\na\te\|_{L^2}^2)\right)+\int_1^T e^{\al t}(\|\na\dot u\|_{L^2}^2+\|\n^{1/2}\dot \te\|^2_{L^2})dt\le C.
\ea \ee
Adopting the analogous method and applying \eqref{3.99}, \eqref{vu15}, \eqref{t10}, \eqref{t11}, \eqref{t13}, \eqref{t15}, \eqref{lop4}, \eqref{m20},  \eqref{z1}, and \eqref{ae9}, we obtain that
\be\la{t17}\ba
\sup_{1\leq t\leq T}\left(e^{\al t}(\|\n^{1/2}\dot\te\|_{L^2}^2+\|\na^2\te\|_{L^2}^2)\right)+\int_1^T e^{\al t}\|\na\dot \te\|_{L^2}^2dt\le C.
\ea \ee

Finally, it remains to determine the limit of $\te$ as $t$ tends to infinity. Combining \eqref{pt}, \eqref{t16}, and \eqref{t13} shows that for any $t\geq 1$,
\be\ba\notag
|\bp_t|\le C(\|\na u\|_{L^2}^2+\|P-\bp\|_{L^2}^2)\le C e^{-\al t},
\ea\ee
which implies there exists a constant $P_\infty$ such that $\lim_{t\rightarrow \infty}\overline P=P_\infty$ and
\be\la{t18}|\overline P-P_\infty|\le Ce^{-\al t}.\ee
Denoting $\te_\infty\triangleq P_\infty/R$,   we have
\be\la{t19}\|\te-\te_\infty\|_{L^2}^2\le C\|R\te-\bp\|_{L^2}^2+C|\bp-R\te_\infty|^2\le Ce^{-\al t},\ee
where we have used \eqref{pq}, \eqref{t15}, and \eqref{t18}.
Therefore, the combination of  \eqref{t10}, \eqref{t13}--\eqref{t17}, \eqref{t19}, \eqref{h17}, and \eqref{p} concludes \eqref{t7} and finishs the proof of Lemma \ref{pr2}.
\end{proof}

\section{\la{se4} A priori estimates (II): higher-order estimates}

In this section, we will derive the higher-order estimates of smooth solution $(\rho, u, \te)$ to problem (\ref{a1})--(\ref{h1})  on $ \Omega\times (0,T]$ with initial data $(\n_0 ,u_0,\te_0)$ satisfying (\ref{co3}) and (\ref{3.1}).

We shall assume that (\ref{z1}) and (\ref{z01}) both hold as well. To proceed,
we define $\tilde g $ as
\be \la{co12}\tilde g\triangleq\n_0^{-1/2}\left(
-\mu \Delta u_0-(\mu+\lambda)\na\div u_0+R\na (\n_0\te_0)\right).\ee
Then it follows from (\ref{co3}) and (\ref{3.1}) that
\be\la{wq01}\tilde g\in L^2.\ee
From now on, the generic constant $C $ will depend only  on \bnn
T, \,\, \| \tilde g\|_{L^2},    \,\|\n_0\|_{W^{2,q}}  ,   \,  \,\|\na u_0\|_{H^1},  \ \,
\| \na\te_0\|_{L^2} , \enn
besides  $\mu,\,\lambda,\, \ka,\, R,\, \ga,\, \on,\,\bt,\,\O,$ and $M.$

We begin with the following estimates on the spatial gradient of
the smooth solution $(\rho,u,\te).$

\begin{lemma}\la{le11}
	The following estimates hold:
	\be\label{lee2}\ba
	&\sup_{0\le t\le T} \left(\|\rho^{1/2}\dot u\|_{L^2}^2 + \sigma\|\rho^{1/2}\dot \te\|_{L^2}^2 +\|\te\|_{H^1}^2 + \sigma \|\na^2 \theta\|_{L^2}^2
 \right)  \\
	  &\quad+\ia\left(  \|\nabla\dot u\|_{L^2}^2  + \|\rho^{1/2}\dot \te\|_{L^2}^2+ \|\nabla^2 \theta\|_{L^2}^2 +\sigma \|\nabla\dot \te\|_{L^2}^2 \right) dt\le C,
	\ea\ee
	and
	\be\la{qq1}
	\sup_{0\le t\le T}\left(\|u\|_{H^2} +\|\n\|_{H^2}\right)
	+ \int_0^{T}\left( \|\nabla u\|_{L^{\infty}}^{3/2} + \si \| \na^3 \te\|_{L^2}^2+\|u\|_{H^3}^2 \right)dt\le C.
	\ee
\end{lemma}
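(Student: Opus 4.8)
The plan is to bootstrap off the lower-order a priori estimates of Section~3 (summarised in \eqref{z1} and Lemmas~\ref{le6}--\ref{le8}), the crucial new input being the compatibility condition \eqref{co2}, which allows us to drop the degenerate weight $\sigma$ from the estimates on $\dot u$. First I would evaluate the momentum equation \eqref{a1}$_2$ at $t=0$: together with \eqref{co2} and $P_0=R\rho_0\te_0$ this gives $\rho_0\dot u|_{t=0}=-\sqrt{\rho_0}\,g$, hence $\|\rho^{1/2}\dot u\|_{L^2}(0)\le\|\tilde g\|_{L^2}$ with $\tilde g$ as in \eqref{co12}; moreover $B_2(0)$ (see \eqref{e6}) is bounded by a constant depending on $\|\na\te_0\|_{L^2}$ and $\|\na u_0\|_{H^1}$ since $u_0\in H^2$ and $\te_0\in H^1$.

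Next I would run the coupled Gr\"onwall argument behind \eqref{e8}, but now with $m=0$, on $\varphi(t)=\|\n^{1/2}\dot u\|_{L^2}^2+(C_2+1)B_2(t)$ from \eqref{wq3}. The boundary term $(\int_{\p\O}(u\cdot\na n\cdot u)G\,dS)_t$ is moved to the left and absorbed via \eqref{h19}, \eqref{b2}, \eqref{bz5} together with $\sup_t\|\na u\|_{L^2}^2\le C$; the terms $\|\na u\|_{L^4}^4$ and $\|\te\na u\|_{L^2}^2$ are re-expressed through \eqref{ae9}, \eqref{m20}, \eqref{2.48}, \eqref{lop4} so that every superlinear occurrence of $\|\n^{1/2}\dot u\|_{L^2}^2$ or $\|\na\te\|_{L^2}^2$ carries a coefficient already known to be integrable in $t$ (because $A_1(T)$, $A_2(T)$, $A_3(T)$ are bounded), while every $\|\na\dot u\|_{L^2}^2$-type term appears with a small multiple that is absorbed on the left. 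Gr\"onwall then yields $\sup_{[0,T]}(\|\n^{1/2}\dot u\|_{L^2}^2+\|\na\te\|_{L^2}^2)+\int_0^T(\|\na\dot u\|_{L^2}^2+\|\n^{1/2}\dot\te\|_{L^2}^2)\,dt\le C$, where the $\|\na\te\|_{L^2}$-bound uses the coercivity $B_2(t)\ge c\|\na\te\|_{L^2}^2-C\|\n^{1/2}\dot u\|_{L^2}^2-C$ coming from \eqref{2.48} and the $\dot\te$-integral bound comes from integrating \eqref{nle7} with $m=0$; then \eqref{kk} upgrades this to $\sup_t\|\te\|_{H^1}^2\le C$, and \eqref{lop4} gives $\int_0^T\|\na^2\te\|_{L^2}^2\,dt\le C$.

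The remaining, necessarily $\sigma$-weighted, parts of \eqref{lee2} for $\dot\te$ follow by rerunning the estimate behind \eqref{3.99} with weight $\sigma$ in place of $\sigma^2$: with $\sup_t\|\te\|_{H^1}^2\le C$ and $\int_0^T\|\na^2\te\|_{L^2}^2\,dt\le C$ now available, all right-hand terms are controlled, giving $\sup_t\sigma\|\n^{1/2}\dot\te\|_{L^2}^2+\int_0^T\sigma\|\na\dot\te\|_{L^2}^2\,dt\le C$, and \eqref{lop4} then gives $\sup_t\sigma\|\na^2\te\|_{L^2}^2\le C$, completing \eqref{lee2}. For \eqref{qq1}, the Lam\'e estimate \eqref{rmk1} with $k=2$, $p=2$ bounds $\|u\|_{H^2}$ by $\|\n\dot u\|_{L^2}+\|\na P\|_{L^2}+\|\na u\|_{L^2}$, with $\|\na P\|_{L^2}\le C\|\na\n\|_{L^3}\|\te\|_{H^1}+C\|\na\te\|_{L^2}$; this couples with the $H^2$-estimate for $\n$, obtained by applying $\na$ and $\na^2$ to the continuity equation (a transport equation for $\na\n$, $\na^2\n$) to get $\frac{d}{dt}\|\n\|_{H^2}^2\le C(1+\|\na u\|_{L^\infty}+\|\te\|_{H^2}^2)\|\n\|_{H^2}^2+C\|\na\dot u\|_{L^2}^2+C$ after inserting the Lam\'e bound $\|u\|_{H^3}\le C(\|\na\dot u\|_{L^2}+\|\te\|_{H^2}\|\n\|_{H^2}+1)$ (using \eqref{hs}) for the top-order term $\n\na^2\div u$. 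Since $\|\te\|_{H^2}^2$ and $\|\na\dot u\|_{L^2}^2$ are integrable in $t$, and $\|\na u\|_{L^\infty}$ is controlled through the Beale--Kato--Majda inequality (Lemma~\ref{le9}) with $\|\div u\|_{L^\infty}+\|\curl u\|_{L^\infty}\le C\|G\|_{L^\infty}+C\|\curl u\|_{L^\infty}+C\|R\te-\bp\|_{L^\infty}$ estimated by \eqref{h19}, \eqref{h191}, \eqref{6yue} and $\|\na^2u\|_{L^q}$ by \eqref{rmk1} with $p=q$, a Gr\"onwall argument closes $\sup_t\|\n\|_{H^2}\le C$, hence $\sup_t\|u\|_{H^2}\le C$ and $\int_0^T(\|\na u\|_{L^\infty}^{3/2}+\|u\|_{H^3}^2)\,dt\le C$; differentiating \eqref{3.29} once in space and applying the elliptic estimate then gives $\int_0^T\sigma\|\na^3\te\|_{L^2}^2\,dt\le C$.

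I expect the main obstacle to be the bookkeeping in the Gr\"onwall closures, arranged so that the small-$t$ singularities never enter an exponent: for \eqref{lee2} this means ensuring that every dangerous superlinear term in the $\dot u$--$\dot\te$ system is multiplied by an $L^1_t$ coefficient rather than an $L^\infty_t$ one, and for \eqref{qq1} it means verifying that the logarithmic Beale--Kato--Majda bound, combined with the merely $\sigma$-weighted $H^2$-bound on $\te$ but the \emph{unweighted} $L^2_t$-bound on $\|\na^2\te\|_{L^2}$, still yields $\|\na u\|_{L^\infty}\in L^{3/2}(0,T)$ and $\|u\|_{H^3}\in L^2(0,T)$. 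The rest is a lengthy but routine combination of the elliptic estimates of Section~2 with the bounds already in hand.
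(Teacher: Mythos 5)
Your proposal is correct and follows essentially the same route as the paper: take $m=0$ in the combined $\dot u$--$\dot\te$ inequality \eqref{e8} with $\varphi(0)$ controlled through the compatibility condition \eqref{co2}, close by Gr\"onwall, upgrade to the $\sigma$-weighted $\dot\te$ and $\na^2\te$ bounds via \eqref{3.99} and \eqref{lop4}, and then obtain \eqref{qq1} from the Lam\'e estimates, the Beale--Kato--Majda inequality with a logarithmic Gr\"onwall closure for the density, and the $H^1$ elliptic estimate on \eqref{3.29}. The only cosmetic differences (bundling $\|\na\n\|_{L^6}$ and $\|\na^2\n\|_{L^2}$ into one $H^2$-Gr\"onwall rather than the paper's two-stage closure, and using \eqref{kk} instead of \eqref{key}--\eqref{pq} for $\|\te\|_{L^2}$) do not change the argument.
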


\begin{proof}
The proof is divided into the following two steps.
	
{\it Step 1: The proof of (\ref{lee2}).}
First, for $\varphi(t)$ as in \eqref{wq3}, taking $m=0$ in \eqref{e8}, one gets
\be\la{ae8}\ba
& \varphi'(t) +   \int\left( \frac{C_1}{2}|\nabla\dot{u}|^2   +\n|\dot \te|^2\right)dx \\
&\le -2 \left(\int_{\p\O} G\left( u\cdot \na n \cdot u \right)dS\right)_t+C\left(\|\n^{1/2} \dot u\|_{L^2}^2+\|\na u\|_{L^2}^2+\|\na    \te\|_{L^2}^2\right)\\
&\quad +C\left(\|\n^{1/2}\dot u\|_{L^2}^3+\|\na\te\|_{L^2}^3+\|\na u\|_{L^2}^2+\|\n-1\|_{L^2}^2\right) \\
& \quad +C \left(\|\na u\|_{L^2}^2+\|\na\te\|_{L^2}^2\right)\left( \|\n^{1/2}\dot u\|_{L^2}^2+ \|\na \te \|_{L^2}^2+1 \right) \\
&\le -2 \left(\int_{\p\O} G\left( u\cdot \na n \cdot u \right)dS\right)_t+C   \left( \|\n^{1/2}\dot u\|_{L^2}^2 + \|\na\te\|_{L^2}^2\right) (\varphi+1)+C
\ea  \ee
due to   (\ref{z1}), \eqref{wq2}, (\ref{ae9}),  and \eqref{m20}.
Taking into account the compatibility condition \eqref{co2}, we can define
\bnn \ba
\sqrt{\n} \dot u(x,t=0) \triangleq
-\tilde g,
\ea\enn
which along with    (\ref{e6}), (\ref{2.48}), and \eqref{wq01} yields that
\be \la{wq02}
|\varphi(0)|\le C\| \tilde g\|_{L^2}^2+C\le C.
\ee
Then, integrating \eqref{ae8} over $(0,t)$, one obtains after using \eqref{z1}, \eqref{b2}, \eqref{h19}, \eqref{wq2}, and \eqref{wq02} that
\be\ba \la{q1}
& \varphi(t)+\int_0^t\int\left(\frac{C_1}{2} |\nabla\dot{u}|^2   +\n|\dot \te|^2\right)dxds\\
&\le  2\left| \int_{\p\O} G\left( u\cdot \na n \cdot u \right)dS\right|(t)+C\int_0^t \left(\|\n^{1/2}\dot u\|_{L^2}^2+ \|\na \te \|_{L^2}^2\right)\varphi ds+C\\
&\le  C (\|\na u\|_{L^2}^2\|\n^{1/2}\dot u\|_{L^2})(t)+C\int_0^t \left(\|\n^{1/2}\dot u\|_{L^2}^2+ \|\na \te \|_{L^2}^2\right)\varphi ds+C\\
&\le  \frac{1}{2} \varphi(t) + C \int_{0}^{t} \left(\|\n^{1/2}\dot u\|_{L^2}^2+ \|\na \te \|_{L^2}^2\right)\varphi ds+C.
\ea\ee
Applying Gr\"{o}nwall's inequality to \eqref{q1} and 
 using \eqref{z1} and (\ref{wq2}), it holds
\be\label{lee3}
\sup_{0\le t\le T} \left(\|\rho^{1/2}\dot u\|_{L^2}^2
+\|\na \te\|_{L^2}^2 \right) + \ia\int\left(|\nabla\dot
u|^2+\n|\dot\te|^2\right)dxdt\le C,
\ee
which together with  \eqref{key} and \eqref{pq} implies
\be\la{ff1} \ba
\|\te\|_{L^2} &\le C( \|R \te-\overline P \|_{L^2} +\overline P ) \le C(\|\na \te\|_{L^2}+1) \le C.
\ea \ee

Next,
multiplying (\ref{3.99}) by $\sigma$ and integrating over $(0,T)$  lead to
\be  \ba \la{a5}
&\sup\limits_{0\le t\le T} \si \int \n|\dot\te|^2dx+\int_0^T \si \|\na\dot\te\|_{L^2}^2dt\\
&\le
C\int_0^T\left(  \|\na^2\te\|_{L^2}^2+\|\na\dot u\|_{L^2}^2+ \|\n^{1/2}\dot\te\|_{L^2}^2 \right)dt + C\\
&\le  C,
\ea\ee
where we have used (\ref{lee3}),  (\ref{z1}), (\ref{m20}), \eqref{ae9},   (\ref{lop4}), and the following fact:
\be\la{w1}\|\na u\|_{L^6}\le C\ee
due to \eqref{3.30}, \eqref{z1}, and \eqref{lee3}.
Then, it  follows from (\ref{lop4}), \eqref{lee3}, \eqref{a5}, \eqref{w1}, (\ref{m20}), and (\ref{z1}) that
\be \notag \sup\limits_{0\le t\le T} \si\|\na^2\te\|_{L^2}^2 + \int_{0}^{T}\|\na^2\te\|_{L^2}^2 dt \le C,\ee
which along with \eqref{lee3}--\eqref{a5} gives \eqref{lee2}.


{\it Step 2: The proof of (\ref{qq1}).}
First, standard calculations show  that for $ 2\le p\le 6$,
\be\la{L11}\ba
\partial_t\norm[L^p]{\nabla\rho}
&\le C\norm[L^{\infty}]{\nabla u} \norm[L^p]{\nabla\rho}+C\|\na^2u\|_{L^p}\\
&\le C\left(1+\|\na u\|_{L^{\infty}}+\|\na^2\te \|_{L^2}\right)
\norm[L^p]{\nabla\rho}\\
&\quad +C\left(1+\|\na\dot u\|_{L^2}+\|\na^2\te \|_{L^2}\right), \ea\ee
where we have used
\be\ba\la{ua1}
\|\na^2 u\|_{L^p} & \le C(\|\rho \dot u\|_{L^p} + \|\na P\|_{L^p} + \|\na u\|_{L^2}  ) \\
& \le   C\left(1+\|\na\dot u\|_{L^2}+\|\na \te\|_{L^p}+\|\te\|_{L^\infty}\|\nabla\n\|_{L^p}\right)\\
&\le  C\left(1+\|\na\dot u\|_{L^2}+\|\na^2\te\|_{L^2}+(\|\na^2\te \|_{L^2} + 1)\|\nabla\n\|_{L^p}\right)
\ea\ee
due to \eqref{g1}, \eqref{tb90}, \eqref{rmk1},  \eqref{z1},  and  \eqref{lee2}.
It follows from Lemma \ref{le9}, \eqref{z1}, and (\ref{ua1})  that
\be\la{u13}\ba
\|\na u\|_{L^\infty }
&\le C\left(\|{\rm div}u\|_{L^\infty}+\|\curl u\|_{L^\infty}\right)\log(e+\|\na^2 u\|_{L^6}) +C\|\na u\|_{L^2}+C \\
&\le C\left( \|{\rm div}u\|_{L^\infty } + \|\curl u\|_{L^\infty }
\right)\log(e+ \|\na\dot u\|_{L^2 } + \|\na^2\te \|_{L^2})\\
&\quad +C\left(\|{\rm div}u\|_{L^\infty }+ \|\curl u\|_{L^\infty } \right)
\log\left(e  + \|\na \n\|_{L^6}\right)+C.
\ea\ee
Denote
\bnn\la{gt}\begin{cases}
	f(t)\triangleq  e+\|\na	\n\|_{L^6},\\
	h(t)\triangleq 1+  \|{\rm div}u\|_{L^\infty }^2+ \|\curl u\|_{L^\infty }^2
	+ \|\na\dot u\|_{L^2 }^2 +\|\na^2\te \|_{L^2}^2.
\end{cases}\enn
One obtains after submitting \eqref{u13} into (\ref{L11}) with  $p=6$ that
\bnn f'(t)\le   C h(t) f(t)\ln f(t) ,\enn
which implies \be\la{w2}  (\ln(\ln f(t)))'\le  C h(t).\ee
Note that by virtue of
(\ref{hj1}),  (\ref{key}), (\ref{lee2}),  (\ref{z1}), \eqref{bz5}, and (\ref{bz6}),  one gets
\be \la{p2}\ba
& \int_0^T\left(\|\div u\|^2_{L^\infty}+\|\curl u\|^2_{L^\infty} \right)dt \\
& \le  C\int_0^T\left(\|G\|^2_{L^\infty}+ \|\curl u\|^2_{L^\infty}+\|P-\bp\|^2_{L^\infty}\right)dt \\
&\le  C\ia\left(\| G\|^2_{  W^{1,6} } + \| \curl u\|^2_{W^{1,6}} + \|\te\|_{L^\infty}^2\right)dt + C \\
& \le   C\ia\left(\|\na G\|^2_{L^6}+\|\na\curl  u\|^2_{L^6}+\|\na^2\te \|_{L^2}^2\right)dt+C \\
&\le C\ia(\|\na \dot u\|^2_{L^2}+\|\na^2\te \|_{L^2}^2)dt+C \\
&\le  C,
\ea\ee
which as well as \eqref{w2} and \eqref{lee2}  yields that
\be \la{u113} \sup\limits_{0\le t\le T}\|\nabla \rho\|_{L^6}\le C.\ee
Combining this with (\ref{u13}),  \eqref{p2}, and (\ref{lee2}) leads to
\be \la{v6}\ia\|\nabla u\|_{L^\infty}^{3/2}dt \le C.\ee
Moreover, it follows from (\ref{rmk1}), \eqref{z1}, \eqref{u113}, and (\ref{lee2})  that
\be\ba\la{aa95}
\sup\limits_{0\le t\le T} \| u\|_{H^2}
\le &C \sup\limits_{0\le t\le T}\left(\|\n\dot u\|_{L^2}+\|\nabla P\|_{L^2}+\| \na u\|_{L^2}\right)\le C.
\ea\ee

Next, applying operator $\p_{ij}~(1\le i,j\le 3)$ to $(\ref{a1})_1$ gives
\be\la{4.52}  (\p_{ij} \n)_t+\div (\p_{ij} \n u)+\div (\n\p_{ij} u)+\div(\p_i\n\pa_j u+\p_j\n\p_i u)=0. \ee
Multiplying (\ref{4.52}) by $2\p_{ij} \n$ and  integrating the resulting equality over $\O,$ it holds
\be\la{ua2}\ba
\frac{d}{dt}\|\na^2\n\|^2_{L^2}
& \le C(1+\|\na u\|_{L^{\infty}})\|\na^2\n\|_{L^2}^2+C\|\na u\|^2_{H^2}\\
& \le C(1+\|\na u\|_{L^{\infty}} +\|\na^2\te \|_{L^2}^2)(1+\|\na^2\n\|_{L^2}^2) +C\|\na\dot u \|^2_{L^2}, \ea\ee
where one has used \eqref{z1}, \eqref{u113}, and the following estimate:
\be\ba\la{va2}
\| u\|_{H^3}&\le C\left(\|\na(\n\dot u)\|_{L^2}+\|\n\dot u\|_{L^2}+\| \na P\|_{H^1}+\|\na u\|_{L^2}\right)
\\&\le C\|\na\n\|_{L^3}\|\dot u\|_{L^6}+C\|\na \dot u\|_{L^2}+C\|\na \te\|_{H^1}
\\&\quad+C\||\na\n||\na\te|\|_{L^2}+C\|\te\|_{L^\infty}\|\na \n\|_{H^1}+C
\\ &\le  C\|\na\dot u \|_{L^2}+ C  (1+\|\na^2\te \|_{L^2})(1+\|\na^2\n\|_{L^2}) +C
\ea\ee
due to (\ref{rmk1}), (\ref{tb90}), (\ref{lee2}), (\ref{u113}),  and (\ref{z1}).
Then applying Gr\"{o}nwall's inequality to \eqref{ua2} and using  (\ref{lee2}), (\ref{v6}) yield
\be\la{ja3} \sup_{0\le t\le T} \|\na^2\n \|_{L^2}  \le C,\ee
which together with \eqref{va2} and \eqref{lee2} gives
\be\la{ja4} \int_0^T\|u\|_{H^3}^2 dt\le C . \ee

Finally,
applying the standard $H^1$-estimate to   elliptic problem (\ref{3.29}), one derives from \eqref{z1},  \eqref{lee2}, \eqref{u113}, \eqref{kk}, and \eqref{aa95} that
\be\la{ex4}\ba
\|\na^2\te\|_{H^1}
& \le C\left(\|\n \dot \te\|_{H^1}+\|\n\te\div u\|_{H^1}+\||\na u|^2\|_{H^1}\right)\\
& \le C\left(1+ \|\na \dot \te\|_{L^2} +  \|\rho^{1/2} \dot \theta\|_{L^2}  + \|\na(\n\te\div u)\|_{L^2}+ \||\na u||\na^2u|\|_{L^2} \right) \\
& \le C\left(1+ \|\na \dot \te\|_{L^2} +  \|\rho^{1/2} \dot \theta\|_{L^2}  +\|\na^2 \theta\|_{L^2}+\|\na^3 u\|_{L^2} \right),
\ea\ee
which along with \eqref{z1}, \eqref{ja3}, \eqref{ja4}, \eqref{u113}--\eqref{aa95}, and \eqref{lee2} yields (\ref{qq1}).


 The proof of Lemma \ref{le11} is finished.
\end{proof}

\begin{lemma}\la{le9-1}
	The following estimates hold:
	\be\la{va5}\ba&
	\sup\limits_{0\le t\le T}
	\|\n_t\|_{H^1}
	+\int_0^T\left(\|  u_t\|_{H^1}^2+\si \| \te_t\|_{H^1}^2+\| \n u_t\|_{H^1}^2+\si \|\n \te_t\|_{H^1}^2
	\right)dt\le C,
	\ea  \ee
	and
	\be\la{vva5}\ba
	\int_0^T \sigma \left( \|(\n u_t)_t\|_{H^{-1}}^2+\|(\n \te_t)_t\|_{H^{-1}}^2
	\right)dt\le C.\ea\ee
\end{lemma}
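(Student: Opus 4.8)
The plan is to obtain \eqref{va5} and \eqref{vva5} by differentiating the equations in time, rewriting all material derivatives in terms of the quantities already controlled in Lemma \ref{le11}, and applying the estimates \eqref{lee2}, \eqref{qq1} together with the elliptic and Sobolev tools from Section \ref{se2}. First I would handle $\|\n_t\|_{H^1}$: from the continuity equation $\n_t = -\div(\n u) = -u\cdot\na\n - \n\div u$, so $\na\n_t = -\na u\cdot\na\n - u\cdot\na^2\n - \na\n\,\div u - \n\na\div u$, and each term is bounded in $L^2$ uniformly in time using $\sup_t\|\n\|_{H^2}\le C$, $\sup_t\|u\|_{H^2}\le C$ from \eqref{qq1}, and the algebra inequality \eqref{hs}; this gives the first term in \eqref{va5}. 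For the velocity part, I would write $u_t = \dot u - u\cdot\na u$, so $\|u_t\|_{H^1}\le \|\dot u\|_{H^1} + \|u\cdot\na u\|_{H^1}$; by \eqref{tb90}, \eqref{tb11} and \eqref{h17} the quantity $\|\dot u\|_{H^1}$ is controlled by $\|\na\dot u\|_{L^2}+\|\na u\|_{L^2}^2$, whose square is integrable in time by \eqref{lee2}, while $\|u\cdot\na u\|_{H^1}$ is bounded via \eqref{hs} and \eqref{qq1}. The weighted term $\int_0^T\si\|\te_t\|_{H^1}^2\,dt$ is handled analogously: $\te_t = \dot\te - u\cdot\na\te$, and $\si\|\na\dot\te\|_{L^2}^2$ is integrable by \eqref{lee2}, $\|\rho^{1/2}\dot\te\|_{L^2}^2$ controls $\|\dot\te\|_{L^6}^2$ via \eqref{kk}, and $\|u\cdot\na\te\|_{H^1}$ is bounded using \eqref{qq1} and $\sup_t\si\|\na^2\te\|_{L^2}^2\le C$. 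The terms $\|\n u_t\|_{H^1}$ and $\si\|\n\te_t\|_{H^1}$ follow by one more application of \eqref{hs} against the $H^1$ (indeed $H^2$) bound on $\n$.

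For \eqref{vva5}, the idea is that $(\n u_t)_t$ and $(\n\te_t)_t$ are not individually controlled in $L^2$, but their $H^{-1}$-norms are, because the equations \eqref{nt0} and \eqref{eg1} (equivalently $(\ref{a1})_2$ and $(\ref{a1})_3$ differentiated in $t$) express them, after integration against a test function $\psi\in H^1_0$, as a sum of terms that are either one derivative of an $L^2$ function or products controlled by the bounds already in hand. Concretely, from \eqref{nt0},
\[
\n u_{tt} = \mu\Delta u_t + (\mu+\lambda)\na\div u_t - \n_t u_t - \n_t u\cdot\na u - \n u_t\cdot\na u - \n u\cdot\na u_t - \na P_t,
\]
so $(\n u_t)_t = \n u_{tt} + \n_t u_t$ and testing against $\psi$ moves the top-order terms $\Delta u_t$, $\na\div u_t$, $\na P_t$ onto $\na\psi$, leaving $\|\na u_t\|_{L^2}$, $\|P_t\|_{L^2}$, and lower-order products; using $P_t = R\n\dot\te - \div(Pu)$ (see \eqref{jia1}) together with \eqref{lee2} and the already-proven part of \eqref{va5} shows $\int_0^T\si\|(\n u_t)_t\|_{H^{-1}}^2\,dt\le C$. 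The temperature case is identical starting from \eqref{eg1}, with $\te_t$-equation top-order term $\Delta\te_t$ integrated by parts and the right-hand side products $(\te\div u)_t$, $(u\cdot\na\te)_t$, $((\div u)^2)_t$, $(|\mathfrak D(u)|^2)_t$ each estimated by Hölder and \eqref{g1} against $\|\na u_t\|_{L^2}$, $\|\na\te_t\|_{L^2}$, $\|\na\dot u\|_{L^2}$ (which carry the $\si$-weight) and the uniform $H^2$-bounds.

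The main obstacle I expect is the bookkeeping of the $\si$-weights: several of the quantities on the right (notably $\|\na\dot u\|_{L^2}$, $\si^{1/2}\|\na\dot\te\|_{L^2}$, $\si^{1/2}\|\na^2\te\|_{L^2}$) are only controlled after multiplication by the appropriate power of $\si$, so every term arising from the time-differentiated temperature equation must be checked to carry at least the weight $\si$ (and no more than $\si$) so that the time integral closes using exactly \eqref{lee2} and \eqref{va5}; terms with no vacuum-degeneracy issue, like the velocity estimates, need no weight, and one must be careful not to introduce a spurious $\si^{-1}$ when converting $\|\dot\te\|_{L^6}$ via \eqref{kk}. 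Apart from this weight-tracking, everything reduces to Hölder's inequality, the Gagliardo–Nirenberg inequalities of Lemma \ref{11}, the product estimate \eqref{hs}, and the already-established uniform bounds, so no genuinely new difficulty arises.
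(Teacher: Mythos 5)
Your proposal is correct and follows essentially the same route as the paper: $\|\n_t\|_{H^1}$ from the continuity equation with the $H^2$ bounds of \eqref{qq1}, the splittings $u_t=\dot u-u\cdot\na u$, $\te_t=\dot\te-u\cdot\na\te$ combined with \eqref{lee2} and product/Sobolev estimates for \eqref{va5}, and the time-differentiated momentum and temperature equations estimated in duality (top-order terms and $\na P_t$ handled through $\|\na u_t\|_{L^2}$, $\|\na\te_t\|_{L^2}$, $P_t=R\n\dot\te-\div(Pu)$) for \eqref{vva5}. The only cosmetic difference is that the paper bounds $\|\na P_t\|_{L^2}$ and $\|(\n u\cdot\na u)_t\|_{L^2}$ directly rather than moving $\na P_t$ onto the test function, which changes nothing in substance.
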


\begin{proof}

First, it   follows from  (\ref{lee2}) and (\ref{qq1})   that
\be\label{va1}\ba
&\sup_{0\le t\le T}\int\left( \n|u_t|^2 + \si
\n\te_t^2\right)dx +\int_0^T \left(\|\na u_t\|_{L^2}^2+ \si \|\na\te_t\|_{L^2}^2\right)dt\\
&\le \sup\limits_{0\le t\le T}\int \left(\n|\dot u|^2+ \si\n|\dot\te|^2 \right)dx
+\int_0^T\left(\|\na\dot u\|_{L^2}^2+\si \|\na\dot\te\|_{L^2}^2 \right)dt\\
&\quad + \sup\limits_{0\le t\le T}\int \n\left(|u\cdot\na u|^2+\si |u\cdot\na\te|^2\right)dx\\
&\quad +\int_0^T\left(\|\na(u\cdot\na u)\|_{L^2}^2+\si \|\na(u\cdot\na \te)\|_{L^2}^2\right)dt\\
&\le C,\ea\ee
which together with (\ref{lee2}) and (\ref{qq1}) gives
\be\la{vva1} \ba
& \int_0^T\left(\|\na(\n u_t)\|_{L^2}^2+ \si \|\na(\n\te_t)\|_{L^2}^2\right)dt\\
& \le  C\int_0^T\left(\|  \na u_t \|_{L^2}^2+\|  \na \n\|_{L^3}^2\| u_t \|_{L^6}^2
+ \si \|  \na \te_t \|_{L^2}^2+ \si \|  \na \n\|_{L^3}^2\| \te_t \|_{L^6}^2 \right)dt\\
&\le C,\ea\ee
where we have used
\be\la{w3}\|\te_t\|_{L^6}\le C\|\n^{1/2}\te_t\|_{L^2}+C\|\na\te_t\|_{L^2}
\ee
due to \eqref{kk}.

Next, one deduces from $(\ref{a1})_1$, (\ref{qq1}), and \eqref{hs} that
\bnn\ba\la{sp1}
\|\n_t\|_{H^1}\le& \|\div (\rho u)\|_{H^1}
\le  C \|u\|_{H^2}\|
\n\|_{H^2}
\le C,
\ea \enn
which as well as (\ref{va1})--\eqref{w3} shows    (\ref{va5}).

Finally, differentiating $(\ref{a1})_2$ with respect to $t$ yields that
\be   \la{va7}\ba (\n u_t)_t
=-(\n u\cdot\na u)_t + \left(\mu\Delta u+(\mu+\lambda)\na\div u \right)_t -\na P_t.\ea\ee
It follows from (\ref{va5}), \eqref{qq1}, \eqref{lee2}, \eqref{lop4}, and \eqref{w3} that
\be   \la{va9}\ba  \|(\n u\cdot\na u)_t  \|_{L^{2}}
&=\| \n_t u\cdot\na u+ \n u_t\cdot\na u + \n u\cdot\na u_t  \|_{L^{2}}\\
&\le C\|\n_t\|_{L^6} \|\na u\|_{L^3}+  C\|u_t\|_{L^6} \|\na u\|_{L^3}+  C\|u \|_{L^\infty} \|\na u_t\|_{L^2}\\
&\le C+   C\|\na u_t\|_{L^2},\ea\ee
and
\be   \la{va10}\ba  \|\na P_t  \|_{L^2}
&=R\|\n_t\na\te +\n \na\te_t +\na\n_t\te +\na\n\te_t\|_{L^2}\\
&\le C\left(\|\n_t\|_{L^6}\|\na\te\| _{L^3}+\|\na\te_t\|_{L^2}
+\|\te\|_{L^\infty}\|\na \n_t\|_{L^2}+\|\na\n\|_{L^6}\|\te_t\| _{L^3}\right)\\
&\le C+C\|\na\te_t\|_{L^2}+C\|\n^{1/2}\te_t\|_{L^2}.\ea\ee
Combining (\ref{va7})--(\ref{va10}) with   (\ref{va5}) shows
\be\la{vva04}\ba\int_0^T \si \|(\n u_t)_t\|_{H^{-1}}^2dt\le C.\ea\ee
Similarly, we have \bnn\int_0^T \si  \|(\n \te_t)_t\|_{H^{-1}}^2dt\le C,  \enn
which combined with (\ref{vva04}) implies (\ref{vva5}). The proof of Lemma \ref{le9-1} is completed.
\end{proof}

\begin{lemma}\la{pe1}
	The following estimate holds:
	\be\la{nq1}
	\sup\limits_{0\le t\le T} \si\left(\|\nabla u_t\|^2_{L^2}+\|\n_{tt} \|^2_{L^2} + \|u\|_{H^3}^2\right)
	+ \int_0^T\si \left(\|\rho^{1/2} u_{tt}\|_{L^2}^2+ \|\nabla u_t\|_{H^1}^2\right)dt\le C.
	\ee
\end{lemma}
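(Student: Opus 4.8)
The plan is to obtain \eqref{nq1} by differentiating the momentum equation in time, testing with $u_{tt}$, and running a Gr\"onwall argument weighted by $\sigma$. First I would differentiate $\eqref{a1}_2$ in $t$ to get the equation \eqref{va7} for $(\n u_t)_t$, rewrite it as
\be\notag
\n u_{tt}+\n u\cdot\na u_t-\mu\Delta u_t-(\mu+\lambda)\na\div u_t=-\n_t u_t-\n_t u\cdot\na u-\n u_t\cdot\na u-\na P_t,
\ee
multiply by $u_{tt}$, integrate over $\O$ by parts, and thereby produce on the left
\be\notag
\int\n|u_{tt}|^2dx+\frac{1}{2}\frac{d}{dt}\int\left(\mu|\na u_t|^2+(\mu+\lambda)(\div u_t)^2\right)dx,
\ee
modulo a boundary term coming from $\curl u_t\times n$, which vanishes under \eqref{h1} as in the derivation of \eqref{hh17}. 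The right-hand side is a sum of terms of the schematic form $\int\n_t u_t u_{tt}$, $\int\n_t(u\cdot\na u)u_{tt}$, $\int\n u_t\cdot\na u\,u_{tt}$, $\int P_t\div u_{tt}$. The last of these I would integrate by parts in $t$ to move the time derivative off $\div u_{tt}$ — writing $\int P_t\div u_{tt}dx=\frac{d}{dt}\int P_t\div u_t dx-\int P_{tt}\div u_t dx$ — and then control $\|P_{tt}\|_{L^2}$ and $\|P_t\|_{L^2}$ using \eqref{jia1}, \eqref{va5}, \eqref{w3}, the bound $\|\te_{tt}\|$ being avoided by expressing $P_{tt}$ via $\dot\te_t$ and the temperature equation, exactly in the spirit of \eqref{va10}.

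Next I would bound each remaining term by $\frac{1}{4}\|\n^{1/2}u_{tt}\|_{L^2}^2$ plus good quantities: for instance $\int|\n_t||u_t||u_{tt}|\le\frac{1}{4}\|\n^{1/2}u_{tt}\|_{L^2}^2+C\|\n_t\|_{L^3}^2\|u_t\|_{L^6}^2$, which is integrable in $t$ after multiplying by $\sigma$ thanks to \eqref{va5} and the $H^1$-bound \eqref{va1} on $u_t$; the cubic-in-$\na u$ terms are handled with \eqref{qq1} and \eqref{u113}. This yields an inequality of the form
\be\notag
\sigma\frac{d}{dt}\Psi(t)+\sigma\|\n^{1/2}u_{tt}\|_{L^2}^2\le\sigma'\Psi(t)+\sigma A(t)\Psi(t)+\sigma B(t),
\ee
where $\Psi(t)=\|\na u_t\|_{L^2}^2-C\int P_t\div u_t dx+\text{(controllable lower-order)}$ is comparable to $\|\na u_t\|_{L^2}^2$ for small data (again using $\|P_t\|_{L^2}\le C$), and $A,B\in L^1(0,T)$ by \eqref{va5}, \eqref{lee2}, \eqref{qq1}. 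Since near $t=0$ we only have $\sigma\|\na u_t\|_{L^2}^2\to0$ (no assumed compatibility on the temperature, cf. Remark~3.3), the weight $\sigma$ is essential and $\sigma'\Psi$ is absorbed because $\sigma'=1$ only on $[0,1]$ where $\int_0^1\Psi\,dt\le C$ by \eqref{va1}. Gr\"onwall's inequality then gives $\sup_{0\le t\le T}\sigma\|\na u_t\|_{L^2}^2+\int_0^T\sigma\|\n^{1/2}u_{tt}\|_{L^2}^2dt\le C$.

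With $\sigma\|\na u_t\|_{L^2}^2$ in hand, the bound $\sup\sigma\|u\|_{H^3}^2\le C$ follows from \eqref{va2}: there $\|u\|_{H^3}$ is controlled by $\|\na\dot u\|_{L^2}$ and density/temperature norms already estimated, and $\|\na\dot u\|_{L^2}\le C(\|\na u_t\|_{L^2}+\|\na(u\cdot\na u)\|_{L^2})$ with the second piece bounded by \eqref{qq1}; alternatively one applies \eqref{rmk1} with $k=3$ to the Lam\'e system. The bound on $\sigma\|\n_{tt}\|_{L^2}^2$ comes from differentiating $\eqref{a1}_1$ twice in combination once in $t$: $\n_{tt}=-\div(\n u)_t=-\div(\n_t u+\n u_t)$, so $\|\n_{tt}\|_{L^2}\le C(\|\na\n_t\|_{L^2}\|u\|_{L^\infty}+\|\n_t\|_{L^3}\|\na u\|_{L^6}+\|\na\n\|_{L^3}\|u_t\|_{L^6}+\|\n\|_{L^\infty}\|\na u_t\|_{L^2})$, all factors bounded via \eqref{va5}, \eqref{qq1}, and the just-proven $\sigma\|\na u_t\|_{L^2}^2\le C$. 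Finally $\int_0^T\sigma\|\na u_t\|_{H^1}^2dt$ is obtained by applying the elliptic estimate \eqref{rmk1} (with $k=2$) to the time-differentiated Lam\'e system, estimating $\|\n u_{tt}\|_{L^2}$, $\|(\n u\cdot\na u)_t\|_{L^2}$ (as in \eqref{va9}), and $\|\na P_t\|_{L^2}$ (as in \eqref{va10}), then multiplying by $\sigma$ and integrating. The main obstacle is the term $\int P_{tt}\div u_t\,dx$: since no compatibility condition on $\te_0$ is assumed, $\|\te_{tt}\|_{L^2}$ is not available near $t=0$, so $P_{tt}$ must be rewritten through the temperature equation as $P_{tt}\sim\n\dot\te_t+(\text{lower order})$ and the factor $\sigma$ carefully tracked so that only the integrable quantity $\sigma\|\na\dot\te\|_{L^2}^2$ from \eqref{lee2} is needed.
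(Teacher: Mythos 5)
Your overall strategy is the paper's: differentiate the momentum equation in time, test with $u_{tt}$, move the pressure contribution under a time derivative to avoid $\|P_{tt}\|_{L^2}$, use the temperature equation so that $\te_{tt}$ never appears, and close with a $\sigma$-weighted Gr\"onwall argument, finishing $\|u\|_{H^3}$, $\|\n_{tt}\|_{L^2}$ and $\int\si\|\na u_t\|_{H^1}^2dt$ by the Lam\'e elliptic estimate and \eqref{s4}. However, there is one step that fails as written. You bound $\int|\n_t||u_t||u_{tt}|dx$ by $\frac14\|\n^{1/2}u_{tt}\|_{L^2}^2+C\|\n_t\|_{L^3}^2\|u_t\|_{L^6}^2$, and you treat $\int\n_t(u\cdot\na u)\cdot u_{tt}dx$ the same way. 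These terms carry the factor $u_{tt}$ with weight $\n_t$, not $\n$, and in the presence of vacuum $\|u_{tt}\|_{L^2}$ is \emph{not} dominated by $\|\n^{1/2}u_{tt}\|_{L^2}$ (nor is $\|\na u_{tt}\|_{L^2}$ available at this stage, so \eqref{kk} does not help). This is exactly why the paper includes not only $\int P_t\div u_t\,dx$ but also $-\frac12\int\n_t|u_t|^2dx$ and $-\int\n_t u\cdot\na u\cdot u_tdx$ in the corrector (its $\tilde I_0$ in \eqref{sp9}): after this additional integration by parts in time the offending terms become $\frac12\int\n_{tt}|u_t|^2dx$ and $\int(\n_t u\cdot\na u)_t\cdot u_tdx$, which contain no $u_{tt}$ and are handled through \eqref{s4}, $\|u_t\|_{L^4}\le C\|\na u_t\|_{L^2}$ (via \eqref{nn2}, since $u_t\cdot n|_{\p\O}=0$), and \eqref{va5}. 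Your corrector $\Psi$, containing only the $P_t\div u_t$ term, does not close the estimate; you need to enlarge it in this way (and then verify, as the paper does in \eqref{sp10}, that the enlarged corrector is still controlled by $(1+\|\n^{1/2}\te_t\|_{L^2})\|\na u_t\|_{L^2}$ so it can be absorbed).

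A second, smaller point: after rewriting $P_{tt}$ through the temperature equation, the leftover piece is $\ka(\ga-1)\int\Delta\te_t\,\div u_t\,dx$, and it is not true that "only $\si\|\na\dot\te\|_{L^2}^2$ from \eqref{lee2} is needed." After integrating by parts (using $\na\te_t\cdot n|_{\p\O}=0$) one faces $\int\na\te_t\cdot\na\div u_t\,dx$, which requires $\|\na^2u_t\|_{L^2}$; the paper controls this by the Lam\'e estimate \eqref{nt4} in terms of $\|\n u_{tt}\|_{L^2}$ and absorbs a $\frac14\|\n^{1/2}u_{tt}\|_{L^2}^2$ into the left-hand side (see \eqref{asp16}). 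You do invoke the $H^2$ elliptic estimate for $u_t$, but only as a final step; it must be used \emph{inside} the Gr\"onwall loop at this point. Finally, a harmless imprecision: with the slip conditions the natural energy produced by integration by parts is $\mu\|\curl u_t\|_{L^2}^2+(2\mu+\lambda)\|\div u_t\|_{L^2}^2$ rather than $\mu\|\na u_t\|_{L^2}^2+(\mu+\lambda)\|\div u_t\|_{L^2}^2$; the two are equivalent by \eqref{nn2}, so this does not affect the argument.
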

\begin{proof}
Differentiating  $(\ref{a11})$  with respect to $t$ leads to \be\la{nt0}\ba \begin{cases}
(2\mu+\lambda)\na\div u_t-\mu \na\times \curl u_t\\
= \n u_{tt} +\n_tu_t+\n_tu\cdot\na u+\n u_t\cdot\na u +\n u\cdot\na u_t+\na P_t\triangleq \tilde f
,&\, \text{in}\,\O\times[0,T],\\
u_{t}\cdot n=0,\  \curl u_{t}\times n=0,  &\,\text{on}\,\p\O\times[0,T]. \end{cases}\ea\ee
Multiplying (\ref{nt0})$_1$ by $u_{tt}$   and integrating  the resulting equality  by parts, one gets  \be\la{sp9} \ba&
\frac{1}{2}\frac{d}{dt}\int \left(\mu|\curl u_t|^2 + (2\mu +\lambda)({\rm div}u_t)^2\right)dx
+\int \rho| u_{tt}|^2dx\\
&=\frac{d}{dt}\left(-\frac{1}{2}\int_{ }\rho_t |u_t|^2 dx- \int_{}\rho_t u\cdot\nabla u\cdot u_tdx
+ \int_{ }P_t {\rm div}u_tdx\right)\\
& \quad + \frac{1}{2}\int_{ }\rho_{tt} |u_t|^2 dx+\int_{ }(\rho_{t} u\cdot\nabla u )_t\cdot u_tdx
-\int_{ }\rho u_t\cdot\nabla u\cdot u_{tt}dx\\
&\quad - \int_{ }\rho u\cdot\nabla u_t\cdot u_{tt}dx - \int_{ }\left(P_{tt}-
\ka(\ga-1)\Delta\te_t\right){\rm div}u_tdx\\
&\quad +\ka(\ga-1)\int_{ } \na\te_t\cdot\na {\rm div}u_tdx
\triangleq\frac{d}{dt}\tilde{I}_0+ \sum\limits_{i=1}^6 \tilde{I}_i. \ea \ee

 Each term $\tilde{I}_i(i=0,\cdots,6)$ can be estimated as follows:

First, it follows from simple calculations, $(\ref{a1})_1,$   (\ref{va5}), \eqref{qq1}, \eqref{lee2}, and (\ref{va1}) that
\be \ba \la{sp10}
|\tilde{I}_0|&=\left|-\frac{1}{2}\int\rho_t |u_t|^2 dx- \int \rho_t u\cdot\nabla
u\cdot u_tdx+ \int P_t {\rm div}u_tdx\right|\\
&\le C\int  \n |u||u_t||\nabla u_t| dx+C\norm[L^3]{\rho_t}\norm[L^2]
{\nabla u}\norm[L^6]{u_t}+C\|(\n\te)_t\|_{L^2}\|\nabla u_t\|_{L^2}\\
&\le C \|\n^{1/2}u_t\|_{L^2}  \|\nabla u_t\|_{L^2} +C(1+\|\n^{1/2} \te_t\|_{L^2}+\|\n_t\|_{L^3}\|\te\|_{L^6})\|\nabla u_t\|_{L^2}\\
&\le C (1+\|\n^{1/2}\te_t\|_{L^2}) \|\nabla u_t\|_{L^2} ,\ea\ee
\be \la{sp11}\ba
2|\tilde{I}_1|&=\left|\int \rho_{tt} |u_t|^2 dx\right|  \le C \|\n_{tt}\|_{L^2}\|u_t\|_{L^4}^{2}
 \le  C\|\n_{tt}\|_{L^2}^2+C \|\na u_t\|_{L^2}^4,
\ea \ee
\be \la{sp12}\ba
|\tilde{I}_2|&=\left|\int \left(\rho_t u\cdot\nabla u \right)_t\cdot u_{t}dx\right|\\
& = \left|  \int\left(\rho_{tt} u\cdot\nabla u\cdot u_t +\rho_t
u_t\cdot\nabla u\cdot u_t+\rho_t u\cdot\nabla u_t\cdot u_t\right)dx\right|\\
&\le   C\norm[L^2]{\rho_{tt}}\norm[L^6]{\na u}\norm[L^6]{u}\norm[L^6]{u_t}
+C\norm[L^2]{\rho_t}\norm[L^6]{u_t}^2\norm[L^6]{\nabla u} \\
&\quad+C\norm[L^3]{\rho_t}\norm[L^{\infty}]{u}\norm[L^2]{\nabla u_t}\norm[L^6]{u_t}\\
& \le C\norm[L^2]{\rho_{tt}}^2 + C\norm[L^2]{\nabla u_t}^2, \ea \ee
and
\be\ba\la{sp13}
|\tilde{I}_3|+|\tilde{I}_4|&= \left| \int \rho u_t\cdot\nabla u\cdot u_{tt} dx\right|
+\left| \int \rho u\cdot\nabla u_t\cdot u_{tt}dx\right|\\
& \le   C\|\n^{1/2}u_{tt}\|_{L^2}\left(\|u_t\|_{L^6}\|\na u\|_{L^3}
+\|u\|_{L^\infty}\|\na u_t\|_{L^2}\right) \\
& \le  \frac{1}{4}\norm[L^2]{\rho^{{1/2}}u_{tt}}^2 + C \norm[L^2]{\nabla u_t}^2.\ea\ee
Then, by virtue of
(\ref{op3}), (\ref{va5}), \eqref{va10}, and Lemma \ref{le11}, it holds
\bnn\ba & \|P_{tt}-\ka(\ga-1)\Delta \te_t\|_{L^2}\\
&\le  C\|(u\cdot\na P)_t\|_{L^2}+C\|(P\div u)_t\|_{L^2}+C\||\na u||\na u_t|\|_{L^2}\\
&\le  C\|u_t\|_{L^6}\|\na P\|_{L^3}+C\|u\|_{L^\infty}\|\na P_t\|_{L^2}
+C\|P_t\|_{L^6}\|\na u\|_{L^3}\\
&\quad  +C\|P\|_{L^\infty}\|\na u_t\|_{L^2}+C\|\na u\|_{L^\infty}\|\na u_t\|_{L^2}\\
&\le C\left(1+\|\na u\|_{L^\infty}+\|\na^2\te \|_{L^2}\right)\|\na u_t\|_{L^2} +C\left(1+\|\na\te_t\|_{L^2}+\|\n^{1/2}\te_t\|_{L^2}\right),
\ea\enn
which yields
\be\ba\la{sp15}
|\tilde{I}_5|&=\left|\int\left(P_{tt}-\ka(\ga-1)\Delta \te_t\right){\rm div}u_tdx\right|\\
&\le\norm[L^2]{P_{tt}-\ka(\ga-1)\Delta \te_t}\norm[L^2]{\na u_t}\\
&\le C\left(1+\|\na u\|_{L^\infty}+\|\na^2\te \|_{L^2}\right)\|\na u_t\|_{L^2}^2\\
&\quad+C\left(1+\|\na\te_t\|^2_{L^2}+\|\n^{1/2}\te_t\|_{L^2}^2\right).
\ea\ee
Next, combining Lam\'{e}'s system \eqref{nt0} with Lemma \ref{zhle},   \eqref{qq1}, \eqref{va5}, and (\ref{va10}) gives
\be\la{nt4}\ba
\|\na^2u_t\|_{L^2}&\le C\|\tilde f\|_{L^2}+C\|\na u_t\|_{L^2}\\
&\le C\|\n u_{tt}\|_{L^2}
+C\|\n_t\|_{L^3}\|u_t\|_{L^6}+C\|\n_t\|_{L^3}\|\na u\|_{L^6}\|u\|_{L^\infty}\\
&\quad  +C\|u_t\|_{L^6}\|\na u\|_{L^3}+C\|\na u_t\|_{L^2}\|u\|_{L^\infty}+C\|\na P_t\|_{L^2}\\
&\le C\left(\|\n u_{tt}\|_{L^2}+\|\na u_t\|_{L^2}+\|\n^{1/2} \te_{t}\|_{L^2}+\|\na \te_t\|_{L^2}+1\right),\ea\ee
which immediately leads  to\be \la{asp16}\ba
|\tilde{I}_6| &=\ka(\ga-1) \left| \int_{ } \na\te_t\cdot\na{\rm div}u_tdx \right|   \\
& \le  C \|\na^2u_t\|_{L^2}\|\na\te_t\|_{L^2}\\
& \le \frac{1}{4} \|\n^{1/2} u_{tt}\|^2_{L^2}+ C\left(1+\|\na u_t\|^2_{L^2}+\|\n^{1/2} \te_t\|^2_{L^2}+\|\na\te_t\|^2_{L^2}\right).
\ea\ee

Putting   (\ref{sp11})--(\ref{sp15})  and (\ref{asp16}) into
(\ref{sp9}) yields
\be\la{4.052} \ba
& \frac{d}{dt}\int \left(\mu|\curl u_t|^2 + (2\mu +\lambda)({\rm div}u_t)^2-2\tilde{I}_0\right)dx
+\int \rho| u_{tt}|^2dx\\
&\le  C\left(1+\|\na u\|_{L^\infty}+\|\na u_t\|_{L^2}^2+\|\na^2\te \|_{L^2}^2 \right)\|\na u_t\|_{L^2}^2 \\
&\quad +C\left(1+\|\n_{tt}\|_{L^2}^2+\|\n^{1/2} \te_t\|^2_{L^2}+\|\na \te_t\|_{L^2}^2\right).\ea\ee
Furthermore, we infer from $(\ref{a1})_1$, \eqref{qq1}, and (\ref{va5}) that
\be \la{s4} \ba
\|\n_{tt}\|_{L^2} &= \|\div(\rho u)_t\|_{L^2} \\
& \le C\left(\|\n_t\|_{L^6}\|\nabla u\|_{L^3}+ \|\nabla u_t\|_{L^2}
+\|u_t\|_{L^6}\|\nabla \n\|_{L^3}+\|\nabla \n_t\|_{L^2}\right) \\ &\le C+C\|\na u_t\|_{L^2}.\ea\ee
Multiplying \eqref{4.052} by $\sigma$ and integrating the resulting inequality over $(0,T)$, one thus  deduces from \eqref{nn2}, \eqref{lee2}, (\ref{qq1}),  (\ref{va1}), (\ref{sp10}),     (\ref{s4}),
and Gr\"{o}nwall's inequality that
\be\la{nq11}
\sup\limits_{0\le t\le T} \si \|\nabla u_t\|^2_{L^2}
+ \int_0^T\si\int\rho |u_{tt}|^2dxdt
\le C.
\ee

Finally, by Lemma \ref{le11}, \eqref{s4}, (\ref{va2}), \eqref{nt4}, \eqref{va1}, and  (\ref{nq11}), we have
\be\notag
\sup\limits_{0\le t\le T}\si\left(\|\n_{tt}\|_{L^2}^2+\|u\|^2_{H^3}\right) + \int_0^T \si\|\nabla u_t\|_{H^1}^2 dt\le C,
\ee
which along with \eqref{nq11} gives (\ref{nq1}).
We  complete  the proof of Lemma \ref{pe1}.
\end{proof}

\begin{lemma}\la{pr3} For $q\in (3,6)$ as in Theorem \ref{th1}, it holds that
	\be\la{y2}\ba
	&\sup_{0\le t\le T} \|\n\|_{W^{2,q}} +\int_0^T
	 \|\na^2u\|_{W^{1,q}}^{p_0}  dt\le C,
	\ea \ee
	where  \be 1< \la{pppppp} p_0<\frac{4q }{5q-6} \in (1,4/3).\ee
\end{lemma}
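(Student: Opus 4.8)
The plan is to run a Grönwall argument for $\Phi(t)\triangleq\|\na^2\n(\cdot,t)\|_{L^q}$, with the source term supplied by elliptic regularity for the Lam\'e system \eqref{lames} and the higher‑order bounds of Lemmas \ref{le11}, \ref{le9-1} and \ref{pe1}. First I would differentiate the continuity equation twice as in \eqref{4.52}, multiply by $|\na^2\n|^{q-2}\na^2\n$ and integrate, integrating by parts only in the pure transport term $\div(\p_{ij}\n\,u)$; using $q>3$ so that $\|\na\n\|_{L^\infty}\le C\|\n\|_{W^{2,q}}\le C(1+\Phi)$ together with the already‑proven bounds $\|\n\|_{H^2}+\|u\|_{H^2}\le C$ and $\|\na\n\|_{L^6}\le C$, this gives
\begin{equation}\notag
\Phi'(t)\le C\bigl(1+\|\na u\|_{L^\infty}+\|\na^2 u\|_{L^q}\bigr)\Phi(t)+C\|\na^2 u\|_{W^{1,q}}.
\end{equation}
Everything therefore reduces to estimating $\|\na^2u\|_{W^{1,q}}\simeq\|u\|_{W^{3,q}}$ in such a way that the part proportional to $\Phi$ carries an $L^1(0,T)$ time coefficient, while the remainder lies in $L^{p_0}(0,T)$.

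Next I would apply the elliptic estimate \eqref{rmk1} for \eqref{lames} with $k=3$, $p=q$, namely $\|u\|_{W^{3,q}}\le C(\|\n\dot u\|_{W^{1,q}}+\|\na P\|_{W^{1,q}}+\|\na u\|_{L^2})$. In expanding $\|\n\dot u\|_{W^{1,q}}$ and $\|\na P\|_{W^{1,q}}=R\|\na(\n\te)\|_{W^{1,q}}$ by the product rule, I would always put $\na\n$ in $L^\infty$, absorbing a harmless factor $1+\Phi$ whose coefficient $\|\dot u\|_{L^q}+\|\na\te\|_{L^q}+\|\te\|_{L^\infty}\le C(1+\|\na\dot u\|_{L^2}+\|\na^2\te\|_{L^2})+C\sigma^{-1/4}$ lies in $L^1(0,T)$, so that the only genuinely delicate contributions are
\[
\|\na\dot u\|_{L^q}\le C\|\na\dot u\|_{L^2}^{\frac{6-q}{2q}}\bigl(\|\na\dot u\|_{L^2}+\|\na^2\dot u\|_{L^2}\bigr)^{\frac{3q-6}{2q}},\qquad
\|\na^2\te\|_{L^q}\le C\|\na^2\te\|_{L^2}^{\frac{6-q}{2q}}\|\na^2\te\|_{L^6}^{\frac{3q-6}{2q}}.
\]
Here $\|\na^2\dot u\|_{L^2}\le C(1+\|\na^2 u_t\|_{L^2}+\|\na^3 u\|_{L^2})$ via $\dot u=u_t+u\cdot\na u$ and $\|\na u\|_{L^6}\le C$, and $\|\na^2\te\|_{L^6}\le C\|\te\|_{H^3}$ is controlled by \eqref{ex4}. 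Collecting, $\|\na^2 u\|_{W^{1,q}}\le C\sigma^{-1/4}\Phi+F(t)$, where $F$ is built from $\|\na\dot u\|_{L^2}$, $\|\na^2 u_t\|_{L^2}$, $\|\na^3 u\|_{L^2}$, $\|\rho^{1/2}\dot\te\|_{L^2}$, $\|\na\dot\te\|_{L^2}$ and $\|\na^2\te\|_{L^2}$.

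The main obstacle — and the point that forces $p_0<\tfrac{4q}{5q-6}$ — is to show $F\in L^{p_0}(0,T)$. Among the ingredients, $\|\na\dot u\|_{L^2}$, $\|\na^3 u\|_{L^2}$, $\|\rho^{1/2}\dot\te\|_{L^2}$ and $\|\na^2\te\|_{L^2}$ are in $L^2(0,T)$ with no weight, whereas $\|\na^2 u_t\|_{L^2}$, $\|\na\dot\te\|_{L^2}$ and $\|\na^3\te\|_{L^2}$ are only in $L^2(\sigma\,dt)$, i.e.\ each is dominated by $\sigma^{-1/2}$ times an $L^2(0,T)$ function. Substituting these into the two interpolation inequalities above, keeping the unweighted factor (respectively $\|\na\dot u\|_{L^2}$ and $\|\na^2\te\|_{L^2}$) to the power $\tfrac{6-q}{2q}$ and the singular factor to the power $\tfrac{3q-6}{2q}$, and then applying H\"older in $t$ with three exponents, one finds the resulting product is $p_0$‑integrable exactly when $p_0\bigl(2-\tfrac{6-q}{2q}\bigr)<2$, i.e.\ $p_0<\tfrac{2}{(5q-6)/2q}=\tfrac{4q}{5q-6}$; the same arithmetic gives $\tfrac{4q}{5q-6}\in(1,4/3)$ for $q\in(3,6)$, which is the range \eqref{pppppp}. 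The non‑singular pieces of $F$, together with $\sigma^{-1/4}$, are trivially in $L^{p_0}(0,T)$ since $p_0<4/3<2$.

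Finally, with $F\in L^{p_0}(0,T)$ in hand, I would feed $\|\na^2 u\|_{W^{1,q}}\le C\sigma^{-1/4}\Phi+F$ back into the inequality for $\Phi$. Its coefficient $C(1+\|\na u\|_{L^\infty}+\|\na^2 u\|_{L^q}+\sigma^{-1/4})$ is in $L^1(0,T)$ because $\|\na u\|_{L^\infty}\in L^{3/2}(0,T)$ by \eqref{qq1}, $\|\na^2 u\|_{L^q}\le C(1+\|\na^3 u\|_{L^2})\in L^2(0,T)$, and $\sigma^{-1/4}\in L^1(0,T)$; the inhomogeneity lies in $L^{p_0}(0,T)\subset L^1(0,T)$. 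Grönwall's inequality together with $\Phi(0)=\|\na^2\n_0\|_{L^q}\le C\|\n_0\|_{W^{2,q}}\le C$ then yields $\sup_{[0,T]}\|\n\|_{W^{2,q}}\le C$; reinserting this bound gives $\|\na^2 u\|_{W^{1,q}}\le C\sigma^{-1/4}+F$, whence $\int_0^T\|\na^2 u\|_{W^{1,q}}^{p_0}\,dt\le C$ since $\int_0^T\sigma^{-p_0/4}\,dt<\infty$ and $F\in L^{p_0}(0,T)$. This completes the proof of \eqref{y2}.
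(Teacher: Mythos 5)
Your proposal is correct and follows essentially the same route as the paper: a Gr\"onwall argument for $\|\na^2\n\|_{L^q}$ driven by the twice-differentiated continuity equation, the $W^{3,q}$ Lam\'e estimate \eqref{rmk1} to control $\|\na^2u\|_{W^{1,q}}$, Gagliardo--Nirenberg interpolation of the borderline terms (your $\|\na\dot u\|_{L^q}$, $\|\na^2\te\|_{L^q}$ versus the paper's $\|\na u_t\|_{L^q}$, $\|\na^2\te\|_{L^q}$), and the three-exponent H\"older count against the $\si$-weighted bounds of Lemmas \ref{le11}--\ref{pe1}, which yields exactly $p_0<\frac{4q}{5q-6}$. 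The only cosmetic imprecision is writing the $\Phi$-coefficient as $C\si^{-1/4}$ rather than the full $L^{p_0}$ function $C(1+\|\na\dot u\|_{L^2}+\|\na^2\te\|_{L^2}+\si^{-1/4})$, which does not affect the argument.
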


\begin{proof}
First, it follows from  \eqref{rmk1}, \eqref{tb90}, and Lemma \ref{le11} that
\be\la{a4.74}\ba
\|\na^2u\|_{W^{1,q}}
\le & C\left(\|\n \dot u\|_{W^{1,q}}+\|\na P\|_{W^{1,q}}+\|\na u\|_{L^2}\right)\\
\le & C
(\|\na\dot u\|_{L^2}+\|\na(\n\dot u)\|_{L^q}+  \|\na^2\te\|_{L^2}+ \|\te\na^2\n\|_{L^q}\\
& +\| |\na\n||\na\te|\|_{L^q}+  \|\na^2\te\|_{L^q}+1)\\
\le & C\left(\|\na\dot u\|_{L^2}+\|\na(\n\dot u)\|_{L^q}  + \| \na^2 \theta\|_{L^q}+1 \right)\\
&+ C(1+ \|\na^2 \theta\|_{L^2})\| \na^2 \n\|_{L^q}.
\ea\ee
Next, multiplying (\ref{4.52}) by $q|\p_{ij} \n|^{q-2}\p_{ij} \n$ and  integrating the resulting equality over $\O,$ we obtain after using  (\ref{qq1}) and (\ref{a4.74}) that
\be\la{sp28}\ba
&\frac{d}{dt}\|\na^2\n\|_{L^q}^q\\
&\le C\|\na u\|_{L^\infty}\|\na^2\n\|_{L^q}^q
+C\|\na^{2} u\|_{W^{1,q}}\|\na^2\n\|_{L^q}^{q-1}(\|\na\n\|_{L^q}+1)\\
&\le C\| u\|_{H^3}\|\na^2\n\|_{L^q}^q+C\|\na^2 u\|_{W^{1,q}}\|\na^2\n\|_{L^q}^{q-1}\\
&\le C\left(\| u\|_{H^3}   + \|\na\dot u\|_{L^2} +\|\na(\n\dot u)\|_{L^q}+\|\na^2 \theta\|_{L^q}+1\right)\left(\|\na^2 \n\|_{L^q}^q+1\right).
\ea\ee

Note that Lemma \ref{le11}, (\ref{g1}), \eqref{tb90}, and \eqref{nq1} imply
\be\la{4.49}\ba     \|\na(\n\dot u)\|_{L^q}
&\le C\|\na \n\|_{L^6}\|\dot u \|_{L^{6q/(6-q)}}+C\|\na\dot u \|_{L^q}\\
&\le C\|\dot u \|_{W^{1,6q/(6+q)}}+C\|\na\dot u \|_{L^q}\\
&\le C\|\na u_t \|_{L^q}+C\|\na(u\cdot \na u ) \|_{L^q}+C\\
&\le C\|\na u_t \|_{L^2}^{(6-q)/2q}\|\na u_t \|_{L^6}^{3(q-2)/{2q}}\\
& \quad+C\|\na u \|_{L^q}\| \na u \|_{L^\infty}+C\| u \|_{L^\infty}\|\na^2 u \|_{L^q}+C\\
&\le C\si^{-1/2} \left(\si\|\na u_t \|^2_{H^1}\right)^{3(q-2)/{4q}}
+C\|u\|_{H^3}+C,\ea \ee
and
\begin{equation}\notag
    \begin{aligned}
    \| \na^2 \theta\|_{L^q} \le& C \|\na^2 \theta\|_{L^2}^{(6-q)/2q} \|\na^3 \theta\|_{L^2}^{3(q-2)/2q} +C\|\na^2 \theta\|_{L^2}\\
    \le & C \si^{-1/2} \left(\si\|\na^3 \theta \|^2_{L^2} \right)^{3(q-2)/{4q}} +C \|\na^2 \theta\|_{L^2},
\end{aligned}
\end{equation}
which combined with Lemma \ref{le11} and \eqref{nq1} shows that, for $p_0$ as in (\ref{pppppp}),
\be \la{4.53}\int_0^T \left(\|\na(\n \dot u)\|^{p_0}_{L^q} + \|\na^2 \theta\|_{L^q}^{p_0} \right) dt\le C. \ee

Finally, applying  Gr\"{o}nwall's
inequality to (\ref{sp28}), we obtain after using Lemma \ref{le11} and (\ref{4.53}) that
\bnn  \sup\limits_{0\le t\le T}\|\na^2 \n\|_{L^q}\le C,\enn
which together with   Lemma \ref{le11},    (\ref{4.53}),
and (\ref{a4.74}) gives (\ref{y2}).  We finish  the proof of Lemma \ref{pr3}.
\end{proof}

\begin{lemma}\la{sq90} For $q\in (3,6)$ as in Theorem \ref{th1}, the following estimate holds:
	\be \ba\la{eg17}
	&\sup_{ 0\le t\le T}\si \left(\|\te_t\|_{H^1}+\|\na^2\te\|_{H^1}+\| u_t\|_{H^2}
	+\| u\|_{W^{3,q}}\right)   +\int_0^T   \si^2\|\na u_{tt}\|_{L^2}^2 dt\le C.\ea  \ee
	
\end{lemma}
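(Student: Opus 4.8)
The plan is to bootstrap the higher-order time-derivative and spatial estimates already available from Lemmas \ref{le11}--\ref{pr3}, paying the usual price of an extra factor of $\si=\min\{1,t\}$ to absorb the singularity of time-derivatives near $t=0$. The five quantities in \eqref{eg17} are estimated in a definite order, each feeding the next: first $\si\|\te_t\|_{H^1}$ and $\si\|\na^2\te\|_{H^1}$ via a weighted energy estimate on the differentiated temperature equation, then $\si\|u_t\|_{H^2}$ and $\int_0^T\si^2\|\na u_{tt}\|_{L^2}^2\,dt$ from the Lam\'e elliptic estimate \eqref{nt4} together with a weighted estimate for $u_{tt}$, and finally $\si\|u\|_{W^{3,q}}$ from the elliptic estimate \eqref{rmk1} for $k=3$, $p=q$. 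All constants are allowed to depend on $T$ and the initial norms as declared before Lemma \ref{le11}.

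\textbf{Step 1: the temperature derivative.} I would differentiate $(\ref{a1})_3$ in $t$ to obtain an equation of the form $\frac{R}{\ga-1}\n(\te_t)_t-\ka\Delta\te_t=(\text{terms involving }u_t,\te_t,\na u,\na\te,\dot\te)$, multiply by $\si^2\te_{tt}$ (equivalently work with $\si^2\dot\te_t$ or simply $\si^2\te_t$ after a further differentiation, whichever keeps the boundary term $\na\te_t\cdot n=0$ clean), and integrate. Using \eqref{kk}, the Gagliardo--Nirenberg inequalities \eqref{g1}, the bounds \eqref{lee2}, \eqref{va5}, \eqref{nq1} on $\|\na u_t\|_{L^2}$, $\|\na^2\te\|_{L^2}$, $\si\|\na u_t\|_{H^1}$, and \eqref{qq1}, one controls the right-hand side by $C\si^2\|\na\te_{tt}\|_{L^2}\cdot(\cdots)$ plus lower-order terms and closes by Gr\"onwall to get $\sup_t\si^2\|\n^{1/2}\te_{tt}\|_{L^2}^2+\int_0^T\si^2\|\na\te_{tt}\|_{L^2}^2\,dt\le C$, hence $\sup_t\si\|\te_t\|_{H^1}\le C$ via \eqref{kk}. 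Feeding this into the $H^1$-estimate for the elliptic problem \eqref{3.29} (i.e. applying \eqref{ex4}) and using $\|\na^3 u\|_{L^2}\le C\si^{-1/2}$ from \eqref{nq1} gives $\sup_t\si\|\na^2\te\|_{H^1}\le C$; the parabolic smoothing also yields $\int_0^T\si\|\na^3\te\|_{L^2}^2\,dt\le C$, already recorded in \eqref{qq1}.

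\textbf{Step 2: the velocity derivative and $W^{3,q}$ bound.} Next I would differentiate \eqref{nt0} once more in $t$, or rather return to \eqref{4.052} and run a \emph{second-order weighted} energy estimate: multiply the equation for $u_{tt}$ by $\si^2 u_{tt}$ (with the analog of \eqref{nt0} for $u_t$ giving the elliptic control $\|\na^2 u_t\|_{L^2}\le C(\|\n u_{tt}\|_{L^2}+\cdots)$ from \eqref{nt4}), integrate, and absorb using $\sup_t\si\|\na u_t\|_{L^2}^2\le C$, $\int_0^T\si\|\n^{1/2}u_{tt}\|_{L^2}^2\,dt\le C$ from \eqref{nq1}, the temperature bounds from Step 1, and \eqref{va10}, \eqref{s4}. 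Gr\"onwall then gives $\sup_t\si^2\|\na u_{tt}\|_{L^2}^2$-type control, hence $\int_0^T\si^2\|\na u_{tt}\|_{L^2}^2\,dt\le C$, and back in \eqref{nt4} one gets $\sup_t\si\|\na^2 u_t\|_{L^2}^2\le C$, i.e. $\sup_t\si\|u_t\|_{H^2}\le C$ (using \eqref{nn2} and Poincar\'e for $u_t\cdot n|_{\p\O}=0$). Finally, with $\|\n\|_{W^{2,q}}\le C$ from Lemma \ref{pr3}, $\|\na u_t\|_{L^q}\le C\|\na u_t\|_{L^2}^{(6-q)/2q}\|\na^2 u_t\|_{L^2}^{3(q-2)/2q}\le C\si^{-1/2}(\cdots)$, and $\|\te\|_{W^{2,q}}\le C$ from Steps 1 plus \eqref{hs}, the elliptic estimate \eqref{rmk1} with $k=3,p=q$ applied to the Lam\'e system for $u$ gives $\|u\|_{W^{3,q}}\le C(\|\n\dot u\|_{W^{1,q}}+\|\na P\|_{W^{1,q}}+\|\na u\|_{L^2})\le C\si^{-1/2}$, which is $\sup_t\si\|u\|_{W^{3,q}}\le C$ after noting $\|\n\dot u\|_{W^{1,q}}$ is controlled by $\|\na u_t\|_{L^q}$, $\|\na^2 u\|_{L^q}$ (already in \eqref{y2}) and $\|\na(u\cdot\na u)\|_{W^{1,q}}$, which in turn needs $\|u\|_{W^{3,q}}$ on the right with a small coefficient—so one closes by absorbing or by a short interpolation argument.

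\textbf{Main obstacle.} The delicate point is the self-referential appearance of $\|u\|_{W^{3,q}}$ inside the estimate for $\|\na(u\cdot\na u)\|_{W^{1,q}}$ (and of $\|\na\te_{tt}\|_{L^2}$ and $\|\na u_{tt}\|_{L^2}$ in their own estimates): one must arrange the Gr\"onwall/absorption so that the top-order term comes with a coefficient strictly less than one, which forces a careful accounting of the powers of $\si$ (the exponent restriction $p_0<\frac{4q}{5q-6}$ in \eqref{pppppp} is exactly what makes the $\si^{-1/2}$-type singularities $L^{p_0}$-integrable near $t=0$, and a similar $\si$-bookkeeping is needed here in $L^2_t$). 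Handling the boundary integrals arising from the $t$-differentiated momentum equation is routine since $u_t\cdot n=0$ and $\curl u_t\times n=0$ on $\p\O$ are preserved, but one must again use the trick $u_t=u_t^\perp\times n$ as in \eqref{bz3}--\eqref{bz4} to avoid uncontrolled $\|\na^2 u_t\|_{L^p}$ boundary traces. Everything else is a bootstrap from Lemmas \ref{le11}--\ref{pr3} with no genuinely new ideas.
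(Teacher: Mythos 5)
Your overall toolkit (weighted energy estimates for the time-differentiated equations plus the elliptic estimates \eqref{nt4}, \eqref{ex4}, \eqref{a4.74}) is the right one, but the order in which you propose to close the estimates contains a genuine gap: the $\te_t$-estimate and the $u_{tt}$-estimate cannot be closed sequentially, because each has the other's top-order dissipation on its right-hand side. Concretely, when you test the once-differentiated temperature equation \eqref{eg1} with $\te_{tt}$, the contribution of $\left(\lambda(\div u)^2+2\mu|\mathfrak{D}(u)|^2\right)_t\te_{tt}$ cannot be estimated against the dissipation $\int\n\te_{tt}^2dx$ because of the vacuum: there is no bound on $\|\te_{tt}\|_{L^2}$ without the weight $\n^{1/2}$, and \eqref{kk} would require $\|\na\te_{tt}\|_{L^2}$, which is not available at this stage. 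The only way out is to integrate by parts in time, as the paper does in \eqref{ex5}, and this produces the term $H_4$, estimated in \eqref{ex10} by $\|\na u\|_{L^3}\|\na u_{tt}\|_{L^2}\|\te_t\|_{L^6}$, i.e.\ a contribution $\de\|\na u_{tt}\|_{L^2}^2$ --- exactly the quantity your Step 2 is supposed to deliver later. Symmetrically, the $u_{tt}$-estimate needs $\|\n^{1/2}\te_{tt}\|_{L^2}^2$ through $P_{tt}$ (the term $\tilde{J}_5$, cf.\ \eqref{sp36} and \eqref{ex12}), which only the temperature dissipation supplies. The paper therefore closes neither estimate alone: it adds \eqref{ex11} multiplied by $2(C_5+1)$ to \eqref{ex12}, chooses $\de$ small so that both $\de\|\na u_{tt}\|_{L^2}^2$ and $C_5\|\n^{1/2}\te_{tt}\|_{L^2}^2$ are absorbed, multiplies the combined inequality by $\si^2$, and only then applies Gr\"onwall to get \eqref{eg10}. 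Your ``first temperature, then velocity'' scheme cannot be repaired using only the bounds of Lemmas \ref{le11}--\ref{pr3}.

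A second, related inaccuracy: the output you claim from Step 1, namely $\sup_t\si^2\|\n^{1/2}\te_{tt}\|_{L^2}^2+\int_0^T\si^2\|\na\te_{tt}\|_{L^2}^2dt\le C$, is not what testing with $\te_{tt}$ yields (that pairing gives $\sup_t\si^2\|\na\te_t\|_{L^2}^2$ plus $\int_0^T\si^2\int\n\te_{tt}^2dxdt$), and it is stronger than anything obtainable at weight $\si^2$: the paper proves it only at weight $\si^4$, by differentiating the temperature equation a second time, in the next lemma (see \eqref{eg13}, \eqref{egg17}); indeed \eqref{eg10} gives $\int_0^T\si^2\|\n^{1/2}\te_{tt}\|_{L^2}^2dt\le C$, consistent with $\|\n^{1/2}\te_{tt}\|_{L^2}\sim\si^{-3/2}$ near $t=0$, so your claimed sup-bound should not be expected. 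Once the coupled estimate \eqref{eg10} is in hand, the remainder of your plan does match the paper: \eqref{nt4} yields $\si\|\na^2u_t\|_{L^2}\le C$, \eqref{ex4} yields $\si\|\na^2\te\|_{H^1}\le C$, and \eqref{a4.74} with \eqref{4.49} yields $\si\|\na^2u\|_{W^{1,q}}\le C$, which is \eqref{sp20}. Incidentally, the ``self-referential'' loop you worry about in $\|\na(u\cdot\na u)\|_{W^{1,q}}$ does not arise, since that term is controlled by $\|u\|_{H^3}$ (already bounded with weight $\si^{1/2}$ by \eqref{nq1}) because $q<6$; and no boundary trick of the form $u_t=u_t^{\perp}\times n$ is needed here, since testing \eqref{sp30} with $u_{tt}$ is clean thanks to $u_{tt}\cdot n=0$ and $\curl u_{tt}\times n=0$ on $\p\O$.
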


\begin{proof}
First,  differentiating $(\ref{nt0})$ with
respect to $t$ gives
\be\la{sp30}\ba \begin{cases}
\n u_{ttt}+\n u\cdot\na u_{tt}-(2\mu+\lambda)\nabla{\rm div}u_{tt} +\mu\na \times \curl u_{tt}\\
= 2{\rm div}(\n u)u_{tt} +{\rm div}(\n u)_{t}u_t-2(\n u)_t\cdot\na u_t\\\quad -(\n_{tt} u+2\n_t u_t) \cdot\na u
- \n u_{tt}\cdot\na u-\na P_{tt}, & \text{in}\,\O\times[0,T],\\
u_{tt} \cdot n=0,\quad \curl u_{tt} \times n=0, &  \text{on}\,\p\O\times[0,T].
\end{cases}\ea \ee
Multiplying (\ref{sp30})$_1$ by $u_{tt}$ and integrating the resulting equality over ${\Omega}$ by parts imply that
\be \la{sp31}\ba
&\frac{1}{2}\frac{d}{dt}\int \n |u_{tt}|^2dx
+\int \left((2\mu+\lambda)({\rm div}u_{tt})^2+\mu|\curl u_{tt}|^2\right)dx\\
&=-4\int  u^i_{tt}\n u\cdot\na u^i_{tt} dx
-\int (\n u)_t\cdot \left(\na (u_t\cdot u_{tt})+2\na u_t\cdot u_{tt}\right)dx\\
&\quad -\int (\n_{tt}u+2\n_tu_t)\cdot\na u\cdot u_{tt}dx
-\int   \n u_{tt}\cdot\na u\cdot  u_{tt} dx\\
& \quad+\int  P_{tt}{\rm div}u_{tt}dx\triangleq\sum_{i=1}^5\tilde{J}_i.\ea\ee
It follows from Lemmas \ref{le11}--\ref{pe1}, (\ref{va1}),   \eqref{w3}, and \eqref{s4} that, for $\eta\in(0,1],$
\be \la{sp32} \ba
|\tilde{J}_1| &\le C\|\n^{1/2}u_{tt}\|_{L^2}\|\na u_{tt}\|_{L^2}\| u \|_{L^\infty} \le \eta \|\na u_{tt}\|_{L^2}^2+C(\eta) \|\n^{1/2}u_{tt}\|^2_{L^2},\ea\ee
\be \la{sp33}\ba
|\tilde{J}_2| &\le C\left(\|\n u_t\|_{L^3}+\|\n_t u\|_{L^3}\right)
\left(\| \na u_{tt}\|_{L^2}\| u_t\|_{L^6}+\| u_{tt}\|_{L^6}\| \na u_t\|_{L^2}\right)\\
&\le C\left(\|\n^{1/2} u_t\|^{1/2}_{L^2}\|u_t\|^{1/2}_{L^6}+\|\n_t\|_{L^6}\| u\|_{L^6}\right)\| \na u_{tt}\|_{L^2}\| \na u_t\|_{L^2}\\
&\le \eta \|\na u_{tt}\|_{L^2}^2+C(\eta)\| \na u_t\|_{L^2}^{3}+C(\eta)\\
&\le \eta \|\na u_{tt}\|_{L^2}^2+C(\eta)\si^{-3/2}  ,\ea\ee
\be  \la{sp34}\ba
|\tilde{J}_3| &\le C\left(\|\n_{tt}\|_{L^2}\|u\|_{L^6}+
\|\n_{t}\|_{L^2}\|u_{t}\|_{L^6} \right)\|\na u\|_{L^6}\|u_{tt}\|_{L^6} \\
&\le \eta \|\na u_{tt}\|_{L^2}^2+C(\eta)\si^{-1}  ,\ea\ee
and\be  \la{sp36}\ba &
|\tilde{J}_4|+|\tilde{J}_5|\\
&\le  C\|\n u_{tt}\|_{L^2} \|\na u\|_{L^3}\|u_{tt}\|_{L^6}
+C \|(\n_t\te+\n\te_t)_t\|_{L^2}\|\na u_{tt}\|_{L^2}\\
&\le  \eta \|\na u_{tt}\|_{L^2}^2+C(\eta) \left(\|\n^{1/2}u_{tt}\|^2_{L^2}
+\|\n_{tt}\te\|_{L^2}^2+\|\n_{t}\te_t\|_{L^2}^2 +\|\n^{1/2}\te_{tt}\|_{L^2}^2\right) \\
&\le  \eta \|\na u_{tt}\|_{L^2}^2+C(\eta)\left( \|\n^{1/2}u_{tt}\|^2_{L^2}
+\|\na\te_t\|_{L^2}^2 +\|\n^{1/2}\te_{tt}\|_{L^2}^2+\sigma^{-2}\right). \ea\ee
Substituting (\ref{sp32})--(\ref{sp36}) into (\ref{sp31}), we obtain after using \eqref{nn2} and choosing $\eta$ suitably small  that
\be \la{ex12}\ba
& \frac{d}{dt}\int \n |u_{tt}|^2dx+C_4 \int|\na u_{tt}|^2dx \\
& \le  C\si^{-2} +C\|\n^{1/2}u_{tt}\|^2_{L^2}
+C\|\na\te_t\|_{L^2}^2+C_5\|\n^{1/2}\te_{tt}\|_{L^2}^2.\ea\ee

Next, differentiating \eqref{3.29} with respect to $t$ infers
\be\la{eg1}\ba \begin{cases}
-\frac{\ka(\ga-1)}{R}\Delta \te_t+\n\te_{tt}\\
=-\n_t\te_{t}- \n_t\left(u\cdot\na \te+(\ga-1)\te\div u\right)-\n\left( u\cdot\na
\te+(\ga-1)\te\div u\right)_t\\
\quad+\frac{\ga-1}{R}\left(\lambda (\div u)^2+2\mu |\mathfrak{D}(u)|^2\right)_t,  \\
\na \te_t\cdot n|_{ \p\O\times(0,T)}=0  .
\end{cases}\ea\ee
Multiplying  (\ref{eg1})$_1$ by $\te_{tt}$ and integrating the resulting
equality over $\Omega$ lead to
\be\la{ex5}\ba
& \left(\frac{\ka(\ga-1)}{2R}\|\na \te_t\|_{L^2}^2+H_0\right)_t+ \int\n\te_{tt}^2dx \\
&=\frac{1}{2}\int\n_{tt}\left( \te_t^2
+2\left(u\cdot\na \te+(\ga-1)\te\div u\right)\te_t\right)dx\\
&\quad + \int\n_t\left(u\cdot\na\te+(\ga-1)\te\div u \right)_t\te_{t}dx\\
& \quad-\int\n\left(u\cdot\na\te+(\ga-1)\te\div u\right)_t\te_{tt}dx\\
& \quad -\frac{\ga-1}{R}\int \left(\lambda (\div u)^2+2\mu |\mathfrak{D}(u)|^2\right)_{tt}\te_t dx
\triangleq\sum_{i=1}^4H_i,\ea\ee
where
\bnn\ba H_0\triangleq & \frac{1}{2}\int \n_t\te_{t}^2dx
+\int\n_t\left(u\cdot\na\te+(\ga-1)\te\div u\right) \te_tdx\\
&- \frac{\ga-1}{R}\int\left(\lambda (\div u)^2+2\mu |\mathfrak{D}(u)|^2 \right)_t\te_t dx. \ea\enn
It follows from  $(\ref{a1})_1,$   (\ref{va1}),    \eqref{w3}, \eqref{s4}, and Lemmas \ref{le11}--\ref{pe1} that
\be\la{ex6}\ba
|H_0|\le & C\int \n|u||\te_{t}||\na\te_{t}|dx+C\|\n_t\|_{L^3}\|\te_t\|_{L^6}\left( \|\na\te\|_{L^2} \|u\|_{L^\infty}+ \|\theta\|_{L^6} \|\na u\|_{L^3}\right)\\
&+ C\|\na u\|_{L^3}\|\na u_t\|_{L^2} \|\te_t\|_{L^6} \\
\le &C  (\|\rho^{1/2}\theta_t \|_{L^2}+\|\na\te_t\|_{L^2})\left(\|\n^{1/2}\te_t\|_{L^2}+\|\na u_t\|_{L^2}+1\right)\\
\le &\frac{\ka(\ga-1)}{4R} \|\na\te_t\|_{L^2}^2+C\si^{-1},\ea\ee
and
\be\la{ex7}\ba
|H_1|& \le C\|\n_{tt}\|_{L^2}\left(\|\te_t\|_{L^4}^{2}
+\|\te_t\|_{L^6}\left(\|u\cdot\na \te\|_{L^3}+\| \te\div u\|_{L^3} \right)\right)\\
&\le C\|\n_{tt}\|_{L^2}\left(\|\rho^{1/2} \te_t\|_{L^2}^{2} + \|\na \te_t\|_{L^2}^{2}
+\si^{-1}  \right) \\
& \le  C(1+\|\na u_{t}\|_{L^2} )\|\na \te_t\|^2_{L^2}+C\si^{-3/2}.\ea\ee
Combining  Lemma \ref{le11} with (\ref{w3}) gives
\be\la{eg12}\ba
 &\|\left(u\cdot\na\te+(\ga-1)\te\div u \right)_t\|_{L^2}\\
& \le  C\left(\|u_t\|_{L^6}\|\na\te\|_{L^3}+\|\na\te_t\|_{L^2}+\|\te_t\|_{L^6}\|\na u\|_{L^3}
+\|\te\|_{L^\infty}\|\na u_t\|_{L^2}\right)\\
& \le  C\|\na u_t\|_{L^2}(\|\na^2 \te\|_{L^2}+1)+ C\|\na \te_t\|_{L^2}+C\|\rho^{1/2} \theta_t \|_{L^2},\ea\ee
which together with \eqref{lee2}, \eqref{va5},  (\ref{va1}), and \eqref{w3} shows
\be\la{ex9}\ba
|H_2|+|H_3|&\le C\left(\si^{-1/2}(\|\na u_t\|_{L^2}+1)+\|\na\te_t\|_{L^2}\right)
\left(\|\n_t\|_{L^3} \|\te_t\|_{L^6}+\|\n \te_{tt}\|_{L^2}\right)\\
&\le \frac{1}{2}\int\n\te_{tt}^2dx+C\|\na\te_t\|_{L^2}^2+C\si^{-1 } \|\na u_t\|^2_{L^2}+C\si^{-1 } . \ea\ee
One deduces from \eqref{qq1}, (\ref{va1}), \eqref{w3}, and (\ref{nq1}) that
\be\la{ex10}\ba
|H_4|&\le C\int \left(|\na u_t|^2+|\na u||\na u_{tt}|\right)|\te_t|dx\\
&\le C\left(\|\na u_t\|_{L^2}^{3/2}\|\na u_t\|_{L^6}^{1/2}
+ \|\na u\|_{L^3} \|\na u_{tt}\|_{L^2}\right)\|\te_t\|_{L^6}\\
&\le \de\|\na u_{tt}\|^2_{L^2}+C\|\na^2 u_t\|^2_{L^2}
+C(\de)(\|\na\te_t\|_{L^2}^2+\si^{-1 })+C\si^{-2}\|\na u_t\|_{L^2}^2.\ea\ee
Substituting (\ref{ex7}), (\ref{ex9}), and (\ref{ex10}) into (\ref{ex5}) gives
\be\la{ex11}\ba
& \left(\frac{\ka(\ga-1)}{2R}\|\na \te_t\|_{L^2}^2+H_0\right)_t
+\frac{1}{2}\int\n\te_{tt}^2dx \\
&\le  \de\|\na u_{tt}\|^2_{L^2}+C(\de)((1+\|\na u_{t}\|_{L^2}) \|\na \te_t\|^2_{L^2}+\si^{-3/2})\\
&\quad +C(\|\na^2 u_t\|^2_{L^2} +\si^{-2}\|\na u_t\|_{L^2}^2).\ea\ee

Finally, for $C_5$ as in (\ref{ex12}), adding (\ref{ex11}) multiplied by
$2 (C_5+1) $ to  (\ref{ex12}) and choosing $\de$ suitably small yield that
\bnn\la{ex13}\ba
& \left[ 2 (C_5+1)\left(\frac{\ka(\ga-1)}{2R}\|\na \te_t\|_{L^2}^2+H_0\right)
+\int \n |u_{tt}|^2dx\right]_t\\
&\quad + \int\n\te_{tt}^2dx+\frac{C_4}{2}\int |\na u_{tt}|^2dx\\
&\le C (1+\|\na u_{t}\|_{L^2}^2) (\si^{-2} +\|\na \te_t\|^2_{L^2})
+C\|\n^{1/2}u_{tt}\|^2_{L^2} + C\|\na^2 u_t\|^2_{L^2}.\ea\enn
Multiplying this by $\si^2$ and integrating the resulting inequality over $(0,T),$
we  obtain after using (\ref{ex6}),  (\ref{nq1}),  (\ref{va5}), and Gr\"{o}nwall's inequality that
\be \la{eg10}
\sup_{ 0\le t\le T}\si^2\int \left(|\na\te_t|^2+\n |u_{tt}|^2\right)dx
+\int_{0}^T\si^2\int \left(\n\te_{tt}^2+|\nabla u_{tt}|^2\right)dxdt\le C,\ee
which together with Lemmas \ref{le11}, \ref{pe1}, \ref{pr3}, (\ref{nt4}), (\ref{ex4}),  \eqref{va1}, (\ref{a4.74}),
and (\ref{4.49}) gives
\be\la{sp20} \sup_{ 0\le t\le T}\si \left(\|\na u_t\|_{H^1}
+ \|\na^2\te\|_{H^1}+\|\na^2u\|_{W^{1,q}} \right)\le C.\ee

We thus derive (\ref{eg17}) from  (\ref{eg10}),   (\ref{sp20}), \eqref{va1}, \eqref{w3},
and (\ref{qq1}). The proof of Lemma \ref{sq90} is completed.
\end{proof}

\begin{lemma}\la{sq91} The following estimate holds:
	\be \la{egg17}\sup_{ 0\le t\le T}\si^2  \left(\|\na^2\te\|_{H^2}+\| \te_t\|_{H^2}+\|\n^{1/2}\te_{tt}\|_{L^2}  \right)
	+\int_0^T\si^4\|\na \te_{tt}\|_{L^2}^2 dt\le C.\ee
\end{lemma}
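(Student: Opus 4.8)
The plan is to follow the same differentiate-multiply-test-integrate scheme that was used in Lemma~\ref{sq90} for $u_{tt}$, but now pushed one derivative further: differentiate the temperature equation \eqref{eg1} in $t$, multiply by $\te_{tt}$ (for the $L^2$-bound on $\sqrt{\n}\,\te_{tt}$ and the $L^2(0,T;L^2)$-bound on $\na\te_{tt}$), and separately apply the elliptic estimate to the $t$-differentiated equation \eqref{eg1} to control $\|\te_t\|_{H^2}$ and $\|\na^2\te\|_{H^2}$. More precisely, I would first differentiate $(\ref{eg1})_1$ once more in time to get an equation of the schematic form
\be\notag
-\frac{\ka(\ga-1)}{R}\Delta\te_{tt}+\n\te_{ttt}=\tilde H,
\ee
where $\tilde H$ collects $\n_{tt}$, $\n_t$, $\te_t$, $\te_{tt}$, $u_t$, $u_{tt}$ and products thereof. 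Multiply this by $\si^4\te_{tt}$, integrate over $\Omega$, and integrate by parts in the usual way (writing $\int\n\te_{ttt}\te_{tt}\,dx=\frac12(\int\n\te_{tt}^2dx)_t+\ldots$ via $(\ref{a1})_1$) to produce $\frac{d}{dt}(\si^4\|\n^{1/2}\te_{tt}\|_{L^2}^2)+\si^4\ka(\ga-1)R^{-1}\|\na\te_{tt}\|_{L^2}^2$ on the left, plus the lower-order time-weight terms $\si^3\si'\|\n^{1/2}\te_{tt}\|_{L^2}^2$.

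The bulk of the work is estimating the right-hand side terms coming from $\tilde H$. Each term is handled by Hölder, Sobolev embedding, the Poincaré-type inequality \eqref{kk} (to absorb $\|\te_{tt}\|_{L^6}$ by $\|\n^{1/2}\te_{tt}\|_{L^2}+\|\na\te_{tt}\|_{L^2}$), and the already-established bounds from Lemmas~\ref{le11}--\ref{sq90}: in particular $\sup\si\|\na u_t\|_{H^1}\le C$, $\sup\si^2\|\na\te_t\|_{L^2}^2\le C$, $\sup\si^2\|\n^{1/2}u_{tt}\|_{L^2}^2\le C$, $\int_0^T\si^2\|\na u_{tt}\|_{L^2}^2dt\le C$, and $\int_0^T\si^2\|\n^{1/2}\te_{tt}\|_{L^2}^2dt\le C$ — all with the appropriate $\si$-powers matching the target $\si^4$ weight. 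The dangerous terms are those involving $u_{tt}$ and $\n_{tt}$ (e.g.\ $\n u_{tt}\cdot\na\te$, $(\div u)\div u_{tt}$ type products after expanding $(|\mathfrak D(u)|^2)_{tt}$, and $\n_{tt}(u\cdot\na\te)$): these are controlled by $\de\si^4\|\na u_{tt}\|_{L^2}^2$ plus $C(\de)$ times integrable quantities, then absorbed using $\int_0^T\si^2\|\na u_{tt}\|_{L^2}^2dt\le C$ (note $\si^4\le\si^2$ for $\si\le1$) or by adding a suitable multiple of \eqref{ex12}-type inequality; one also needs the auxiliary consequence $\int_0^T\si^4\|\na u_{ttt}\|_{H^{-1}}^2$-type bounds, but here only $u_{tt}$ and $\te_{tt}$ already-proved estimates enter. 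After Grönwall applied to the resulting differential inequality (the coefficient $1+\|\na u_t\|_{L^2}^2$ is integrable in $t$ by Lemma~\ref{pe1}), one obtains $\sup_{0\le t\le T}\si^4\|\n^{1/2}\te_{tt}\|_{L^2}^2+\int_0^T\si^4\|\na\te_{tt}\|_{L^2}^2dt\le C$, hence $\sup\si^2\|\n^{1/2}\te_{tt}\|_{L^2}\le C$ and $\int_0^T\si^4\|\na\te_{tt}\|_{L^2}^2dt\le C$.

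With the $\te_{tt}$ bounds in hand, the remaining two pieces $\sup\si^2\|\te_t\|_{H^2}\le C$ and $\sup\si^2\|\na^2\te\|_{H^2}\le C$ follow from elliptic regularity. For $\|\te_t\|_{H^2}$: apply the standard $H^2$-estimate to the elliptic problem $(\ref{eg1})$ with Neumann data, getting $\|\na^2\te_t\|_{L^2}\le C(\|\n\te_{tt}\|_{L^2}+\|\text{lower order}\|_{L^2})$ where the lower-order right-hand side is estimated by $\|\na u_t\|_{H^1}$, $\|\na^2\te\|_{H^1}$, $\|\na\te_t\|_{L^2}$, $\|\na u\|_{L^\infty}$, etc., all with the right $\si$-weights from Lemmas~\ref{le11}--\ref{sq90}; multiplying through by $\si^2$ and using the just-proved $\si^2\|\n^{1/2}\te_{tt}\|_{L^2}\le C$ (combined with $\|\te_{tt}\|_{L^2}\le C\|\n^{1/2}\te_{tt}\|_{L^2}+C\|\na\te_{tt}\|_{L^2}$ via \eqref{kk}, and the $L^2(0,T)$-in-time $\si^4\|\na\te_{tt}\|_{L^2}^2$ bound where needed at a.e.\ $t$) closes it. For $\|\na^2\te\|_{H^2}$: apply the $H^3$-estimate to the elliptic problem \eqref{3.29}, $\|\na^3\na\te\|_{L^2}\lesssim\|\n\dot\te\|_{H^2}+\|\n\te\div u\|_{H^2}+\||\na u|^2\|_{H^2}$, then expand each term using \eqref{hs}, $\|\dot\te\|_{H^2}\lesssim\|\te_t\|_{H^2}+\|u\cdot\na\te\|_{H^2}$, the $W^{2,q}$-bound on $\n$ from Lemma~\ref{pr3}, and the $W^{3,q}$-bound on $u$ from Lemma~\ref{sq90}. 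The main obstacle I anticipate is bookkeeping the $\si$-powers so that every borderline term (those forced to consume $\si^4\|\na u_{tt}\|_{L^2}^2$, $\si^4\|\na\te_{tt}\|_{L^2}^2$, or $\si^{-2}$-singular factors) genuinely integrates in time and sits below the available $\si$-weighted a priori bounds — essentially the same delicate weight-matching that already appears in Lemmas~\ref{sq90} and \ref{pe1}, now with one more time-derivative and hence $\si^4$ rather than $\si^2$.
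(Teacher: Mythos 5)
Your proposal follows essentially the same route as the paper: differentiate the temperature equation \eqref{eg1} in time, test the resulting equation with $\te_{tt}$ under the $\si^4$ weight (absorbing the $u_{tt}$- and $\n_{tt}$-terms via $\int_0^T\si^2\|\na u_{tt}\|_{L^2}^2dt\le C$ and the Poincar\'e-type inequality \eqref{kk}), and then recover $\si^2\|\te_t\|_{H^2}$ from the elliptic estimate for \eqref{eg1} and $\si^2\|\na^2\te\|_{H^2}$ from the higher-order elliptic estimate for \eqref{3.29} together with Lemmas \ref{le11}--\ref{sq90}. The only cosmetic differences are that the paper integrates the weighted inequality directly (no Gr\"onwall argument is needed) and controls the source term in the elliptic step by the pointwise-in-time bound $\si^2\|\n^{1/2}\te_{tt}\|_{L^2}\le C$ from \eqref{eg13} rather than any a.e.-in-time use of the $\na\te_{tt}$ bound; these are inessential.
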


\begin{proof}
First, differentiating $(\ref{eg1})$ with respect to $t$ yields
\be\la{eg2}\ba \begin{cases}
\n\te_{ttt}-\frac{\ka(\ga-1)}{R}\Delta \te_{tt}\\
=-\n u
\cdot\na\te_{tt}+ 2\div(\n u)\te_{tt} - \n_{tt}\left(\te_t+ u\cdot\na \te+(\ga-1)\te\div u\right)\\
\quad - 2\n_t\left(u\cdot\na \te+(\ga-1)\te\div u\right)_t\\
\quad - \n\left(u_{tt}\cdot\na \te+2u_t\cdot\na\te_t+(\ga-1)(\te\div u)_{tt}\right)\\
\quad +\frac{\ga-1}{R}\left(\lambda (\div u)^2+2\mu |\mathfrak{D}(u)|^2 \right)_{tt},  \\
\na\te_{tt}\cdot n|_{\p\O\times(0,T)}=0   .
\end{cases}\ea\ee
Multiplying (\ref{eg2})$_1$ by $\te_{tt}$ and integrating the resulting equality over $\Omega$ yield that
\be\la{eg3}\ba
&\frac{1}{2}\frac{d}{dt}\int\n|\te_{tt}|^2dx +\frac{\ka(\ga-1)}{R}\int|\na \te_{tt}|^2dx\\
&=-4\int \te_{tt}\n u\cdot\na\te_{tt}dx  -\int \n_{tt}\left(\te_t
+ u\cdot\na \te+(\ga-1)\te\div u\right)\te_{tt}dx\\
&\quad - 2\int\n_t\left(u\cdot\na \te+(\ga-1)\te\div u\right)_t\te_{tt}dx\\
& \quad - \int\n\left(u_{tt}\cdot\na \te+2u_t\cdot\na\te_t
+(\ga-1)(\te\div u)_{tt}\right)\te_{tt}dx\\
& \quad +\frac{\ga-1}{R}\int  \left(\lambda (\div u)^2+2\mu |\mathfrak{D}(u)|^2\right)_{tt}\te_{tt}dx
\triangleq \sum_{i=1}^5K_i.\ea\ee

It follows from
Lemmas \ref{le11}--\ref{pe1}, \ref{sq90},  \eqref{kk}, (\ref{eg10}), and \eqref{va1}  that
 \be \la{eg4} \ba
\si^4|K_1|&\le C\si^4\|\n^{1/2}\te_{tt}\|_{L^2}\|\na \te_{tt}\|_{L^2}\|u\|_{L^\infty}\\
&\le \de \si^4\|\na \te_{tt}\|_{L^2}^2+C(\de) \si^4\|\n^{1/2}\te_{tt}\|^2_{L^2} ,\ea\ee
\be \la{eg16}\ba
\si^4|K_2|&\le C \si^4\|\n_{tt}\|_{L^2}\|\te_{tt}\|_{L^6} \left( \|\te_t\|_{H^1}
+\|\na\te\|_{L^3}+\|\na u\|_{L^6}\|\te\|_{L^6}\right) \\
&\le C\si^2 (\|\na \te_{tt}\|_{L^2}+\|\n^{1/2} \te_{tt}\|_{L^2})\\
&\le \de\si^4\|\na \te_{tt}\|_{L^2}^2+C(\de)(\si^4\|\n^{1/2} \te_{tt}\|_{L^2}^2+1),\ea\ee
\be \la{eg7}\ba
\si^4|K_4|&\le C\si^4\|\te_{tt}\|_{L^6}
\left( \|\na\te\|_{L^3}\|\n u_{tt}\|_{L^2}
+\|\na\te_t\|_{L^2}\|u_t\|_{L^3}\right)\\
&\quad+ C\si^4\|\te_{tt}\|_{L^6} \left( \|\na u\|_{L^3}\|\n \te_{tt}\|_{L^2}
+ \|\na u_t\|_{L^2}\| \te_t\|_{L^3}\right)\\
&\quad+C\si^4\|\te\|_{L^\infty}\|\n\te_{tt}\|_{L^2} \|\na u_{tt}\|_{L^2} \\
&\le \de\si^4\|\na \te_{tt}\|_{L^2}^2
+C(\de)\left(\si^4 \|\n^{1/2} \te_{tt}\|_{L^2}^2+\si^3\|\na u_{tt}\|_{L^2}^2  \right)+C(\de),\ea\ee
\be \la{eg8}\ba
\si^4|K_5|&\le C\si^4\|\te_{tt}\|_{L^6}
\left( \|\na u_t\|_{L^2}^{3/2}\|\na u_{t}\|_{L^6}^{1/2}+\|\na u\|_{L^3}\|\na u_{tt}\|_{L^2}\right)  \\
&\le \de\si^4\|\na \te_{tt}\|_{L^2}^2
+C(\de)\si^4\left(\|\n^{1/2} \te_{tt}\|_{L^2}^2+\|\na u_{tt}\|_{L^2}^2  \right) +C(\de),\ea\ee
and
\be \la{eg6}\ba
\si^4|K_3|&\le C \si^4\|\n_t\|_{L^3} \|\te_{tt}\|_{L^6}
\left( \si^{-1/2}\|\na u_t\|_{L^2} +\|\rho^{1/2} \theta_t\|_{L^2} +\|\na\te_{t}\|_{L^2} \right) \\
&\le \de\si^4\|\na \te_{tt}\|_{L^2}^2+ C \si^4 \|\n^{1/2} \te_{tt}\|_{L^2}^2 +C(\de),\ea\ee
where in the last inequality we have used (\ref{eg12}).

Then, multiplying (\ref{eg3})  by $\si^4,$ substituting (\ref{eg4})--(\ref{eg6}) into the resulting equality and choosing $\de$ suitably small, one obtains
\bnn \ba
& \frac{d}{dt}\int\si^4\n|\te_{tt}|^2dx +\frac{\ka(\ga-1)}{R}\int\si^4|\na \te_{tt}|^2dx\\
& \le  C\si^2\left(\|\n^{1/2} \te_{tt}\|_{L^2}^2
+\|\na u_{tt}\|_{L^2}^2  \right)+C,\ea\enn
which together with (\ref{eg10})   gives
\be\la{eg13} \sup_{ 0\le t\le T}\si^4\int  \n |\te_{tt}|^2dx
+\int_{0}^T\si^4\int_{ } |\nabla \te_{tt}|^2 dxdt\le C.\ee

Finally, applying the standard $L^2$-estimate  to (\ref{eg1}), one obtains after using Lemmas \ref{le11}--\ref{pe1}, \ref{sq90}, (\ref{va1}), and (\ref{eg13}) that
\be\la{eg14}\ba
&\sup_{0\le t\le T}\si^2\|\na^2\te_t\|_{L^2}\\
&\le  C\sup_{0\le t\le T}\si^2\left(\|\n\te_{tt}\|_{L^2}
+  \|\n_t\|_{L^3}\|\te_t\|_{L^6}+\|\n_t\|_{L^6} \left(\|\na\te\|_{L^3}+\|\te\|_{L^6}\|\na u\|_{L^6}\right)\right)\\
& \quad +C\sup_{0\le t\le T}\si^2\left(\|\n^{1/2} \te_t\|_{L^2}+\|\na\te_t\|_{L^2}+ (1+\|\na^2\te\|_{L^2}) \|\na u_t\|_{L^2}+
 \|\na u_t\|_{L^6}\right)\\
& \le  C.\ea\ee
Moreover, it follows from  the standard $H^2$-estimate  to
$(\ref{3.29})$, (\ref{hs}), \eqref{nq1}, and Lemma \ref{le11}   that
\bnn\ba \|\na^2\te\|_{H^2}
&\le C\left(\|\n\te_t\|_{H^2}+\|\n u\cdot\na\te\|_{H^2}
+\|\n\te\div u\|_{H^2}+\||\na u|^2\|_{H^2}\right)\\
&\le C\left( \|\n\|_{H^2} \|\te_t\|_{H^2}
+\|\n\|_{H^2} \| u\|_{H^2}\|\na\te\|_{H^2}\right)\\
&\quad+C\|\n\|_{H^2} \|\te\|_{H^2} \| \div u\|_{H^2}+C\|\na u\|^2_{H^2}+C\\
&\le C\si^{-1}+ C\| \na^3\te \|_{L^2}+C\| \te_t\|_{H^2}.\ea\enn
Combining this with  (\ref{eg17}),  (\ref{eg14}), and  (\ref{eg13}) shows (\ref{egg17}).
The proof of Lemma \ref{sq91} is completed.
\end{proof}

\section{\la{se5}Proof of  Theorems  \ref{th1} and \ref{th2}}

With all the a priori estimates in Sections \ref{se3} and \ref{se4}
at hand, we are ready to prove the main results  of this paper in
this section.

\begin{pro} \la{pro2}

 For  given numbers $M>0$ (not necessarily small),
  $\on> 2,$ and $\bt>1,$   assume that  $(\rho_0,u_0,\te_0)$ satisfies (\ref{2.1}),  (\ref{3.1}),
and   (\ref{z01}). Then    there exists a unique classical solution  $(\rho,u,\te) $      of problem (\ref{a1})--(\ref{h1})
 in $\Omega\times (0,\infty)$ satisfying (\ref{mn5})--(\ref{mn2}) with $T_0$ replaced by any $T\in (0,\infty).$
  Moreover,  (\ref{zs2}), (\ref{a2.112}), (\ref{ae3.7}), and (\ref{vu15})  hold for any $T\in (0,\infty)$ and (\ref{h22}) holds for any $t\geq 1$.


 \end{pro}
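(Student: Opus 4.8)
The plan is to prove Proposition \ref{pro2} by a standard continuation argument, combining the local existence theory of Lemma \ref{th0} with the global a priori estimates of Section \ref{se3} (Proposition \ref{pr1} together with Lemma \ref{le8}) and the higher-order estimates of Section \ref{se4}. First I would invoke Lemma \ref{th0} to obtain a local classical solution $(\rho,u,\te)$ on $\Omega\times(0,T_0]$ for some $T_0\in(0,1)$, starting from initial data satisfying \eqref{2.1}; by the compatibility-type observation following \eqref{co12}--\eqref{wq01} and Remark after Lemma \ref{th0}, this solution also enjoys the extra regularity \eqref{mn1} and the pointwise lower bound \eqref{mn2}. Then I would define
\[
T^\ast\triangleq\sup\{T>0 : (\rho,u,\te)\text{ is a classical solution on }\Omega\times(0,T]\text{ satisfying }\eqref{z1}\},
\]
where \eqref{z1} is read with the constants $K,\,C^\ast,\,\al,\,\te_\infty,\,\ve_0$ furnished by Proposition \ref{pr1}; note $T^\ast\ge T_0>0$ since for short time the bounds in \eqref{z1} hold with room to spare (at $t=0$ one has $A_1=\|\na u_0\|_{L^2}^2\le M^2<3K$, $A_2(0),A_3(0)$ essentially vanish, and $\rho_0<\on$).

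The core of the argument is to show $T^\ast=\infty$. Suppose for contradiction $T^\ast<\infty$. On $(0,T^\ast)$ all hypotheses of Proposition \ref{pr1} are met, so its conclusion \eqref{zs2}, i.e. the \emph{strict} improvements $\rho\le 3\on/2$, $A_1\le 2K$, $A_2\le C_0^{1/4}$, $A_3\le C_0^{1/6}$, holds provided $C_0\le\ve_0$ — which is exactly assumption \eqref{z01}. Combined with Lemmas \ref{le8}, \ref{le11}, \ref{le9-1}, \ref{pe1}, \ref{pr3}, \ref{sq90}, and \ref{sq91}, these give uniform (in $t\le T^\ast$) bounds on $(\rho,u,\te)$ in the norms of \eqref{mn5} — in particular $\sup_t(\|\rho\|_{W^{2,q}}+\|u\|_{H^2}+\|\te\|_{H^1})\le C$, together with the time-integrated higher-order bounds — and crucially these bounds depend only on the data, \emph{not} on $T^\ast$. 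Hence $(\rho,u,\te)(\cdot,T^\ast)$ is a well-defined state satisfying the hypotheses \eqref{2.1} of Lemma \ref{th0} (here the strict positivity $\inf_\Omega\rho(\cdot,T^\ast)>0$ follows from the continuity equation and the uniform upper bound on $\div u$, and $\inf_\Omega\te(\cdot,T^\ast)>0$ follows from \eqref{mn2} applied on $[0,T^\ast]$). Restarting the local theory from time $T^\ast$ extends the solution to $\Omega\times(0,T^\ast+T_1]$ for some $T_1>0$; moreover, because \eqref{zs2} gives the estimates in \eqref{z1} with strict inequality at $t=T^\ast$ and all quantities are continuous in time, the extended solution still satisfies \eqref{z1} on $(0,T^\ast+T_1']$ for some $0<T_1'\le T_1$, contradicting the definition of $T^\ast$. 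Therefore $T^\ast=\infty$.

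With $T^\ast=\infty$ established, Proposition \ref{pr1}, Lemma \ref{le8}, and the Section \ref{se4} lemmas hold on every interval $(0,T]$, which yields \eqref{zs2}, \eqref{a2.112}, \eqref{ae3.7}, and \eqref{vu15} for all $T\in(0,\infty)$, and Lemma \ref{pr2} gives the exponential decay \eqref{h22} for $t\ge 1$; the regularity class \eqref{mn5} on each finite $[0,T]$ follows from Lemmas \ref{le11}--\ref{sq91}. Uniqueness is inherited from the local uniqueness statement of Lemma \ref{th0} together with the fact that two classical solutions with the same data agree on a maximal interval which, by the above, is all of $(0,\infty)$. The main obstacle in this scheme is not any single new estimate but the internal consistency of the bootstrap: one must verify that the \emph{improved} bounds \eqref{zs2} are strictly better than the \emph{assumed} bounds \eqref{z1} (so that the open-closed continuation argument closes), which is precisely why Proposition \ref{pr1} is stated with the strict gaps $3K\to 2K$, $2C_0^{1/4}\to C_0^{1/4}$, $2C_0^{1/6}\to C_0^{1/6}$, $2\on\to 3\on/2$ — and this in turn rests on the delicate treatment of the basic energy via $\overline P$ (Lemma \ref{a13.1}), the energy-like quantity $A_2$ (Lemma \ref{le3} and Remark \ref{r2}), and the density upper bound (Lemma \ref{le7}). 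The role of \eqref{z01} is exactly to make $C_0$ small enough that all the smallness thresholds $\ve_1,\dots,\ve_4$ and $\ve_0$ collected in \eqref{t7} are simultaneously satisfied.
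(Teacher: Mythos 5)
Your overall scheme is the same as the paper's (local existence via Lemma \ref{th0}, a maximal time up to which \eqref{z1} holds, upgrading \eqref{z1} to \eqref{zs2} by Proposition \ref{pr1}, restart, contradiction), but there is a genuine gap at the restart step. To re-apply Lemma \ref{th0} at the critical time you must know that $(\rho,u,\te)$ at that time lies in $H^3$, and the a priori estimates you invoke do not give this for the density: Lemmas \ref{le8}, \ref{le11}--\ref{sq91} control $\rho$ only in $H^2\cap W^{2,q}$ with $q\in(3,6)$ (see \eqref{qq1} and \eqref{y2}), and $W^{2,q}$ with $q<6$ neither contains nor controls $\|\na^3\rho\|_{L^2}$. (For $u$ and $\te$ the $\si$-weighted bounds \eqref{eg17}, \eqref{egg17} do give $W^{3,q}$ and $H^4$ regularity at positive times, so the obstruction is specifically the density.) Consequently your assertion that the Section 4 lemmas give ``bounds in the norms of \eqref{mn5}'', and hence that $(\rho,u,\te)(\cdot,T^\ast)$ satisfies the hypotheses \eqref{2.1} of Lemma \ref{th0}, is unjustified as stated; this is precisely the point where the paper has to work.

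Indeed, the bulk of the paper's proof of Proposition \ref{pro2} is devoted to the missing claim \eqref{y12}, $\sup_{0\le t\le T}\|\rho\|_{H^3}\le\tilde C$ for all $T$ below the critical time $T_*$: one defines $u_t(\cdot,0)$ and $\te_t(\cdot,0)$ directly from the equations (this uses $\inf\rho_0>0$ and the $H^3$ data of \eqref{2.1}), derives time-unweighted analogues of the $\dot\te$- and $u_t$-estimates (\eqref{a51}, \eqref{sp211}, \eqref{ssp1}, \eqref{sp221}, \eqref{ssp24}), bounds $\|\na^2 u\|_{H^2}$ by $\|\na^2u_t\|_{L^2}+\|\na^3\rho\|_{L^2}+\|\na^3\te\|_{L^2}$ via Lemma \ref{zhle}, and closes a Gr\"onwall inequality for $\|\na^3\rho\|_{L^2}$. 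Note also that the constants in this step are allowed to depend on $T_*$ and on $\inf_{x\in\O}\rho_0$, which is harmless because $T_*$ is finite in the contradiction argument; your insistence that all bounds be independent of $T^\ast$ is neither achievable for this $H^3$ bound nor needed (time-independence is only required for the quantities in \eqref{z1}, which is what Proposition \ref{pr1} provides). Once \eqref{y12} is supplied, the rest of your argument (strictness of \eqref{zs2} versus \eqref{z1}, positivity of $\rho$ and $\te$ at $T_*$, restart, and the final collection of \eqref{zs2}, \eqref{a2.112}, \eqref{ae3.7}, \eqref{vu15}, \eqref{h22}) matches the paper.
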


\begin{proof}
First, by the standard local existence result (Lemma \ref{th0}), there exists a $T_0>0$ which may depend on
$\inf\limits_{x\in \Omega}\n_0(x), $  such that the  problem
 (\ref{a1})--(\ref{h1})  with   initial data $(\n_0 ,u_0,\te_0 )$
 has   a unique classical solution $(\n,u,\te)$ on $\O\times(0,T_0]$  satisfyinng (\ref{mn6})--(\ref{mn2}).
It follows from (\ref{As1})--(\ref{3.1}) and (\ref{z01}) that
\bnn A_1(0)\le M^2,\quad  A_2(0)\le  C_0^{1/4},\quad A_3(0)=0, \quad  \n_0<
 \hat{\rho},\quad \te_0\le \bt,\enn  which implies  there exists a
$T_1\in(0,T_0]$ such that (\ref{z1}) holds for $T=T_1.$
 We set \bnn \notag T^* =\sup\left\{T\,\left|\, \sup_{t\in [0,T]}\|(\n,u,\te)\|_{H^3}<\infty\right\},\right.\enn  and \be \la{s1}T_*=\sup\{T\le T^* \,|\,{\rm (\ref{z1}) \
holds}\}.\ee Then $ T^*\ge T_* \geq T_1>0.$
 Next, we claim that
 \be \la{s2}  T_*=\infty.\ee  Otherwise,    $T_*<\infty.$
Proposition \ref{pr1} shows   (\ref{zs2}) holds for all $0<T<T_*,$ which together with \eqref{z01} yields
  Lemmas \ref{le11}--\ref{sq91} still hold for all  $0< T< T_*.$
Note here that all  constants $C$  in  Lemmas \ref{le11}--\ref{sq91} 
depend  on $T_*  $ and $\inf\limits_{x\in \Omega}\n_0(x)$, and are in fact independent  of  $T$.
Then,  we claim that  there
exists a positive constant $\tilde{C}$ which may  depend  on $T_* $
and $\inf\limits_{x\in \Omega}\n_0(x)$   such that, for all  $0< T<
 T_*,$  \be\la{y12}\ba \sup_{0\le t\le T}
\| \n\|_{H^3}   \le \tilde{C},\ea \ee which together with Lemmas \ref{le11}, \ref{pe1},  \ref{sq90},  \eqref{mn2}, and (\ref{3.1}) gives
 \bnn
 \|(\n(x,T_*),u(x,T_*),\te(x,T_*))\|_{H^3}
 \le \tilde{C},\quad\inf_{x\in \Omega}\n(x,T_*)>0,\quad\inf_{x\in \Omega}\te(x,T_*)>0.\enn
  Thus, Lemma \ref{th0} implies that there exists some $T^{**}>T_*,$  such that
(\ref{z1}) holds for $T=T^{**},$   which contradicts (\ref{s1}).
Hence, (\ref{s2}) holds. This along with Lemmas \ref{th0}, \ref{a13.1}, \ref{le8},  and Proposition \ref{pr1},  thus
finishes the proof of   Proposition \ref{pro2}.

 Finally, it remains to prove (\ref{y12}). Using  $(\ref{a1})_3$ and (\ref{mn6}), we can define
 \bnn
 \theta_t(\cdot,0)\triangleq - u_0 \cdot\na \te_0 + \frac{\ga-1}{R} \rho_0^{-1}
\left(\ka\Delta\te_0-R\rho_0 \theta_0 \div u_0+\lambda (\div u_0)^2+2\mu |\mathfrak{D}(u_0)|^2\right),
 \enn
 which along with  (\ref{2.1}) gives
 \be \la{ssp91}\|\theta_t(\cdot,0)\|_{L^2}\le \tilde{C}.\ee
 Thus, one deduces from  (\ref{3.99}), \eqref{2.1}, \eqref{ssp91}, and Lemma \ref{le11} that
\be  \ba \la{a51}
\sup\limits_{0\le t\le T} \int \n|\dot\te|^2dx+\int_0^T \|\na\dot\te\|_{L^2}^2dt \le \tilde{C},
\ea\ee
which together with  (\ref{lop4}) and Lemma \ref{le11}  yields
\be\la{sp211}
\sup\limits_{0\le t\le T}\|\na^2 \theta\|_{L^2} \le \tilde{C}.\ee
Using  $(\ref{a1})_2$ and  (\ref{mn6}), we can define
\bnn
u_t(\cdot,0) \triangleq -u_0\cdot\na u_0+\n_0^{-1}\left( \mu \Delta u_0 + (\mu+\lambda) \na \div u_0 - R\na (\n_0\te_0)\right),
\enn
which along with  (\ref{2.1}) gives
\be \la{ssp9}\|\na u_t(\cdot,0)\|_{L^2}\le \tilde{C}.\ee
Thus, it follows from Lemmas \ref{le11}, \ref{le9-1},  (\ref{4.052}),  (\ref{s4}), (\ref{a51})--(\ref{ssp9}), and Gr\"{o}nwall's inequality that
\be \la{ssp1}
\sup_{0\le t\le T}\|\na u_t\|_{L^2}+\int_0^T\int \n |u_{tt}|^2dxdt\le \tilde{C},
\ee
which as well as (\ref{va2}), \eqref{sp211}, and (\ref{qq1}) yields
\be\la{sp221} \sup\limits_{0\le t\le T}\|u\|_{H^3} \le \tilde{C}.\ee
Combining this with Lemma \ref{le11}, \eqref{ex4}, \eqref{nt4}, (\ref{a51}), (\ref{sp211}), (\ref{ssp1}),   and  (\ref{sp221}) gives
\be\la{ssp24} \ia\left(\|\na^3\te\|_{L^2}^2+ \|\nabla u_t\|_{H^1}^2\right)dt\le \tilde{C} . \ee
Then, applying (\ref{rmk1}), \eqref{hs}, \eqref{sp211}, \eqref{ssp1}, \eqref{sp221}, and Lemma \ref{le11}, one has
\be \notag\ba \|\na^2 u\|_{H^2}
&\le\tilde{C}\left(\| \n \dot u \|_{H^2}+\|\na  P\|_{H^2} + \|\na u\|_{L^2}\right)\\
&\le \tilde{C}\left(\| \n \|_{H^2}\|  u_t \|_{H^2}+\| \n \|_{H^2}\|  u \|_{H^2}\|  \na u \|_{H^2}\right)\\
&\quad+\tilde C\left(\|\na\n \|_{H^2}\|\te\|_{H^2}+\|\n\|_{H^2}\|\na  \te\|_{H^2}+1\right)\\
& \le \tilde{C} (1+ \|\na^2  u_t\|_{L^2}+\|\na^3 \n \|_{L^2}+\|\na^3 \te \|_{L^2}),\ea\ee
which along with some standard calculations leads to
 \bnn\la{sp134}\ba  & \left(\|\na^3 \n\|_{L^2} \right)_t \\
&\le \tilde{C}\left(\| |\na^3u| |\na \n| \|_{L^2}+ \||\na^2u||\na^2
      \n|\|_{L^2}+ \||\na u||\na^3 \n|\|_{L^2} +\| \na^4u \|_{L^2} \right)\\
&\le \tilde{C}\left(\| \na^3 u\|_{L^2}\|\na \n \|_{H^2}+ \| \na^2u\|_{L^3}\|\na^2 \n \|_{L^6}
       +\|\na u\|_{L^\infty}\|\na^3 \n\|_{L^2} +\| \na^4u \|_{L^2}\right) \\
&\le \tilde{C}(1+ \| \na^3\n \|_{L^2}+ \| \na^2 u_t\|^2_{L^2}+ \|\na^3\te\|^2_{L^2}), \ea\enn where we have used (\ref{sp221}) and Lemma \ref{le11}.
 Combining this with (\ref{ssp24})  and
Gr\"{o}nwall's inequality  yields   \bnn\la{sp26} \sup\limits_{0\le t\le
T}\|\nabla^3  \n\|_{L^2} \le \tilde{C},\enn which together with
(\ref{qq1}) gives (\ref{y12}).
The proof of Proposition \ref{pro2} is completed.
\end{proof}

With  Proposition \ref{pro2} at hand, we are now in a position to prove  Theorem \ref{th1}.

\begin{proof}[Proof of  Theorem   \ref{th1}]
 Let $(\n_0,u_0,\te_0)$  satisfying (\ref{co3})--(\ref{co2}) be the initial data in Theorem \ref{th1}.  Assume that  $C_0$  satisfies (\ref{co14}) with
\be\la{xia}\ve\triangleq \ve_0/2,\ee
where  $\ve_0$  is given in Proposition \ref{pr1}.

First, we construct the approximate initial data $(\n_0^{m,\eta},u_0^{m,\eta}, \te_0^{m,\eta})$ as follows. For constants
\be\la{5d0}
m \in \mathbb{Z}^+,\ \  \eta \in \left(0, \eta_0 \right),\ \  \eta_0\triangleq \min\xl\{1,\frac{1}{2}(\on-\sup\limits_{x\in \O}\n_0(x)) \xr\},
\ee
we define
\begin{align*}
\n_0^{m,\eta} = \n_0^{m}+ \eta,\ \  u_0^{m,\eta}=\frac{u_0^m }{1+\eta},\ \  \te_0^{m,\eta}= \frac{\te_0^{m} + \eta}{1+2\eta},
\end{align*}
where $\n_0^{m}$ satisfies
\begin{align*}
0 \le \n_0^{m} \in C^{\infty},\ \  \lim_{m \to \infty} \|\n_0^{m} -\rho_0\|_{W^{2,q}}=0,
\end{align*}
 $u_0^m$ is the unique smooth solution to the following elliptic equation:
\be\notag\begin{cases}
	\Delta u_0^m=\Delta \tilde{u}_0^m,&\text{in}\,\, \O,\\
	u_0^m\cdot n=0 ,\,\,\curl u_0^m\times n=0,&\text{on}\,\,\p\O,
\end{cases}\ee
with $\tilde{u}_0^m \in C^{\infty}$ satisfying $\lim_{m \to \infty}\| \tilde{u}_0^m -{u}_0\|_{H^2}=0$, and $\te_0^m$ satisfying $\int_\O\te_0^m dx=\int_\O\te_0 dx$ is the unique smooth solution to the following Poisson equation:
\be\notag\begin{cases}
	\Delta \te_0^m=\Delta \tilde{\te}_0^m- \overline{\Delta \tilde{\te}_0^m},&\text{in}\,\,\O,\\
	\na \te_0^m\cdot n=0 ,&\text{on}\,\,\p\O,
\end{cases}\ee
with $0\le \tilde{\te}_0^m \in C^{\infty}$ satisfying $\lim\limits_{m \to \infty}\| \tilde{\te}_0^m -{\te}_0\|_{H^2}=0$.

Then for any $\eta\in (0, \eta_0)$, there exists $m_1(\eta)\ge 1$ such that for $m \ge m_1(\eta)$, the approximate initial data
$(\n_0^{m,\eta},u_0^{m,\eta}, \te_0^{m,\eta})$ satisfies
\be \la{de3}\begin{cases}(\n_0^{m,\eta},u_0^{m,\eta}, \te_0^{m,\eta})\in C^\infty ,\\
	\dis \eta\le  \n_0^{m,\eta}  <\hat\n,~~\, \frac{\eta}{4}\le \te_0^{m,\eta} \le \hat \te,~~\,\|\na u_0^{m,\eta}\|_{L^2} \le M, \\
	\dis u_0^{m,\eta}\cdot n=0,~~\,\curl u_0^{m,\eta}\times n=0,~~\,\na \te_0^{m,\eta}\cdot n=0\,\, \text{on}\,\p\O,\end{cases}
\ee
and
 \be \la{de03}
\lim\limits_{\eta\rightarrow 0} \lim\limits_{m\rightarrow \infty}
\left(\| \n_0^{m,\eta} - \n_0 \|_ {W^{2,q}}+\| u_0^{m,\eta}-u_0\|_{H^2}+\| \te_0^{m,\eta}- \te_0  \|_{H^2}\right)=0.
\ee
Moreover,  the initial norm $C_0^{m,\eta}$
for $(\n_0^{m,\eta},u_0^{m,\eta}, \te_0^{m,\eta}),$ which is defined by  the right-hand side of (\ref{e})
with $(\n_0,u_0,\te_0)$   replaced by
$(\n_0^{m,\eta},u_0^{m,\eta}, \te_0^{m,\eta}),$
satisfies \bnn \lim\limits_{\eta\rightarrow 0} \lim\limits_{m\rightarrow \infty} C_0^{m,\eta}=C_0.\enn
Therefore, there exists  an  $\eta_1\in(0, \eta_0) $
such that, for any $\eta\in(0,\eta_1),$ we can find some $m_2(\eta)\geq m_1(\eta)$  such that   \be \la{de1} C_0^{m,\eta}\le C_0+\ve_0/2\le  \ve_0 , \ee
provided that\be  \la{de7}0<\eta<\eta_1 ,\,\, m\geq m_2(\eta).\ee

  We assume that $m,\eta$ satisfy (\ref{de7}).
  Proposition \ref{pro2} together with (\ref{de1}) and (\ref{de3}) thus yields that
   there exists a smooth solution  $(\n^{m,\eta},u^{m,\eta}, \te^{m,\eta}) $
   of problem (\ref{a1})--(\ref{h1}) with  initial data $(\n_0^{m,\eta},u_0^{m,\eta}, \te_0^{m,\eta})$
   on $\Omega\times (0,T] $ for all $T>0. $
    Moreover, one has (\ref{h8}), \eqref{zs2}, \eqref{h22}, (\ref{a2.112}), \eqref{ae3.7}, and (\ref{vu15})   with $(\n,u,\te)$  being replaced by $(\n^{m,\eta},u^{m,\eta}, \te^{m,\eta}).$

 Next, for the initial data $(\n_0^{m,\eta},u_0^{m,\eta}, \te_0^{m,\eta})$, the function $\tilde g$ in (\ref{co12})  is
 \be \la{co5}\ba \tilde g & \triangleq(\n_0^{m,\eta})^{-1/2}\left(-\mu \Delta u_0^{m,\eta}-(\mu+\lambda)\na\div
 u_0^{m,\eta}+R\na (\n_0^{m,\eta}\te^{m,\eta}_0)\right)\\
& = (\n_0^{m,\eta})^{-1/2}\sqrt{\n_0}g+\mu(\n_0^{m,\eta})^{-1/2}\Delta(u_0-u_0^{m,\eta})\\
&\quad+(\mu+\lambda) (\n_0^{m,\eta})^{-1/2} \na \div(u_0-u_0^{m,\eta})+ R(\n_0^{m,\eta})^{-1/2} \na(\n_0^{m,\eta}\te_0^{m,\eta}-\n_0\te_0),\ea\ee
where in the second equality we have used (\ref{co2}).
Since $g \in L^2,$ one deduces from (\ref{co5}),  (\ref{de3}), (\ref{de03}), and  (\ref{co3})  that for any $\eta\in(0,\eta_1),$ there exist some $m_3(\eta)\geq m_2(\eta)$ and a positive constant $C$ independent of $m$ and $\eta$ such that
 \be\la{de4}
 	\|\tilde g\|_{L^2}\le \|g\|_{L^2}+C\eta^{-1/2}\de(m) + C\eta^{1/2},
 	\ee
with   $0\le\de(m) \rightarrow 0$ as
$m \rightarrow \infty.$ Hence,  for any  $\eta\in(0,\eta_1),$ there exists some $m_4(\eta)\geq m_3(\eta)$ such that for any $ m\geq m_4(\eta)$,
 \be \la{de9}\de(m) <\eta.\ee  We thus obtain from (\ref{de4}) and (\ref{de9}) that
there exists some positive constant $C$ independent of $m$ and $\eta$ such that  \be\la{de14}  \|\tilde g \|_{L^2}\le \|g \|_{L^2}+C,\ee provided that\be \la{de10} 0<\eta<\eta_1,\,\,  m\geq m_4(\eta).\ee

 Now, we   assume that $m,$  $\eta$ satisfy (\ref{de10}).
 It thus follows from (\ref{de3})--(\ref{de1}),  (\ref{de14}), Proposition \ref{pr1}, 
 and Lemmas \ref{le8}, \ref{le11}--\ref{sq91} that for any $T>0,$
 there exists some positive constant $C$ independent of $m$ and $\eta$ such that
 (\ref{h8}), (\ref{zs2}),   (\ref{a2.112}),  \eqref{ae3.7}, (\ref{vu15}), \eqref{lee2}, \eqref{qq1},  (\ref{va5}),  (\ref{vva5}), (\ref{y2}),  (\ref{eg17}),
  and  (\ref{egg17})  hold for  $(\n^{m,\eta},u^{m,\eta}, \te^{m,\eta}) .$
   Then passing  to the limit first $m\rightarrow \infty,$ then $\eta\rightarrow 0,$
   together with standard arguments yields that there exists a solution $(\n,u,\te)$ of the problem (\ref{a1})--(\ref{h1})
   on $\Omega\times (0,T]$ for all $T>0$, such that the solution  $(\n,u,\te)$
   satisfies  (\ref{h8}), (\ref{a2.112}),    \eqref{ae3.7},  (\ref{vu15}),  \eqref{lee2}, \eqref{qq1}, (\ref{va5}),  (\ref{vva5}), (\ref{y2}),  (\ref{eg17}), (\ref{egg17}),
   and  the estimates of $A_i(T)\,(i=1,2,3)$ in
   (\ref{zs2}). Hence,    $(\n,u,\te)$ satisfying  (\ref{h8}) and \eqref{h9} refers to \cite{H-L} for the detailed proof. Moreover, one deduces from Proposition \ref{pr1} that the desired exponential decay property \eqref{h11}.

Finally,  the proof of the uniqueness of $(\n,u,\te)$ is similar to that  of \cite[Theorem 1]{choe1} and will be omitted here for simplicity. The proof of Theorem \ref{th1} is completed.
\end{proof}

{\it Proof of  Theorem   \ref{th2}. } We will prove  Theorem   \ref{th2} in two steps.

 {\it Step 1. Construction  of approximate  solutions.} Assume $(\n_0,u_0,\te_0)$ satisfying (\ref{co4}) and \eqref{dt7} is the initial data in Theorem \ref{th2} and  $C_0$ satisfies (\ref{co14})   with  $\ve $  as in  (\ref{xia}).   For $j_{m^{-1}}(x)$ being the standard mollifying kernel of width $m^{-1}$, we construct
\begin{align*}
\hat\n_0^{m,\eta} = (\n_0 1_\O)\ast j_{m^{-1}}1_\O+ \eta,\ \  \hat u_0^{m,\eta}=\frac{u_0^m }{1+\eta},\ \  \hat\te_0^{m,\eta}= \frac{(\n_0\te_0 1_{\O_m})\ast j_{m^{-1}} + \eta}{(\n_01_{\O_m})\ast j_{m^{-1}}+ \eta},
\end{align*}
where  $\O_m=\{x\in\O| dist(x,\p\O)>2/m\}$ and $u_0^{m}$ satisfies
\be\notag u_0^m\in C^\infty\cap H^1_\omega\ \ \text{and}\ \ \lim_{m\rightarrow \infty}\|u_0^{m}-u_0\|_{H^1}=0.\ee
 Then for any $\eta\in (0, \eta_0)$ with $\eta_0$  as in (\ref{5d0}), there exists $m(\eta)>1$ such that for $m \ge m(\eta)$, the approximate initial data
$(\hat\n_0^{m,\eta},\hat u_0^{m,\eta}, \hat\te_0^{m,\eta})$ satisfies
\be \la{dee3}\begin{cases}(\hat\n_0^{m,\eta},\hat u_0^{m,\eta}, \hat\te_0^{m,\eta})\in C^\infty ,\\
	\dis \eta\le \hat \n_0^{m,\eta}  <\hat\n,~~\, \frac{\eta}{\hat\n+\eta}\le \hat\te_0^{m,\eta} \le \hat \te,~~\,\|\na \hat u_0^{m,\eta}\|_{L^2} \le M, \\
\dis \hat u_0^{m,\eta}\cdot n=0,~~\,\curl \hat u_0^{m,\eta}\times n=0,~~\,\na \hat \te_0^{m,\eta}\cdot n=0,\,\,\,\, \text{on}\,\p\O,\end{cases}
\ee
and for any $p\geq1$,
 \be \la{dee03}
\lim\limits_{\eta\rightarrow 0} \lim\limits_{m\rightarrow \infty}
\left(\| \hat\n_0^{m,\eta} - \n_0 \|_ {L^p}+\|\hat u_0^{m,\eta}-u_0\|_{H^1}+\|\hat\n_0^{m,\eta}\hat\te_0^{m,\eta}-\n_0\te_0  \|_{L^2}\right)=0
\ee
owing to (\ref{co4})  and (\ref{co14}).

Now, we claim that  the initial norm $\hat C_0^{m,\eta}$
for $(\hat\n_0^{m,\eta},\hat u_0^{m,\eta},\hat\te_0^{m,\eta}),$  i.e., the right hand side of
(\ref{e}) with $(\n_0,u_0,\te_0)$  replaced by
$(\hat\n_0^{m,\eta},\hat u_0^{m,\eta},\hat\te_0^{m,\eta}),$   satisfies
\be \la{uv9}  \lim\limits_{\eta\rightarrow 0}
\lim\limits_{m\rightarrow \infty}\hat C_0^{m,\eta}\le C_0,\ee
which leads to that there exists an $\hat\eta\in(0,\eta_0)$ such that, for any $\eta\in
(0,\hat\eta ),$ there exists some $\hat m (\eta)\geq m(\eta)$ such that
\be \la{uv8}\hat C_0^{m,\eta}\le C_0+\ve_0/2\le \ve_0 , \ee
provided \be\la{uv01}
0<\eta<\hat\eta  , \quad m\geq\hat m (\eta).\ee
Then if we assume (\ref{uv01}) holds, it directly follows from Proposition \ref{pro2}, (\ref{dee3}) and (\ref{uv8}) that there exists a classical solution  $(\hat\n^{m,\eta},\hat u^{m,\eta},\hat\te^{m,\eta})  $  of problem (\ref{a1})--(\ref{h1}) with  initial data $(\hat\n_0^{m,\eta},\hat u_0^{m,\eta},\hat\te_0^{m,\eta})$   on $\O\times(0,T]$ for all $T>0$.  Furthermore, $(\hat\n^{m,\eta},\hat u^{m,\eta},\hat\te^{m,\eta}) $  satisfies \eqref{h8}, (\ref{zs2}), (\ref{a2.112}), \eqref{key}, (\ref{ae3.7}),  (\ref{vu15}), and \eqref{h22} respectively  for any $T>0$ and $t\geq1$ with $(\n ,u ,\te )$   replaced by $(\hat\n^{m,\eta},\hat u^{m,\eta},\hat\te^{m,\eta})$.

It remains to prove (\ref{uv9}). Indeed, we just need to infer
\be\la{uv10}\lim_{\eta\rightarrow 0}\lim_{m\rightarrow \infty}\int\hat\n_0^{m,\eta}\left(\hat\te_0^{m,\eta}- \log
 \hat\te_0^{m,\eta} -1 \right)dx\le \int\n_0\left(\te_0- \log
 \te_0 -1 \right)dx ,\ee since   the other terms in  (\ref{uv9}) can be proved in a  similar and even simpler way.
Note that
\bnn\ba
&\hat\n_0^{m,\eta}\left(\hat\te_0^{m,\eta}- \log \hat\te_0^{m,\eta} -1\right)\\
&=\hat\n_0^{m,\eta}(\hat\te_0^{m,\eta}-1)^2 \int_0^1\frac{\al}{\al(\hat\te_0^{m,\eta}-1)+1}d\al\\
&= \frac{\hat\n_0^{m,\eta}(j_{m^{-1}}*(\n_0(\te_0-1)1_{\O_m}))^2}{j_{m^{-1}}* (\n_01_{\O_m} )+\eta}\\
&\quad\cdot\int_0^1\frac{\al}{\al j_{m^{-1}}*(\n_0(\te_0-1)1_{\O_m})+j_{m^{-1}}* (\n_01_{\O_m} )+\eta}d\al \\
&\in \left[0, \, \hat\n\eta^{-2}(j_{m^{-1}}*(\n_0(\te_0-1)1_{\O_m}))^2 \right],
\ea\enn
which combined with  Lebesgue's dominated convergence theorem yields that
\be \notag\ba
&\lim_{m\rightarrow \infty}\int\hat\n_0^{m,\eta}\left(\hat\te_0^{m,\eta}- \log
 \hat\te_0^{m,\eta} -1 \right)dx\\
&=\int(\n_0+\eta)\left(\frac{\n_0\te_0+\eta}{\n_0+\eta}
 - \log\frac{\n_0\te_0+\eta}{\n_0+\eta}-1 \right)dx\\
&=\int\left(\n_0\te_0 -\n_0+(\n_0+\eta)\log(\n_0+\eta)\right)dx\\
 &\quad-\int\left(\n_0\log(\n_0\te_0+\eta)+\eta\log(\n_0\te_0+\eta)\right)dx\\
 &\le \int\left(\n_0\te_0 -\n_0+(\n_0+\eta)\log(\n_0+\eta)\right)dx-\int\left(\n_0\log(\n_0\te_0)+\eta\log\eta\right)dx\\
 & \rightarrow \int \n_0\left(\te_0-  \log\te_0 -1 \right)dx,\quad \mbox{ as }\eta\rightarrow 0.
\ea\ee
It thus gives \eqref{uv10}.

{\it Step 2. Compactness results.}
 With the approximate solutions $(\hat\n^{m,\eta},\hat u^{m,\eta},\hat\te^{m,\eta}) $ obtained in the previous step at hand,  we can derive the global existence of weak solutions by passing to the limit  first   $m\rightarrow \infty,$ then $\eta\rightarrow 0 .$   Since the two steps are similar, we will only   sketch the  arguments for $m\rightarrow \infty.$  For any fixed  $\eta\in (0,\hat\eta)$, we simply denote  $(\hat\n^{m,\eta},\hat u^{m,\eta},\hat\te^{m,\eta}) $
   by $(\n^m ,u^m ,\te^m ).$ Then  the combination of Aubin-Lions Lemma with (\ref{zs2}), (\ref{a2.112}),  \eqref{key},  (\ref{vu15}),  and Lemma \ref{le4} yields that there exists some appropriate subsequence $ m_j \rightarrow \infty$ of $m\rightarrow \infty$ such that, for any $0<\tau<T<\infty $, $p\in[1,\infty)$, and $\tilde p\in[1,6)$,
\be \la{vu4}
u^{m_j}\rightharpoonup u  \,\,\mbox{ weakly star in }\,\, L^\infty(0,T; H^1),
\ee
\be\la{lk}
\te^{m_j}\rightharpoonup \te \,\,\mbox{ weakly in }\,\, L^2(0,T;H^1),
\ee
\be \la{vu1}
\n^{m_j}\rightarrow \n  \quad \mbox{ in }\,\, C([0,T];L^p\mbox{-weak})\cap C([0,T];H^{-1}),\ee
\be \la{vu5}
\n^{m_j}u^{m_j}\rightarrow \n u,\, \n^{m_j}\te^{m_j}
\rightarrow \n \te   \, \mbox{ in }\,  C([0,T];L^2 \mbox{-weak})\cap C([0,T];H^{-1}),
\ee
\be \la{vu26} \n^{m_j} |u^{m_j}|^2\rightarrow \n |u|^2 \,\, \mbox{ in }\,\, C([0,T];L^3 \mbox{-weak})\cap C([0,T];H^{-1}),\ee
\be\la{ghh}
G^{m_j}\rightarrow G,\,\curl u^{m_j}\rightarrow\curl u\,\,\mbox{ in }\,\,C([\tau,T];H^1\mbox{-weak})\cap C([\tau,T];L^{\tilde p}),
\ee
\be\la{lll}u^{m_j}\rightarrow u\,\mbox{ in }\,\,C([\tau,T];W^{1,6}\mbox{-weak})\cap C(\O\times[\tau,T]),\ee
and
\be \la{vu18}
\te^{m_j}\rightarrow \te\,\,\mbox{in}\,\,C([\tau,T];H^2\mbox{-weak})\cap C([\tau,T];W^{1,\tilde p}),
\ee
referring to \cite{H-L} for the detailed proof.
Now we consider the approximate solutions $(\n^{m_j},u^{m_j},\te^{m_j})$ in the weak forms, i.e. \eqref{def1}--\eqref{def3}, then  take appropriate limits. Standard arguments as well as (\ref{dee03}) and (\ref{vu4})--(\ref{vu18}) thus conclude that the limit $(\n,u,\te)$ is a weak solution  of   (\ref{a0})   (\ref{h2})  (\ref{h1})  in the sense of Definition \ref{def} and satisfies (\ref{hq1})--(\ref{hq4}) and the exponential decay property \eqref{h11}. 
Moreover, we obtain the estimates (\ref{hq5})--(\ref{hq8}) with the aid of (\ref{zs2}), (\ref{a2.112}), (\ref{vu15}),    and (\ref{vu4})--(\ref{vu18}). Finally, (\ref{vu019}) shall be obtained by adopting the same way as in \cite{H-L}.
The proof of Theorem \ref{th2} is finished.

\section*{Acknowledgements}   The research  is
partially supported by the National Center for Mathematics and Interdisciplinary Sciences, CAS,
NSFC Grant (Nos.  11688101,  12071200, 11971217),   Double-Thousand Plan of Jiangxi Province (No. jxsq2019101008),  Academic and Technical Leaders Training Plan of Jiangxi Province (No. 20212BCJ23027), and Natural Science
Foundation of Jiangxi Province (No.  20202ACBL211002).

\begin {thebibliography} {99}

\bibitem{adn}

S. Agmon, A. Douglis, L. Nirenberg, Estimates near the boundary for solutions of elliptic partial differential equations satisfying general boundary conditions II. {\it Commun. Pure Appl. Math.} \textbf{17}(1)(1964), 35--92.

\bibitem{akm} S. N. Antontsev,  A. V. Kazhikhov,  V. N. Monakhov, {\it  Boundary value problems in mechanics of nonhomogeneous fluids.} North-Holland Publishing Co., Amsterdam, 1990.

\bibitem{CANEHS} J. Aramaki, $L^p$ theory for the div-curl system. {\it Int. J. Math. Anal.}  \textbf{8}(6)(2014), 259--271.

\bibitem{bkm} J.T. Beale,  T. Kato, A. Majda,
Remarks on the breakdown of smooth solutions for the 3-D Euler
equations. {\it Commun. Math. Phys.} {\bf 94}(1984), 61--66.

\bibitem{bd} D. Bresch,  B. Desjardins, On the existence of global weak solutions to the Navier-Stokes equations for viscous compressible and heat conducting fluids. {\it J. Math. Pures Appl.} {\bf 87}(9)(2007), 57--90.

\bibitem{C-L} G.C. Cai, J. Li, Existence and exponential growth of global classical solutions to the compressible Navier-Stokes equations with slip boundary conditions in 3D bounded domains. {\it Indiana Univ. Math. J.} in press.


\bibitem{choe1}Y. Cho, H. Kim, Existence results for viscous polytropic fluids with vacuum. {\it J. Differ. Eqs.} {\bf 228}(2006), 377--411.


\bibitem{feireisl} E. Feireisl, On the motion of a viscous, compressible, and heat conducting fluid. {\it Indiana Univ. Math. J.} {\bf 53}(2004), 1707--1740.

\bibitem{feireisl1}E. Feireisl, {\it Dynamics of Viscous Compressible Fluids}. Oxford Science Publication, Oxford, 2004.

\bibitem{F1} E. Feireisl,  A. Novotny, H. Petzeltov\'{a},  On the existence of globally defined weak solutions to the
Navier-Stokes equations. {\it J. Math. Fluid Mech.} {\bf 3}  (2001), 358--392.

\bibitem{GPG}
G.P. Galdi,  \textit{An Introduction to the Mathematical Theory of the Navier-Stokes Equations. Steady-State Problems. Second Edition.} Springer, New York, 2011.


\bibitem{Hof1} D. Hoff,  Discontinuous solutions of the Navier-Stokes equations
for multidimensional flows of heat-conducting fluids. {\it Arch.
Rational Mech. Anal.}  {\bf 139}(1997), 303--354.

\bibitem{h101}
 X.D. Huang,  J.  Li, Serrin-type blowup criterion for viscous,
compressible, and heat conducting Navier-Stokes
and magnetohydrodynamic flows. {\it Commun. Math. Phys.} \textbf{324}(2013), 147--171.

\bibitem{H-L}
X.D. Huang,  J. Li, Global classical and weak solutions to the three-dimensional full compressible Navier-Stokes system with vacuum and large oscillations.  {\it Arch. Rational Mech. Anal.} \textbf{227}(2018), 995--1059.

\bibitem{h1x} X.D. Huang, J. Li, Z.P. Xin,
Serrin type criterion for the three-dimensional compressible flows. {\it  SIAM  J. Math. Anal.} {\bf 43}(4)(2011), 1872--1886.

\bibitem{hulx} X.D. Huang, J. Li, Z.P. Xin,  Global well-posedness of classical solutions with large oscillations and vacuum to the three-dimensional isentropic
compressible Navier-Stokes equations. {\it Comm. Pure Appl. Math.} {\bf 65}(4)(2012), 549--585.

 \bibitem{kato}
T. Kato, Remarks on the Euler and Navier-Stokes equations in $\r^2$. {\it Proc. Symp. Pure Math. Amer. Math. Soc. Providence.} \textbf{45}(1986), 1--7.

\bibitem{kazh01} A. V. Kazhikhov,   Cauchy problem for viscous gas equations. {\it Siberian Math. J.} {\bf 23} (1982),   44--49.

\bibitem{Kaz} A. V. Kazhikhov, V. V.  Shelukhin,
Unique global solution with respect to time of initial-boundary
value problems for one-dimensional equations of a viscous gas.
{\it J. Appl. Math. Mech.  } {\bf 41} (1977), 273--282.

\bibitem{lxz}
S.H. Lai, H. Xu, J.W. Zhang, Well-posedness and exponential decay for the Navier-
Stokes equations of viscous compressible heat-conductive fluids with vacuum.
arxiv: 2103.16332.


\bibitem{2dlx} J. Li, Z.P. Xin,  Global well-posedness and large time asymptotic behavior of classical solutions to the compressible Navier-Stokes equations with vacuum. {\it Annals of PDE.}  \textbf{5}(2019), 7.


\bibitem{L1} P. L. Lions,  \emph{Mathematical topics in fluid
mechanics.  Compressible models. Vol. {\bf 2}.}  Oxford
University Press, New York,   1998.

\bibitem{M1} A. Matsumura, T.   Nishida,   The initial value problem for the equations of motion of viscous and heat-conductive gases. {\it J. Math. Kyoto Univ.} {\bf 20}(1980), 67--104.

\bibitem{Na}
J. Nash, Le probl\`{e}me de Cauchy pour les \'{e}quations diff\'{e}rentielles d'un fluide g\'{e}n\'{e}ral. {\it Bull. Soc. Math. France.} \textbf{90} (1962), 487--497.

\bibitem{Nclm1} C. L. M. H. Navier,  Sur les lois de l\'{e}quilibre et du mouvement des corps \'{e}lastiques. {\it Mem. Acad. R. Sci. Inst. France.} \textbf{6 }(1827), 369.

\bibitem{nir} L. Nirenberg, On elliptic partial differential equations.  {\it Ann. Scuola Norm. Sup. Pisa.} {\bf 13}(3)(1959), 115--162.

\bibitem{R} O. Rozanova,   Blow up of smooth solutions to the compressible
Navier-Stokes equations with the data highly decreasing at
infinity. {\it J. Differ. Eqs.}  {\bf 245} (2008),  1762--1774.

\bibitem{jes} W. Rudin, {\it Real and complex analysis. Third Edition}. McGraw-Hill Book Company, New York, 1987.


\bibitem{se1} J. Serrin, On the uniqueness of compressible fluid motion. {\it
Arch. Rational  Mech. Anal.} {\bf 3 }(1959), 271--288.

\bibitem{Tani}
A. Tani, On the first initial-boundary value problem of compressible viscous fluid motion. {\it Publ. Res. Inst. Math. Sci. Kyoto Univ.} \textbf{13}(1977), 193--253.

\bibitem{vww}
W. von Wahl, Estimating $\nabla u$ by $\div u$ and $\curl u$. {\it Math. Meth. Appl. Sci.} \textbf{15}(1992), 123--143.

\bibitem{W-C}
H.Y. Wen, C.J. Zhu,
Global solutions to the three-dimensional full compressible Navier-Stokes equations with vacuum at infinity in some classes of large data. {\it SIAM J. Math. Anal.} \textbf{49}(2017),  162--221.

\bibitem{wenzhu1} H.Y. Wen, C.J. Zhu, Global classical large solutions to Navier-Stokes equations for viscous compressible and heat-conducting fluids with vacuum. {\it SIAM J. Math. Anal.} \textbf{45}(2013),  431--468.

 \bibitem{wenzhu2} H.Y. Wen, C.J. Zhu, Global symmetric classical solutions of the full compressible Navier-Stokes equations with vacuum and large initial data. {\it J. Math. Pures Appl.}  \textbf{102}(2014),  498--545.

\bibitem{X1} Z. P. Xin,
Blowup of smooth solutions to the compressible {N}avier-{S}tokes
equation with compact density. {\it Comm. Pure Appl. Math. } {\bf 51}(1998), 229--240.

\end {thebibliography}

\end{document}